\documentclass[fleqn,preprint,3p,a4paper]{elsarticle}
\usepackage{amssymb}
\usepackage{amsmath}
\usepackage{amsthm}
\usepackage{dcolumn}
\usepackage{endnotes}
\usepackage{tabularx}
\usepackage[matrix,arrow]{xy}
\usepackage{wasysym}
\usepackage{graphicx}
\usepackage{epstopdf}
\usepackage{xcolor}
\usepackage{tikz}
\usepackage{mathrsfs}

\newtheorem{theorem}{Theorem}[section]
\newtheorem{proposition}[theorem]{Proposition}
\newtheorem{lemma}[theorem]{Lemma}
\newtheorem{corollary}[theorem]{Corollary}

\theoremstyle{definition}
\newtheorem{definition}[theorem]{Definition}
\newtheorem{example}[theorem]{Example}
\newtheorem{remark}[theorem]{Remark}
\newtheorem{question}[theorem]{Question}

\newcommand{\ir}{{\mathsf{Irr}}}
\newcommand\twoheaduparrow{\mathord{\rotatebox[origin=c]{90}{$\twoheadrightarrow$}}}
\newcommand\twoheaddownarrow{\mathord{\rotatebox[origin=c]{90}{$\twoheadleftarrow$}}}
\newcommand{\dda}{\twoheaddownarrow}
\newcommand{\dua}{\twoheaduparrow}

\newcommand{\cl}{{\rm cl}}
\newcommand{\ii}{{\rm int}}

\newcommand{\ua}{\mathord{\uparrow}}
\newcommand{\da}{\mathord{\downarrow}}

\begin{document}

\begin{frontmatter}

\title{Characterization of $T_0$-spaces for \\ quasi-liminf convergence being topological\tnoteref{t1}}
\tnotetext[t1]{This research was supported by the National Natural Science Foundation of China (Nos. 12471070, 12071199, 12261040).}

\author[X. Wen]{Xinpeng Wen}
\address[X. Wen]{School of Mathematics and Information Science, Nanchang Hangkong University, Nanchang 330063, China}
\ead{wenxinpeng2009@163.com}

\author[X. Xu]{Xiaoquan Xu\corref{mycorrespondingauthor}}
\cortext[mycorrespondingauthor]{Corresponding author}
\ead{xiqxu2002@163.com}
\address[X. Xu]{School of Mathematics and Statistics, Minnan Normal University, Zhangzhou 363000, China}

\begin{abstract}

The authors' primary goal in this paper is to extend some important results related to the liminf-convergence and $\mathcal{QS}$-convergence in domain theory to the setting of $T_0$-spaces. To that end, we study the quasi-liminf convergence in $T_0$-spaces and introduce a new kind of $T_0$-spaces --- \emph{WLH}-spaces. It is proved that every locally hypercompact $T_0$-space is a \emph{WLH}-space, and a $T_0$-space $(X, \tau)$ is a \emph{WLH}-space iff the quasi-liminf convergence in $(X, \tau)$ is topological. Therefore, the quasi-liminf convergence in a locally hypercompact space is topological, and for a quasicontinuous poset $P$, the quasi-liminf convergence is topological and agrees with convergence in the Lawson topology $\lambda(P)$. We also show that a $T_0$-space $(X,\tau)$ is locally hypercompact iff the $\mathcal{QS}$-convergence in $(X,\tau)$ coincides with the convergence in the topology $\tau$. Therefore, a poset $P$ is quasicontinuous iff $\mathcal{QS}$-convergence in the Scott space of $P$ is topological iff $\mathcal{QS}$-convergence coincides with convergence in the Scott topology $\sigma (P)$. Using the quasi-liminf convergence and approximate relation $\ll_c$, we give several characterizations of $C$-spaces and continuous posets.
\end{abstract}

\begin{keyword} $\mathcal{QS}$-convergence; liminf convergence; quasi-liminf convergence; locally hypercompact space; \emph{WLH}-space; $C$-spaces; quasicontinuous poset
\MSC 54A20; 54D45; 06B35; 06F30
\end{keyword}

\end{frontmatter}

\section{Introduction}

In the fields of domain theory and non-Hausdorff topology, a variety of convergent classes have been investigated (see \cite{Andradi-Ho-2019, Chen-Kou-2014, Chen-Kou-2024, Erne-1981, Erne-Gatzke-1985, GHKLMS-2003, Ruan-Xu-2019, Shen-Lu-Li-2021, Yang-Xu-2025, Zhao-Zhao-2005, Zhang-Bao-Xu-2022}). Through diverse convergence structures, not only can numerous concepts of continuity be defined and described, but these structures also connect order and topology in an interconnected way. Within the numerous elegant findings in domain theory, the following two important results serve as a perfect example of this profound link between order theory and topology.

\begin{theorem}\label{theor-quansicontinuous-domain-Lawson-topology} (\cite[Theorem III-3.17]{GHKLMS-2003})  Let $P$ be a dcpo.
\begin{enumerate}[\rm (1)]
\item The Lawson topology and the liminf topology on $P$ agree if $P$ is a quasicontinuous domain.
\item If $P$ is a continuous domain, then the liminf convergence in $P$ is topological and agrees
with convergence in the Lawson topology of $P$.
\end{enumerate}
\end{theorem}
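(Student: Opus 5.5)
We set up the standard objects from the Compendium. For a net $(x_j)_{j\in J}$ in $P$, write $x\equiv_{\liminf} x_j$ when $x\le \sup_{j}\inf_{k\ge j}x_k$, where the infima are taken in $P$ and must exist eventually. The liminf topology consists of the sets $U$ such that whenever $(x_j)\to_{\liminf} x\in U$, the net is eventually in $U$. The Lawson topology $\lambda(P)$ is generated by the Scott topology $\sigma(P)$ together with the complements of principal filters $P\setminus\ua x$.

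For part (1) we prove two inclusions. First, $\lambda(P)\subseteq$ liminf topology in any dcpo. For Scott open $U$, take a liminf-convergent net with limit $x\in U$. The directed family $\{\inf_{k\ge j}x_k\}$ has supremum $\ge x$, so some $\inf_{k\ge j}x_k\in U$. Since $U$ is an upper set, $x_k\in U$ for all $k\ge j$. For $P\setminus\ua y$ with $x\notin\ua y$, suppose the net is frequently in $\ua y$. Then each $\inf_{k\ge j}x_k$ lies below elements that are $\ge y$, but that gives no control on the infima directly, so we argue via closedness instead. This direction is delicate, and we handle it by noting that every liminf limit is a lower bound of the set of cluster points in the lower-bound sense: each $\inf_{k\ge j}x_k\le x_k$ for $k\ge j$. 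The subnet lying in $\ua y$ only bounds the infima from above, so this argument still needs a genuine idea. The second inclusion, liminf-open $\Rightarrow$ Lawson-open when $P$ is quasicontinuous, is the main obstacle. Given a liminf-open $U$ and $x\in U$, suppose no Lawson basic neighbourhood $\ua F\setminus\ua G$ (with $F\ll x$ finite in the quasicontinuous sense and $G$ finite) of $x$ lies inside $U$. We build a net indexed by pairs $(F,G)$, with $F$ ranging over finite sets way below $x$, directed downward, and $G$ finite, choosing $x_{(F,G)}\in (\ua F\setminus\ua G)\setminus U$. Quasicontinuity says $\bigcap \ua F=\ua x$ for a filtered family, and we aim to show $x\le\sup_j\inf_{k\ge j}x_k$. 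Here we would use the characterization of quasicontinuous domains as those where $\ua x$ is the filtered intersection of $\ua F$'s, together with the Lawson-compactness-type argument of Theorem III-3.17's proof. The expected route is to replace the net by one whose tail infima can be computed, passing to $\inf$ of elements of $\ua F$. The real difficulty is that $\inf\ua F$ need not exist for finite $F$ that is not a singleton; we would try first to show that the quasi-liminf notion, using finite sets in place of single infima, is the right tool, and then deduce the classical statement.

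For part (2), continuity makes each $\ua F$ replaceable by $\dua y$ for a single $y\ll x$. We choose nets $x_{(y,G)}\in\ua y\setminus(\ua G\cup U)$. The tail infimum beyond index $(y,G)$ is $\ge y$, so $\sup_j\inf_{k\ge j}x_k\ge\sup\{y: y\ll x\}=x$. Hence the net liminf-converges to $x$ yet never enters $U$, a contradiction. So the Lawson topology equals the liminf topology. Finally, to see that liminf convergence is topological we show it coincides with Lawson convergence. We already have liminf $\Rightarrow$ Scott convergence, and for the lower part we use that every $x_k$ with $k\ge j$ lies above $\inf_{k\ge j}x_k$. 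Conversely, a Lawson-convergent net to $x$ is eventually in each $\dua y$ with $y\ll x$, so its tail infima exceed $y$; taking the supremum gives $x$, which yields liminf convergence.
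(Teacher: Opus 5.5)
You have two genuine gaps. The first is definitional. The condition $x\le\sup_j\inf_{k\ge j}x_k$ is Scott ($\mathcal S$-)convergence, not liminf convergence. Liminf convergence (in the Compendium, and the paper's class $\mathcal L$ via its characterization proposition) also requires $x$ to be the liminf of \emph{every subnet}: every eventual lower bound of every subnet lies below $x$, or equivalently $z\le x$ whenever the net is frequently in $\ua z$. In addition, tail infima need not exist in a dcpo, so the other half must be phrased with a directed set $D$ of eventual lower bounds satisfying $\sup D=x$.

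Under your definition, a constant net with value $y$ converges to every $x\le y$, and each net $(d)_{d\in D}$ converges to $\sup D$. So your liminf topology is exactly $\sigma(P)$, and the inclusion of $\lambda(P)$ in it, which you are trying to prove, is false. That is why the step for $P\setminus\ua y$ seemed to need a new idea. With the correct definition it is one line: if $x\notin\ua y$ and the net were frequently in $\ua y$, the subnet lying in $\ua y$ has $y$ as an eventual lower bound, so $y\le x$.

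The same omission breaks the lower half of your part (2). That $x_k$ lies above a tail infimum says nothing about eventually leaving $\ua z$ for $z\not\le x$, whereas the subnet condition gives this at once. Conversely, for a Lawson-convergent net, and for your net $x_{(y,G)}$, you must verify the subnet condition. It follows from convergence in the lower topology, respectively from directing the finite sets $G$ with $x\notin\ua G$ by inclusion. The tail infima should be replaced by the directed set $\Downarrow x$ of eventual lower bounds. After these repairs, part (2) is fine.

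The second gap is the heart of (1): you do not prove that liminf-open sets are Lawson-open for quasicontinuous $P$; you only describe what you would try. The obstruction is real. Your net $x_{(F,G)}$ is eventually in every $\ua F$ with $F\ll x$, but it may have no eventual lower bound at all. Take two incomparable chains $a_1<a_2<\cdots$ and $b_1<b_2<\cdots$ with common supremum $a_0$, so that $\Downarrow a_0=\emptyset$. Nothing in your construction prevents the chosen points from alternating between the chains.

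The missing idea is to pass to a subnet and use Rudin's Lemma.
\begin{enumerate}[\rm (i)]
\item Let $M$ be a universal subnet. Since $\ua F=\bigcup_{f\in F}\ua f$ is a finite union, $F^*=\{f\in F: M \mbox{ is eventually in } \ua f\}$ is nonempty, and the $G$-coordinate gives $F^*\subseteq\da x$.
\item The family $\{\ua F^*: F\ll x\}$ is filtered, because $\ua F_3\subseteq\ua F_1$ forces each $f\in F_3^*$ to lie above some element of $F_1^*$.
\item Rudin's Lemma gives a directed $E\subseteq\bigcup F^*$ meeting every $F^*$. Its elements are eventual lower bounds of $M$ and of all its subnets.
\item We have $\sup E\in\bigcap_{F\ll x}\ua F=\ua x$ and $\sup E\le x$, so $\sup E=x$.
\item The $G$-coordinate bounds every eventual lower bound of every subnet by $x$. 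Hence $M$ liminf-converges to $x$ yet never meets $U$, a contradiction.
\end{enumerate}

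For comparison, the paper gives no proof of this theorem; it is quoted from the Compendium. The paper's own results only yield $\lambda(P)=\mathcal{O}(\mathcal{QL})\subseteq\mathcal{O}(\mathcal{L})$ for quasicontinuous $P$. Your suggested detour through quasi-liminf convergence is completed by the same extraction. Apply it to a family $\{F_d\}$ witnessing $(\mathcal N,x)\in\mathcal{QL}$, using ``$\mathcal N$ frequently in $\ua z$ implies $z\le x$'' in place of the $G$-coordinate, and $\ua x\in\cl_{P_S(X)}\{\ua F_d\}$ to get $\cl E=\da x$. This shows that every $\mathcal{QL}$-convergent net has an $\mathcal L$-convergent subnet. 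Since $\mathcal{QL}$ is closed under subnets, $\mathcal{O}(\mathcal{L})\subseteq\mathcal{O}(\mathcal{QL})$, and (1) follows.
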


\begin{theorem}\label{theor-charac-quasicontinuous-domain} (\cite[Theorem 2.10]{Chen-Kou-2014}) For a dcpo $P$, the following statements are equivalent:
\begin{enumerate}[\rm (1)]
    \item $P$ is a quasicontinuous domain.

     \item $\mathcal{GS}$-convergence is topological convergence for the Scott topology.
\end{enumerate}
\end{theorem}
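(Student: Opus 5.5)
Throughout I read $\mathcal{GS}$-convergence as in \cite{Chen-Kou-2014}: a net $(x_i)_{i\in I}$ $\mathcal{GS}$-converges to $x$ if there is a family $\mathcal{F}$ of nonempty finite subsets of $P$, directed in the Smyth preorder, such that $\bigcap_{F\in\mathcal{F}}\ua F\subseteq \ua x$ and, for each $F\in\mathcal{F}$, $x_i\in\ua F$ eventually. If the official definition differs, the plan has to be re-run against it. I also use the standard characterization of quasicontinuity: $P$ is quasicontinuous iff for every $x$ there is a Smyth-directed family of finite $F$ with $x\in\ii_\sigma\ua F$ and $\bigcap\ua F=\ua x$. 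The one external tool is the finitely-generated-filter property of Scott open sets in a dcpo (\cite[Proposition III-3.3]{GHKLMS-2003}): if $\mathcal{F}$ is Smyth-directed and $\bigcap\ua F\subseteq U$ with $U\in\sigma(P)$, then $\ua F\subseteq U$ for some $F\in\mathcal{F}$.

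\emph{Step 1: $\mathcal{GS}$-limits are always Scott limits.} Suppose $(x_i)$ $\mathcal{GS}$-converges to $x$ via $\mathcal{F}$, and let $x\in U\in\sigma(P)$. Then $\bigcap\ua F\subseteq\ua x\subseteq U$, so the cited proposition gives some $F\in\mathcal{F}$ with $\ua F\subseteq U$. Since $x_i\in\ua F$ eventually, the net is eventually in $U$.

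\emph{(1)$\Rightarrow$(2).} By Step 1 it remains to show that every Scott-convergent net $\mathcal{GS}$-converges. Take $\mathcal{F}=\{F: x\in\ii_\sigma\ua F\}$. By quasicontinuity this family is directed and $\bigcap\ua F=\ua x$. For each such $F$, the Scott open set $\ii_\sigma\ua F$ contains $x$, so a Scott-convergent net is eventually inside it. Hence $\mathcal{GS}$-convergence coincides with Scott convergence, and in particular it is topological.

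\emph{(2)$\Rightarrow$(1).} Suppose $\mathcal{GS}$-convergence is convergence in some topology $\tau$. The aim is to prove $\tau=\sigma(P)$.
\begin{itemize}
\item By Step 1, every Scott open set is $\tau$-open.
\item Conversely, let $V\in\tau$. Take $x\le y$ with $x\in V$. The constant net at $y$ $\mathcal{GS}$-converges to $x$ via $\mathcal{F}=\{\{x\}\}$, so $y\in V$; thus $V$ is an upper set.
\item Let $D$ be directed with $\sup D\in V$. The net $(d)_{d\in D}$ $\mathcal{GS}$-converges to $\sup D$ via $\mathcal{F}=\{\{d\}:d\in D\}$, because $\bigcap_d\ua d=\ua\sup D$. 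Hence $D$ meets $V$, and $V$ is Scott open.
\end{itemize}
So $\tau=\sigma(P)$, and Scott convergence equals $\mathcal{GS}$-convergence.

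Now fix $x$ and form the standard net $x_{(U,a)}=a$, indexed by $\{(U,a): x\in U\in\sigma(P),\ a\in U\}$ with $(U,a)\le(U',a')$ iff $U'\subseteq U$. This net Scott-converges to $x$, so it $\mathcal{GS}$-converges via some directed $\mathcal{F}$ with $\bigcap\ua F\subseteq\ua x$. Fix $F\in\mathcal{F}$. Eventual membership in $\ua F$ gives some $(U_0,a_0)$ such that every $a\in U\subseteq U_0$ lies in $\ua F$; taking $U=U_0$ shows $U_0\subseteq\ua F$. Therefore $x\in\ii_\sigma\ua F$ for every $F\in\mathcal{F}$. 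Since also $x\in\bigcap\ua F\subseteq\ua x$, we get $\bigcap\ua F=\ua x$ with a directed family of finite sets each having $x$ in the Scott interior of its upper set. By the characterization above, $P$ is quasicontinuous.

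The main obstacle I expect is matching the exact definition of $\mathcal{GS}$-convergence, in particular whether the condition is $\bigcap\ua F\subseteq\ua x$ or some weaker requirement. Step 1 relies entirely on the filter property of Scott open sets together with $\bigcap\ua F\subseteq\ua x$, so that is the point to check first.
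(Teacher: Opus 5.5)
Your argument is correct, and your worry about the definition resolves in your favour. In Definition~\ref{def-GS-convergence}, condition (3) reads: there is $a\in{\ua}x$ with ${\ua}a=\bigcap_{d\in D}{\ua}F_d$. This is stronger than your $\bigcap{\ua}F\subseteq{\ua}x$.

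Wherever you are handed a $\mathcal{GS}$-family (Step~1 and the last paragraph), you only use the weaker inclusion, which follows from ${\ua}a\subseteq{\ua}x$. Wherever you construct one, the stronger condition holds: $a=x$ for $\{F : x\in\ii_{\sigma(P)}{\ua}F\}$ and for $\{\{x\}\}$, and $a=\bigvee D$ for $\{\{d\} : d\in D\}$. So nothing has to be re-run.

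One cosmetic point: your index set for the standard net is only preordered. If nets must be indexed by posets, use the $(U,n,a)$ indexing from the proof of Theorem~\ref{theor-characterization-WLH-space-1}.

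Your route differs from the paper's. The paper does not prove this theorem; it cites it and instead proves the $\mathcal{QS}$-analogue (Theorem~\ref{theor-LCH-QS-topological} and Corollary~\ref{cor-quasicontinuous-poset-QS-convergennce}) with general $T_0$-space machinery:
\begin{enumerate}
\item $\mathcal{O}(\mathcal{QS})=md(\tau)$;
\item the Chen--Kou characterization of topological $\mathcal{QS}$-convergence via $\ll_w$ and the Smyth-closure condition (Theorem~\ref{theor-QS-topological-charact});
\item the fact that $P$ is quasicontinuous iff $\Sigma P$ is locally hypercompact.
\end{enumerate}
Your Step~1 coincides with the proof of Proposition~\ref{prop-GS-is-QS}, which uses the filter property of Corollary~\ref{cor-Rudin-lemma}. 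Beyond that, you work directly in the dcpo, needing only that filter property and Proposition~\ref{prop-quasicontinuous-sufficient-condition}. This makes your proof short and self-contained.

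The paper's approach buys generality. It covers arbitrary $T_0$-spaces and posets that are not dcpos, where the intersection condition no longer forces Scott convergence and is replaced by closure in $P_S(X)$. However, for $\mathcal{GS}$ itself the paper's results only give (2)$\Rightarrow$(1), via $\mathcal{GS}\subseteq\mathcal{QS}$ and $\mathcal{O}(\mathcal{QS})=\sigma(P)$. For (1)$\Rightarrow$(2) one still needs your observation: in a quasicontinuous domain, the family $\{F : x\in\ii_{\sigma(P)}{\ua}F\}$ is filtered with intersection exactly ${\ua}x$. That is what makes every Scott-convergent net $\mathcal{GS}$-convergent.
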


At the 6th International Symposium in Domain Theory, Jimmie Lawson emphasized the need to develop the core of
domain theory directly in $T_0$-spaces instead of posets. Towards this direction, several classic results in
domain theory have been lifted from the context of posets to $T_0$-spaces. 


In this paper, we seek to extend the aforementioned two theorems and some other related results to the setting of $T_0$-spaces. The paper is organized as follows.

Section 2 provides some fundamental definitions and notations and a few basic results about lattice-ordered structures and $T_0$-spaces which will be used in the whole paper.

Section 3 is mainly devoted to investigate the $\mathcal{QS}$-convergence in $T_0$-spaces. It is shown that a $T_0$-space $(X,\tau)$ is locally hypercompact iff the $\mathcal{QS}$-convergence in $(X,\tau)$ coincides with the convergence in the topology $\tau$. Therefore, a poset $P$ is quasicontinuous iff $\mathcal{QS}$-convergence in the Scott space of $P$ is topological iff $\mathcal{QS}$-convergence coincides with convergence in the Scott topology $\sigma (P)$.

In Section 4, we study the quasi-liminf convergence in $T_0$-spaces by introducing and investigating a new class of $T_0$-spaces A --- \emph{WLH}-spaces. It is proved that every locally hypercompact $T_0$-space is a \emph{WLH}-space, and a $T_0$-space $(X, \tau)$ is a \emph{WLH}-space iff the quasi-liminf convergence in $(X, \tau)$ is topological. Therefore, the quasi-liminf convergence in a locally hypercompact space is topological, and for a quasicontinuous poset $P$, the quasi-liminf convergence is topological and agrees with convergence in the Lawson topology $\lambda(P)$. Using the quasi-liminf convergence and approximate relation $\ll_c$, we obtain several characterizations of $C$-spaces and continuous posets.

\section{Preliminaries}

In this section, we briefly recall some fundamental definitions and notations and a few basic results about lattice-ordered structures and $T_0$-spaces that will be used in the paper. For further details, we refer the reader to \cite{Engelking-1989, GHKLMS-2003, Goubault-2013, Kelley-1975}.

For a poset $P$ and $A \subseteq P$, define ${\ua} A=\{x\in P: a\leq x \mbox{ for some }a\in A \}$ and ${\da} A=\{x \in P: x \leq a \mbox{ for some }a\in A\}$. For $x\in X$, let ${\ua} x={\ua}\{x\}$ and ${\da} x={\da}\{x\}$. A subset $A$ is called a \emph{lower set} (resp., an \emph{upper set}) if $A={\da} A$ (resp., $A={\ua} A$). The set of all upper bounds (resp., all lower bounds) of $A$ in $P$ is denoted by $A^{\ua}$ (resp., $A^{\da}$), that is, $A^{\ua}=\{u\in P: A\subseteq {\da} u\}$ and $A^{\da}=\{v\in P: A\subseteq \ua v\}$. The set $A^{\delta}=(A^{\ua})^{\da}$ is called the $cut$ of $A$ in $P$. If the set of upper bounds of~$A$ has a unique smallest element, we call this element the \emph{least upper bound} and write it as~$\vee A$ or sup~\!\!$A$ (for supremum). Dually, the \emph{greatest lower bound} of $A$ is written as~$\wedge A$ or inf~\!$A$ (for infimum). A nonempty subset $D$ of a poset $P$ is called \emph{directed} if every nonempty finite subset of $D$ has an upper bound in $D$. The set of all directed sets of $P$ is denoted by $\mathcal{D}(P)$. The poset $P$ is called a \emph{directed complete poset}, or \emph{dcpo} for short, if for any $D\in \mathcal D(P)$, $\vee D$ exists in $P$. A set $I\subseteq P$ is called an \emph{ideal} of $P$ if $I$ is a directed lower subset of $P$. Let $\mathrm{Id} (P)$ be the poset (with the order of set inclusion) of all ideals of $P$. For a nonempty subset $C$ of $P$, define $max(C)=\{c\in C : c \mbox{~ is a maximal element of~} C\}$ and $min(C)=\{c\in A : c \mbox{~ is a minimal element of~} C\}$. The set of all natural numbers is denoted by $\mathbb{N}$. Let $\mathbb N^+=\mathbb N\setminus \{0\}$. A nonempty family $\mathcal{G}$ of subsets of a set $X$ is called \emph{filtered} if for any $G_1, G_2\in \mathcal{G}$, there exists $G_3\in \mathcal{G}$ such that $G_3\subseteq {\uparrow} G_2\cap {\uparrow} G_1$.


\begin{lemma}\label{lem-filtred-family} Suppose that $\{{\ua} F_d : d\in D\}$ is a filtered family of nonempty upper sets of a poset $P$,  and $A, U\subseteq P$. Then we have the following two conclusions:
\begin{enumerate}[\rm (1)]
\item If $F_d\cap {\da} A\neq\emptyset$ for any $d\in D$, then $\{{\ua} (F_d\cap {\da} A) : d\in D\}$ is filtered.
\item If $F_d\not\subseteq {\ua} U$ for any $d\in D$, then $\{{\ua} (F_d\setminus {\ua} U) : d\in D\}$ is filtered.
\end{enumerate}
\end{lemma}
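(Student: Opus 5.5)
Both parts follow directly from the definition of a filtered family, applied to $\{{\ua} F_d : d\in D\}$. Fix $d_1,d_2\in D$. Filteredness gives $d_3\in D$ with ${\ua} F_{d_3}\subseteq {\ua}({\ua} F_{d_1})\cap{\ua}({\ua} F_{d_2})={\ua} F_{d_1}\cap{\ua} F_{d_2}$. It suffices to show that the same index $d_3$ works for the modified families, that is,
\[
{\ua}(F_{d_3}\cap{\da} A)\subseteq {\ua}(F_{d_1}\cap{\da} A)\cap{\ua}(F_{d_2}\cap{\da} A)
\]
in case (1), and the analogous inclusion with $F_d\setminus{\ua} U$ in case (2). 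The hypotheses $F_d\cap{\da} A\neq\emptyset$ and $F_d\not\subseteq{\ua} U$ ensure that each modified set is nonempty, so the new families consist of nonempty upper sets and are nonempty families.

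For (1), take $x\in{\ua}(F_{d_3}\cap{\da} A)$. Then there is $y\in F_{d_3}\cap{\da} A$ with $y\le x$. Since $y\in F_{d_3}\subseteq{\ua} F_{d_1}$, there is $z\in F_{d_1}$ with $z\le y$. Because ${\da} A$ is a lower set and $y\in{\da} A$, we get $z\in{\da} A$. Hence $z\in F_{d_1}\cap{\da} A$ and $z\le y\le x$, so $x\in{\ua}(F_{d_1}\cap{\da} A)$. The same argument with $d_2$ gives the other half of the inclusion.

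For (2), take $x\ge y$ with $y\in F_{d_3}\setminus{\ua} U$. As before, choose $z\in F_{d_1}$ with $z\le y$. If $z$ were in ${\ua} U$, then $y\in{\ua} U$ because ${\ua} U$ is an upper set, which contradicts the choice of $y$. So $z\in F_{d_1}\setminus{\ua} U$ and $x\in{\ua}(F_{d_1}\setminus{\ua} U)$. The same reasoning applies to $d_2$.

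The argument has no real obstacle. The only point to check is the direction of closure in each case. In (1) the element $z$ lies below $y$ and stays in ${\da} A$ because ${\da} A$ is a lower set. In (2) the complement of the upper set ${\ua} U$ is a lower set, so again $z\le y$ keeps $z$ outside ${\ua} U$. Both parts are thus the same observation: intersecting with a lower set preserves filteredness.
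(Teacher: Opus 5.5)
Your proof is correct and essentially the same as the paper's: the paper packages your element chase as the identity ${\ua}(F\cap{\da}C)={\ua}({\ua}F\cap{\da}C)$, so that the index $d_3$ witnessing filteredness of $\{{\ua}F_d\}$ carries over. The paper then obtains (2) from (1) by applying it to the lower set $B=P\setminus{\ua}U$, which is the same observation you make at the end.
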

\begin{proof} (1): First, we show that ${\ua} (F\cap {\da} C)={\ua} ({\ua} F\cap {\da} C)$ for any $F, C\subseteq P$. Obviously, ${\ua} (F\cap {\da} C)\subseteq{\ua} ({\ua} F\cap {\da} C)$. Conversely, if $t\in {\ua} ({\ua} F\cap {\da} C)$, then there is $s\in {\ua} F\cap {\da} C$ such that $s\leq t$, whence there exists $f\in F$ with $f\leq s$. So $f\in F\cap {\da} C$ and $f\leq t$. Thus $t\in {\ua} (F\cap {\da} C)$. Therefore, ${\ua} ({\ua} F\cap {\da} C)\subseteq {\ua} (F\cap {\da} C)$, and hence ${\ua} (F\cap {\da} C)={\ua} ({\ua} F\cap {\da} C)$. As $\{{\ua} F_d : d\in D\}$ is filtered, $\{{\ua} (F_d\cap {\da} A) : d\in D\}=\{{\ua} ({\ua} F_d\cap {\da} A) : d\in D\}$ is also filtered.

(2): Let $B=X\setminus {\ua} U$. Then $B={\da} B$ and $F_d\cap {\da} B=F_d\cap B=F_d\cap (X\setminus {\ua} U)=F_d\setminus {\ua} U\neq\emptyset$ for any $d\in D$. Hence by (1) we get that $\{{\ua} (F_d\setminus {\ua} U) : d\in D\}=\{{\ua} (F_d\cap B) : d\in D\}$ is filtered.
\end{proof}

For a $T_0$-space $X$ and $A\subseteq X$, let $\mathcal{O}(X)$ (resp., $\Gamma(X)$) be the set of all open subsets (resp., all closed subsets) of $X$. The closure of a subset $A$ in $X$ is denoted by $\cl_{X}~\!\!A$ or simply by $\cl~\!A$ and the interior of $A$ in $X$ by $\ii_{X}~\!\! A$ or simply by $\ii~\! A$. A nonempty subset $F$ of $X$ is called \emph{irreducible} if for any $C_1, C_2\in\Gamma(X)$, $F\subseteq C_1\cup C_2$ implies $F\subseteq C_1$ or $F\subseteq C_2$. The set of all irreducible subsets of $X$ is denoted by $\ir(X)$. We use $\leq_X$ to denote the \emph{specialization order} of $X$: $x\leq y$ iff $x\in \cl~\! \{y\} $. Clearly, open sets  (resp., closed sets) of $X$ are upper sets (resp., lower sets) of $X$. Let $\mathbf{up}(X)=\{{\uparrow_X} U : U\subseteq X\}$ (that is, the family of all upper subsets of $X$), $X^{(<\omega)}$ be the set of all nonempty finite subsets of $X$ and $\mathbf{Fin}~\!\!X=\{{\ua} F : F\in X^{(<\omega)}\}$. For a subset $M$ of $X$, the \emph{induced topology} or \emph{subspace topology} on $M$ is denoted by $\mathcal O(X)|_M$, that is, $\mathcal O(X)|_M=\{U\cap M : U\in \mathcal O(X)\}$. In what follows, when a $T_0$-space is considered as a poset, the order always refers to the specialization order if no other explanation is given.

A subset $U$ of a poset $P$ is \emph{Scott open} if (i) $U={\uparrow}U$, and (ii) for any directed subset $D$ for which $\vee D$ exists, $\vee D\in U$ implies $D\cap U\neq\emptyset$. The topology formed by all Scott open sets of $P$ is called the \emph{Scott topology}, written as $\sigma(P)$. The space $\Sigma~\!\!P=(P, \sigma(P))$ is called the \emph{Scott space} of $P$. The \emph{lower topology} on a poset $P$, generated by $\{P\setminus {\ua}x : x\in P\}$ (as a subbase), is denoted by $\omega(P)$. Dually, define the \emph{upper topology} on $P$ and denote it by $\upsilon(P)$. The topology generated by $\omega (P)\cup\sigma (P)$ is called the \emph{Lawson topology} on $P$ and is denoted by $\lambda (P)$. The space $\Lambda P=(P, \lambda (P))$ is called the \emph{Lawson space} of $P$. The upper sets of $P$ form the (\emph{upper}) \emph{Alexandroff topology} $\alpha (P)$. For a $T_0$-space $(X, \tau)$, we use the symbol $\omega(X)\vee \tau$ to denote the topology generated
by $\omega (X)\cup\tau$ ($X$ is equipped with the specialization order).


There is more than one way to extend the definition of the Scott topology from
dcpos to general posets. In \cite{Erne-1981, Erne-2009}, Ern\'e
has suggested an alternative approach using cuts of directed sets called the weak Scott
topology or $\sigma_2$-topology. We adopt the terminology from \cite{Lawson-Wu-Xi-2020} and call it the \emph{Scott-Ern\'e topology}.

\begin{definition}\label{def-sigma-2-topology} An upper subset $U$ of a poset $P$ is called $s_2$-\emph{open} if for any directed subset $D$ of $P$, $D^{\delta}\cap U\neq\emptyset$ implies $D\cap U\neq\emptyset$. The collection of all $s_2$-open subsets of $P$ form a topology, the \emph{Scott}-\emph{Ern\'e topology} $\sigma_2(P)$.
\end{definition}

Clearly, $\sigma_2(P)$ is always coarser than $\sigma (P)$, and both topologies coincide on dcpos.

Rudin's Lemma is an important tool in domain theory and non-Hausdorff topology (see \cite{GHKLMS-2003, Gierz-Lawson-Stralka-1983, Goubault-2013}. Rudin \cite{Rudin-1981} proved her lemma by transfinite methods. In this paper we will use the following Jung's version of Rudin's Lemma (see \cite[Theorem 4.11]{Jung-1989} or \cite[Corollary 3.5]{Heckmann-Keimel-2013}).

\begin{lemma}\label{lem-Rudin-lemma} (Rudin's Lemma) Let $P$ be a poset and $\mathcal{F}\subseteq P^{(<\omega)}$ such that $\{{\ua} F : F\in\mathcal F\}$ is a filtered family. Then there exists directed set $D\subseteq \mathop{\bigcup}\limits_{F \in \mathcal{F}} F$ such that $D\cap F\neq\emptyset$ for all $F\in \mathcal{F}$.
\end{lemma}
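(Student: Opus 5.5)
We prove Rudin's Lemma (Lemma \ref{lem-Rudin-lemma}) by a Zorn's Lemma argument in the style of Jung and Heckmann--Keimel: we look for a minimal subfamily of selections that still meets every member of $\mathcal F$ in a filtered way, and take $D$ to be its union.

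Consider the collection $\mathfrak{S}$ of all families $\mathcal G=\{G_F : F\in\mathcal F\}$ with $\emptyset\neq G_F\subseteq F$ for every $F\in\mathcal F$ and such that $\{{\ua} G_F : F\in\mathcal F\}$ is filtered in the sense of the preliminaries. More precisely, we require that for $F_1,F_2\in\mathcal F$ there is $F_3\in\mathcal F$ with $F_3\subseteq{\ua}F_1\cap{\ua}F_2$ and $G_{F_3}\subseteq {\ua}G_{F_1}\cap{\ua}G_{F_2}$. To keep the indexing compatible, we order $\mathfrak S$ by componentwise reverse inclusion. The family $\{F : F\in\mathcal F\}$ itself lies in $\mathfrak S$, so $\mathfrak S$ is nonempty. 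For a chain $(\mathcal G^i)_i$, set $G_F=\bigcap_i G^i_F$. Each $G^i_F$ is a nonempty subset of the finite set $F$ and the chain is decreasing, so the intersection stabilizes and is nonempty. For fixed $F_1,F_2$, the witness $F_3$ for $\mathcal G^i$ ranges over the subsets of ${\ua}F_1\cap{\ua}F_2$ that lie in $\mathcal F$. This range may be infinite, which is a nuisance. We handle it by passing to the stage where $G_{F_1}$ and $G_{F_2}$ have stabilized, so that at that stage $G^i_{F_j}=G_{F_j}$. We then need a single $F_3$ working at all later stages. The way around this is to reformulate filteredness as a finitary condition: for every $F\in\mathcal F$ and every $F'\in\mathcal F$ with $F'\subseteq{\ua}F$, we require ${\ua}G_{F'}\subseteq{\ua}G_F$ (monotonicity). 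This condition is preserved under intersections of chains, because each $G_{F'}$ eventually stabilizes at a set contained in ${\ua}G^i_F$ for all $i$. Filteredness of the $G$'s then follows from filteredness of $\mathcal F$ itself. So we work with $\mathfrak S$ defined by the monotonicity condition, and Zorn gives a minimal element $\{G_F\}$.

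Next we show minimality forces each $G_F$ to be small in a useful sense. Fix $F_0$ and $x\in G_{F_0}$, and suppose $G_{F_0}\setminus\{x\}\ne\emptyset$. Define $G'_F=G_F\setminus{\ua}x$ when this is nonempty for all $F$ with $F\subseteq{\ua}F_0$, and leave the other components unchanged. If this modified family still satisfies monotonicity, it contradicts minimality. Lemma \ref{lem-filtred-family}(2) is exactly the tool guaranteeing that removing ${\ua}U$ preserves filteredness. The consequence is the key property: for every $F_0$ and $x\in G_{F_0}$ there is some $F\in\mathcal F$ with $F\subseteq{\ua}F_0$ and $G_F\subseteq{\ua}x$.

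Finally, put $D=\bigcup_F G_F$. Clearly $D\subseteq\bigcup\mathcal F$ and $D\cap F\supseteq G_F\neq\emptyset$. For directedness, take $x\in G_{F_1}$ and $y\in G_{F_2}$. The key property gives $F_1'$ with $G_{F_1'}\subseteq{\ua}x$ and $F_2'$ with $G_{F_2'}\subseteq{\ua}y$. Filteredness of $\mathcal F$ gives $F_3\subseteq{\ua}F_1'\cap{\ua}F_2'$, and monotonicity gives $G_{F_3}\subseteq{\ua}x\cap{\ua}y$. Any element of $G_{F_3}$ is therefore an upper bound of $x$ and $y$ in $D$. The main obstacle is the minimality step: we must check carefully that the modified family remains in $\mathfrak S$, that is, that monotonicity survives deleting ${\ua}x$ only on components below $F_0$. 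Components not below $F_0$ are unchanged, and the condition relating them requires a case check. If this fails, the fallback is the classical transfinite proof of Rudin's Lemma.
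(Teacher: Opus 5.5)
The paper gives no proof of this lemma; it only cites Jung and Heckmann--Keimel. Your overall strategy is the standard Zorn's-lemma proof, and most of it is sound:
\begin{itemize}
\item The set $\mathfrak S$ of selections $\emptyset\neq G_F\subseteq F$ satisfying monotonicity ($F'\subseteq{\ua}F\Rightarrow G_{F'}\subseteq{\ua}G_F$) contains $\{F\}_{F\in\mathcal F}$.
\item $\mathfrak S$ is closed under componentwise intersections of chains. Each component reaches a least value because $F$ is finite, and for any two indices you can go far enough along the chain that both have reached it.
\item Your final directedness argument from the ``key property'' is correct.
\end{itemize}

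The gap is the one step everything depends on: you never verify that the modified family $\mathcal G'$ lies in $\mathfrak S$. You call it ``the main obstacle'' and keep a transfinite fallback in reserve. The tool you cite, Lemma \ref{lem-filtred-family}(2), does not supply it: it gives filteredness of $\{{\ua}(F_d\setminus{\ua}U)\}$, whereas membership in $\mathfrak S$ now means the stronger monotonicity condition. Also, the assumption $G_{F_0}\setminus\{x\}\neq\emptyset$ is not the right one. What you must assume, to reach a contradiction, is the negation of the key property.

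Here is the missing check. Suppose $G_F\not\subseteq{\ua}x$ for every $F\in\mathcal F$ with $F\subseteq{\ua}F_0$. Set $G'_F=G_F\setminus{\ua}x$ for these $F$ and $G'_F=G_F$ otherwise; every $G'_F$ is nonempty by this assumption. Now let $F'\subseteq{\ua}F$.
\begin{itemize}
\item If $F\subseteq{\ua}F_0$, then $F'\subseteq{\ua}F\subseteq{\ua}F_0$, so both components were modified. For $g\in G'_{F'}\subseteq{\ua}G_F$, choose $f\in G_F$ with $f\le g$. Since ${\ua}x$ is an upper set not containing $g$, also $f\notin{\ua}x$, i.e.\ $f\in G'_F$.
\item If $F\not\subseteq{\ua}F_0$, then $G'_{F'}\subseteq G_{F'}\subseteq{\ua}G_F={\ua}G'_F$.
\end{itemize}
So the mixed case you were worried about (modified $F$, unmodified $F'$) never occurs. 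The modified indices are closed under passing from $F$ to any $F'\subseteq{\ua}F$.

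Since $F_0\subseteq{\ua}F_0$, we have $x\in G_{F_0}\setminus G'_{F_0}$. Hence $\mathcal G'$ is strictly below $\mathcal G$, contradicting minimality. Modifying every component works equally well; it yields ``$G_F\subseteq{\ua}x$ for some $F$'', which is all your directedness step uses.

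With this check inserted, the proof is complete and no fallback is needed. It is an elementary order-theoretic alternative to the Heckmann--Keimel source. That source obtains the lemma as a corollary of a topological Rudin Lemma: a minimal closed set meeting all members of an irreducible family of compact saturated sets is irreducible. Applied to the Alexandroff topology, where irreducible closed sets are ideals, this gives the statement.
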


As an interesting consequence of Rudin's Lemma, we have the following.

\begin{corollary}\label{cor-Rudin-lemma} (\cite[Corollary 3.9]{Heckmann-Keimel-2013}) Let $P$ be a dcpo, $\{{\ua} F_d : d\in D\}\subseteq \mathbf{Fin} P$ be a filtered family and $U\in \sigma(P)$. Then $\bigcap_{d\in D}{\ua}F_d\subseteq U$ implies ${\ua}F_d\subseteq U$ for some $d\in D$.
\end{corollary}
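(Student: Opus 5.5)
Assume, towards a contradiction, that $\bigcap_{d\in D}{\ua}F_d\subseteq U$ but ${\ua}F_d\not\subseteq U$ for every $d\in D$. Since $U$ is Scott open, it is an upper set, so ${\ua}U=U$. The failure of ${\ua}F_d\subseteq U$ then means $F_d\not\subseteq U$, i.e.\ $F_d\not\subseteq{\ua}U$, for every $d\in D$. By Lemma \ref{lem-filtred-family}(2), the family $\{{\ua}(F_d\setminus U) : d\in D\}$ is filtered. Each $F_d\setminus U$ is a nonempty finite set, so the family $\mathcal F=\{F_d\setminus U : d\in D\}$ satisfies the hypothesis of Rudin's Lemma (Lemma \ref{lem-Rudin-lemma}).

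Rudin's Lemma gives a directed set $E\subseteq\bigcup_{d\in D}(F_d\setminus U)$ with $E\cap(F_d\setminus U)\neq\emptyset$ for all $d\in D$. As $P$ is a dcpo, $x=\vee E$ exists. For each $d$, choose $e_d\in E\cap F_d$; then $e_d\le x$, so $x\in{\ua}F_d$. Hence $x\in\bigcap_{d\in D}{\ua}F_d\subseteq U$. Because $U$ is Scott open and $\vee E\in U$, we get $E\cap U\neq\emptyset$. But $E\subseteq\bigcup_d(F_d\setminus U)$ is disjoint from $U$, a contradiction. Therefore ${\ua}F_d\subseteq U$ for some $d\in D$.

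The only delicate point is applying Lemma \ref{lem-filtred-family}(2), where the sets $F_d\setminus{\ua}U$ must be nonempty; this is exactly the contradiction hypothesis. Everything else is a direct use of Rudin's Lemma together with directed completeness and the definition of Scott openness.
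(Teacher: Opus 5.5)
Your proof is correct and is the standard derivation. The paper does not prove this corollary itself; it cites Heckmann--Keimel and presents the result as a consequence of Rudin's Lemma. Your argument is exactly that derivation: Lemma \ref{lem-filtred-family}(2) makes $\{{\ua}(F_d\setminus U) : d\in D\}$ filtered, Rudin's Lemma gives a directed $E$ meeting every $F_d\setminus U$, and then $\vee E\in\bigcap_{d\in D}{\ua}F_d\subseteq U$ contradicts Scott openness. It is also the same pattern the paper itself uses in the proofs of Proposition \ref{prop-O(QS)-charac} and Lemma \ref{lem-characterization-way-below-c-relation-2}.
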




A subset $W$ of a $T_0$-space $X$ is called \emph{saturated} if $W$ equals the intersection of all open sets containing it (equivalently, $W$ is an upper set of $X$). We shall use $Q(X)$ to denote the set of all nonempty compact saturated subsets of $X$ and endow it with the \emph{Smyth order} $\sqsubseteq$: $K_1\sqsubseteq K_2$ if{}f $K_2\subseteq K_1$. The \emph{upper Vietoris topology} on $Q(X)$ is the topology that has $\{\Box U: U\in \mathcal{O}(X)\}$ as a base, where $\Box U=\{Q\in Q(X):Q\subseteq U\}$, and the resulting space is called the \emph{Smyth power space} or \emph{upper space} of $X$ and is denoted by $P_S(X)$. For $A\in \Gamma(X)$, define $\Diamond A=\{Q\in Q(X):Q\cap A\neq\emptyset\}$. Clearly, $Q(X)\setminus \Box U=Q(X)\setminus \Diamond (X\setminus U)$ for any $U\in\mathcal O(X)$.

\begin{remark}\label{rem-Smyth-specialization-order} Let $X$ be a $T_0$-space. Then the specialization order of $P_S(X)$ is the Smyth order, that is, $\leq_{P_S(X)}=\sqsubseteq$. Suppose that $K_1,K_2\in Q(X)$ and $K_1\in \cl_{P_S(X)}\{K_2\}$. If $K_2\not\subseteq K_1$, then there is $k_2\in K_2\setminus K_1$, whence $K_1\in \Box (X\setminus \da k_2)\in \mathcal O(P_S(X))$. Therefore, by $K_1\in \cl_{P_S(X)}\{K_2\}$ we have $K_2\in \Box (X\setminus \da k_2)$ or, equivalently, $K_2\subseteq X\setminus \da k_2$, a contradiction. Thus  $K_2\subseteq K_1$. Conversely, assume $K_2\subseteq K_1$. Then for any $U\in \mathcal O(X)$ with $K_1\in \Box U$, we have $K_2\in \Box U$. Hence $K_1\in \cl_{P_S(X)}\{K_2\}$.
\end{remark}

\begin{lemma}\label{lem-closure-in-Smyth}
Let $(X, \tau)$ be a $T_0$-space and $A\subseteq X$. Then $\cl_{P_S((X, \tau))}\{{\ua} a : a\in A\}=\Diamond \cl_{\tau}~\!\!A$.
\end{lemma}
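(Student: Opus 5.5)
We prove $\cl_{P_S((X,\tau))}\{{\ua} a : a\in A\}=\Diamond \cl_{\tau}A$ by establishing the two inclusions separately. Throughout we use Remark \ref{rem-Smyth-specialization-order} and the identity $Q(X)\setminus \Box U=\Diamond (X\setminus U)$ for $U\in\mathcal O(X)$. Note first that each ${\ua} a$ is compact and saturated, so ${\ua} a\in Q(X)$ and the left-hand side makes sense.

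For the inclusion $\subseteq$, we show that $\Diamond \cl_\tau A$ is a closed set of $P_S(X)$ containing every ${\ua} a$ with $a\in A$. Put $U=X\setminus \cl_\tau A\in\mathcal O(X)$. Then $\Diamond \cl_\tau A=Q(X)\setminus \Box U$, which is closed because $\Box U$ is basic open in the upper Vietoris topology. For $a\in A$ we have $a\in {\ua} a\cap \cl_\tau A$, so ${\ua} a\in \Diamond\cl_\tau A$. Since the closure is the smallest closed set containing $\{{\ua} a : a\in A\}$, the inclusion follows.

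For the inclusion $\supseteq$, let $K\in\Diamond \cl_\tau A$, so there is some $x\in K\cap\cl_\tau A$. It suffices to show that every basic open neighbourhood $\Box U$ of $K$ meets $\{{\ua} a : a\in A\}$, because the sets $\Box U$ form a base. So let $K\subseteq U$ with $U\in\mathcal O(X)$.
\begin{itemize}
\item Since $x\in K\subseteq U$, the point $x$ lies in the open set $U$.
\item Since $x\in\cl_\tau A$, the open neighbourhood $U$ of $x$ meets $A$, so there is $a\in A\cap U$.
\item Since $U$ is an upper set, ${\ua} a\subseteq U$, that is, ${\ua} a\in\Box U$.
\end{itemize}
Hence every basic neighbourhood of $K$ meets $\{{\ua} a: a\in A\}$, and so $K$ lies in its closure.

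Neither inclusion involves any real obstacle. The only point needing care is that the sets $\Box U$ form a base rather than just a subbase, which is immediate because $\Box U\cap\Box V=\Box(U\cap V)$.
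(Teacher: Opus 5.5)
Your proof is correct and follows the paper's argument. The inclusion $\subseteq$ is identical to the paper's. For $\supseteq$, you check that every basic open $\Box U$ containing $K$ contains some ${\ua} a$ with $a\in A\cap U$, using that $U$ is an upper set. This is the pointwise dual of the paper's step, which writes the closure as an intersection $\bigcap_i \Diamond C_i$ of basic closed sets and uses that each $C_i$ is a lower set to get $\cl_{\tau}A\subseteq C_i$.
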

\begin{proof} Clearly, $\Diamond \cl_{\tau}~\!\!A=Q(X)\setminus \Box (X\setminus \cl_{\tau}~\!\!A)$ is closed in $P_S((X, \tau))$ and hence $\cl_{P_S((X, \tau))}\{{\ua} a : a\in A\}\subseteq\Diamond  \cl_{\tau}~\!\!A$. As $\{\Diamond  C : C\in \Gamma(X)\}$ is a (closed) base of $P_S((X, \tau))$, there is a family $\{C_i : i\in I\}\subseteq \Gamma(X)$ such that $\cl_{P_S((X, \tau))}\{{\ua} a : a\in A\}=\bigcap_{i\in I} \Diamond  C_i$. Then for each $i\in I$, $\{{\ua} a : a\in A\}\subseteq \Diamond  C_i$, and consequently, ${\ua} a\cap C_i\neq \emptyset$ for all  $a\in A$; whence, for each $a\in A$, $a\in C_i$ as $C_i=\da C_i$. It follows that $\cl_{\tau}~\!\!A\subseteq C_i$ for each $i\in I$ and hence $\Diamond  \cl_{\tau}~\!\!A\subseteq \bigcap_{i\in I}\Diamond  C_i=\cl_{P_S((X, \tau))}\{{\ua} a : a\in A\}$. Thus $\cl_{P_S((X, \tau))}\{{\ua} a : a\in A\}=\Diamond \cl_{\tau}~\!\!A$.
\end{proof}

For a poset $P$ and $A, B\subseteq P$, we say $A$ is \emph{way below} $B$, written $A\ll B$, if for each $D\in \mathcal D(P)$ for which $\vee D$ exists in $P$, $\vee D\in \ua B$ implies $D\cap \ua A\neq \emptyset$. For $B=\{x\}$, a singleton, $A\ll B$ is
written $A\ll x$ for short. For $x\in P$, let ${\Downarrow} x=\{u\in P : u\ll x\}$  and $w(x)=\{{\ua} F : F\in P^{(<\omega)}, F\ll x\}$.

\begin{definition}\label{def-continuous-domain} (\cite[Definition 1.2]{Gierz-Lawson-Stralka-1983}, \cite[Definition III-3.2]{GHKLMS-2003}) (1)  A poset (resp., a dcpo) $P$ is called a \emph{continuous poset} (resp., a \emph{continuous domain}), if for each $x\in P$, ${\Downarrow} x$ is directed and $x=\vee {\Downarrow} x$.

(2) A dcpo $Q$ is called a \emph{quasicontinuous domain}, if for each $x\in Q$, $w(x)$ is filtered and ${\ua} x=\bigcap
w(x)$.
\end{definition}

\begin{proposition}\label{prop-quasicontinuous-sufficient-condition} (\cite[Proposition 3.2]{Gierz-Lawson-Stralka-1983}) Let $P$ be a dcpo. Suppose that for each $x\in P$ there exists a filtered subfamily $\mathcal G$ of $w(x)$ satisfying $\bigcap \mathcal G\subseteq {\ua} x$. Then $P$ is a quasicontinuous domain. Furthermore, we have that for a finite set $F$ of $P$, ${\ua} F\in w(x)$ iff ${\ua} G\subseteq {\ua} F$ for some ${\ua} G\in \mathcal G$.
\end{proposition}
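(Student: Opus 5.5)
We fix a dcpo $P$ and a filtered subfamily $\mathcal G\subseteq w(x)$ for each $x$ with $\bigcap\mathcal G\subseteq{\ua}x$. First note that $\bigcap\mathcal G={\ua}x$, because every ${\ua}G\in w(x)$ contains $x$: the singleton $\{x\}$ is directed with supremum $x$. The whole argument rests on the "furthermore" part, since it immediately shows that $w(x)$ is filtered. Indeed, if ${\ua}F_1,{\ua}F_2\in w(x)$, the characterization gives ${\ua}G_1\subseteq{\ua}F_1$ and ${\ua}G_2\subseteq{\ua}F_2$ with $G_1,G_2\in\mathcal G$. Filteredness of $\mathcal G$ then gives ${\ua}G_3\subseteq{\ua}G_1\cap{\ua}G_2$ with ${\ua}G_3\in\mathcal G\subseteq w(x)$. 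Also ${\ua}x\subseteq\bigcap w(x)\subseteq\bigcap\mathcal G={\ua}x$. So $P$ is quasicontinuous by Definition \ref{def-continuous-domain}(2).

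The "if" direction of the characterization is immediate. If ${\ua}G\subseteq{\ua}F$ with $G\ll x$, then any directed $D$ with $\vee D\geq x$ meets ${\ua}G\subseteq{\ua}F$, so $F\ll x$.

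For the "only if" direction, suppose $F\ll x$ but ${\ua}G\not\subseteq{\ua}F$ for every ${\ua}G\in\mathcal G$, i.e.\ $G\not\subseteq{\ua}F$ for all such $G$. By Lemma \ref{lem-filtred-family}(2), the family $\{{\ua}(G\setminus{\ua}F):{\ua}G\in\mathcal G\}$ is filtered, and it consists of principal upper sets of the finite sets $G\setminus{\ua}F$. Rudin's Lemma \ref{lem-Rudin-lemma} then yields a directed set $D\subseteq\bigcup_{G}(G\setminus{\ua}F)$ meeting every $G\setminus{\ua}F$. Let $y=\vee D$, which exists since $P$ is a dcpo. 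Each $G$ meets $D$, so $y\in{\ua}G$ for all $G$, hence $y\in\bigcap\mathcal G={\ua}x$. Then $F\ll x\leq y$ forces $D\cap{\ua}F\neq\emptyset$. But $D$ is disjoint from ${\ua}F$ by construction, a contradiction.

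The main obstacle is this "only if" step. It needs the filtered family of finite differences to feed into Rudin's Lemma, which is exactly what Lemma \ref{lem-filtred-family}(2) provides. One also has to check that the sets $G$ can be taken finite, as they are since $\mathcal G\subseteq w(x)\subseteq\mathbf{Fin}\,P$. Alternatively, Corollary \ref{cor-Rudin-lemma} could be applied with $U=P\setminus{\da}(\ldots)$, but the direct Rudin argument seems cleaner.
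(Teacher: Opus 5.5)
Your proof is correct and follows the standard argument behind the cited result. The paper itself runs the same Lemma~\ref{lem-filtred-family}(2) plus Rudin's Lemma contradiction in the sufficiency half of Proposition~\ref{prop-WLH-condition-1-2}, and deriving filteredness of $w(x)$ and ${\ua}x=\bigcap w(x)$ from the ``furthermore'' part is exactly right. Drop the closing aside about Corollary~\ref{cor-Rudin-lemma}, though: using it would require a Scott open $U$ with ${\ua}x\subseteq U\subseteq{\ua}F$, which $F\ll x$ alone does not supply in a general dcpo (only once quasicontinuity is known), so keep the direct Rudin argument.
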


As in \cite{Erne-2018}, a $T_0$-space $(X, \tau)$ is called \emph{C-space} if for any $U\in \tau$ and $x\in U$, there exists $y \in X$ with  $x\in \ii_{\tau} \ua y\subseteq \ua y\subseteq U$. The space $(X, \tau)$ is called \emph{locally hypercompact} if for any $U\in \tau$ and  $x\in U$, there exists $F \in X^{(<\omega)}$ such that $x\in \ii_{\tau} \ua F\subseteq  \ua F\subseteq U$.

\begin{proposition}\label{prop-continuous-c-space} (\cite[Theorem II-1.14]{GHKLMS-2003}, \cite[Theorem 4.12]{Xu-LS-2006}) For a poset $P$, the following two conditions are equivalent:
\begin{enumerate}[\rm (1)]
\item $P$ is continuous.
\item $\Sigma~\!\!P$ is a $C$-space.
\end{enumerate}
\end{proposition}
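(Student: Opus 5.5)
We prove the two implications separately. The key tool is the standard fact that in a poset $P$ the relation $x\in\ii_{\sigma(P)}\ua y$ is closely tied to $y\ll x$.

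For (1)$\Rightarrow$(2), let $P$ be continuous, $U\in\sigma(P)$ and $x\in U$.
\begin{enumerate}
\item Since $\Downarrow x$ is directed with $\vee\Downarrow x=x\in U$, Scott openness gives some $z\in\Downarrow x\cap U$.
\item We use the interpolation property of continuous posets: $z\ll x$ yields $y$ with $z\ll y\ll x$. This follows from the usual argument that $\bigcup\{\Downarrow u: u\ll x\}$ is directed with supremum $x$. That argument only uses existing sups of directed sets, so it is valid in posets.
\item We claim $\Uparrow z=\{w: z\ll w\}$ is Scott open. It is clearly an upper set. If $D$ is directed with $\vee D$ existing in $\Uparrow z$, interpolate $z\ll v\ll \vee D$; then some $d\in D$ satisfies $v\le d$, so $z\ll d$.
\item Hence $x\in\Uparrow z\subseteq\ii_{\sigma(P)}\ua z\subseteq\ua z\subseteq U$, and $\Sigma P$ is a $C$-space.
\end{enumerate}
Alternatively, step 3 can be avoided by taking $y$ from step 2 and using $\Uparrow y$.

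For (2)$\Rightarrow$(1), assume $\Sigma P$ is a $C$-space and fix $x\in P$. Set $A_x=\{y: x\in\ii_\sigma\ua y\}$.
\begin{enumerate}
\item $A_x\subseteq\Downarrow x$: if $D$ is directed with $\vee D\ge x$, then $\vee D\in\ii_\sigma\ua y$, so $D$ meets $\ua y$.
\item $A_x$ is nonempty, by applying the $C$-space property to $U=P$.
\item $A_x$ is directed: for $y_1,y_2\in A_x$, the set $W=\ii\ua y_1\cap\ii\ua y_2$ is an open neighbourhood of $x$. The $C$-space property gives $y_3$ with $x\in\ii\ua y_3\subseteq\ua y_3\subseteq W$, so $y_3\in A_x$ and $y_3\ge y_1,y_2$.
\item $\vee A_x=x$: $x$ is an upper bound of $A_x$. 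If $u$ is another upper bound with $x\not\le u$, then $x\in P\setminus\da u$, which is Scott open. The $C$-space property gives $y\in A_x$ with $\ua y\subseteq P\setminus\da u$, contradicting $y\le u$.
\item $\Downarrow x$ is directed with supremum $x$. Given $a\in\Downarrow x$, apply the way-below definition to the directed set $A_x$, whose supremum is $x$. This gives some $y\in A_x$ with $a\le y$, so $A_x$ is cofinal in $\Downarrow x$. A set containing a cofinal directed subset is directed and has the same supremum, so $\Downarrow x$ is directed and $\vee\Downarrow x=x$.
\end{enumerate}

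The main obstacle is the interpolation step in (1)$\Rightarrow$(2) for posets that are not dcpos. We must check that the sets $\Downarrow u$ used there have sups, which they do by continuity, and that the union is directed. Beyond this, the argument just unwinds definitions.
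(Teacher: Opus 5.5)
Your proof is correct. The paper gives no proof of this proposition and only cites the sources, and your argument is the standard one behind them: interpolation plus Scott-openness of ${\Uparrow} z=\{w : z\ll w\}$ for (1)$\Rightarrow$(2), and for (2)$\Rightarrow$(1) the directed set $\{y : x\in \ii_{\sigma(P)}{\ua} y\}\subseteq {\Downarrow} x$, which has supremum $x$ and is cofinal in ${\Downarrow} x$. The only slip is your aside that step 3 can be avoided by using ${\Uparrow} y$ for the interpolant $y$: proving ${\Uparrow} y$ open is the same claim as step 3, and it needs interpolation at $\vee D$, not just at $x$, so keep step 3 as written.
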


\begin{proposition}\label{prop-quasicontinuous-local-hypercompact} (\cite[Proposition III-3.6 and Exercise III-3.19]{GHKLMS-2003}, \cite[Theorem 4.1]{Mao-Xu-2006}) For a dcpo $P$, the following two conditions are equivalent:
\begin{enumerate}[\rm (1)]
\item $P$ is quasicontinuous.
\item $\Sigma~\!\!P$ is locally hypercompact.
\end{enumerate}
\end{proposition}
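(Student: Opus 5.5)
The statement is Proposition~\ref{prop-quasicontinuous-local-hypercompact}: a dcpo $P$ is quasicontinuous iff $\Sigma~\!\!P$ is locally hypercompact. I would prove the two implications separately. Both go through the observation that, for a finite $F\subseteq P$, $F\ll x$ holds exactly when $x\in\ii_{\sigma(P)}{\ua}F$ for some suitable open set, and both rest on Rudin's Lemma in the form of Corollary~\ref{cor-Rudin-lemma}.

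\textbf{(1)$\Rightarrow$(2).} Let $P$ be quasicontinuous, and let $U\in\sigma(P)$ with $x\in U$. Since ${\ua}x=\bigcap w(x)\subseteq U$ and $w(x)$ is a filtered family of finitely generated upper sets, Corollary~\ref{cor-Rudin-lemma} gives ${\ua}F\in w(x)$ with ${\ua}F\subseteq U$. It remains to find a Scott open set $V$ with $x\in V\subseteq{\ua}F$.

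I take $V=\{y\in P: F\ll y\}$. Clearly $x\in V$ and $V\subseteq{\ua}F$, and $V$ is an upper set. The key step, and the main obstacle, is showing that $V$ is Scott open, i.e.\ an interpolation property. Suppose $\vee D\in V$ with $D$ directed. By the second part of Proposition~\ref{prop-quasicontinuous-sufficient-condition} (applied with $\mathcal G=w(\vee D)$), the members of $w(\vee D)$ lying below ${\ua}F$ form a filtered cofinal family.

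First step: I claim there is ${\ua}G\in w(\vee D)$ such that $G\ll y$ for every $y\in{\ua}G$, with ${\ua}G\subseteq{\ua}F$. To get it, apply Rudin's Lemma to the filtered family $\{{\ua}H\in w(\vee D): {\ua}H\subseteq{\ua}F\}$. The resulting directed set $E$ has $\vee E\in\bigcap w(\vee D)={\ua}\vee D$, and this shows that the pairs $(H,F)$ admit intermediate finite sets. Concretely, the standard argument (GHKLMS III-3.5) gives that $\{y: F\ll y\}=\bigcup\{\ii\,{\ua}G\}$.

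Second step: once ${\ua}G$ with $G\ll\vee D$ and $G\subseteq V$ is in hand, $D\cap{\ua}G\neq\emptyset$, and hence $D\cap V\neq\emptyset$.

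\textbf{(2)$\Rightarrow$(1).} Let $\Sigma~\!\!P$ be locally hypercompact and fix $x\in P$. Define
\[
\mathcal G=\{{\ua}F: F\in P^{(<\omega)},\ x\in\ii_{\sigma(P)}{\ua}F\}.
\]
\emph{Each member lies in $w(x)$.} If $x\in\ii\,{\ua}F$ and $\vee D\geq x$, then $\vee D\in\ii\,{\ua}F$, which is Scott open. Hence $D$ meets $\ii\,{\ua}F\subseteq{\ua}F$, so $F\ll x$.

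\emph{$\mathcal G$ is filtered.} Given ${\ua}F_1,{\ua}F_2\in\mathcal G$, the set $\ii\,{\ua}F_1\cap\ii\,{\ua}F_2$ is a Scott open neighbourhood of $x$. Local hypercompactness yields $F_3$ with $x\in\ii\,{\ua}F_3\subseteq{\ua}F_3\subseteq{\ua}F_1\cap{\ua}F_2$.

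\emph{$\bigcap\mathcal G\subseteq{\ua}x$.} If $y\notin{\ua}x$, then $P\setminus{\da}y$ is a Scott open set containing $x$. Local hypercompactness gives some ${\ua}F\in\mathcal G$ inside it, so $y\notin\bigcap\mathcal G$.

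Proposition~\ref{prop-quasicontinuous-sufficient-condition} now yields that $P$ is quasicontinuous.

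This direction is routine. The real work is the interpolation in (1)$\Rightarrow$(2), i.e.\ showing that $\{y: F\ll y\}$ is Scott open. If the direct argument stalls, I would instead show $\{y:F\ll y\}=\bigcup\{\ii\,{\ua}G : G\ll F\text{-refinement}\}$ using filteredness of $w(y)$ together with Corollary~\ref{cor-Rudin-lemma}, as in the cited \cite[Proposition III-3.6]{GHKLMS-2003}.
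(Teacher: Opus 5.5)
\textbf{Verdict: there is a genuine gap in (1)$\Rightarrow$(2).} The paper gives no proof of this proposition; it only cites GHKLMS and Mao--Xu, so I am judging your argument on its own. Your (2)$\Rightarrow$(1) is correct and standard. The one unstated point is $\mathcal G\neq\emptyset$, which follows by applying local hypercompactness to $x\in P$.

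In (1)$\Rightarrow$(2), your use of Corollary \ref{cor-Rudin-lemma} to get $\ua F\in w(x)$ with $\ua F\subseteq U$ is fine. After that, everything depends on interpolation: if $F\ll y$, there is a finite $G$ with $F\ll g$ for all $g\in G$ and $G\ll y$. Equivalently, $\dua F=\{y: F\ll y\}$ is Scott open. You do not prove this.
\begin{itemize}
\item Your first-step claim is that some $\ua G\in w(\vee D)$ with $\ua G\subseteq\ua F$ satisfies $G\ll y$ for every $y\in\ua G$. That is not what your second step uses, which needs $F\ll g$ for $g\in G$.
\item As written the claim is false. It says $\ua G$ is Scott open. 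In $[0,1]$ with $\vee D=1/2$ and $F=\{1/4\}$, the only Scott open $\ua G$ is $[0,1]\not\subseteq[1/4,1]$.
\item Even reading it as $F\ll y$, your justification does not work. The ``furthermore'' clause of Proposition \ref{prop-quasicontinuous-sufficient-condition} with $\mathcal G=w(\vee D)$ is a tautology.
\item Rudin's Lemma applied to $\{\ua H\in w(\vee D):\ua H\subseteq\ua F\}$ only yields a directed $E$ with $\vee E\geq\vee D$. That says nothing about points $g$ with $F\ll g$.
\item Appealing to GHKLMS for $\{y:F\ll y\}=\bigcup\ii\,\ua G$ just cites the step that has to be proved.
\end{itemize}

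\textbf{The missing idea} is to use quasicontinuity twice: at $x$, and at every point of an approximating set. Let $\mathcal M_x$ be the family of all $\ua H$, with $H\in P^{(<\omega)}$, for which some $G\in P^{(<\omega)}$ has $H\ll g$ for all $g\in G$ and $G\ll x$.
\begin{itemize}
\item $\mathcal M_x\subseteq w(x)$, since $x\in\ua G$ and $\{y:H\ll y\}$ is an upper set.
\item $\mathcal M_x$ is filtered. Given $(H_1,G_1)$ and $(H_2,G_2)$, choose $\ua G_3\in w(x)$ with $\ua G_3\subseteq\ua G_1\cap\ua G_2$. Each $g\in G_3$ lies in $\ua G_1\cap\ua G_2$, so $\ua H_1,\ua H_2\in w(g)$. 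Filteredness of $w(g)$ gives $K_g\ll g$ with $\ua K_g\subseteq\ua H_1\cap\ua H_2$. Then $K=\bigcup_{g\in G_3}K_g$ satisfies $K\ll g$ for all $g\in G_3$, so $\ua K\in\mathcal M_x$ and $\ua K\subseteq\ua H_1\cap\ua H_2$.
\item $\bigcap\mathcal M_x\subseteq\ua x$. For $y\notin\ua x$, pick $G\ll x$ with $y\notin\ua G$. For each $g\in G$ pick $H_g\ll g$ with $y\notin\ua H_g$, and set $H=\bigcup_{g\in G}H_g$. The same construction shows $\mathcal M_x\neq\emptyset$.
\end{itemize}
Now apply the ``furthermore'' clause of Proposition \ref{prop-quasicontinuous-sufficient-condition} to $\mathcal M_x$, not to $w(x)$; this is where Rudin's Lemma really enters. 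It turns $F\ll x$ into some $\ua H\in\mathcal M_x$ with $\ua H\subseteq\ua F$, that is, $F\ll G\ll x$. With this interpolation your second step goes through verbatim: $\dua F$ is Scott open and $x\in\dua F\subseteq\ua F\subseteq U$.
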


\begin{definition}\label{def-continuous-poset} (\cite[Definition 2.1]{Mao-Xu-2006})  A poset $P$ is said to be \emph{quasicontinuous} if $\Sigma~\!\!P$ is locally hypercompact.
\end{definition}

A \emph{net} $(x_i)_{i\in I}$ in a set $X$ is a mapping from a directed set $I$ to $X$.
A net $(y_e)_{e \in E}$ is called a \emph{subnet} of a net $(x_i)_{i\in I}$ if there exists a function $f : E \to I$ such that (i) $y_e = x_{f(e)}$ for all $e \in E$; and (ii) for any $i \in I$, there exists $e \in E$ such that $f(l) \geq i$ whenever $l \geq e$.
  For a property $H(x)$ of elements $x\in X$, we say that $H(x)$ \emph{holds eventually} in the net $(x_i)_{i\in I}$ if there is a $i_0\in I$ such that $H(x_i)$ is true whenever $i_0\leq i$. Similarly, we say that $H(x)$ \emph{holds usually} in the net $(x_i)_{i\in I}$ if for any $i\in I$, there is a $j\in I$ with $i\leq j$ such that $H(x_{j})$ is true.

As we all know, convergence and convergence class play an important role in both order theory and general topology (see \cite{Engelking-1989, GHKLMS-2003, Kelley-1975}). For a topological space $(X, \tau)$ and a class $\mathcal L$ consisting of pairs $((x_i)_{i\in I}, x)$, where $(x_i)_{i\in I}$ is a net in $X$ and $x$ a point of $X$, the topology $\tau$ can naturally induce a convergence class as follows:

\begin{center}
$\mathcal{C}(\tau)=\{((x_i)_{i\in I}, x) : (x_i)_{i\in I} \mbox{ is a net of } X, x\in X \mbox{ and }$ $(x_i)_{i\in I} \mbox{ converges to } x \mbox{ in } (X, \tau)\}$.
\end{center}

\noindent And we can define a topology on $X$ associated with $\mathcal L$:

\begin{center}
$\mathcal O(\mathcal L)=\{U\subseteq X : ((x_i)_{i\in I}, x)\in\mathcal L$ and $x\in U$ imply
 $x_i\in U$ eventually$\}$.
\end{center}

It is easy to verify that $\tau=\mathcal O(\mathcal C(\tau))$. However, if $\mathcal L$ is not a convergence class in the sense of Kelley [2] (see Theorem \ref{theor-convergence-topological} below), then $\mathcal L\neq \mathcal C(\mathcal O(\mathcal L))$, that is, the class $\mathcal L$ is not topological.

\begin{theorem}\label{theor-convergence-topological} (\cite[Exercise II-1.29]{GHKLMS-2003} or \cite[page 74, Theorem 9]{Kelley-1975})
Let $\mathcal{L}$ be a class of some pairs $((x_i)_{i\in I}, x)$ of nets $((x_i)_{i\in I}$ on a set $X$ and elements $x\in X$. Then $((x_i)_{i\in I}, x)\in \mathcal{L}$ iff $((x_i)_{i\in I}$ converges to $x$ with respect to the topology $\mathcal{O}(\mathcal{L})$ precisely when the following four axioms are satisfied.
\begin{description}
    \item[(Constants)] For every constant net $(x)_{i\in I}$ with value $x$ one has $((x)_{i\in I}, x)\in \mathcal{L}$.

    \item[(Subnets)] If $((y_j)_{j\in J}$ is a subnet of $((x_i)_{i\in I}$ and $((x_i)_{i\in I}, x)\in \mathcal{L}$, then $((y_j)_{j\in J}, x)\in \mathcal{L}$.

    \item[(Divergences)] If $((x_i)_{i\in I}, x)$ is not in $\mathcal{L}$, then $((x_i)_{i\in I}$ has a subnet $((y_j)_{j\in J}$ no subnet $((z_k)_{k\in K}$ of which ever has $(((z_k)_{k\in K}, x)\in \mathcal{L}$.

    \item[(Iterated limits)] If $((x_i)_{i\in I}, x)\in \mathcal{L}$, and if $((x_{i,j})_{j\in J(i)}, x_i)\in \mathcal{L}$ for all $i\in I$, then $((x_{i, f(i)})_{(i,f)\in I\times M}, x)\in \mathcal{L}$, where $M=\prod_{i\in I}J(i)=\{f: I\rightarrow \bigcup_{i\in I}J(i): f(i)\in J(i)$ for all $i\in I\}$  is a product of directed sets. The order of $M$ is defined as follows: $f \leq g$ iff $f(i) \leq g(i)$ for all $i\in I$, and the order of $I\times M$ is defined as follows:  $(a, f) \leq (b, g)$   iff $a \leq b$ and $f \leq g$.
\end{description}
\end{theorem}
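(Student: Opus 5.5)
This is Kelley's classical theorem on convergence classes, so the plan is to prove both directions directly from the definition of $\mathcal O(\mathcal L)$.

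\textbf{Necessity.} Assume $\mathcal L=\mathcal C(\mathcal O(\mathcal L))$, i.e. $\mathcal L$ is exactly the convergence class of a topology. Then the four axioms are standard facts about topological convergence. Constant nets converge to their value. Subnets of convergent nets converge to the same point, by clause (ii) of the subnet definition. For (Divergences): if $(x_i)$ does not converge to $x$, pick an open $U\ni x$ such that $x_i\notin U$ frequently. The indices $i$ with $x_i\notin U$ form a cofinal subset of $I$, and this gives a subnet lying entirely outside $U$; no subnet of it can converge to $x$. For (Iterated limits): given an open $U\ni x$, choose $i_0$ with $x_i\in U$ for $i\geq i_0$. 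For each $i\geq i_0$ choose $j_i\in J(i)$ with $x_{i,j}\in U$ for $j\geq j_i$, and take $f_0\in M$ extending these choices arbitrarily. Then $(i,f)\geq (i_0,f_0)$ gives $x_{i,f(i)}\in U$.

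\textbf{Sufficiency.} Assume the four axioms hold. The inclusion $\mathcal L\subseteq\mathcal C(\mathcal O(\mathcal L))$ is immediate from the definition of $\mathcal O(\mathcal L)$. For the converse, suppose $((x_i),x)\notin\mathcal L$. By (Divergences), pass to a subnet $(y_j)_{j\in J}$ none of whose subnets $\mathcal L$-converges to $x$. It then suffices to find an open $U\in\mathcal O(\mathcal L)$ with $x\in U$ such that $y_j\notin U$ frequently. Indeed, if $(x_i)$ converged to $x$ in $\mathcal O(\mathcal L)$, then so would its subnet $(y_j)$, contradicting this.

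The natural candidate for $U$ comes from a closure operator. For $A\subseteq X$ define
\[
\bar A=\{z : \exists\,(a_k)\text{ in }A\text{ with }((a_k),z)\in\mathcal L\}.
\]
Then set $A_j=\{y_l : l\geq j\}$ and $U=X\setminus\bar{A_{j}}$ for a suitable $j$. Three facts are needed:
\begin{enumerate}
\item Some $j$ has $x\notin\bar A_j$. Otherwise, for every $j$ there is a net in $A_j$ that $\mathcal L$-converges to $x$. Combining these nets gives a net indexed by pairs $(j,k)$, and it is a subnet of $(y_j)$ converging to $x$. Showing this convergence uses (Iterated limits) together with (Constants), which is the standard device of iterating over a constant outer net. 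This is a contradiction.
\item $\bar{\bar A}=\bar A$, by (Iterated limits) applied to a net in $\bar A$ whose points are each limits of nets in $A$.
\item $X\setminus\bar A\in\mathcal O(\mathcal L)$. This follows from (2) and (Subnets): if $((z_i),z)\in\mathcal L$, $z\notin\bar A$, and $z_i\in\bar A$ frequently, then the frequent part forms a subnet converging to $z$, so $z\in\bar{\bar A}=\bar A$, a contradiction.
\end{enumerate}
Then $U=X\setminus\bar A_j$ is $\mathcal O(\mathcal L)$-open, contains $x$, and misses every $y_l$ with $l\geq j$; note $A\subseteq\bar A$ by (Constants).

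The main obstacle is fact (1) and the index bookkeeping there. One must check that the diagonal net built from the product directed set $I\times M$ is genuinely a subnet of $(y_j)$, so that it falls under the hypothesis from (Divergences). This is where care is needed: the nets in $A_j$ must be reindexed so that the map into $J$ is cofinal. Since the statement is quoted from Kelley, an alternative is simply to cite \cite[page 74, Theorem 9]{Kelley-1975}.
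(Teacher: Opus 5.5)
Your proposal is correct and is essentially Kelley's own argument: the closure $\bar A$ defined via $\mathcal L$-limits of nets in $A$, $\bar{\bar A}=\bar A$ from (Iterated limits), and the constant-outer-net trick producing a subnet of $(y_j)_{j\in J}$ that $\mathcal L$-converges to $x$; the paper itself states the theorem without proof and relies on exactly this argument by citing \cite[page 74, Theorem 9]{Kelley-1975}. The bookkeeping you flag is routine: if $(a^j_k)_{k\in K_j}$ is the chosen net in $A_j$, pick $\ell(j,k)\geq j$ with $a^j_k=y_{\ell(j,k)}$ and put $\phi(j,f)=\ell(j,f(j))$, so that $\phi(j,f)\geq j$ makes the diagonal net over $J\times M$ (you wrote $I\times M$) a subnet of $(y_j)_{j\in J}$, with no reindexing of the inner nets needed.
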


\begin{lemma}\label{lem-closed-set-lattice-Heyting} (\cite[Theorem 2 and Theorem 3]{Erne-2009})
For a $T_0$-space $(X, \tau)$, the following conditions are equivalent:

\begin{enumerate}[\rm (1)]
\item The lattice of closed sets of $(X, \tau)$ is a complete Heyting algebra.

\item For any $A, B \subseteq X$, $\cl_{\tau}({\da} A\cap {\da} B)=\cl_{\tau}{\da} A \cap \cl_\tau {\da} B$.

\item For each $A\subseteq X$ and $x\in X$, $x \in \cl_{\tau}A$ implies $x\in \cl_{\tau}({\da} x \cap {\da} A)$.

\item For any $U\in \tau$ and $x\in X$, ${\ua} (U\cap {\da} x)\in \tau$.
\item For any $U, V\subseteq X$, $\ii_\tau ({\ua} U\cup {\ua} V)=\ii_\tau {\ua} U\cup \ii_\tau {\ua} V$.
\end{enumerate}
\end{lemma}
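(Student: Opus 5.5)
We prove the implications $(1)\Rightarrow(2)\Rightarrow(3)\Rightarrow(1)$, then $(2)\Rightarrow(4)\Rightarrow(3)$, and finally $(2)\Leftrightarrow(5)$ by complementation. Throughout we use three facts: closed sets are lower sets; $\cl_\tau\{a\}={\da} a$; and $\cl_\tau {\da} A=\cl_\tau A$ for every $A\subseteq X$. In the lattice $\Gamma(X)$ the meet is intersection and the join of $\{C_i\}$ is $\cl_\tau\bigcup_i C_i$. So (1) amounts to the frame law
\[
C\cap \cl_\tau\bigcup_{i} C_i=\cl_\tau\bigcup_i (C\cap C_i)\quad (C,C_i\in\Gamma(X)),
\]
where only the inclusion "$\subseteq$" needs proof.

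$(1)\Rightarrow(2)$: Write $\cl_\tau{\da} A=\bigvee_{a\in A}{\da} a$ and $\cl_\tau {\da} B=\bigvee_{b\in B}{\da} b$ in $\Gamma(X)$. Applying the frame law twice gives
\[
\cl_\tau{\da} A\cap\cl_\tau{\da} B=\cl_\tau\bigcup_{a\in A,\,b\in B}({\da} a\cap{\da} b)\subseteq \cl_\tau({\da} A\cap{\da} B).
\]
The reverse inclusion is trivial.

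$(2)\Rightarrow(3)$: Take $\{x\}$ and $A$ in (2). This gives $\cl_\tau({\da} x\cap{\da} A)={\da} x\cap\cl_\tau A$, which contains $x$ whenever $x\in\cl_\tau A$.

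$(3)\Rightarrow(1)$: Let $x\in C\cap\cl_\tau\bigcup_i C_i$. By (3), $x\in\cl_\tau({\da} x\cap\bigcup_i C_i)$, since $\bigcup_i C_i$ is already a lower set. Because $C$ is a lower set containing $x$, we have ${\da} x\cap C_i\subseteq C\cap C_i$. Hence $x\in \cl_\tau\bigcup_i(C\cap C_i)$.

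$(2)\Rightarrow(4)$: Fix $U\in\tau$ and $x\in X$, and put $A=X\setminus{\ua}(U\cap{\da} x)$. It suffices to show that no $z\in{\ua}(U\cap{\da} x)$ lies in $\cl_\tau A$. Suppose $z\in\cl_\tau A$, and pick $u\in U$ with $u\le x$ and $u\le z$. Then $u\in\cl_\tau A$, since $\cl_\tau A$ is a lower set. By (2), $u\in{\da} x\cap\cl_\tau{\da} A=\cl_\tau({\da} x\cap{\da} A)$. Since $U$ is an open neighbourhood of $u$, there is $w\in U$ with $w\le x$ and $w\in{\da} A={\da}(X\setminus{\ua}(U\cap{\da} x))$. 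But $w\in U\cap{\da} x$, so every element above $w$ lies in ${\ua}(U\cap{\da} x)$, contradicting $w\in{\da} A$.

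$(4)\Rightarrow(3)$: Let $x\in\cl_\tau A$ and suppose $x\notin\cl_\tau({\da} x\cap{\da} A)$. Then $U=X\setminus\cl_\tau({\da} x\cap{\da} A)$ is open and contains $x$. By (4), $V={\ua}(U\cap{\da} x)$ is an open neighbourhood of $x$, so $V$ meets $A$. This yields some $v\in U$ with $v\le x$ and $v\le a$ for some $a\in A$. Then $v\in{\da} x\cap{\da} A$, contradicting $v\in U$.

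$(2)\Leftrightarrow(5)$: Upper sets ${\ua} U,{\ua} V$ correspond to lower sets ${\da} A=X\setminus{\ua} U$ and ${\da} B=X\setminus{\ua} V$. By De Morgan and the identity $X\setminus\ii_\tau M=\cl_\tau(X\setminus M)$, statement (5) for ${\ua} U,{\ua} V$ is literally the complement of statement (2) for ${\da} A,{\da} B$.

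The only step that needs a genuine idea is $(2)\Rightarrow(4)$. There, openness of ${\ua}(U\cap{\da} x)$ has to be obtained from a closure identity, and the key is to apply (2) at the auxiliary point $u$ rather than at $z$. Every other step is either a direct substitution into the hypothesis or a complementation argument.
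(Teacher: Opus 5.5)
Your proof is correct. The reduction of (1) to the frame law $C\cap\cl_\tau\bigcup_i C_i=\cl_\tau\bigcup_i(C\cap C_i)$ in $\Gamma(X)$, where joins are closures of unions, is sound. So is the double application of that law to $\cl_\tau{\da}A=\bigvee_{a\in A}{\da}a$ and $\cl_\tau{\da}B=\bigvee_{b\in B}{\da}b$. The pointwise arguments for $(3)\Rightarrow(1)$ and $(4)\Rightarrow(3)$ both check out. The complementation argument for $(2)\Leftrightarrow(5)$ also works: (2) depends only on the lower sets ${\da}A,{\da}B$, (5) only on the upper sets ${\ua}U,{\ua}V$, and complementation is a bijection between lower and upper sets.

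There is no argument in the paper to compare with. The lemma is quoted from Ern\'e (Theorems 2 and 3 of the cited paper) without proof and is then used as the definition of a web space. Your write-up is therefore a short self-contained substitute. It uses nothing beyond the facts that closed sets are lower sets and $\cl_\tau\{a\}={\da}a$.

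One small streamlining is possible. In $(2)\Rightarrow(4)$, the only consequence of (2) you actually use is: $u\in\cl_\tau A$ and $u\le x$ imply $u\in\cl_\tau({\da}x\cap{\da}A)$. This already follows from (3) applied at $u$, because $\cl_\tau({\da}u\cap{\da}A)\subseteq\cl_\tau({\da}x\cap{\da}A)$. You could therefore present that step as $(3)\Rightarrow(4)$. This makes $(3)\Leftrightarrow(4)$ a self-contained pair and shows that (4) is simply the open-set form of the pointwise condition (3).
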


As in \cite{Erne-2009}, a $T_0$-space $(X, \tau))$ is called a \emph{web space} if it satisfies the equivalent conditions (1)-(5) of Lemma \ref{lem-closed-set-lattice-Heyting}.

\begin{proposition}\label{prop-C-space-LHC-web} (\cite[Corollary 1]{Erne-2009})
For a $T_0$-space $(X, \tau)$, the following two conditions are equivalent:

\begin{enumerate}[\rm (1)]
\item $(X, \tau)$ is a $C$-space.

\item $(X, \tau)$ is a locally hypercompact web space.
\end{enumerate}
\end{proposition}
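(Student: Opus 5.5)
The proof works directly from the definitions of $C$-space and locally hypercompact space. For the web-space property I will use characterizations (4) and (5) of Lemma \ref{lem-closed-set-lattice-Heyting}, one in each direction.

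\emph{(1)$\Rightarrow$(2).} Every $C$-space is locally hypercompact: take $F=\{y\}\in X^{(<\omega)}$.

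To show $(X,\tau)$ is a web space, I verify condition (4) of Lemma \ref{lem-closed-set-lattice-Heyting}. Fix $U\in\tau$ and $x\in X$, and let $z\in{\ua}(U\cap{\da}x)$. Then there is $u\in U$ with $u\leq x$ and $u\leq z$. Since $u\in U\in\tau$, the $C$-space property gives $y\in X$ with $u\in\ii_\tau{\ua}y\subseteq{\ua}y\subseteq U$.

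From $y\leq u\leq x$ and $y\in{\ua}y\subseteq U$ we get $y\in U\cap{\da}x$. Hence ${\ua}y\subseteq{\ua}(U\cap{\da}x)$, and so $\ii_\tau{\ua}y\subseteq{\ua}(U\cap{\da}x)$. Moreover $\ii_\tau{\ua}y$ is open, hence an upper set, and it contains $u\leq z$; so $z\in\ii_\tau{\ua}y$. Thus every point of ${\ua}(U\cap{\da}x)$ is an interior point, and ${\ua}(U\cap{\da}x)\in\tau$.

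\emph{(2)$\Rightarrow$(1).} Let $U\in\tau$ and $x\in U$. Local hypercompactness yields $F\in X^{(<\omega)}$ with $x\in\ii_\tau{\ua}F\subseteq{\ua}F\subseteq U$. Write $F=\{f_1,\dots,f_n\}$, so that ${\ua}F={\ua}f_1\cup\cdots\cup{\ua}f_n$.

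Condition (5) of Lemma \ref{lem-closed-set-lattice-Heyting}, applied $n-1$ times by induction on $n$, gives
\[
\ii_\tau{\ua}F=\ii_\tau{\ua}f_1\cup\cdots\cup\ii_\tau{\ua}f_n .
\]
Hence $x\in\ii_\tau{\ua}f_k$ for some $k$, and ${\ua}f_k\subseteq{\ua}F\subseteq U$. So $y=f_k$ witnesses the $C$-space condition at $x$ and $U$.

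The only step with real content is choosing which of the equivalent web conditions to use in each direction. Condition (4) fits the direction (1)$\Rightarrow$(2), and the finite-union interior formula (5) fits the converse. Everything else is routine bookkeeping with upper sets, so I expect no serious obstacle.
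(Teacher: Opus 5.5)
Your proof is correct, and your direction (2)$\Rightarrow$(1) is exactly the paper's argument: the finite-union form of condition (5) applied to ${\ua}F={\ua}f_1\cup\cdots\cup{\ua}f_n$.

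The only difference is in (1)$\Rightarrow$(2), where the paper checks condition (5) rather than (4). It takes a $C$-space witness $t$ with $x\in\ii_\tau{\ua}t\subseteq{\ua}t\subseteq\ii_\tau({\ua}U\cup{\ua}V)$. Then $t$ must lie in ${\ua}U$ or ${\ua}V$, which gives $x\in\ii_\tau{\ua}U\cup\ii_\tau{\ua}V$. This keeps the whole proof inside the single characterization (5).

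Your route additionally uses the equivalence (4)$\Leftrightarrow$(5) of Lemma \ref{lem-closed-set-lattice-Heyting}. That costs nothing, since web spaces are defined by those equivalent conditions. Your verification of (4) also follows the same pattern the paper uses later, in the proof of (3)$\Rightarrow$(1) of Theorem \ref{theor-characterization-C-space-1}.
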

\begin{proof} (1) $\Rightarrow$ (2): Obviously, $(X, \tau)$ is locally hypercompact. Let $U, V$ be two subsets of $X$. Then $\ii_\tau \ua U\cup \ii_\tau \ua V \subseteq \ii_\tau (\ua U\cup \ua V)$. Now we show that $\ii_\tau (\ua U\cup \ua V)\subseteq\ii_\tau \ua U\cup \ii_\tau \ua V$. Let $x\in \ii_\tau (\ua U\cup \ua V)$. As $(X, \tau)$ is a $C$-space, there is a $t\in X$ such that $x\in \ii_\tau \ua t\subseteq \ua t\subseteq \ii_\tau (\ua U\cup \ua V)\subseteq \ua U\cup \ua V$. Then $\ua t\subseteq \ua U$ or $\ua t\subseteq \ua V$, whence $x\in \ii_\tau \ua t\subseteq \ii_\tau \ua U$ or $x\in \ii_\tau \ua t\subseteq \ii_\tau \ua V$. So $x\in \ii_\tau \ua U\cup \ii_\tau \ua V$. Thus $\ii_\tau \ua U\cup \ii_\tau \ua V = \ii_\tau (\ua U\cup \ua V)$. By Lemma \ref{lem-closed-set-lattice-Heyting}, $(X, \tau)$ is a web space.

(2) $\Rightarrow$ (1): Assume that $x\in U\in\tau$. Since $(X, \tau)$ is locally hypercompact, there is a finite set $F=\{u_1, u_2, ..., u_n\}\subseteq X$ with $x\in \ii_\tau \ua F\subseteq \ua F\subseteq U$. By Lemma \ref{lem-closed-set-lattice-Heyting}, we have that $x\in \ii_\tau \ua F=\ii_\tau \bigcup\limits_{i=1}^n\ua u_i=\bigcup_{i=1}^n\ii_\tau \ua u_i$. Hence there is $1\leq i\leq n$ such that $x\in \ii_\tau \ua u_i\subseteq \ua u_i\subseteq \ua F\subseteq U$. Thus $(X, \tau)$ is a $C$-space.
\end{proof}

A poset (resp., dcpo, complete lattice) $P$ is said to be a \emph{meet-continuous poset} (resp., \emph{meet-continuous domain}, \emph{meet-continuous lattice}) if for any $x\in P$ and any
directed set $D$ with $x \leq \vee D$, one has $x\in \ii_{\sigma(P)} \downarrow D\cap \downarrow x$.

\begin{proposition}\label{prop-MC-poset-charac} (\cite[Theorem 2.1 and Proposition 2.2]{Kou-Liu-Luo-2003}, \cite[Theorem 3.8]{Mao-Xu-2009}) For a poset $P$, the following two conditions are equivalent:
\begin{enumerate}[\rm (1)]
\item $P$ is meet-continuous.
\item The lattice of Scott-closed sets of $P$ is a complete Heyting algebra (i.e., $\Gamma(\Sigma~\!\!P)$ is a web space).
\end{enumerate}

\indent If $P$ is both a semilattice and a dcpo, then these conditions are equivalent to the following one:

\vspace{0.2cm}

\noindent $\mathrm{(3)}$ $P$ satisfies the following distributive law:
\vskip 3mm
\emph{(MC)}  \qquad \qquad \qquad \qquad \qquad \qquad  $x\wedge (\bigvee D)=\bigvee_{d\in D}x\wedge d$
\vskip 3mm
\noindent for all $x\in P$ and $D\in \mathcal D(P)$.
\end{proposition}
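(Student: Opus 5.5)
The plan is to reduce everything to the equivalent conditions of Lemma~\ref{lem-closed-set-lattice-Heyting} applied to the Scott space $\Sigma~\!\!P$. Throughout I read the meet-continuity condition as $x\in \cl_{\sigma(P)}({\da} D\cap{\da} x)$ whenever $x\leq\vee D$. This is the standard formulation, and I take the ``$\ii$'' in the displayed definition to be a misprint for the closure. Two elementary facts about Scott closures will be used:
\begin{itemize}
\item[(a)] If $D$ is directed and $\vee D$ exists, then $\vee D\in\cl_{\sigma(P)}{\da} D$. Indeed, any Scott open set containing $\vee D$ meets $D$.
\item[(b)] In a dcpo, if $E$ is directed, then $\cl_{\sigma(P)}{\da} E={\da}\vee E$. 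This holds because ${\da}\vee E$ is Scott closed, and it is contained in the closure by (a).
\end{itemize}

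\emph{(1) $\Rightarrow$ (2).} I will verify condition (4) of Lemma~\ref{lem-closed-set-lattice-Heyting}: for $U\in\sigma(P)$ and $x\in P$, the set ${\ua}(U\cap{\da} x)$ is Scott open. It is an upper set. Let $D$ be directed with $\vee D\in{\ua}(U\cap{\da} x)$, so there is $u\in U$ with $u\leq x$ and $u\leq\vee D$. By meet-continuity, $u\in\cl_{\sigma(P)}({\da} D\cap{\da} u)$. Since $U$ is an open neighbourhood of $u$, it meets ${\da} D\cap{\da} u$ in some $v$. Then $v\in U\cap{\da} x$ and $v\leq d$ for some $d\in D$, so $d\in{\ua}(U\cap{\da} x)$. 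Hence $\Gamma(\Sigma~\!\!P)$ is a complete Heyting algebra.

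\emph{(2) $\Rightarrow$ (1).} I will use condition (3) of Lemma~\ref{lem-closed-set-lattice-Heyting}. Let $x\leq\vee D$. By (a), $\vee D\in\cl_{\sigma(P)}{\da} D$, and since closed sets are lower sets, $x\in\cl_{\sigma(P)}{\da} D$. Condition (3) then gives $x\in\cl_{\sigma(P)}({\da} x\cap{\da} D)$, which is exactly meet-continuity.

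\emph{Equivalence with (3) when $P$ is a dcpo and a semilattice.}
\begin{itemize}
\item[(MC) $\Rightarrow$ (1).] Suppose $x\leq\vee D$. Then $x=x\wedge\vee D=\bigvee_{d\in D}(x\wedge d)$, and $\{x\wedge d: d\in D\}$ is a directed subset of ${\da} D\cap{\da} x$. By (a), $x\in\cl_{\sigma(P)}({\da} D\cap{\da} x)$.
\item[(1) $\Rightarrow$ (MC).] Put $y=x\wedge\vee D$. Clearly $\bigvee_{d\in D}(x\wedge d)\leq y$. For the reverse inequality, meet-continuity gives $y\in\cl_{\sigma(P)}({\da} D\cap{\da} y)$. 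We have ${\da} D\cap{\da} y={\da}\{y\wedge d: d\in D\}$, and this family is directed. So by (b) the closure is ${\da}\bigvee_{d}(y\wedge d)$. Hence $y\leq\bigvee_d(y\wedge d)\leq\bigvee_d(x\wedge d)$.
\end{itemize}

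The only delicate point is the identification of the closure of the down-set of a directed set in (b), together with correctly interpreting the definition. Everything else is a direct unwinding of Lemma~\ref{lem-closed-set-lattice-Heyting}.
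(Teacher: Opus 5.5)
The paper gives no proof of this proposition; it simply cites Kou--Liu--Luo and Mao--Xu. Your argument is correct and complete.

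\textbf{The definition.} Reading the $\ii$ in the paper's definition of meet-continuity as $\cl$ is right. Under the interior reading, a Scott open (hence upper) set containing $x$ and contained in ${\da} x$ would force ${\ua} x=\{x\}$, so only maximal elements could ever qualify. The closure version is the one used in the cited sources.

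\textbf{(1)$\Leftrightarrow$(2).} Routing this through conditions (3) and (4) of Lemma~\ref{lem-closed-set-lattice-Heyting} is the natural approach, since the paper already states that lemma. Your pivot is that meet-continuity amounts to ${\ua}(U\cap{\da} x)$ being Scott open, and you check it correctly in both directions. One point is left implicit and could be made explicit: the specialization order of $\Sigma~\!\!P$ is the original order of $P$. This holds because ${\da} y$ is Scott closed and closures are lower sets, and it is what makes the ${\ua}$ and ${\da}$ in the lemma the order-theoretic ones.

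\textbf{The (MC) part.} This part is self-contained and does not need the Heyting-algebra condition. You use the dcpo hypothesis correctly to guarantee that $\vee D$ and $\bigvee_{d\in D}(x\wedge d)$ exist. The identity ${\da} D\cap{\da} y={\da}\{y\wedge d: d\in D\}$ is exactly what is needed to apply your fact (b). That fact only requires $\vee E$ to exist, not that $P$ is a dcpo.
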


Classically, a subset $U$ of a topological space $X$ is open iff $\overline{A}\cap U\neq\emptyset$ implies $A\cap U \neq\emptyset$ for all subsets $A$ of $X$, or equivalently, every net that converges to a point in $U$ is residually in $U$. In \cite{Erne-2009},
Ern\'e introduced the notion of what he calls a topological space that is ``monotone determined".

\begin{definition}\label{def-MD-space}
A $T_0$-space $(X, \tau)$ is called a \emph{monotone determined space}, an \emph{MD}-\emph{space} for short, if any subset $U$ meeting all directed sets whose closure meets $U$ is open, that is, $U\in\tau$ iff $U\cap\cl_{X} D\neq\emptyset$ implies $U\cap D\neq\emptyset$ for any $D\in \mathcal D(X)$. In this case, the topology $\tau$ is called a \emph{monotone determined topology} (shortly an \emph{MD-topology}). Let $\mathbf{MD}$ denote the category of MD-spaces and continuous mappings.
\end{definition}

 In \cite{Yu-Kou-2015}, Yu and Kou used the convergence of directed subsets to defined the concept of \emph{directed spaces}. For a point $x$ and a directed subset $D$ in a $T_0$-space $X$, it is easy to see that $x\in\cl~\!D$ iff the net $(d)_{d\in D}$ converges to $x$. So directed spaces are exactly MD-spaces. As shown in \cite{Erne-2009, Yu-Kou-2015} (see also \cite{Luo-Xu-2017}), MD-spaces possess a lot of nice properties and have close relations with some other important spaces.

\begin{proposition}\label{lem-Erne-MD-space} (\cite[Proposition 7]{Erne-2009}) Let $P$ be a poset and $X$ a $T_0$-space. Then
\begin{enumerate}[\rm (1)]
\item  The weakest order-compatible MD-topology on $P$ is the Scott-Ern\'e topology $\sigma_2(P)$, the strongest one is the Alexandroff
topology $\alpha (P)$.
\item  If $X$ is locally hypercompact, then it is an MD-space.
\item The Scott space $\Sigma~\!\!P$ is an MD-space.
\item The monotone convergence MD-spaces are exactly the Scott spaces of dcpos.
\end{enumerate}
\end{proposition}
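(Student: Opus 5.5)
The statement bundles four claims. I would prove each directly from Definition \ref{def-MD-space}, using one preliminary observation throughout.

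\emph{Preliminary observation.} Let $\tau$ be any topology whose specialization order is the given order. If $U$ satisfies the MD-condition, then $U$ is an upper set. Indeed, let $x\in U$ and $x\leq y$, and take $D=\{y\}$. Then $x\in{\da} y=\cl_\tau\{y\}$, so $U\cap \cl_\tau D\neq\emptyset$, and the condition gives $y\in U$.

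\textbf{Part (1).} I would use two facts about closures.
\begin{itemize}
\item For $\sigma_2(P)$: the closed sets are exactly the lower sets $C$ such that $D\subseteq C$ implies $D^{\delta}\subseteq C$ for directed $D$. Hence $\cl_{\sigma_2}D\supseteq D^{\delta}$.
\item For any order-compatible $\tau$: each ${\da} u=\cl_\tau\{u\}$ is $\tau$-closed. Hence $D^\delta=\bigcap_{u\in D^{\ua}}{\da} u$ is $\tau$-closed and contains $D$, so $\cl_\tau D\subseteq D^{\delta}$.
\end{itemize}
Now the three claims of (1) follow.
\begin{itemize}
\item $\sigma_2(P)$ is MD. Suppose $U$ satisfies the MD-condition. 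By the observation $U$ is an upper set. If $D^{\delta}\cap U\neq\emptyset$, then $\cl_{\sigma_2} D\cap U\neq\emptyset$, so $D\cap U\neq\emptyset$. Thus $U\in\sigma_2(P)$. The converse direction is trivial for open sets.
\item $\sigma_2(P)$ is the weakest. Let $\tau$ be an order-compatible MD-topology and $U\in\sigma_2(P)$. If $U\cap\cl_\tau D\neq\emptyset$, then $U\cap D^{\delta}\neq\emptyset$, so $U\cap D\neq\emptyset$. By MD, $U\in\tau$.
\item $\alpha(P)$ is the strongest. Every order-compatible topology consists of upper sets, so it is contained in $\alpha(P)$. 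Moreover $\alpha(P)$ is MD: there $\cl D={\da} D$, and an upper set meeting ${\da} D$ meets $D$.
\end{itemize}

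\textbf{Part (3).} Suppose $U$ satisfies the MD-condition in $\Sigma P$; it is an upper set by the observation. Let $D$ be directed with $\vee D\in U$. Every Scott open set containing $\vee D$ meets $D$, so $\vee D\in\cl_\sigma D$. The MD-condition then gives $D\cap U\neq\emptyset$, so $U\in\sigma(P)$.

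\textbf{Part (2).} This is the main step, and I would argue by contradiction using Rudin's Lemma.
\begin{enumerate}
\item Let $U$ satisfy the MD-condition, so $U$ is an upper set. Suppose some $x\in U$ has no open neighbourhood contained in $U$.
\item For each open $V\ni x$, local hypercompactness gives a finite $F_V$ with $x\in\ii\,{\ua} F_V\subseteq{\ua} F_V\subseteq V$.
\item The family $\{{\ua} F_V\}$ is filtered. Given $V_1,V_2$, put $V_3=\ii\,{\ua} F_{V_1}\cap\ii\,{\ua} F_{V_2}$; then ${\ua} F_{V_3}\subseteq{\ua} F_{V_1}\cap{\ua} F_{V_2}$.
\item Each $F_V\not\subseteq U={\ua} U$. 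Otherwise $x\in\ii\,{\ua} F_V\subseteq U$, contrary to the assumption on $x$.
\item By Lemma \ref{lem-filtred-family}(2), the family $\{{\ua}(F_V\setminus U)\}$ is filtered.
\item Rudin's Lemma \ref{lem-Rudin-lemma} then yields a directed $D\subseteq\bigcup_V(F_V\setminus U)$ that meets every $F_V\setminus U$. In particular $D\cap U=\emptyset$.
\item Every open $V\ni x$ contains ${\ua} F_V$, which contains a point of $D$. Hence $x\in\cl D\cap U$.
\end{enumerate}
This contradicts the MD-condition, so $U$ is open.

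\textbf{Part (4).} Recall that in a monotone convergence space every directed $D$ has a supremum and $D$ converges to $\vee D$. So $P$ is a dcpo and $\tau\subseteq\sigma(P)$. If $\tau$ is also MD, take $U\in\sigma(P)$ with $U\cap\cl_\tau D\neq\emptyset$. Since $\cl_\tau D\subseteq{\da}\vee D$, which is closed, and $U$ is an upper set, we get $\vee D\in U$. Scott openness then gives $D\cap U\neq\emptyset$. Hence $\sigma(P)\subseteq\tau$, so $\tau=\sigma(P)$.

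Conversely, the Scott space of a dcpo is MD by (3). It is monotone convergence because $\vee D$ exists and every Scott open set containing $\vee D$ meets $D$.
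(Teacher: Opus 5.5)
Your proof is correct in all four parts. The paper gives no argument for this proposition: it is quoted from Ern\'e (2009, Proposition 7). So there is no proof in the paper to compare with. What your write-up adds is a self-contained proof built only from tools already in Section 2.

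Your proof of (2) runs as follows: index a filtered family $\{\ua F_V\}$ by the open neighbourhoods of $x$, remove $U$ via Lemma~\ref{lem-filtred-family}(2), and apply Rudin's Lemma~\ref{lem-Rudin-lemma} to get a directed $D$ disjoint from $U$ with $x\in\cl D$. This is the same device the paper uses repeatedly elsewhere, for instance in Proposition~\ref{prop-O(QS)-charac}, Lemma~\ref{lem-characterization-way-below-c-relation-2} and Claim~5 of Theorem~\ref{theor-LH-is-WLH}. In (1), your two closure facts give $\cl_\tau D\subseteq D^{\delta}\subseteq \cl_{\sigma_2(P)}D$ for every order-compatible $\tau$, with equality on the right once $\sigma_2(P)$ is itself order-compatible. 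This is the cleanest way to get both halves of the $\sigma_2$ claim at once.

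There are two small points to tidy up.

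First, you never check that $\sigma_2(P)$ is order-compatible. You need this for $\sigma_2(P)$ to be a candidate at all. You also need it so that your preliminary observation, and the word ``directed'' in $(P,\sigma_2(P))$, refer to the given order of $P$. It follows at once from your description of the $\sigma_2$-closed sets. Every closed set is a lower set, and $\da x$ is closed: if $D\subseteq \da x$, then $x\in D^{\ua}$, so $D^{\delta}\subseteq \da x$. The same remark applies to $\sigma(P)$ in (3), where it is standard.

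Second, for ``$\alpha(P)$ is MD'' the sentence you give (an upper set meeting $\da D$ meets $D$) proves only the trivial direction, which holds for the open sets of any space. The direction that needs saying is that every set satisfying the MD-condition is $\alpha$-open. That is immediate from your preliminary observation, since such a set is an upper set.
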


\begin{theorem}\label{theor-MD-cartesian-closed} (\cite[Theorem 3.10 and Theorem 4.8]{Yu-Kou-2015}) $\mathbf{MD}$ is cartesian closed and it is co-reflective in $\mathbf{Top}_0$.
\end{theorem}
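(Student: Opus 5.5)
The proof splits into two parts: first construct the coreflector $\mathbf{Top}_0\to\mathbf{MD}$ explicitly, then use it to transport products and build exponentials inside $\mathbf{MD}$.

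\textbf{Coreflector.} For a $T_0$-space $(X,\tau)$ put
\[
d(\tau)=\{U\subseteq X : U\cap \cl_\tau D\neq\emptyset \Rightarrow U\cap D\neq\emptyset \mbox{ for all } D\in\mathcal D(X)\},
\]
where directedness refers to the specialization order $\leq_\tau$. The steps are as follows.
\begin{enumerate}[\rm (i)]
\item $d(\tau)$ is a topology. Arbitrary unions are immediate. For $U\cap V$, suppose $x\in U\cap V\cap\cl_\tau D$. Then $U$ and $V$ are upper sets, since for $y\leq x$ we have $x\in\cl_\tau\{y\}$ and $\{y\}$ is directed. Choose $d_1\in U\cap D$ and $d_2\in V\cap D$, and take $d\in D$ above both; then $d\in U\cap V\cap D$.
\item $\tau\subseteq d(\tau)$, and $\cl_\tau\{x\}={\da}x$ is $d(\tau)$-closed. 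Hence $d(\tau)$ has the same specialization order and is $T_0$.
\item For every directed $D$ we have $\cl_{d(\tau)}D=\cl_\tau D$. The inclusion "$\subseteq$" holds because $d(\tau)$ is finer. For "$\supseteq$", if $x\in\cl_\tau D$ and $x\in U\in d(\tau)$, then $U\cap D\neq\emptyset$ by the definition of $d(\tau)$. Consequently $d(\tau)=d(d(\tau))$, so $(X,d(\tau))$ is an MD-space.
\item Couniversality. Let $f:(Y,\rho)\to(X,\tau)$ be continuous with $Y$ an MD-space. Then $f$ is monotone, so $f(D)$ is directed, and $f(\cl_\rho D)\subseteq\cl_\tau f(D)$. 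Thus for $U\in d(\tau)$ the set $f^{-1}(U)$ meets every directed $D$ whose $\rho$-closure meets it. Since $Y$ is MD, $f^{-1}(U)\in\rho$.
\end{enumerate}
Hence $\mathrm{id}:(X,d(\tau))\to(X,\tau)$ is the counit, and $\mathbf{MD}$ is coreflective in $\mathbf{Top}_0$.

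\textbf{Products.} Since $\mathbf{MD}$ is coreflective, limits in $\mathbf{MD}$ are the coreflections of limits in $\mathbf{Top}_0$. In particular the product of $X$ and $Y$ is $X\times_{\mathbf{MD}} Y=(X\times Y,\,d(\tau_X\times\tau_Y))$. The terminal object is the one-point space.

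\textbf{Exponentials.} For MD-spaces $X,Y$, let $[X\to Y]$ be the set of continuous maps with the pointwise order. Call a directed family $(f_i)_{i\in I}$ convergent to $f$ if, whenever a directed $D\subseteq X$ has $x\in\cl D$, the directed set $\{f_i(d): i\in I,\ d\in D\}$ has $f(x)$ in its closure. Give $[X\to Y]$ the MD-topology determined by this directed convergence, i.e. $U$ is open iff every directed family converging to a point of $U$ meets $U$. Then three things must be checked.
\begin{enumerate}[\rm (a)]
\item The specialization order of this topology is the pointwise order, so the space is $T_0$.
\item Evaluation $\mathrm{ev}:[X\to Y]\times_{\mathbf{MD}}X\to Y$ is continuous. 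Because the domain is MD, it suffices to test directed sets. A directed set in the product projects to directed families $(f_i)$ and $(x_j)$, and the definition of convergence is tailored exactly so that $\mathrm{ev}$ preserves closure points of directed sets.
\item Currying. For continuous $g:Z\times_{\mathbf{MD}}X\to Y$, the map $\hat g(z)=g(z,-)$ lands in $[X\to Y]$ and is continuous. Again it suffices to check directed sets in $Z$. This uses that directed sets in $Z\times X$ converge iff their coordinate projections do, which holds in $d(\tau_Z\times\tau_X)$ by step (iii).
\end{enumerate}
Uniqueness of $\hat g$ is immediate.

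The main obstacle is (b)–(c): verifying that closure of directed sets in the coreflected product is computed coordinatewise, and that the directed-convergence topology on $[X\to Y]$ has the intended convergent directed families. I would handle this by working throughout with the characterization "$x\in\cl D$ for directed $D$" as net convergence, which step (iii) shows is unaffected by passing to $d(\tau)$.
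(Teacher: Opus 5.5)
The paper gives no proof of this theorem; it only cites Yu--Kou. Your coreflection half is correct and matches the paper's own machinery. Your $d(\tau)$ is exactly $md(\tau)$ of Definition~\ref{def-md-topology}, your steps (ii)--(iii) are Proposition~\ref{prop-md-topology}(2),(3),(5), and (iv) is a correct couniversality argument. Transporting products through the coreflector is also fine. There is a small slip in (i): to see that $U$ is an upper set, take $x\in U$ and $x\leq y$; then $x\in\cl_\tau\{y\}$ with $\{y\}$ directed, so $y\in U$. You wrote $y\leq x$ instead.

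Parts (a) and (c) of the exponential also go through. In (a), the sets $\{h : h(x)\in V\}$ with $V$ open in $Y$ are open, because the singleton $\{x\}$ is an admissible directed set in your convergence. In (c) you only need two things: the trivial direction that a convergent directed family has its limit in its $\mathcal O$-closure, and the easy fact that for a directed subset of a product, the product closure is the product of the closures of the projections (basic opens are upper sets, so use directedness).

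The gap is in (b), at exactly the point you flagged but did not resolve. Since the domain of $\mathrm{ev}$ is MD, you must show: if $\mathcal D$ is directed and $(f,x)\in\cl\mathcal D$, then $f(x)\in\cl\,\mathrm{ev}(\mathcal D)$. Coordinatewise closure only gives $f\in\cl_{\mathcal O}\pi_1(\mathcal D)$. Your definition of convergence, however, says something only when $\pi_1(\mathcal D)$ actually converges to $f$. For a topology generated by a convergence class, the closure of a directed family can a priori be larger than its set of limits: closed sets are merely closed under limits, so closures may need iteration. This is precisely the failure of topologicity this paper studies. Your proposed remedy via step (iii) does not address it, since (iii) compares $\tau$ and $d(\tau)$ on $X$, not convergence and closure in $\mathcal O$.

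What is needed is the following lemma: for directed $\mathcal F\subseteq[X\to Y]$, the set $L=\{g : \mathcal F\to g\}$ is $\mathcal O$-closed, hence $\cl_{\mathcal O}\mathcal F=L$. This does hold. First, $\mathcal F\subseteq L$ by continuity of each member. Next, every $h\in L$ satisfies $h(d)\in\cl\{k(d) : k\in\mathcal F\}$ (use the directed set $\{d\}$). So if a directed $\mathcal G\subseteq L$ converges to $g$, then for directed $D$ with $x\in\cl D$ you get $g(x)\in\cl\{h(d) : h\in\mathcal G, d\in D\}\subseteq\cl\{k(d) : k\in\mathcal F, d\in D\}$, i.e.\ $g\in L$. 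Hence every directed family converging to a point outside $L$ meets the complement of $L$, so that complement lies in $\mathcal O$.

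You then also need to pass to the diagonal to finish (b). For $h\in\pi_1(\mathcal D)$ and $e\in\pi_2(\mathcal D)$, directedness of $\mathcal D$ yields $(h',e')\in\mathcal D$ with $h\leq h'$ and $e\leq e'$. Then $h(e)\leq h'(e')$, so $\{h(e): h\in\pi_1(\mathcal D), e\in\pi_2(\mathcal D)\}\subseteq{\da}\,\mathrm{ev}(\mathcal D)$. With these two additions your proof is complete.
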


\begin{definition}\label{def-md-topology}  (\cite[Definition 3.3]{Luo-Xu-2017})
Let $(X, \tau)$ be a $T_0$-space.
A subset $U\subseteq X$ is called \emph{monotone determined open} (shortly \emph{MD}-\emph{open}) if for any directed subset $D\subseteq X$, $\cl_{\tau}D \cap U\neq\emptyset$ implies $D\cap U\neq\emptyset$. All monotone determined open sets form a topology on $X$, denoted by $md(\tau)$.
\end{definition}


\begin{proposition}\label{prop-md-topology} (\cite[Lemma 3.4 and Theorem 3.5]{Luo-Xu-2017}) Let $(X, \tau)$ be a $T_0$-space. Then we have the following conclusions:
\begin{enumerate}[\rm (1)]
\item $\upsilon(X)\subseteq \tau\subseteq md(\tau)\subseteq \alpha(X)$.
\item $\leq_{\tau}=\leq_{md(\tau)}$ and hence $\mathcal{D}((X, \tau))=\mathcal{D}((X, md(\tau))$.
 \item $\cl_{\tau} D=\cl_{md(\tau)} D$ for any directed subset $D$ of $X$.
\item $\tau$ is an MD-topology iff $\tau=md(\tau)$.
\item $md(md(\tau))=md(\tau)$ and hence $md(\tau)$ is the weakest MD-topology containing $\tau$.
\end{enumerate}
\end{proposition}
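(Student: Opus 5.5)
The plan is to verify the five items essentially in the order stated, using only the definition of $md(\tau)$ together with two elementary facts. First, in a $T_0$-space closed sets are lower sets, and $\cl_\tau\{y\}={\da}y$. Second, if $D'$ is a cofinal subset of a directed set $D$, then $\cl D'=\cl D$: indeed $D\subseteq{\da}D'\subseteq\cl D'$.

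First I will check that $md(\tau)$ is a topology. Closure under arbitrary unions is immediate from the definition. Before finite intersections I show that every MD-open set $U$ is an upper set: if $x\in U$ and $x\le y$, take $D=\{y\}$; then $x\in{\da}y=\cl_\tau D$, so $D\cap U\ne\emptyset$, i.e.\ $y\in U$. Now let $U,V$ be MD-open and $D$ directed with $\cl_\tau D\cap U\cap V\neq\emptyset$. Then $D\cap U\ne\emptyset$, so pick $d_0\in D\cap U$ and set $D'=D\cap{\ua}d_0$. This set is directed and cofinal in $D$, so $\cl_\tau D'=\cl_\tau D$. Since $U$ is an upper set, $D'\subseteq U$. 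Applying the MD-openness of $V$ to $D'$ gives $D'\cap V\ne\emptyset$, hence $D\cap U\cap V\ne\emptyset$. I expect this intersection step to be the only point needing a small idea (the cofinal-subset trick); everything else is bookkeeping.

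For (1): the sets $X\setminus{\da}x$ are open since ${\da}x=\cl\{x\}$, so $\upsilon(X)\subseteq\tau$. Open sets are MD-open by the classical characterization of open sets via closures, so $\tau\subseteq md(\tau)$. Finally $md(\tau)\subseteq\alpha(X)$ by the upper-set argument above. For (2): both $\upsilon(X)$ and $\alpha(X)$ have the given order as specialization order, and any topology sandwiched between them inherits it. Hence $\le_\tau=\le_{md(\tau)}$, and the two spaces have the same directed sets.

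For (3): $\cl_{md(\tau)}D\subseteq\cl_\tau D$ because $md(\tau)$ is finer. Conversely, $W=X\setminus\cl_{md(\tau)}D$ is MD-open and disjoint from $D$. So if some $x\in\cl_\tau D$ lay in $W$, MD-openness would force $D\cap W\neq\emptyset$, a contradiction. Item (4) is just a restatement: $\tau$ is an MD-topology exactly when $\tau$-openness coincides with MD-openness, i.e.\ $\tau=md(\tau)$.

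For (5): by (2) and (3), the directed sets and their closures are the same for $\tau$ and $md(\tau)$. Hence the defining conditions for $md(\tau)$ and $md(md(\tau))$ are identical, so $md(md(\tau))=md(\tau)$, and by (4) $md(\tau)$ is an MD-topology. For minimality, let $\sigma\supseteq\tau$ be an MD-topology and $U\in md(\tau)$, and take a $\sigma$-directed $D$ with $\cl_\sigma D\cap U\neq\emptyset$. Since $\sigma\supseteq\tau$, we have $\le_\sigma\subseteq\le_\tau$, so $D$ is also $\tau$-directed. Moreover $\cl_\sigma D\subseteq\cl_\tau D$, so $\cl_\tau D\cap U\ne\emptyset$ and therefore $D\cap U\neq\emptyset$. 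Thus $U\in md(\sigma)=\sigma$, showing $md(\tau)\subseteq\sigma$.
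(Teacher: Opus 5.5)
Your proof is correct and complete. The paper gives no argument of its own for this proposition; it simply cites Lemma 3.4 and Theorem 3.5 of Luo--Xu. So there is no in-paper route to compare against, and your verification stands as a self-contained derivation from the definition of $md(\tau)$.

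One simplification is available. The finite-intersection step does not need the cofinal subset $D\cap{\ua}d_0$. From $\cl_\tau D\cap U\cap V\neq\emptyset$ you get $\cl_\tau D\cap U\neq\emptyset$ and $\cl_\tau D\cap V\neq\emptyset$ separately. Hence $D\cap U\neq\emptyset$ and $D\cap V\neq\emptyset$. Pick $d_1\in D\cap U$ and $d_2\in D\cap V$, and take an upper bound $d_3\in D$ of both. Then $d_3\in U\cap V$, because both sets are upper sets by your earlier observation. Your cofinal-subset argument is valid, but this step is just directedness plus upward closure.

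Two trivial points are left implicit in your write-up:
\begin{itemize}
\item $\emptyset, X\in md(\tau)$, which holds because directed sets are nonempty.
\item Applying (4) to $(X, md(\tau))$ and to $(X,\sigma)$ requires these spaces to be $T_0$. This holds because both topologies refine $\tau$.
\end{itemize}
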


\begin{proposition}\label{prop-md-upper-topology} (\cite[Theorem 3.8]{Luo-Xu-2017}) For a poset $P$, md$(\upsilon(P))=\sigma_2(P)$. Hence if $P$ is a dcpo, then md$(\upsilon(P))=\sigma (P)$.
\end{proposition}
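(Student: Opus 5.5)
The plan is to compute the closure of a directed set in the upper topology explicitly and then compare the definition of $md(\upsilon(P))$ (Definition \ref{def-md-topology}) with that of $\sigma_2(P)$ (Definition \ref{def-sigma-2-topology}) line by line. The key claim is that for every directed subset $D$ of $P$,
\[
\cl_{\upsilon(P)} D = D^{\delta}.
\]

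First, $D^{\delta}=\bigcap_{u\in D^{\ua}}{\da} u$ is an intersection of subbasic $\upsilon$-closed sets and contains $D$. Hence $\cl_{\upsilon(P)}D\subseteq D^{\delta}$.

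For the converse, take $x\notin \cl_{\upsilon(P)}D$. Then there is a basic upper-open set $P\setminus{\da}F$ with $F=\{f_1,\dots,f_n\}$ finite, such that $x\notin{\da}F$ and $D\subseteq {\da}F$. Directedness now gives $D\subseteq{\da}f_i$ for a single $i$. Indeed, otherwise choose $d_i\in D\setminus{\da}f_i$ for each $i$ and take an upper bound $d\in D$ of $d_1,\dots,d_n$. Then $d\le f_j$ for some $j$, so $d_j\le f_j$, a contradiction. Thus $f_i\in D^{\ua}$ and $x\notin{\da}f_i\supseteq D^{\delta}$, which proves $D^{\delta}\subseteq\cl_{\upsilon(P)}D$.

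With the claim in hand, a set $U$ is $md(\upsilon(P))$-open iff for every directed $D$, $D^{\delta}\cap U\neq\emptyset$ implies $D\cap U\neq\emptyset$. The only remaining discrepancy with Definition \ref{def-sigma-2-topology} is the requirement that $U$ be an upper set. This is automatic by Proposition \ref{prop-md-topology}(1), since $md(\upsilon(P))\subseteq\alpha(P)$. It can also be seen directly: apply the condition to $D=\{y\}$, whose cut is ${\da}y$; then $x\le y$ and $x\in U$ force $y\in U$. Hence $md(\upsilon(P))=\sigma_2(P)$.

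For a dcpo, every directed $D$ has a supremum, and $D^{\delta}={\da}\vee D$. The $s_2$-condition then reads: $\vee D\in U$ (for an upper set $U$) implies $D\cap U\neq\emptyset$. This is exactly Scott openness, so $\sigma_2(P)=\sigma(P)$, as already remarked after Definition \ref{def-sigma-2-topology}.

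The only step needing real care is the inclusion $D^{\delta}\subseteq\cl_{\upsilon(P)}D$, namely reducing a finite cover $D\subseteq{\da}F$ to a single principal ideal. The directedness argument above handles it.
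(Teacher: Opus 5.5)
Your proof is correct and complete. The paper does not prove this proposition itself; it quotes it from Luo and Xu (Theorem 3.8 there), so there is no internal argument to compare against. Your argument is self-contained and rests on one fact: $\cl_{\upsilon(P)}D=D^{\delta}$ for every directed $D$. Once that holds, the $md$-condition and the $s_2$-condition are literally the same, up to the upper-set requirement, which you dispose of correctly. The step reducing $D\subseteq{\da}F$ to $D\subseteq{\da}f_i$ for a single $i$ is where directedness is genuinely needed, and you handle it properly. The dcpo case, where $D^{\delta}={\da}\vee D$, is also right.

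One point you leave implicit is harmless but worth stating. In the definition of $md(\upsilon(P))$, ``directed'' refers to the specialization order of $(P,\upsilon(P))$. Your lemma with $D=\{y\}$ gives $\cl_{\upsilon(P)}\{y\}={\da}y$. So this order is the given order of $P$, and both definitions quantify over the same directed sets.
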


A poset $P$ is said to be \emph{Noetherian} if it satisfies the \emph{ascending chain condition} ($\mathrm{ACC}$ for short): every ascending chain has a greatest member. It is easy to verify that $P$ is Noetherian if{}f every directed set of $P$ has a largest element. So every Noetherian poset is a dcpo.

\begin{proposition}\label{prop-Alexandroff-topology-sober} (\cite[Proposition 11]{Lawson-Xu-2024-2}).
	For a poset $P$, the following two conditions are equivalent:
	\begin{enumerate}[\rm (1)]
		\item $P$ is Noetherian.
        \item $P$ is a dcpo and $\alpha(P)=\sigma(P)$.
	\end{enumerate}
\end{proposition}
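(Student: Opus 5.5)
We prove (1) $\Rightarrow$ (2) and (2) $\Rightarrow$ (1) separately.

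\emph{(1) $\Rightarrow$ (2).} Assume $P$ is Noetherian. First, $P$ is a dcpo: every directed set $D$ has a largest element, which is then $\vee D$. Next we show every upper set $U$ is Scott open. Let $D$ be directed with $\vee D\in U$. Since $D$ has a largest element $d_0$, we have $\vee D=d_0\in D$, so $D\cap U\neq\emptyset$. Hence $\alpha(P)\subseteq\sigma(P)$. The reverse inclusion $\sigma(P)\subseteq\alpha(P)$ always holds. Thus $\alpha(P)=\sigma(P)$.

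To justify the preliminary claim that every directed set has a largest element, we argue by contradiction, and this is the main obstacle. Suppose a directed set $D$ has no largest element. Then for every $d\in D$ there is $d'\in D$ with $d'\not\leq d$. Take an upper bound $e\in D$ of $\{d,d'\}$. Then $e\geq d$, and $e\neq d$ because $d'\not\leq d$. So every element of $D$ has a strict successor in $D$. Using dependent choice, build $d_0<d_1<d_2<\cdots$ in $D$. This ascending chain has no greatest member, contradicting ACC. Alternatively, the paper already notes that Noetherian is equivalent to every directed set having a largest element, so we may simply cite that.

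\emph{(2) $\Rightarrow$ (1).} Assume $P$ is a dcpo and $\alpha(P)=\sigma(P)$. Let $C$ be an ascending chain. It is directed, so $s=\vee C$ exists. The set $U={\ua}s$ is an upper set, so $U\in\alpha(P)=\sigma(P)$. Since $\vee C=s\in U$, Scott openness gives some $c\in C$ with $c\in U$, that is, $c\geq s$. Also $c\leq s$, so $c=s$. Hence $s\in C$ is the greatest member of $C$, and $P$ satisfies ACC.

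The only subtle step is the dependent-choice argument linking ACC to the existence of largest elements in directed sets. Everything else follows directly from the definitions.
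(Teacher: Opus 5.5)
Your proof is correct and complete. The paper gives no proof of its own here, only the citation to Lawson--Xu together with the remark just before the statement that a poset is Noetherian iff every directed set has a largest element; your argument is the standard direct verification built on exactly that fact. One small note: in (2)$\Rightarrow$(1) you can apply the same reasoning with ${\uparrow}\vee D$ to an arbitrary directed $D$, which gives $\vee D\in D$ directly, so only the direction (1)$\Rightarrow$(2) needs a choice principle.
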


\section{$\mathcal{QS}$-convergence in $T_0$-spaces}

In this section, we study the $\mathcal{QS}$-convergence in $T_0$-spaces. Some characterizations are given for the $\mathcal{QS}$-convergence in a $T_0$-space being topological. It is shown that a $T_0$-space $(X,\tau)$ is locally hypercompact iff the $\mathcal{QS}$-convergence in $(X,\tau)$ coincides with the convergence in the topology $\tau$. Therefore, a poset $P$ is quasicontinuous iff $\mathcal{QS}$-convergence in the Scott space of $P$ is topological iff $\mathcal{QS}$-convergence coincides with convergence in the Scott topology $\sigma (P)$.

\begin{definition}\label{def-S-convergence}  (\cite[Definition 3.1]{Zhang-Bao-Xu-2022})
Let $(X, \tau)$ be a $T_0$-space, $x\in X$ and $\mathcal{N}$ be a net in $X$.
\begin{enumerate}[\rm (1)]

    \item We say that $x$ is an \emph{eventual lower bound} of $\mathcal{N}$ if $\mathcal{N}$ is eventually in ${\uparrow} x$.

    \item $\mathcal{N}$ is said to be $\mathcal{S}$-\emph{convergent} to a point $x\in X$,  written~$\mathcal{N}\stackrel{\mathcal{S}}\longrightarrow x$, if
there exists a directed subset $D\subseteq X$ of eventual lower bounds of $\mathcal{N}$ such that
$x\in \cl_{\tau} D$.

\end{enumerate}
\end{definition}

Let $\mathcal{S} = \{(\mathcal{N} , x) :\mathcal{N} \mbox{~is~a~net~in~} X,~x\in X \mbox{~and~} \mathcal{N}\stackrel{\mathcal{S}}\longrightarrow x \}$. Clearly, $\mathcal{O}(\mathcal{S})=\{U\subseteq X :$ whenever $(\mathcal{N} , x)\in \mathcal{S}$ and $x\in U$, then $\mathcal{N}$ is eventually in $U\}$  is a topology on $X$.

\begin{remark}\label{rem-S-convergence} (\cite[Remark 3.2]{Zhang-Bao-Xu-2022}) Let $P$ be a poset and $\mathcal{N}$ be a net in $P$.
\begin{enumerate}[\rm (1)]
   \item If $\mathcal{N}$ $\mathcal{S}$-converges to a point $x\in P$, then it $\mathcal{S}$-converges to every $y\in P$ with $y \leq x$.

   \item When $P$ is a $dcpo$ endowed with the Scott topology $\sigma(P)$, the preceding definition of $\mathcal{S}$-convergence
is equivalent to the standard one in \cite[Definition II-1.1]{GHKLMS-2003}.
\end{enumerate}
\end{remark}

\begin{proposition}\label{prop-O(S)=md} (\cite[Proposition 3.3]{Zhang-Bao-Xu-2022}) Let $(X, \tau)$ be a $T_0$-space. Then $\mathcal O(\mathcal S)=md(\tau)$.
\end{proposition}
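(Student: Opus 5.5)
We prove the two inclusions $md(\tau)\subseteq \mathcal O(\mathcal S)$ and $\mathcal O(\mathcal S)\subseteq md(\tau)$ directly from the definitions. Throughout we use that every $\tau$-closed set is a lower set in the specialization order. We also use that $md(\tau)\subseteq \alpha(X)$ by Proposition \ref{prop-md-topology}(1), so every member of $md(\tau)$ is an upper set.

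\emph{First inclusion.} Let $U\in md(\tau)$, and let $(\mathcal N,x)\in\mathcal S$ with $x\in U$, where $\mathcal N=(x_i)_{i\in I}$. By definition there is a directed set $D$ of eventual lower bounds of $\mathcal N$ with $x\in \cl_\tau D$. Then $\cl_\tau D\cap U\neq\emptyset$, and since $U$ is MD-open we get some $d\in D\cap U$. As $d$ is an eventual lower bound, $\mathcal N$ is eventually in ${\ua} d$. Since $U$ is an upper set, ${\ua} d\subseteq U$, so $\mathcal N$ is eventually in $U$. Hence $U\in\mathcal O(\mathcal S)$.

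\emph{Second inclusion.} Let $U\in\mathcal O(\mathcal S)$.
\begin{enumerate}
\item $U$ is an upper set. Take $x\in U$ and $x\le y$, and consider the constant net with value $y$. The singleton $\{x\}$ is directed, $x$ is an eventual lower bound of this net (because $y\in{\ua} x$), and $x\in\cl_\tau\{x\}$. So the constant net $\mathcal S$-converges to $x\in U$, hence is eventually in $U$, i.e.\ $y\in U$.
\item $U$ is MD-open. Let $D$ be directed with $\cl_\tau D\cap U\neq\emptyset$, say $x\in\cl_\tau D\cap U$. Consider the net $(d)_{d\in D}$ indexed by $D$ itself. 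Each $e\in D$ is an eventual lower bound of it, since $d\ge e$ implies $d\in{\ua} e$. So $D$ itself witnesses that $(d)_{d\in D}\stackrel{\mathcal S}\longrightarrow x$. As $x\in U$, the net is eventually in $U$, and in particular $D\cap U\neq\emptyset$.
\end{enumerate}
Thus $U\in md(\tau)$, which gives $\mathcal O(\mathcal S)=md(\tau)$.

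No step is a real obstacle. The one point needing care is the first inclusion, which requires $U$ to be an upper set; this is supplied by $md(\tau)\subseteq\alpha(X)$ in Proposition \ref{prop-md-topology}(1), or can be checked directly by applying MD-openness to singleton directed sets.
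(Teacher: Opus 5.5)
Your proof is correct. The paper gives no proof of its own here and only cites Zhang, Bao and Xu, but your argument is the standard direct one: the constant-net step is Remark~\ref{rem-QS-convergence}(4), and the net $(d)_{d\in D}$ step is Lemma~\ref{lem-direct-set-S-convergence}. The upper-set check in your second inclusion is harmless but not needed, since Definition~\ref{def-md-topology} does not require MD-open sets to be upper sets.
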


\begin{definition}\label{def-quasi-eventual-lower-bound} (\cite[Definition 6]{Ruan-Xu-2019})
Let $(X, \tau)$ be a $T_0$-space and~$\mathcal{N}$~be a net in $X$. A nonempty finite set $F$ of $X$ is called a \emph{quasi-eventual lower bound} of $\mathcal{N}$ if $\mathcal{N}$ is eventually in ${\ua} F$.
\end{definition}

\begin{definition}\label{def-GS-convergence} (\cite[Definition 2.4]{Chen-Kou-2014})
Let $P$ be a dcpo, $x\in P$ and $\mathcal{N}$ be a net in $P$.
$\mathcal{N}$ is said to be $\mathcal{GS}$-\emph{convergent} to a point $x$, written $\mathcal{N}\stackrel{\mathcal{GS}}\longrightarrow x$, if there exists a family $\{F_d : d\in D\}$ of nonempty finite sets of $P$ satisfying the following three conditions:
\begin{enumerate}[\rm (1)]
\item $\{{\ua} F_d : d\in D\}$ is a filtered family, that is, for any $d_1, d_2\in D$, there is $d_3\in D$ such that $F_{d_3}\subseteq {\ua} F_{d_1}\cap {\ua} F_{d_2}$,

\item for each $d\in D$, $F_d$ is a quasi-eventual lower bound of $\mathcal{N}$, and

\item there is $a\in {\ua}x$ with ${\ua} a=\bigcap_{d\in D}{\ua} F_d$.
\end{enumerate}

Let $\mathcal{GS} = \{(\mathcal{N}, x) :\mathcal{N} \mbox{~is~a~net~in~} P,~x\in P \mbox{~and~}\mathcal{N}\stackrel{\mathcal{GS}}\longrightarrow x \}$ and $\mathcal{O}(\mathcal{GS})=\{U\subseteq P : $ whenever $(\mathcal{N}, x)\in \mathcal{GS}$ and $x\in U$, then $\mathcal{N}$ is eventually in $U\}$.
\end{definition}

\begin{proposition}\label{prop-GS-Scott-topology} (\cite[Proposition 2.7]{Chen-Kou-2014}) Let $P$ be a dcpo. Then $\mathcal{O}(\mathcal{GS})=\sigma(P)$.
\end{proposition}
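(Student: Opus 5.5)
We need to show $\mathcal O(\mathcal{GS})=\sigma(P)$ for a dcpo $P$. The argument compares $\mathcal{GS}$-convergence with $\mathcal S$-convergence, using Rudin's Lemma for the difficult inclusion.

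\emph{Inclusion $\mathcal O(\mathcal{GS})\subseteq\sigma(P)$.} Let $U\in\mathcal O(\mathcal{GS})$. First we check that $U$ is an upper set. Suppose $x\in U$ and $x\le y$, and consider the constant net with value $y$. Take $D$ a singleton and $F_d=\{y\}$. Then $\{y\}$ is a quasi-eventual lower bound, the family $\{{\ua}\{y\}\}$ is trivially filtered, and $a=y\in{\ua}x$ satisfies ${\ua}a=\bigcap{\ua}F_d$. So the constant net $\mathcal{GS}$-converges to $x$, and hence $y\in U$.

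Next we check inaccessibility by directed sups. Let $D$ be directed with $\vee D\in U$, and consider the net $(d)_{d\in D}$. Use the family $\{\{d\}:d\in D\}$. It is filtered: given $d_1,d_2$, choose $d_3\ge d_1,d_2$, so $\{d_3\}\subseteq{\ua}d_1\cap{\ua}d_2$. Each $\{d\}$ is a quasi-eventual lower bound of the net. The intersection $\bigcap_{d}{\ua}d$ equals ${\ua}\vee D$, so we may take $a=\vee D\in{\ua}\vee D$. Hence the net $\mathcal{GS}$-converges to $\vee D$, so it is eventually in $U$, and therefore $D\cap U\neq\emptyset$. This shows $U\in\sigma(P)$.

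\emph{Inclusion $\sigma(P)\subseteq\mathcal O(\mathcal{GS})$.} Let $U\in\sigma(P)$, and let $\mathcal N\stackrel{\mathcal{GS}}\longrightarrow x\in U$ be witnessed by $\{F_d:d\in D\}$ and $a\ge x$. Since $U$ is an upper set, $a\in U$, so $\bigcap_{d\in D}{\ua}F_d={\ua}a\subseteq U$.

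Now apply Corollary \ref{cor-Rudin-lemma}, the consequence of Rudin's Lemma. The family $\{{\ua}F_d\}\subseteq\mathbf{Fin}\,P$ is filtered and $U$ is Scott open in the dcpo $P$, so ${\ua}F_{d}\subseteq U$ for some $d$. As $F_d$ is a quasi-eventual lower bound, $\mathcal N$ is eventually in ${\ua}F_d\subseteq U$. Thus $U\in\mathcal O(\mathcal{GS})$.

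The only non-trivial step is this last one: passing from the infinite intersection being contained in $U$ to a single finite stage being contained in $U$. That is exactly what Rudin's Lemma, in the form of Corollary \ref{cor-Rudin-lemma}, provides. Everything else is a direct check of the definitions.
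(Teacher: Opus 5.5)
Your proof is correct and complete. The constant-net and directed-net checks establish $\mathcal O(\mathcal{GS})\subseteq\sigma(P)$. The reverse inclusion is the same application of Corollary \ref{cor-Rudin-lemma} that the paper uses in proving Proposition \ref{prop-GS-is-QS}.

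The paper itself only cites this statement from Chen--Kou and gives no separate proof. Within its framework, one could also obtain $\sigma(P)\subseteq\mathcal O(\mathcal{GS})$ by combining $\mathcal{GS}\subseteq\mathcal{QS}$ (Proposition \ref{prop-GS-is-QS}) with $\mathcal O(\mathcal{QS})=\sigma(P)$ (Corollary \ref{cor-Scott-topology-O(QS)=O(S)=sigma}). That detour rests on the same Rudin-type argument, though, so your direct route is the more economical one.
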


\begin{definition}\label{def-QS-convergence} (\cite[Definition 3.2]{Chen-Kou-2024})
Let $(X, \tau)$ be a $T_0$-space, $x\in X$ and $\mathcal{N}$ be a net in $X$.
$\mathcal{N}$ is said to be \emph{quasi}-$\mathcal{S}$-\emph{convergent} (shortly $\mathcal{QS}$-\emph{convergent}) to a point $x$, written $\mathcal{N}\stackrel{\mathcal{QS}}\longrightarrow x$, if there exists a family $\{F_d : d\in D\}$ of nonempty finite sets of $X$ satisfying the following three conditions:
\begin{enumerate}[\rm (1)]
\item $\{{\ua} F_d : d\in D\}$ is a filtered family,

\item for each $d\in D$, $F_d$ is a  quasi-eventual lower bound of $\mathcal{N}$, and

\item ${\ua} x\in \cl_{P_S((X, \tau))}\{{\ua} F_d: d\in D\}$.
\end{enumerate}

Let $\mathcal{QS} = \{(\mathcal{N}, x) :\mathcal{N} \mbox{~is~a~net~in~} X,~x\in X \mbox{~and~}\mathcal{N}\stackrel{\mathcal{QS}}\longrightarrow x \}$. Then $\mathcal{O}(\mathcal{QS})=\{U\subseteq X : $ whenever $(\mathcal{N}, x)\in \mathcal{QS}$ and $x\in U$, then $\mathcal{N}$ is eventually in $U\}$  is a topology on $X$.
\end{definition}

\begin{remark}\label{rem-QS-convergence=Kou-L-convergence} Let $(X, \tau)$ be a $T_0$-space, $x\in X$ and $\mathcal{N}$ be a net in $X$. Suppose that $\{F_d : d\in D\}$ is a family of nonempty finite sets of $X$ satisfying conditions (1)-(3) of Definition \ref{def-QS-convergence}. It is easy to see that condition (3) is equivalent to the condition that the net $({\ua} F_d)_{d\in D}$ converges to ${\ua} x$ in $P_S(X)$ (note that $\{{\ua}F_d : d\in D\}\in \mathcal{D}(P_S((X, \tau)))$). So the $\mathcal{QS}$-convergence is equivalent to the $\mathcal S^*$-convergence introduced by Chen and Kou in \cite[Definition 3.2(ii)]{Chen-Kou-2024}. For more consistent notation with the $\mathcal S$-convergence, here we call such a convergence the $\mathcal{QS}$-convergence.
\end{remark}

\begin{remark}\label{rem-QS-convergence}
Let $(X, \tau)$ be a $T_0$-space, $x, y\in X$ and $\mathcal{N}$ be a net in $X$. Then we have the following conclusions:
\begin{enumerate}[\rm (1)]
       \item $\tau\subseteq \mathcal{O}(\mathcal{QS})$.

       Suppose that $(\mathcal{N}, x)\in \mathcal{QS}$ and $x\in U\in \tau $. Then there exists a family $\{F_d : d\in D\}$ of nonempty finite sets of $X$ such that $\{F_d : d\in D\}$ and $(\mathcal{N}, x)$ satisfy conditions (1)-(3) of Definition \ref{def-QS-convergence}. As $\ua x\in \Box U\in \mathcal O(P_S((X, \tau)))$ and $\ua x\in \cl_{P_S((X, \tau))}\{\ua F_d: d\in D\}$, there is $d\in D$ such that $\ua F_d\subseteq U$, whence by condition (2) of Definition \ref{def-QS-convergence}, $\mathcal{N}$ is eventually in $U$. Thus $U\in \mathcal{O}(\mathcal{QS})$.

       \item  If $\mathcal{N}\stackrel{\mathcal{S}}\longrightarrow x$, then $\mathcal{N}\stackrel{\mathcal{QS}}\longrightarrow x$. Hence $\mathcal{O}(\mathcal{QS})\subseteq \mathcal{O}(\mathcal{S})$.

       In fact, as $\mathcal{N}\stackrel{\mathcal{S}}\longrightarrow x$, there is a directed subset $D\subseteq X$ of eventual lower bounds of $\mathcal{N}$ such that
$x\in \cl_{\tau} D$. Then $\{\ua d : d\in D\}$ is a filtered family and for each $d\in D$, $\{d\}$ is a quasi-eventual lower bound of $\mathcal{N}$. By Lemma \ref{lem-closure-in-Smyth} we have $\ua x\in \Diamond \cl_{\tau} D=\cl_{P_S((X, \tau))}\{\ua d: d\in D\}$. Hence $\mathcal{N}\stackrel{\mathcal{QS}}\longrightarrow x$.

       \item If $\mathcal{N}\stackrel{\mathcal{QS}}\longrightarrow x$ and $y\leq x$, then $\mathcal{N}\stackrel{\mathcal{QS}}\longrightarrow y$.

\item For any $U\in \mathcal{O}(\mathcal{S})$, $U$ is an upper set.

            Suppose that $x\in U\in \mathcal{O}(\mathcal{S})$ and $x\leq y$. Consider the constant net $(y)_{i\in I}$ in $X$ with value $y$. Clearly, $(y)_{i\in I}\stackrel{\mathcal{S}}\longrightarrow y$, and hence $(y)_{i\in I}\stackrel{\mathcal{S}}\longrightarrow x$. As $x\in U\in \mathcal{O}(\mathcal{S})$, $(y)_{i\in I}$ is eventually in $U$, that is, $y\in U$. So $U$ is a upper set.

\item By (2) and (4), $U$ is an upper set for any $U\in \mathcal{O}(\mathcal{QS})$.

\end{enumerate}
\end{remark}

\begin{proposition}\label{prop-GS-is-QS} Let $P$ be a dcpo, $x\in P$ and $\mathcal{N}$ be a net in $P$.
If $\mathcal{N}\stackrel{\mathcal{GS}}\longrightarrow x$, then $\mathcal{N}\stackrel{\mathcal{QS}}\longrightarrow x$ in $(P. \sigma(P))$.
\end{proposition}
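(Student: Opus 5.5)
We keep the same witnessing family $\{F_d : d\in D\}$ supplied by the $\mathcal{GS}$-convergence. Conditions (1) and (2) of Definition \ref{def-QS-convergence} are then identical to conditions (1) and (2) of Definition \ref{def-GS-convergence}. The only work is to derive condition (3), namely ${\ua} x\in \cl_{P_S((P,\sigma(P)))}\{{\ua} F_d : d\in D\}$, from the existence of $a\in{\ua}x$ with ${\ua}a=\bigcap_{d\in D}{\ua}F_d$.

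For condition (3) we use the basic open sets $\{\Box U : U\in\sigma(P)\}$ of the upper Vietoris topology. Let $\Box U$ be a basic open neighbourhood of ${\ua}x$, so ${\ua}x\subseteq U$ with $U$ Scott open. It suffices to produce $d\in D$ with ${\ua}F_d\subseteq U$, i.e. ${\ua}F_d\in\Box U$. Since $x\le a$, we get ${\ua}a\subseteq{\ua}x\subseteq U$, hence $\bigcap_{d\in D}{\ua}F_d={\ua}a\subseteq U$. Now $P$ is a dcpo and $\{{\ua}F_d : d\in D\}\subseteq\mathbf{Fin}\,P$ is filtered, so Corollary \ref{cor-Rudin-lemma} yields some $d\in D$ with ${\ua}F_d\subseteq U$. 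Thus every basic neighbourhood of ${\ua}x$ meets $\{{\ua}F_d : d\in D\}$, which gives condition (3). An alternative, avoiding the choice of base, is to note that ${\ua}a\in\cl\{{\ua}F_d\}$ and that ${\ua}x\sqsubseteq{\ua}a$ in the specialization order (Remark \ref{rem-Smyth-specialization-order}); closed sets are lower sets, so ${\ua}x$ also lies in the closure.

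The only step with content is the appeal to Rudin's Lemma in the form of Corollary \ref{cor-Rudin-lemma}; this is exactly where the dcpo hypothesis and finiteness of the $F_d$ are needed. Everything else is unwinding definitions, so there is no real obstacle.
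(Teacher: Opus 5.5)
Your proof is correct and is essentially the paper's own argument: keep the $\mathcal{GS}$-witnessing family, take a basic neighbourhood $\Box U$ of ${\ua}x$, note that $\bigcap_{d\in D}{\ua}F_d={\ua}a\subseteq{\ua}x\subseteq U$, and apply Corollary \ref{cor-Rudin-lemma} to get ${\ua}F_d\subseteq U$ for some $d\in D$. Your closing ``alternative'' still needs that same Rudin step to establish ${\ua}a\in\cl\{{\ua}F_d : d\in D\}$, so it only repackages the passage from $a$ to $x$ and does not avoid the key step.
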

\begin{proof} As $\mathcal{N}\stackrel{\mathcal{GS}}\longrightarrow x$, there exists a family $\{F_d : d\in D\}$ of nonempty finite sets of $P$ satisfying the following three conditions:
\begin{enumerate}[\rm (1)]
\item $\{{\ua} F_d : d\in D\}$ is a filtered family, that is, for any $d_1, d_2\in D$, there is $d_3\in D$ such that $F_{d_3}\subseteq {\ua} F_{d_1}\cap {\ua} F_{d_2}$,

\item for each $d\in D$, $F_d$ is a quasi-eventual lower bound of $\mathcal{N}$, and

\item there is $a\in {\ua}x$ with ${\ua} a=\bigcap_{d\in D}{\ua} F_d$.
\end{enumerate}
Now we show that ${\ua} x\in \cl_{P_S((X, \tau))}\{{\ua} F_d: d\in D\}$. Let $U\in\sigma(P)$ with ${\ua}x\in \Box U$. Then $\bigcap_{d\in D}{\ua} F_d={\ua}a\subseteq {\ua}x\subseteq U$. By Corollary \ref{cor-Rudin-lemma}, there is $d\in D$ such that ${\ua}F_d\subseteq U$ or, equivalently, ${\ua}F_d\in \Box U$. Hence ${\ua} x\in \cl_{P_S((X, \tau))}\{{\ua} F_d: d\in D\}$. Therefore, $\mathcal{N}\stackrel{\mathcal{QS}}\longrightarrow x$ in $(P. \sigma(P))$.
\end{proof}

By the definition of $\mathcal{S}$-convergence and Remark \ref{rem-QS-convergence}(2), we have the following.

\begin{lemma}\label{lem-direct-set-S-convergence}
Let $(X, \tau)$ be a $T_0$-space. If $D\subseteq X$ is a directed set and $x\in cl_{\tau} D$, then the net $(d)_{d\in D}$ $\mathcal{S}$-converges to $x$. Hence it $\mathcal{QS}$-converges to $x$.
\end{lemma}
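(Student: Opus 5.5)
The plan is to verify the definition of $\mathcal{S}$-convergence directly for the net $(d)_{d\in D}$, using $D$ itself as the witnessing directed set of eventual lower bounds. The $\mathcal{QS}$-part then follows from Remark \ref{rem-QS-convergence}(2).

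First I will check that every $e\in D$ is an eventual lower bound of the net $(d)_{d\in D}$. Here the index set is $D$ with the restriction of the specialization order $\leq_\tau$, and it is directed because $D$ is a directed subset of $X$. For $d\geq e$ in $D$ we have $d\in{\ua} e$, so the net is eventually in ${\ua} e$, as Definition \ref{def-S-convergence}(1) requires. Thus $D$ is a directed set consisting of eventual lower bounds of the net.

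Next, the hypothesis $x\in\cl_\tau D$ is exactly condition (2) of Definition \ref{def-S-convergence} with this $D$. Hence $(d)_{d\in D}\stackrel{\mathcal{S}}\longrightarrow x$.

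Finally, Remark \ref{rem-QS-convergence}(2) shows that every $\mathcal{S}$-convergent net is $\mathcal{QS}$-convergent to the same point. Concretely, the filtered family $\{{\ua} d: d\in D\}$ satisfies ${\ua} x\in\Diamond\cl_\tau D=\cl_{P_S((X,\tau))}\{{\ua} d:d\in D\}$ by Lemma \ref{lem-closure-in-Smyth}. So the net also $\mathcal{QS}$-converges to $x$.

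There is no real obstacle here. The only point needing care is to regard $D$, as an index set, as directed under the specialization order, so that "eventually" is taken with respect to that order.
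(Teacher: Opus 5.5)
Your proposal is correct and follows the paper's route: the paper justifies this lemma directly from the definition of $\mathcal{S}$-convergence, with $D$ itself as the directed set of eventual lower bounds and $x\in\cl_\tau D$, and then applies Remark \ref{rem-QS-convergence}(2) for the $\mathcal{QS}$ part. Your explicit check via Lemma \ref{lem-closure-in-Smyth} is what that remark does in this special case.
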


\begin{lemma}\label{lemma-filtered-family-QS-net}
Let $(X, \tau )$ be a $T_0$-space, $x\in X$ and $\{F_d : d\in D\}\subseteq X^{(<\omega)}$ such that $\{\ua F_d : d\in D\}$ is a filtered family and $\ua x\in \cl_{P_S((X, \tau))}\{\ua F_d: d\in D\}$. Then there is a net $\mathcal N$ in $\bigcup_{d\in D} F_d$ such that $(\mathcal N, x)\in \mathcal{QS}$.
\end{lemma}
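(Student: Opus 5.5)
We build the net directly from the finite sets themselves, using the standard ``choice'' construction on the index set. Put
\[
E=\{(d,y) : d\in D,\ y\in F_d\},
\]
preordered by $(d_1,y_1)\preceq (d_2,y_2)$ iff $\ua F_{d_2}\subseteq \ua F_{d_1}$, i.e.\ $F_{d_2}\subseteq \ua F_{d_1}$. Define the net $\mathcal N=(y)_{(d,y)\in E}$, so that $\mathcal N$ takes values in $\bigcup_{d\in D}F_d$.

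First we check that $E$ is directed. It is nonempty because $D\neq\emptyset$ and each $F_d$ is nonempty. Given $(d_1,y_1),(d_2,y_2)\in E$, filteredness of $\{\ua F_d : d\in D\}$ yields $d_3\in D$ with $\ua F_{d_3}\subseteq \ua F_{d_1}\cap\ua F_{d_2}$. Then any $(d_3,y)$ with $y\in F_{d_3}$ is an upper bound of both. The relation is only a preorder, which suffices for nets (as in Kelley).

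Next we verify condition (2) of Definition \ref{def-QS-convergence}: each $F_d$ is a quasi-eventual lower bound of $\mathcal N$. Fix $d$ and pick any $y_0\in F_d$. If $(d,y_0)\preceq (e,y)$, then $y\in F_e\subseteq \ua F_e\subseteq \ua F_d$. Hence $\mathcal N$ is eventually in $\ua F_d$. Conditions (1) and (3) hold for the same family $\{F_d : d\in D\}$ by hypothesis, so $(\mathcal N,x)\in\mathcal{QS}$.

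There is no real obstacle here. The only points needing care are that the index set is merely preordered, which is harmless (one could instead quotient or use the strict product trick), and that eventuality is measured from an arbitrary element with first coordinate $d$. If a partial order is insisted upon, we would instead index by $E'=D'\times\ldots$, or refine $\preceq$ by comparing pairs lexicographically. A simpler alternative is to take $E$ ordered by $(d_1,y_1)\le(d_2,y_2)$ iff $(d_1,y_1)=(d_2,y_2)$ or $\ua F_{d_2}\subsetneq\ua F_{d_1}$, but the preorder version already fits the paper's notion of net.
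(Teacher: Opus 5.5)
Your construction is essentially the paper's: the index set consists of pairs (finite set, chosen element), ordered by reverse inclusion of $\ua F_d$; the net returns the chosen element; and conditions (1) and (3) are inherited from the hypothesis. If nets may be indexed by directed preorders, as in Kelley, your argument is complete.

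The paper's nets, however, are indexed by directed subsets of posets. It gets antisymmetry by adding an $\mathbb{N}$-coordinate: $(F_{d_1},n_1,e_1)<(F_{d_2},n_2,e_2)$ iff $\ua F_{d_2}\subsetneq\ua F_{d_1}$, or $\ua F_{d_2}=\ua F_{d_1}$ and $n_1<n_2$.

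Your suggested ``simpler alternative'' (equality or proper inclusion) does not work. It fails to be directed whenever the family has a least member carried by two different pairs. For example, take $D=\{0\}$ and $F_0=\{a,b\}$ with $a\neq b$ and $x\le a,b$; then $(0,a)$ and $(0,b)$ have no common upper bound. A constant family $F_d=\{x\}$ with $|D|\ge 2$ fails in the same way.

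If you want a poset index, use the paper's $\mathbb{N}$-coordinate instead. Equivalently, take $E\times\mathbb{N}$ with $(e,n)\le(e',n')$ iff the pairs are equal, or $e\preceq e'$ and $n<n'$.
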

\begin{proof}
Let $I=\{(F_d, n, e):d\in D, n\in\mathbb{N}$ and $e\in F_d\}$ and define an order on $I$ as follows: $(F_{d_1}, n_1, e_1)< (F_{d_2}, n_2, e_2)$ iff $\ua F_{d_2}$ is a proper subset of $\ua F_{d_1}$ or $\ua F_{d_2}=\ua F_{d_1}$ and $n_2>n_1$. Then $I$ is a directed set. Define $x_{(F_{d}, n, e)}=e$ for all $(F_{d}, n, e)\in I$. It is easy to see that $F_d$ is a quasi-eventual lower bound of  $(x_i)_{i\in I}$  for each $d\in D$. As $\{\ua F_d : d\in D\}$ is a filtered family and $\ua x\in \cl_{P_S((X, \tau))}\{\ua F_d: d\in D\}$, $(x_i)_{i\in I}$ $\mathcal{QS}$-converges to $x$.
\end{proof}

\begin{proposition}\label{prop-O(QS)=O(S)} (\cite[Lemma 3.5]{Chen-Kou-2024})
Let $(X, \tau)$ be a $T_0$-space. Then $\mathcal{O}(\mathcal{QS})=\mathcal{O}(\mathcal{S})=md(\tau)$.
\end{proposition}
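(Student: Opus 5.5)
We must show $\mathcal{O}(\mathcal{QS})=\mathcal{O}(\mathcal{S})=md(\tau)$. The second equality is Proposition \ref{prop-O(S)=md}, and Remark \ref{rem-QS-convergence}(2) gives $\mathcal{O}(\mathcal{QS})\subseteq \mathcal{O}(\mathcal{S})$. So the plan reduces to proving $md(\tau)\subseteq \mathcal{O}(\mathcal{QS})$.

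Let $U\in md(\tau)$, let $(\mathcal N,x)\in\mathcal{QS}$ with $x\in U$, and fix a witnessing family $\{F_d : d\in D\}$ satisfying conditions (1)--(3) of Definition \ref{def-QS-convergence}. By condition (2), it suffices to find some $d\in D$ with $F_d\subseteq U$. Since $U$ is an upper set by Proposition \ref{prop-md-topology}(1), this gives $\ua F_d\subseteq U$, and then $\mathcal N$ is eventually in $U$.

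We argue by contradiction and suppose $F_d\not\subseteq U$ for every $d\in D$, i.e. $F_d\not\subseteq \ua U$. By Lemma \ref{lem-filtred-family}(2), the family $\{\ua(F_d\setminus U) : d\in D\}$ is filtered, and each $F_d\setminus U$ is a nonempty finite set. Rudin's Lemma \ref{lem-Rudin-lemma} then yields a directed set $E\subseteq \bigcup_{d}(F_d\setminus U)$ with $E\cap F_d\neq\emptyset$ for all $d$. In particular $E\cap U=\emptyset$.

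We next show $x\in\cl_\tau E$. Put $C=\cl_\tau E$, which is closed, so $\Diamond C$ is closed in $P_S((X,\tau))$. Each $\ua F_d$ meets $E\subseteq C$, so $\ua F_d\in\Diamond C$ for all $d$. By condition (3), $\ua x\in\cl_{P_S}\{\ua F_d\}\subseteq \Diamond C$, so $\ua x\cap C\neq\emptyset$. Since $C$ is a lower set, $x\in C$.

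Thus $x\in U\cap \cl_\tau E$ with $E$ directed. Since $U$ is MD-open, this forces $E\cap U\neq\emptyset$, contradicting $E\cap U=\emptyset$. Hence some $F_d\subseteq U$, and $U\in\mathcal{O}(\mathcal{QS})$.

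The main step is the Rudin's Lemma construction: choosing the filtered family $F_d\setminus U$ via Lemma \ref{lem-filtred-family}(2), and transferring the Smyth-closure condition (3) into $x\in\cl_\tau E$ through the closed sets $\Diamond C$. The argument only needs the easy inclusion from the proof of Lemma \ref{lem-closure-in-Smyth}, not the full equality.
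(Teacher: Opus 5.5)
Your proof is correct. The paper does not prove this statement itself; it quotes it from \cite[Lemma 3.5]{Chen-Kou-2024}. Your key step for $md(\tau)\subseteq\mathcal{O}(\mathcal{QS})$ is, however, exactly the Rudin's Lemma argument the paper runs in Proposition \ref{prop-O(QS)-charac} and Lemma \ref{lem-characterization-way-below-c-relation-2}, so the approach is essentially the same.

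One small correction to your closing remark: the reduction through Remark \ref{rem-QS-convergence}(2) does use the nontrivial inclusion $\Diamond\cl_\tau A\subseteq\cl_{P_S(X)}\{{\ua} a : a\in A\}$ of Lemma \ref{lem-closure-in-Smyth}. Only your main step avoids it.
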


By Proposition \ref{lem-Erne-MD-space}(3), Proposition \ref{prop-md-topology}(4) and Proposition \ref{prop-O(QS)=O(S)}, we get the following.

\begin{corollary}\label{cor-Scott-topology-O(QS)=O(S)=sigma}
For a poset $P$ and its Scott space $(P, \sigma(P))$, we have $\mathcal{O}(\mathcal{QS})=\mathcal{O}(\mathcal{S})=\sigma(P)$.
\end{corollary}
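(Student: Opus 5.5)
The corollary follows by chaining three results already stated, so the plan is simply to combine them. By Proposition \ref{prop-O(QS)=O(S)}, applied to the $T_0$-space $(P,\sigma(P))$, we have $\mathcal{O}(\mathcal{QS})=\mathcal{O}(\mathcal{S})=md(\sigma(P))$. The Scott space is $T_0$, since its specialization order is the original order of $P$ and Scott open sets are upper sets, so that proposition applies.

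Next I would invoke Proposition \ref{lem-Erne-MD-space}(3). It states that $\Sigma\!\!~P$ is an MD-space for every poset $P$, that is, $\sigma(P)$ is an MD-topology. Proposition \ref{prop-md-topology}(4) then gives $md(\sigma(P))=\sigma(P)$. Substituting this into the previous equality yields $\mathcal{O}(\mathcal{QS})=\mathcal{O}(\mathcal{S})=\sigma(P)$.

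There is no real obstacle here. The only point worth checking is that the MD property of the Scott space holds for an arbitrary poset and not just for dcpos. This is exactly what Proposition \ref{lem-Erne-MD-space}(3) asserts, and it is also direct to verify. In the specialization order of $\Sigma\!\!~P$, the closure of a directed set $D$ contains $\vee D$ whenever this supremum exists, since $\da\vee D$ is Scott closed. Conversely, if an upper set $U$ satisfies $\cl D\cap U\neq\emptyset\Rightarrow D\cap U\neq\emptyset$ for all directed $D$, then $\vee D\in U$ forces $D\cap U\neq\emptyset$, so $U$ is Scott open.
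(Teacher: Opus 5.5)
Your proof is correct and matches the paper's own proof, which likewise combines Proposition \ref{prop-O(QS)=O(S)} with Proposition \ref{lem-Erne-MD-space}(3) and Proposition \ref{prop-md-topology}(4). One small slip in your optional direct check: $\vee D\in\cl D$ holds because Scott closed sets are closed under existing directed suprema, not because ${\da}\vee D$ is Scott closed, since the latter only gives $\cl D\subseteq{\da}\vee D$; this does not affect the main argument.
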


\begin{proposition}\label{prop-O(QS)-charac}
Let $(X, \tau)$ be a $T_0$-space and $U\subseteq X$. Then the following two conditions are equivalent:
\begin{enumerate}[\rm (1)]
    \item   $U\in \mathcal{O}(\mathcal{QS})$.

      \item  $U=\ua U$ and for any filtered family $\mathcal{F}\subseteq \mathbf{Fin}~\!\!X$, $\Box U\cap \cl_{P_S((X, \tau))}\mathcal F\neq\emptyset $ implies $\Box U\cap \mathcal F\neq\emptyset$.
\end{enumerate}
\end{proposition}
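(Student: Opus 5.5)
We prove the two implications separately, working directly from the definition of $\mathcal{O}(\mathcal{QS})$. Lemma \ref{lemma-filtered-family-QS-net} supplies the link between filtered families in $\mathbf{Fin}~\!\!X$ and $\mathcal{QS}$-convergent nets.

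(1) $\Rightarrow$ (2): By Remark \ref{rem-QS-convergence}(5), every $U\in\mathcal{O}(\mathcal{QS})$ is an upper set. Now let $\mathcal F=\{\ua F_d : d\in D\}\subseteq\mathbf{Fin}~\!\!X$ be filtered, and suppose $K\in\Box U\cap \cl_{P_S((X,\tau))}\mathcal F$.

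The first step is to pass from $K$ to a principal filter. Since $K\subseteq U$ is nonempty, pick any $x\in K$; then $\ua x\subseteq K$, i.e.\ $K\sqsubseteq \ua x$ in the Smyth order. By Remark \ref{rem-Smyth-specialization-order}, the Smyth order is the specialization order of $P_S(X)$. Closed sets are lower sets in the specialization order, so $\ua x$ lies in the same closed set as $K$ exactly when $\ua x\leq K$, which is the wrong direction. So we instead use the structure of $\Box U$ directly.

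Closed sets in $P_S(X)$ are Smyth-lower, that is, closed under taking larger compact saturated sets. Hence $\cl\mathcal F$ is closed under supersets of $K$. Since $K\subseteq U$ and $K$ is a union of the sets $\ua k$ with $k\in K$, we work with $K$ itself. The aim is to show that some $x\in K$ has $\ua x\in\cl\mathcal F$.

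Suppose not. Then for each $k\in K$ there is an open $V_k$ with $\ua k\subseteq V_k$ and $\ua F_d\not\subseteq V_k$ for all $d$. Compactness gives finitely many $k$ with $K\subseteq V_{k_1}\cup\dots\cup V_{k_n}=:V$. Since $K\in\cl\mathcal F$ and $K\in\Box V$, some $\ua F_d\subseteq V$. This is not yet a contradiction, and it is the main obstacle. To overcome it we shrink the family using Lemma \ref{lem-filtred-family}(1) and iterate, or more directly observe that the $\ua F_d$ already meet $\da$-sets suitably. We expect this obstacle can be sidestepped: once some $\ua x\in\cl\mathcal F$ with $x\in U$ is available, the conclusion follows as below.

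Given such an $x$, Lemma \ref{lemma-filtered-family-QS-net} gives a net $\mathcal N$ in $\bigcup_d F_d$ with $(\mathcal N,x)\in\mathcal{QS}$. As $x\in U\in\mathcal{O}(\mathcal{QS})$, the net $\mathcal N$ is eventually in $U$. By the construction of that net, for some index $(F_{d_0},n,e)$ all later indices lie in $U$. Every $e'\in F_{d_0}$ occurs as such a later index (take $n'>n$), so $F_{d_0}\subseteq U$. Since $U$ is an upper set, $\ua F_{d_0}\in\Box U\cap\mathcal F$.

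(2) $\Rightarrow$ (1): Let $(\mathcal N,x)\in\mathcal{QS}$ with $x\in U$, witnessed by the family $\{F_d : d\in D\}$. Then $\mathcal F=\{\ua F_d : d\in D\}$ is filtered, and $\ua x\in\cl\mathcal F$. Since $U$ is an upper set, $\ua x\subseteq U$, so $\ua x\in \Box U\cap\cl\mathcal F$. By (2), some $\ua F_d\subseteq U$. Because $F_d$ is a quasi-eventual lower bound of $\mathcal N$, the net $\mathcal N$ is eventually in $\ua F_d\subseteq U$. Hence $U\in\mathcal{O}(\mathcal{QS})$.

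In summary, the key step is reducing the intersection $\Box U\cap\cl\mathcal F$ to a point $\ua x$ with $x\in U$. If that reduction resists, an alternative plan for (1) $\Rightarrow$ (2) is this: use Proposition \ref{prop-O(QS)=O(S)} to get $U\in md(\tau)$, and combine it with Rudin's Lemma \ref{lem-Rudin-lemma} applied to the sets $F_d\setminus U$, filtered by Lemma \ref{lem-filtred-family}(2). If every $\ua F_d$ failed to lie in $U$, Rudin's Lemma would give a directed $E\subseteq X\setminus U$ meeting every $F_d$. One then shows $\cl_\tau E$ meets $K\subseteq U$, which contradicts $U$ being MD-open.
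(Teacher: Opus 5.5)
Your direction (2) $\Rightarrow$ (1) is correct and matches the paper's argument.

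\textbf{The main route for (1) $\Rightarrow$ (2) has a genuine gap.} It rests on a claim that is false. Take a filtered $\mathcal F\subseteq\mathbf{Fin}\,X$ and $K\in\Box U\cap\cl_{P_S(X)}\mathcal F$. There need not be any $x\in K$ with $\ua x\in\cl_{P_S(X)}\mathcal F$.

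For a counterexample, let $X=\{a,b\}$ be discrete, $U=X$, $\mathcal F=\{\ua\{a,b\}\}$ and $K=\{a,b\}$. Then $\cl_{P_S(X)}\mathcal F=\{\{a,b\}\}$, since its complement $\Box\{a\}\cup\Box\{b\}$ is open. So $\cl_{P_S(X)}\mathcal F$ contains neither $\ua a$ nor $\ua b$.

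Hence the obstacle you hit after the compactness argument cannot be removed by shrinking or iterating. That part of the plan should be dropped, together with the subsequent appeal to Lemma \ref{lemma-filtered-family-QS-net}.

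\textbf{Your fallback plan is exactly the paper's proof, and it works.} The step you left as ``one then shows'' takes one line, as follows.

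Suppose $\ua F_d\not\subseteq U$ for every $d$. Then Lemma \ref{lem-filtred-family}(2), using $\ua U=U$, together with Rudin's Lemma gives a directed set $E\subseteq\bigcup_d(F_d\setminus U)\subseteq X\setminus U$ that meets every $F_d$. So each $\ua F_d$ meets $\cl_\tau E$, that is, $\mathcal F\subseteq\Diamond\cl_\tau E$. Since $\Diamond\cl_\tau E$ is closed in $P_S(X)$, we get $K\in\cl_{P_S(X)}\mathcal F\subseteq\Diamond\cl_\tau E$. Thus $K\cap\cl_\tau E\neq\emptyset$, and $K\subseteq U$ gives $U\cap\cl_\tau E\neq\emptyset$. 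Now $U\in md(\tau)$ (Proposition \ref{prop-O(QS)=O(S)}) forces $U\cap E\neq\emptyset$, a contradiction.

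This is also where the point you were looking for actually lives. Any $x\in K\cap\cl_\tau E$ lies in $U$, and the net $(e)_{e\in E}$ $\mathcal{QS}$-converges to $x$ by Lemma \ref{lem-direct-set-S-convergence}. So $U\in\mathcal{O}(\mathcal{QS})$ directly yields $E\cap U\neq\emptyset$, without citing $\mathcal{O}(\mathcal{QS})=md(\tau)$.
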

\begin{proof} (1) $\Rightarrow$ (2): Let $U\in \mathcal{O}(\mathcal{QS})$. Then by Proposition \ref{prop-O(S)=md} and Proposition \ref{prop-O(QS)=O(S)}, we have $U\in md(\tau)$.
  Suppose that $\mathcal{F}\subseteq \mathbf{Fin}~\!\!X$ is a filtered family satisfying $\Box U\cap \cl_{P_S((X, \tau))}\mathcal F \neq\emptyset$. If $\ua F\nsubseteq U$ for all $\ua F\in \mathcal{F}$, then by Lemma \ref{lem-filtred-family} $\{\ua (F\setminus U) : \ua F\in \mathcal{F}\}\subseteq \mathbf{Fin}~\!\!X$ is a filtered family.
By Lemma \ref{lem-Rudin-lemma}, there exists a directed subset $E\subseteq \mathop{\bigcup}\limits_{\ua F \in \mathcal{F}} (F\setminus U)$ such that $E\cap (F\setminus U)\neq\emptyset$ for all $\ua F\in \mathcal{F}$. Then $\cl_{P_S((X, \tau))}\mathcal{F}\subseteq \Diamond \cl_{\tau}E$, whence $\Box U\cap \Diamond \cl_{\tau}E\neq\emptyset$. Therefore, $U\cap \cl_{\tau} E\neq\emptyset$, and hence $U\cap E\neq\emptyset$ by $U\in md(\tau)$, which contradicts $E\subseteq \mathop{\bigcup}\limits_{\ua F \in \mathcal{F}} (F\setminus U)\subseteq X\setminus U$. So there exists $\ua F\in \mathcal{F}$ such that $\ua F\subseteq U$. Thus $\Box U\cap \mathcal{F}\neq\emptyset$.

(2) $\Rightarrow$ (1):  Suppose that  $U=\ua U$ and $\mathcal{N}$ $\mathcal{QS}$-converges to $x\in U$. Then there exists a family $\{F_d : d\in D\}$ of nonempty finite sets of $X$ such that $\{F_d : d\in D\}$ and $(\mathcal N, x)$ satisfy conditions (1)-(3) of Definition \ref{def-QS-convergence}. Hence $\ua x\in \Box U\cap\cl_{P_S((X, \tau))}\{\ua F_d: d\in D\}$. By (2), we have $\Box U\cap \{\ua F_d: d\in D\}\neq\emptyset$, whence there is $d^\prime\in D$ with $\ua F_{d^\prime}\subseteq U$. As $\{F_d : d\in D\}$ satisfies condition (2) of Definition \ref{def-QS-convergence}, $\mathcal{N}$ is eventually in $\ua F_{d^\prime}$ and hence it is eventually in $U$. So $U\in \mathcal{O}(\mathcal{QS})$.
\end{proof}

By Proposition \ref{lem-Erne-MD-space}(2), Proposition \ref{prop-md-topology}(4) and Proposition \ref{prop-O(QS)=O(S)}, we deduce the following.

\begin{corollary}\label{cor-LHC-O(GS)=tau}
For a locally hypercompact $T_0$-space $(X, \tau)$, $\mathcal{O}(\mathcal{QS})=\mathcal{O}(\mathcal{S})=\tau$.
\end{corollary}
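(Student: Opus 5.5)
This is a direct chaining of three earlier results, as the sentence preceding the statement indicates. First I would use Proposition \ref{lem-Erne-MD-space}(2): since $(X,\tau)$ is locally hypercompact, it is an MD-space. Next, Proposition \ref{prop-md-topology}(4) turns this into the equality $md(\tau)=\tau$. Finally, Proposition \ref{prop-O(QS)=O(S)} gives $\mathcal{O}(\mathcal{QS})=\mathcal{O}(\mathcal{S})=md(\tau)$, and substituting $md(\tau)=\tau$ yields $\mathcal{O}(\mathcal{QS})=\mathcal{O}(\mathcal{S})=\tau$.

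The only step with real content is the first one, which is imported from Ern\'e. If I had to justify it directly, I would argue as follows. Let $U\in md(\tau)$ and $x\in U$; we must show $x\in\ii_\tau U$. Since $U$ is MD-open it is an upper set, so for each open $V\ni x$, local hypercompactness gives $F_V\in X^{(<\omega)}$ with $x\in\ii_\tau\ua F_V\subseteq\ua F_V\subseteq V$.

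Suppose, for contradiction, that no $\ua F_V$ lies inside $U$. The family $\{\ua F_V\}$, indexed by the open neighbourhoods of $x$, is filtered, because each $V\cap V'$ contains some $\ua F_W$. By Lemma \ref{lem-filtred-family}(2), the family $\{\ua(F_V\setminus U)\}$ is then also filtered. Rudin's Lemma \ref{lem-Rudin-lemma} produces a directed set $E\subseteq X\setminus U$ that meets every $F_V$.

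Every open neighbourhood of $x$ contains some $\ua F_V$, and $\ua F_V$ meets $E$, so every neighbourhood of $x$ meets $E$; hence $x\in\cl_\tau E$. Because $U$ is MD-open and $x\in U\cap\cl_\tau E$, we get $E\cap U\neq\emptyset$, contradicting $E\subseteq X\setminus U$. So some $\ua F_V$ lies inside $U$, and then $x\in\ii_\tau\ua F_V\subseteq U$. This shows $U\in\tau$, i.e.\ $\tau=md(\tau)$.

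Since that step is already available as Proposition \ref{lem-Erne-MD-space}(2), there is no genuine obstacle, and the proof amounts to the three citations above.
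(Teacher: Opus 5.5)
Your proof is correct and is exactly the paper's argument: it chains Proposition~\ref{lem-Erne-MD-space}(2), Proposition~\ref{prop-md-topology}(4) and Proposition~\ref{prop-O(QS)=O(S)}. One small fix in your optional direct proof of Ern\'e's step: the filteredness of $\{\ua F_V\}$ should come from applying local hypercompactness to the open neighbourhood $\ii_\tau \ua F_V\cap \ii_\tau \ua F_{V'}$ of $x$, not to $V\cap V'$, since $\ua F_W\subseteq V\cap V'$ alone does not give $\ua F_W\subseteq \ua F_V\cap \ua F_{V'}$.
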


The following example shows that for a $T_0$-space $(X,\tau)$, $\mathcal{O}(\mathcal{QS})=\tau$ does not hold in general.

\begin{example}\label{exam-O(QS)-is-not-tau}
Let $L =\{a_n : n\in \mathbb{N}^+\} \cup \{\bot, \top \})$ and define an order on $L$ as
follows (see Figure 1):

\begin{enumerate}[\rm (1)]
\item~$\bot< a_n < \top$ for all $n\in \mathbb{N}^+$, and

\item~$a_n$ and $a_m$ are are incomparable for any $n, m\in \mathbf{N}^+$ with $n\neq m$.
\end{enumerate}

\begin{figure}[htbp]
  \centering
\begin{tikzpicture}[scale=1,
    dot/.style={circle,fill,inner sep=2pt},
]

\node[dot,label=above:$\top$] (T) at (0.2,4) {};

\node[dot,label=below left:$a_1$] (a1) at (-3,2) {};
\node[dot,label=below:$a_2$]     (a2) at (-1.8,2) {};
\node[dot,label=below:$a_3$]     (a3) at (-0.6,2) {};
\node[font=\small]               (dots) at (0.3,2) {$\dots$};
\node[dot,label=below right:$a_n$](an) at (1.2,2) {};
\node[dot,label=below right:$a_{n+1}$](a$_{n+1}$) at (2.4,2) {};
\node[dot,label=below right:$a_{n+2}$](a$_{n+2}$) at (3.6,2) {};
\node[font=\small]               (dots) at (4.4,2) {$\dots$};

\node[dot,label=below:$\bot$] (bot) at (0.2,0) {};

\draw (T) -- (a1);
\draw (T) -- (a2);
\draw (T) -- (a3);
\draw (T) -- (an);
\draw (T) -- (a$_{n+1}$);
\draw (T) -- (a$_{n+2}$);

\draw (a1) -- (bot);
\draw (a2) -- (bot);
\draw (a3) -- (bot);
\draw (an) -- (bot);
\draw (a$_{n+1}$) -- (bot);
\draw (a$_{n+2}$) -- (bot);
\end{tikzpicture}

  \caption{The complete lattice $L$ in Example \ref{exam-O(QS)-is-not-tau}}
  \label{fig:poset}
\end{figure}

Clearly, $L$ is a countable complete lattice and Noetherian. Consider the space $(L, \upsilon (L))$. Then by Proposition \ref{prop-md-upper-topology}, Proposition \ref{prop-Alexandroff-topology-sober} and Proposition \ref{prop-O(QS)=O(S)}, we have $\mathcal{O}(\mathcal{QS})=\mathcal{O}(\mathcal{S})=md(\upsilon (L))=\sigma (L)=\alpha(L)$. As $\{\top\}\in \sigma(L)$ and $\{\top\}\not\in \upsilon(L)$, we get $\mathcal{O}(\mathcal{QS})\neq \upsilon(L)$.
\end{example}

\begin{proposition}\label{prop-clusure-of-direct-set-in-O(S)} (\cite[Proposition 3.6]{Zhang-Bao-Xu-2022})
Let $(X, \tau)$ be a $T_0$-space. Then we have the following conclusions:
\begin{enumerate}[\rm (1)]
  \item   $(X, \mathcal{O}(\mathcal{S}))$ is a $T_0$-space.

\item   The specialization orders of spaces $(X, \tau )$ and $(X, \mathcal{O}(\mathcal{S}))$ coincide.

  \item   For any directed subset $D\subseteq X$, $\cl_{\tau}D=\cl_{\mathcal{O}(\mathcal{S})}D$.
\end{enumerate}
\end{proposition}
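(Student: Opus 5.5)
Since Proposition \ref{prop-O(S)=md} gives $\mathcal{O}(\mathcal{S})=md(\tau)$, I will prove the three assertions for $md(\tau)$ and then transfer them to $\mathcal{O}(\mathcal{S})$. Most of this is already in Proposition \ref{prop-md-topology}, so the plan is to assemble those facts and check the remaining small points directly.

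\textbf{Item (2).} By Proposition \ref{prop-md-topology}(2), $\leq_{\tau}=\leq_{md(\tau)}$. As a self-contained check I would also argue directly from Proposition \ref{prop-md-topology}(1): $\upsilon(X)\subseteq\tau\subseteq md(\tau)\subseteq\alpha(X)$. The upper topology and the Alexandroff topology of the specialization order of $\tau$ both induce that same order: in each, $x\in\cl\{y\}$ iff $x\leq y$, because $\da y$ is closed in both. Any topology squeezed between them therefore induces the same order, since finer topologies have smaller closures. In detail:
\begin{itemize}
\item Suppose $x\leq_{md(\tau)} y$, i.e.\ $x\in\cl_{md(\tau)}\{y\}$. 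Then $x\in\cl_{\alpha(X)}\{y\}$, because the Alexandroff topology is finer; but that closure is $\da y$, so $x\leq_\tau y$.
\item Conversely, $x\leq_\tau y$ means $x\in\cl_\tau\{y\}$. Every $md(\tau)$-open set is an upper set (it lies in $\alpha(X)$), so any $md(\tau)$-open set containing $x$ contains $y$. Hence $x\leq_{md(\tau)}y$.
\end{itemize}

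\textbf{Item (1).} Once (2) holds, $T_0$ follows immediately. The specialization order of $md(\tau)$ is the partial order $\leq_\tau$, which is antisymmetric because $\tau$ is $T_0$; and a space is $T_0$ exactly when its specialization preorder is antisymmetric. Alternatively, $md(\tau)\supseteq\tau$ and a topology finer than a $T_0$ topology is $T_0$. The second argument is the simplest, so I will use it.

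\textbf{Item (3).} This is Proposition \ref{prop-md-topology}(3). To make it transparent, I would prove the two inclusions directly.
\begin{itemize}
\item The inclusion $\cl_{md(\tau)}D\subseteq\cl_\tau D$ holds because $md(\tau)\supseteq\tau$.
\item For the reverse inclusion, take $x\in\cl_\tau D$ and an $md(\tau)$-open set $U\ni x$. Then $U\cap\cl_\tau D\neq\emptyset$, so by the definition of MD-openness (Definition \ref{def-md-topology}) we get $U\cap D\neq\emptyset$. Hence $x\in\cl_{md(\tau)}D$.
\end{itemize}
Here it matters that the directedness of $D$ in Definition \ref{def-md-topology} refers to the specialization order of $\tau$. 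By (2) this is the same as directedness with respect to $md(\tau)$, so there is no ambiguity about which $D$ are allowed.

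Finally, replacing $md(\tau)$ by $\mathcal{O}(\mathcal{S})$ via Proposition \ref{prop-O(S)=md} gives all three statements. I do not anticipate a real obstacle. The only point needing care is the order-compatibility argument in (2), and Proposition \ref{prop-md-topology} already handles it.
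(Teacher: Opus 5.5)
Your proposal is correct and matches how the paper obtains the result. The paper gives no separate proof (it cites Zhang--Bao--Xu), and the statement is just Proposition~\ref{prop-md-topology}(1)--(3) transported along $\mathcal{O}(\mathcal{S})=md(\tau)$ from Proposition~\ref{prop-O(S)=md}, exactly as you do. There is one slip in your ``in detail'' check for (2): fineness of $\alpha(X)$ gives $\cl_{\alpha(X)}\{y\}\subseteq\cl_{md(\tau)}\{y\}$, which is the wrong direction for your first bullet; that bullet should instead use $\tau\subseteq md(\tau)$ to get $\cl_{md(\tau)}\{y\}\subseteq\cl_{\tau}\{y\}={\da}y$ (your sandwich remark just before it is fine).
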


By Remark  \ref{rem-QS-convergence}(4),  Proposition \ref{prop-O(QS)=O(S)} and Proposition \ref{prop-clusure-of-direct-set-in-O(S)}, we get the following.

\begin{corollary}\label{cor-upsilon-tau-QS-alpha}
Let $(X, \tau)$ be a $T_0$-space. Then we have the following conclusions:
\begin{enumerate}[\rm (1)]

  \item   $\upsilon(X)\subseteq \tau\subseteq md(\tau)=\mathcal{O}(\mathcal{S})= \mathcal{O}(\mathcal{QS})\subseteq \alpha(X)$.

  \item     $(X, \mathcal{O}(\mathcal{QS}))$ is a $T_0$-space and the specialization orders of spaces $(X, \tau )$ and $(X, \mathcal{O}(\mathcal{QS}))$ coincide.

  \item    For any directed subset $D\subseteq X$, $\cl_{\tau}D=\cl_{\mathcal{O}(\mathcal{S})}D=\cl_{\mathcal{O}(\mathcal{QS})}D$.
\end{enumerate}
\end{corollary}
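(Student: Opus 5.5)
This corollary is assembled from earlier results. The only real work is checking which earlier statement gives each piece, and lifting statements about $\mathcal{O}(\mathcal{S})$ to $\mathcal{O}(\mathcal{QS})$ via Proposition \ref{prop-O(QS)=O(S)}.

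For (1), I would take the chain $\upsilon(X)\subseteq \tau\subseteq md(\tau)$ from Proposition \ref{prop-md-topology}(1). The equalities $md(\tau)=\mathcal{O}(\mathcal{S})=\mathcal{O}(\mathcal{QS})$ are exactly Proposition \ref{prop-O(S)=md} together with Proposition \ref{prop-O(QS)=O(S)}. The last inclusion $\mathcal{O}(\mathcal{QS})\subseteq\alpha(X)$ says that every member of $\mathcal{O}(\mathcal{QS})$ is an upper set in the specialization order of $(X,\tau)$. This is Remark \ref{rem-QS-convergence}(5), or equivalently (4) transported through $\mathcal{O}(\mathcal{QS})=\mathcal{O}(\mathcal{S})$. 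Alternatively, Proposition \ref{prop-md-topology}(1) already gives $md(\tau)\subseteq\alpha(X)$.

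For (2), since $\mathcal{O}(\mathcal{QS})=\mathcal{O}(\mathcal{S})$ as topologies, I would simply quote Proposition \ref{prop-clusure-of-direct-set-in-O(S)}(1),(2): $(X,\mathcal{O}(\mathcal{S}))$ is $T_0$ and has the same specialization order as $(X,\tau)$. As a sanity check, this also follows directly from the sandwich in (1). Every topology between $\upsilon(X)$ and $\alpha(X)$ has specialization order equal to $\leq_\tau$: if $x\not\leq y$, then $X\setminus{\da}y\in\upsilon(X)$ separates them, and if $x\leq y$, every $\alpha$-open set containing $x$ contains $y$. Such a topology is therefore also $T_0$.

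For (3), Proposition \ref{prop-clusure-of-direct-set-in-O(S)}(3) gives $\cl_\tau D=\cl_{\mathcal{O}(\mathcal{S})}D$ for directed $D$. The second equality is then immediate because the two topologies coincide. One could equally cite Proposition \ref{prop-md-topology}(3) with $md(\tau)=\mathcal{O}(\mathcal{S})$.

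I expect no genuine obstacle here. The only point needing care is the bookkeeping that $\alpha(X)$ and the notion of directedness refer to the specialization order of $\tau$. This is consistent across all the topologies involved precisely because of (2), so I would establish the order-coincidence before using $\alpha(X)$ in (1), or argue via the upper-set property directly.
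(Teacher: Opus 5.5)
Your proposal is correct and follows essentially the paper's route. The paper also assembles the corollary from Remark~\ref{rem-QS-convergence}(4) (upper sets), Proposition~\ref{prop-O(QS)=O(S)}, and Proposition~\ref{prop-clusure-of-direct-set-in-O(S)}, transported along $\mathcal{O}(\mathcal{QS})=\mathcal{O}(\mathcal{S})=md(\tau)$; your sandwich argument between $\upsilon(X)$ and $\alpha(X)$ for (2) is a valid but unneeded extra check.
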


Now we give a characterization of $T_0$-spaces for the $\mathcal{QS}$-convergence being topological. To this end, we first recall the following notion.

\begin{definition} (\cite[Definition 3.1]{Feng-Kou-2017})
Let $(X, \tau )$ be a $T_0$-space, $A, B\subseteq X$. We say that $A$ \emph{approximates} $B$, in symbols $A\ll_w B$,
if for any directed set $D$ in $X$, $B\cap \cl_{\tau}D\neq\emptyset$  implies ${\ua} A\cap D\neq\emptyset$.
We write $A\ll_{w} x$ for $A\ll_{w} \{x\}$ and $y\ll_{w} B$ for $\{y\}\ll_{w} B$. For $x\in X$ and $F\in X^{(<\omega)}$, define ${\dda_{w}}x=\{G\in X^{(<\omega)}: G\ll_{w} x\}$ and ${\dua_{w}}F=\{z\in X: F\ll_{w} z\}$.
\end{definition}

\begin{lemma}\label{lem-characterization-way-below-c-relation-2}
Let $(X, \tau)$ be a $T_0$-space, $F\in X^{(<\omega)}$ and $y\in X$. Then the following two conditions are equivalent:
\begin{enumerate}[\rm (1)]

\item $F\ll_w y$.

\item For any net $\mathcal{N}$ in $X$ with $(\mathcal{N}, y)\in \mathcal{QS}$, $\mathcal{N}$ is eventually in ${\ua} F$.
\end{enumerate}
\end{lemma}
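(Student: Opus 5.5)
We prove the two implications separately. For (2) $\Rightarrow$ (1) we only need $\mathcal{S}$-convergence; for (1) $\Rightarrow$ (2) we translate a $\mathcal{QS}$-convergent net into a directed set via Rudin's Lemma, as in the proof of Proposition \ref{prop-O(QS)-charac}.

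(2) $\Rightarrow$ (1): Let $D$ be a directed set with $y\in\cl_{\tau}D$. By Lemma \ref{lem-direct-set-S-convergence}, the net $(d)_{d\in D}$ $\mathcal{QS}$-converges to $y$. Hence by (2) it is eventually in ${\ua}F$. In particular some $d\in D$ lies in ${\ua}F$, so ${\ua}F\cap D\neq\emptyset$. Thus $F\ll_w y$.

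(1) $\Rightarrow$ (2): Suppose $(\mathcal N,y)\in\mathcal{QS}$, witnessed by a family $\{F_d : d\in D\}$ satisfying conditions (1)–(3) of Definition \ref{def-QS-convergence}. It suffices to find some $d\in D$ with ${\ua}F_d\subseteq{\ua}F$, since $\mathcal N$ is eventually in ${\ua}F_d$.

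Assume instead that $F_d\not\subseteq{\ua}F$ for every $d$.
\begin{enumerate}
\item By Lemma \ref{lem-filtred-family}(2), $\{{\ua}(F_d\setminus{\ua}F) : d\in D\}$ is a filtered family of finitely generated upper sets.
\item By Rudin's Lemma \ref{lem-Rudin-lemma}, there is a directed set $E\subseteq\bigcup_d(F_d\setminus{\ua}F)$ meeting every $F_d\setminus{\ua}F$.
\item Each ${\ua}F_d$ meets $E\subseteq\cl_\tau E$, so ${\ua}F_d\in\Diamond\cl_\tau E$. This set is closed in $P_S((X,\tau))$, so condition (3) gives ${\ua}y\in\Diamond\cl_\tau E$, i.e. ${\ua}y\cap\cl_\tau E\neq\emptyset$.
\item Since $\cl_\tau E$ is a lower set, $y\in\cl_\tau E$.
\item Now $F\ll_w y$ forces ${\ua}F\cap E\neq\emptyset$. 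This contradicts $E\cap{\ua}F=\emptyset$.
\end{enumerate}
Hence some ${\ua}F_d\subseteq{\ua}F$, and $\mathcal N$ is eventually in ${\ua}F$.

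The main obstacle is the closure step 3: we must check that the closure condition in $P_S$ passes to $\Diamond\cl_\tau E$. This works because $\Diamond\cl_\tau E$ is a closed set containing the whole family $\{{\ua}F_d : d\in D\}$, as in the proof of Proposition \ref{prop-O(QS)-charac}.
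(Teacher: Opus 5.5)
Your proof is correct and follows essentially the same route as the paper's. For (1) $\Rightarrow$ (2) the paper likewise forms the filtered family $\{{\ua}(F_d\setminus{\ua}F) : d\in D\}$, applies Rudin's Lemma to get a directed $E$, and uses ${\ua}y\in\Diamond\cl_\tau E$ to obtain $y\in\cl_\tau E$, contradicting $F\ll_w y$. For (2) $\Rightarrow$ (1) it checks directly that $\{\{d\} : d\in D\}$ witnesses $((d)_{d\in D},y)\in\mathcal{QS}$, which is exactly what your appeal to Lemma \ref{lem-direct-set-S-convergence} supplies.
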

\begin{proof} (1) $\Rightarrow$ (2): Suppose that $F\ll_w y$ and $\mathcal{N}$ is a net in $X$ with $(\mathcal{N}, y)\in \mathcal{QS}$. Then there exists a family $\{F_d : d\in D\}\subseteq X^{(<\omega)}$ such that $\{F_d : d\in D\}$ and $(\mathcal N, y)$ satisfy conditions (1)-(3) of Definition \ref{def-QS-convergence}. We claim that there is $d\in D$ such that ${\ua} F_d\subseteq {\ua} F$. Assume, on the contrary, that ${\uparrow} F_d\nsubseteq {\ua} F$ for all $d\in D$. As $\{{\ua} F_{d} : d\in D\}$ is a filtered family, it follows from Lemma \ref{lem-filtred-family} that $\{{\ua} (F_d\setminus {\ua} F) : d\in D\}$ is a filtered family. By Lemma \ref{lem-Rudin-lemma}, there is a directed subset $E\subseteq \mathop{\bigcup}\limits_{d \in D } (F_d\setminus {\ua} F)$ such that $E\cap (F_d\setminus {\ua} F)\neq\emptyset$ for all $d\in D$.
  Then ${\ua} y\in \cl_{P_S((X, \tau))}\{{\ua} F_d: d\in D\}\subseteq \Diamond \cl_{\tau}E$, whence $ y\in \cl_{\tau}E$. As $F\ll_w y$, we have $E\cap {\ua} F\neq \emptyset$, which contradicts $E\subseteq \mathop{\bigcup}\limits_{d \in D } (F_d\setminus {\ua} F)\subseteq X\setminus {\ua} F$. So there is a $d\in D$ such that ${\uparrow} F_d\subseteq{\ua} F$. Since $F_d$ is a quasi-eventual lower bound of $\mathcal{N}$, we get that $\mathcal{N}$ is eventually in ${\ua} F$.

(2) $\Rightarrow$ (1): Let $D$ be a  directed subset  of $X$ with $y\in \cl_{\tau}D$. Then $\{\{d\} : d\in D\}$ and $((d)_{d\in D}, y)$ satisfy conditions (1)-(3) of Definition \ref{def-QS-convergence}. Hence $((d)_{d\in D}, y)\in \mathcal{QS}$. By (2), $(d)_{d\in D}$ is eventually in ${\ua} F$, whence $D\cap {\ua} F\neq\emptyset$. Thus $F\ll_w y$.
\end{proof}

\begin{proposition}\label{prop-QS-topological} If $\mathcal{QS}$-convergence in a $T_0$-space $X$ is topological, then ${\dua_{w}}F\in \mathcal{O}(\mathcal{QS})$ for any $F\in X^{(<\omega)}$.
\end{proposition}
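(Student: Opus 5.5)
We must show: assuming $\mathcal{QS}$ is topological (i.e., $\mathcal{QS}=\mathcal C(\mathcal O(\mathcal{QS}))$, so Kelley's four axioms of Theorem \ref{theor-convergence-topological} hold, in particular Iterated limits), the set ${\dua_w}F$ is open in $\mathcal O(\mathcal{QS})$. The plan is to argue by contradiction using the Iterated limits axiom together with Lemma \ref{lem-characterization-way-below-c-relation-2}, which says $F\ll_w y$ iff every net $\mathcal{QS}$-converging to $y$ is eventually in $\ua F$.

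Suppose ${\dua_w}F\notin\mathcal O(\mathcal{QS})$. Then there is a net $(x_i)_{i\in I}$ with $((x_i)_{i\in I},x)\in\mathcal{QS}$, $x\in{\dua_w}F$, such that $x_i\notin{\dua_w}F$ frequently. Passing to the subnet indexed by $I'=\{i\in I: x_i\notin {\dua_w}F\}$ (cofinal in $I$, hence a subnet), the Subnets axiom gives $((x_i)_{i\in I'},x)\in\mathcal{QS}$ with $F\not\ll_w x_i$ for all $i\in I'$. By Lemma \ref{lem-characterization-way-below-c-relation-2}, for each $i\in I'$ there is a net $(x_{i,j})_{j\in J(i)}$ with $((x_{i,j})_{j\in J(i)},x_i)\in\mathcal{QS}$ which is not eventually in $\ua F$. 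Replacing it by the subnet indexed by the cofinal set $J'(i)=\{j: x_{i,j}\notin\ua F\}$ (again using the Subnets axiom), we may assume $x_{i,j}\notin\ua F$ for all $j$. (Alternatively, one can avoid Subnets here by directly using the definition of $\ll_w$: a directed set $D_i$ with $x_i\in\cl_\tau D_i$ and $D_i\cap\ua F=\emptyset$, and by Lemma \ref{lem-direct-set-S-convergence} the net $(d)_{d\in D_i}$ $\mathcal{QS}$-converges to $x_i$; this is cleaner and I would use it.)

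Now apply Iterated limits: the diagonal net $(x_{i,f(i)})_{(i,f)\in I'\times M}$ $\mathcal{QS}$-converges to $x$. Since $F\ll_w x$, Lemma \ref{lem-characterization-way-below-c-relation-2} forces this net to be eventually in $\ua F$; but every term $x_{i,f(i)}\in D_i$ lies outside $\ua F$, a contradiction. Hence ${\dua_w}F\in\mathcal O(\mathcal{QS})$. The only delicate step is producing the inner nets entirely outside $\ua F$; using the directed sets $D_i$ from the failure of $F\ll_w x_i$ settles it, and the rest is bookkeeping with Kelley's axioms (noting ${\dua_w}F$ may be empty, in which case it is trivially open).
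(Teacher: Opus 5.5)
Your proof is correct, but it takes a different route from the paper's.

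\textbf{The paper's route.} It shows directly that $\dua_w F\in md(\tau)$. Given a directed $D$ and $u\in\cl D$ with $F\ll_w u$, it combines the characterization in Theorem \ref{theor-QS-topological-charact} with an interpolation result of Feng and Kou to obtain a finite $H$ with $F\ll_w H\ll_w u$. Hence $\emptyset\neq\ua H\cap D\subseteq\dua_w F\cap D$, and $md(\tau)=\mathcal O(\mathcal{QS})$ (Proposition \ref{prop-O(QS)=O(S)}) finishes the proof.

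\textbf{Your route.} You use none of this machinery. You rely only on the fact that a topological convergence class satisfies Kelley's Subnets and Iterated limits axioms, together with Lemmas \ref{lem-direct-set-S-convergence} and \ref{lem-characterization-way-below-c-relation-2}. The directed sets $D_i$ witnessing $F\not\ll_w x_i$ supply inner nets lying entirely outside $\ua F$, and the diagonal net then contradicts $F\ll_w x$. The technical points are handled correctly: the cofinal set $I'$ gives a genuine subnet, and each $D_i$ is nonempty, so $I'\times M$ is a nonempty directed set.

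\textbf{Comparison.} Your argument is self-contained within the paper. It avoids the external interpolation lemma, whose applicability the paper has to justify with a parenthetical caveat. It also avoids the forward reference to Theorem \ref{theor-QS-topological-charact}. It mirrors the iterated-limits argument the paper itself uses later to prove $\dua_c F\in\mathcal O(\mathcal{QL})$ in Theorem \ref{theor-characterization-WLH-space-1}. The paper's route, in exchange, produces the interpolation $F\ll_w H\ll_w u$ and the $md$-openness of $\dua_w F$ explicitly.

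\textbf{A shorter version.} You can drop iterated limits entirely. Once $\mathcal{QS}=\mathcal C(\mathcal O(\mathcal{QS}))$, Lemma \ref{lem-characterization-way-below-c-relation-2} says that $F\ll_w y$ iff every net converging to $y$ in $\mathcal O(\mathcal{QS})$ is eventually in $\ua F$. That holds iff $\ua F$ is an $\mathcal O(\mathcal{QS})$-neighbourhood of $y$. Hence $\dua_w F=\ii_{\mathcal O(\mathcal{QS})}\ua F$, which is open.
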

\begin{proof} Let $F\in X^{(<\omega)}$. We show ${\dua_{w}}F\in md(\mathcal O(X))$. Assume that $D\in \mathcal D(X)$ and $\dua_w F\cap
\cl~\!\!  D\neq\emptyset$. Then there is $u\in \cl D$ such that $F\ll_w u$. By Theorem \ref{theor-QS-topological-charact} and \cite[Proposition 3.8]{Feng-Kou-2017} (note that Proposition 3.8 of \cite{Feng-Kou-2017} holds for any $T_0$-space satisfying condition (2) of \cite[Definition 3.5]{Feng-Kou-2017}), there is $H\in X^{(<\omega)}$ such that $F\ll_w H\ll_w u$. Then by $H\ll_w u$ and $u\in \cl D$, we have $\emptyset\neq\ua H\cap D\subseteq \dua_w F\cap D$. So  ${\dua_{w}}F\in md(\mathcal O(X))$, and hence ${\dua_{w}}F\in \mathcal O(\mathcal{QS})$ by Proposition \ref{prop-O(QS)=O(S)}.
\end{proof}

\begin{theorem}\label{theor-QS-topological-charact} (\cite[Theorem 3.6 and Corollary 3.9]{Chen-Kou-2024})
Let $X$ be a $T_0$-space. Then the following two conditions are equivalent:
\begin{enumerate}[\rm (1)]
   \item   $\mathcal{QS}$-convergence in $X$ is topological, that is, for all $x\in X$ and all nets $(x_i)_{i\in I}$ in $X$,
\begin{center}
$(x_i)_{i\in I}$  $\stackrel{QS}\longrightarrow x$ in $X$ if and only if $(x_i)_{i\in I}$ converges to $x$ with respect to the topology $\mathcal{O}(\mathcal{QS})$.
\end{center}

  \item   For any $x \in X$, the following two conditions hold:
          \begin{enumerate}[\rm (i)]

                \item    $\{{\ua} F : F\in X^{(<\omega)}, F\ll_{w}x\}$ is a filtered family.

                 \item    ${\ua} x\in \cl_{P_S(X)}\{{\ua} F : F\in X^{(<\omega)}, F\ll_{w}x\}$.

%
%
%
%
              \end{enumerate}

\end{enumerate}

\indent If $X$ is an MD-space, these conditions are equivalent to the
following one:

\vspace{0.2cm}

\noindent $\mathrm{(3)}$ $X$ is locally hypercompact.
\end{theorem}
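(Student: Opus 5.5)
The plan is to verify Kelley's four axioms of Theorem \ref{theor-convergence-topological} for $\mathcal{QS}$ under condition (2), and conversely to extract (i) and (ii) from topologicality. Throughout, Lemma \ref{lem-characterization-way-below-c-relation-2} converts statements about $\ll_w$ into statements about $\mathcal{QS}$-limits.

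\textbf{(1) $\Rightarrow$ (2).} Fix $x$ and put $\mathcal W=\{\ua F : F\ll_w x\}$.
\begin{enumerate}
\item Build the net $\mathcal M$ indexed by all pairs $(\mathcal N,i)$, where $(\mathcal N,x)\in\mathcal{QS}$ and $i$ ranges over the domain of $\mathcal N$, ordered suitably so that it is directed (a standard "diagonal over all convergent nets" construction).
\item By topologicality, $\mathcal M$ converges to $x$ in $\mathcal O(\mathcal{QS})$, so $(\mathcal M,x)\in\mathcal{QS}$.
\item Hence there is a filtered family $\{\ua F_d\}$ of quasi-eventual lower bounds of $\mathcal M$ with $\ua x$ in its Smyth closure.
\item Each such $F_d$ is a quasi-eventual lower bound of every net $\mathcal{QS}$-converging to $x$. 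By Lemma \ref{lem-characterization-way-below-c-relation-2}, $F_d\ll_w x$, so $\{\ua F_d\}\subseteq\mathcal W$.
\item Conversely, if $F\ll_w x$, then $\mathcal M\to x$ is eventually in $\ua F$. Redoing the Rudin argument from the proof of Lemma \ref{lem-characterization-way-below-c-relation-2} yields some $\ua F_d\subseteq \ua F$.
\end{enumerate}
So $\{\ua F_d\}$ is cofinal in $\mathcal W$. This makes $\mathcal W$ filtered, and it puts $\ua x$ in $\cl_{P_S(X)}\mathcal W$.

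\textbf{(2) $\Rightarrow$ (1).} I check the four axioms.
\begin{enumerate}
\item Constants hold: use the family $\{\{x\}\}$.
\item Subnets hold: quasi-eventual lower bounds pass to subnets.
\item Divergences: I show that if $\mathcal N$ does not $\mathcal{QS}$-converge to $x$, then some $F\ll_w x$ has $\mathcal N$ frequently outside $\ua F$. Otherwise every $F\ll_w x$ is a quasi-eventual lower bound, and by (i) and (ii) the family $\mathcal W$ itself witnesses $\mathcal N\stackrel{\mathcal{QS}}\longrightarrow x$. The subnet lying outside $\ua F$ then has no subnet converging to $x$, by Lemma \ref{lem-characterization-way-below-c-relation-2}.
\end{enumerate}

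Iterated limits is the main obstacle. We are given $x_i\to x$ and $x_{i,j}\to x_i$. For $F\ll_w x$, I need the diagonal net to be eventually in $\ua F$, and for this an interpolation property is required: $F\ll_w x$ implies $F\ll_w H\ll_w x$ for some finite $H$. This is the step the paper's Proposition \ref{prop-QS-topological} attributes to \cite[Proposition 3.8]{Feng-Kou-2017}.

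I would derive interpolation from (i) and (ii) in the usual way. Take the set of all $G\in X^{(<\omega)}$ for which some $H$ satisfies $G\ll_w h$ for every $h\in H$ and $H\ll_w x$. Then use Rudin's Lemma \ref{lem-Rudin-lemma} to show this family is filtered with $\ua x$ in its Smyth closure, and deduce that it is cofinal in $\mathcal W$.

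With interpolation in hand, the iterated limit follows:
\begin{enumerate}
\item $x_i$ is eventually in $\ua H$; say $x_i\ge h_i\in H$ for $i\ge i_0$.
\item Since $F\ll_w h_i$ and $x_{i,j}\to x_i$, Remark \ref{rem-QS-convergence}(3) gives $x_{i,j}\to h_i$. Lemma \ref{lem-characterization-way-below-c-relation-2} then shows $x_{i,j}$ is eventually in $\ua F$.
\item This gives eventual membership of the diagonal net $(x_{i,f(i)})$ in $\ua F$.
\end{enumerate}
Hence every $F\ll_w x$ is a quasi-eventual lower bound of the diagonal net, and (i) and (ii) give its $\mathcal{QS}$-convergence to $x$.

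\textbf{MD-case.} When $\tau$ is MD, we have $md(\tau)=\tau$. Local hypercompactness gives (i) and (ii) via $F\ll_w x$ whenever $x\in\ii\ua F$. Conversely, from (2) and interpolation, $\dua_w F$ is open in $md(\tau)=\tau$ (Proposition \ref{prop-QS-topological}). Then for $x\in U$, (ii) gives some $F\ll_w x$ with $\ua F\subseteq U$, and $x\in\dua_wF\subseteq\ua F$ shows local hypercompactness.
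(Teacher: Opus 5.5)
The paper gives no proof of this theorem; it quotes it from Chen--Kou. The natural comparison is therefore with the paper's proofs of the parallel results, Proposition~\ref{prop-QS-topological} and Theorem~\ref{theor-characterization-WLH-space-1}.

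Your argument is essentially correct, but it takes a longer road for (2)$\Rightarrow$(1). You verify Kelley's four axioms, and interpolation enters only at the iterated-limits axiom. The paper's pattern uses interpolation once, to get openness. Suppose $u\in{\dua_{w}}F\cap\cl_\tau D$ and $H$ interpolates, i.e.\ $F\ll_w h$ for all $h\in H$ and $H\ll_w u$. Then $\emptyset\neq{\ua}H\cap D\subseteq{\dua_{w}}F\cap D$, so ${\dua_{w}}F\in md(\tau)=\mathcal O(\mathcal{QS})$ by Proposition~\ref{prop-O(QS)=O(S)}. Hence a net converging to $x$ in $\mathcal O(\mathcal{QS})$ is eventually in ${\dua_{w}}F\subseteq{\ua}F$ for every $F\ll_w x$. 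By (i) and (ii), the family $\{{\ua}F: F\ll_w x\}$ then witnesses $\mathcal{QS}$-convergence. This makes the subnet, divergence and product-index bookkeeping unnecessary, and it is the same fact you need in the MD case anyway.

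Where you do better than the paper is interpolation itself. The paper imports it from Proposition~3.8 of Feng--Kou, while you derive it from (i) and (ii), and that derivation works. However, filteredness of your family $\mathcal A$ and ${\ua}x\in\cl_{P_S(X)}\mathcal A$ do not come from Rudin's Lemma. You get them by applying (i), respectively (ii), first at $x$ and then at each $h\in H$, and taking the union of the finitely many finite sets obtained. Rudin's Lemma is needed only for the final coinitiality step, which is verbatim the claim inside the proof of Lemma~\ref{lem-characterization-way-below-c-relation-2}.

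Three small repairs are needed.

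\emph{First}, in (1)$\Rightarrow$(2) the ``diagonal over all convergent nets'' is not well defined as stated. The nets $\mathcal{QS}$-converging to $x$ form a proper class, and no directed order on the pairs $(\mathcal N,i)$ is specified. Use instead the neighbourhood net indexed by $\{(U,n,a): x\in U\in\mathcal O(\mathcal{QS}),\ a\in U,\ n\in\mathbb N\}$, as in the proof of Theorem~\ref{theor-characterization-WLH-space-1}. Any quasi-eventual lower bound $F$ of this net satisfies $x\in\ii_{\mathcal O(\mathcal{QS})}{\ua}F$. So every net $\mathcal{QS}$-converging to $x$ is eventually in ${\ua}F$, and $F\ll_w x$ by Lemma~\ref{lem-characterization-way-below-c-relation-2}.

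\emph{Second}, in (3)$\Rightarrow$(2) the inclusion $\{{\ua}F: x\in\ii_\tau{\ua}F\}\subseteq\{{\ua}F: F\ll_w x\}$ yields (ii) but not (i). For (i) you must also show that this filtered subfamily is coinitial in $\{{\ua}F: F\ll_w x\}$, which is your step~5 Rudin argument again.

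\emph{Third}, citing Proposition~\ref{prop-QS-topological} inside a proof of this theorem is circular in the paper's ordering, because that proposition's proof invokes this very theorem. Simply give the one-line openness argument above, using your own interpolation.
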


\begin{theorem}\label{theor-LCH-QS-topological}
For a $T_0$-space $(X, \tau)$, the following three conditions are equivalent:
\begin{enumerate}[\rm (1)]
   \item  $(X, \tau)$ is locally hypercompact.

\item $\mathcal{QS}$-convergence is topological and $\mathcal O(\mathcal{QS})=\tau$.

   \item $\mathcal{QS}$-convergence coincides with convergence in the topology $\tau$, that is, for any $x\in X$ and any net $\mathcal{N}$ in $X$,
\begin{center}
$\mathcal{N}\stackrel{\mathcal{QS}}\longrightarrow x$ iff $\mathcal{N}$ converges to $x$ with respect to the topology $\tau$.
\end{center}
\end{enumerate}
\end{theorem}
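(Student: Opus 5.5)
I will prove the cycle (1) $\Rightarrow$ (2) $\Rightarrow$ (3) $\Rightarrow$ (1).

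\emph{(1) $\Rightarrow$ (2).} Assume $(X,\tau)$ is locally hypercompact. By Proposition \ref{lem-Erne-MD-space}(2) it is an MD-space. Hence Theorem \ref{theor-QS-topological-charact} (with condition (3) applicable) shows that $\mathcal{QS}$-convergence is topological. Corollary \ref{cor-LHC-O(GS)=tau} gives $\mathcal O(\mathcal{QS})=\tau$.

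\emph{(2) $\Rightarrow$ (3).} If $\mathcal{QS}$ is topological, then $\mathcal N\stackrel{\mathcal{QS}}\longrightarrow x$ iff $\mathcal N$ converges to $x$ in $\mathcal O(\mathcal{QS})$. Substituting $\mathcal O(\mathcal{QS})=\tau$ gives (3) immediately.

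\emph{(3) $\Rightarrow$ (1).} This is the substantive direction.

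First, (3) says $\mathcal{QS}=\mathcal C(\tau)$. Hence $\mathcal O(\mathcal{QS})=\mathcal O(\mathcal C(\tau))=\tau$, and $\mathcal{QS}$ is topological.

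Next, Corollary \ref{cor-upsilon-tau-QS-alpha}(1) gives $md(\tau)=\mathcal O(\mathcal{QS})=\tau$. So $\tau$ is an MD-topology by Proposition \ref{prop-md-topology}(4), and $X$ is an MD-space.

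Finally, Theorem \ref{theor-QS-topological-charact}, (1) $\Rightarrow$ (3), applies to this MD-space and yields that $X$ is locally hypercompact.

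If I want to avoid relying on the MD clause of that theorem, I would argue directly. Take $x\in U\in\tau$. Condition (2) of Theorem \ref{theor-QS-topological-charact} says that $\mathcal W=\{\ua F: F\ll_w x\}$ is filtered and $\ua x\in\cl_{P_S(X)}\mathcal W$. Since $\ua x\in\Box U$, some $\ua F\in\mathcal W$ satisfies $\ua F\subseteq U$. It remains to show $x\in\ii_\tau\ua F$, and for this I show $\dua_w F$ is $\tau$-open.
\begin{itemize}
\item By Proposition \ref{prop-QS-topological}, $\dua_w F\in\mathcal O(\mathcal{QS})=\tau$.
\item $\dua_w F\subseteq\ua F$: use the constant directed set $\{z\}$ for $z\in\dua_w F$.
\item $x\in\dua_w F$ by the choice of $F$.
\end{itemize}
Together these give $x\in\dua_w F\subseteq\ii_\tau\ua F\subseteq\ua F\subseteq U$, which is local hypercompactness at $x$.

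The main obstacle is justifying that $\mathcal{QS}$ being topological feeds correctly into Proposition \ref{prop-QS-topological}, i.e.\ the interpolation $F\ll_w H\ll_w u$. Here I would simply cite Theorem \ref{theor-QS-topological-charact} and Proposition \ref{prop-QS-topological} as stated. The remaining steps are routine bookkeeping with $\mathcal O(\mathcal C(\tau))=\tau$.
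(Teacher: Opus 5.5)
Your proposal is correct and follows the paper's proof essentially verbatim: (1)$\Rightarrow$(2) uses Corollary \ref{cor-LHC-O(GS)=tau} together with the MD clause of Theorem \ref{theor-QS-topological-charact}; (2)$\Rightarrow$(3) is trivial; and (3)$\Rightarrow$(1) deduces $md(\tau)=\mathcal O(\mathcal{QS})=\tau$ and then applies the same theorem. Your optional direct fallback (showing $x\in\dua_w F\subseteq \ii_\tau{\ua}F\subseteq U$ via Proposition \ref{prop-QS-topological}) is also valid, but it is not needed.
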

\begin{proof}
(1) $\Rightarrow$ (2): By Corollary \ref{cor-LHC-O(GS)=tau} and Theorem \ref{theor-QS-topological-charact}.

(2) $\Rightarrow$ (3): Trivial.

(3) $\Rightarrow$ (1): As $\mathcal{QS}$-convergence coincides with convergence in the topology $\tau$, we have
$\mathcal O(\mathcal{QS})=\tau$, whence $md(\tau)=\mathcal O(\mathcal{QS})=\tau$ by Proposition \ref{prop-O(QS)=O(S)}. Therefore, $(X, \tau)$ is locally hypercompact by Theorem \ref{theor-QS-topological-charact}.
\end{proof}

From Corollary \ref{cor-Scott-topology-O(QS)=O(S)=sigma} and Theorem \ref{theor-LCH-QS-topological} we deduce the following.

\begin{corollary}\label{cor-quasicontinuous-poset-QS-convergennce}
For a poset $P$, the following conditions are equivalent:
\begin{enumerate}[\rm (1)]
  \item  $P$ is a quasicontinuous poset.

  \item  $\mathcal{QS}$-convergence in $\Sigma~\!\!P$ is topological.

  \item  $\mathcal{QS}$-convergence in $\Sigma~\!\!P$ coincides with convergence in the Scott topology $\sigma (P)$.
\end{enumerate}
\end{corollary}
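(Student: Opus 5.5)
The plan is to run the chain (1) $\Rightarrow$ (3) $\Rightarrow$ (2) $\Rightarrow$ (1), with the Scott space $\Sigma~\!\!P$ in the role of $(X,\tau)$ in Theorem \ref{theor-LCH-QS-topological}. The fact that makes this possible is Corollary \ref{cor-Scott-topology-O(QS)=O(S)=sigma}: for every poset $P$ we have $\mathcal{O}(\mathcal{QS})=\sigma(P)$ in $\Sigma~\!\!P$. Recall also Definition \ref{def-continuous-poset}: a poset $P$ is quasicontinuous exactly when $\Sigma~\!\!P$ is locally hypercompact. So conditions (1)--(3) for $P$ translate directly into conditions about the $T_0$-space $(P,\sigma(P))$.

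For (1) $\Rightarrow$ (3): if $P$ is quasicontinuous, then $\Sigma~\!\!P$ is locally hypercompact. The implication (1) $\Rightarrow$ (3) of Theorem \ref{theor-LCH-QS-topological}, applied with $\tau=\sigma(P)$, then says that $\mathcal{QS}$-convergence in $\Sigma~\!\!P$ coincides with convergence in $\sigma(P)$.

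For (3) $\Rightarrow$ (2): suppose $\mathcal{QS}$-convergence coincides with $\sigma(P)$-convergence. Since $\sigma(P)=\mathcal{O}(\mathcal{QS})$ by Corollary \ref{cor-Scott-topology-O(QS)=O(S)=sigma}, this says that $\mathcal{QS}$-convergence coincides with convergence in $\mathcal{O}(\mathcal{QS})$. That is precisely the statement that $\mathcal{QS}$-convergence is topological.

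For (2) $\Rightarrow$ (1): assume $\mathcal{QS}$-convergence in $\Sigma~\!\!P$ is topological. Corollary \ref{cor-Scott-topology-O(QS)=O(S)=sigma} again gives $\mathcal{O}(\mathcal{QS})=\sigma(P)$, so condition (2) of Theorem \ref{theor-LCH-QS-topological} holds for $(P,\sigma(P))$. That theorem then yields that $\Sigma~\!\!P$ is locally hypercompact, i.e.\ $P$ is quasicontinuous. As an alternative route, one can note that $\Sigma~\!\!P$ is an MD-space by Proposition \ref{lem-Erne-MD-space}(3) and invoke the last clause of Theorem \ref{theor-QS-topological-charact} directly.

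No step is a real obstacle; each one is an application of an earlier result. The one point to check is that the notion of "topological" used in (2), namely via $\mathcal{O}(\mathcal{QS})$, matches the one in Theorem \ref{theor-LCH-QS-topological}(2) once $\mathcal{O}(\mathcal{QS})=\sigma(P)$ is substituted.
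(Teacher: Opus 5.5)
Your proposal is correct and is essentially the paper's argument. The paper obtains this corollary directly from Corollary \ref{cor-Scott-topology-O(QS)=O(S)=sigma} ($\mathcal{O}(\mathcal{QS})=\sigma(P)$ in $\Sigma P$) and Theorem \ref{theor-LCH-QS-topological}, reading quasicontinuity of $P$ as local hypercompactness of $\Sigma P$; your alternative for (2) $\Rightarrow$ (1), via the MD-property of $\Sigma P$ and the last clause of Theorem \ref{theor-QS-topological-charact}, is also valid.
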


   At the end of this section, we investigate conditions under which the mapping $f : (X, \mathcal{O}(\mathcal{S})) \rightarrow (Y, \mathcal{O}(\mathcal{S}))$ is continuous for a given mapping $f : (X, \tau)  \rightarrow (Y, \eta)$ between two $T_0$-spaces.

   In what follows, for a $T_0$-space $(X, \tau)$,
in order to emphasize the topology $\tau$, we will use $\mathcal{O}_{\tau}(\mathcal{S})$ (resp., $\mathcal{O}_{\tau}(\mathcal{QS})$) to denote  $\mathcal{O}(\mathcal{S})$ (resp., $\mathcal{O}(\mathcal{QS})$).

\begin{definition}\label{def-S-continuous}  (\cite[Definition 3.9]{Zhang-Bao-Xu-2022})
A mapping $f: (X, \tau)\rightarrow (Y, \eta)$ between $T_0$-spaces is called $\mathcal{S}$-\emph{continuous} if $f^{-1}(U)\in\mathcal{O}_{\tau}(\mathcal{S})$ for all $U\in \mathcal{O}_{\eta}(\mathcal{S})$, that is, $f : (X, \mathcal{O}_\tau(\mathcal{S})) \rightarrow (Y, \mathcal{O}_\eta(\mathcal{S}))$ is continuous.
\end{definition}

Similarly, $f: (X, \tau)\rightarrow (Y, \eta)$ is said to be $\mathcal{QS}$-\emph{continuous} if $f : (X, \mathcal{O}_\tau(\mathcal{QS})) \rightarrow (Y, \mathcal{O}_\eta(\mathcal{QS}))$ is continuous.

\begin{lemma}\label{lem-S-continuous} (\cite[Lemma 3.10]{Zhang-Bao-Xu-2022})
For a mapping  $f: (X, \tau)\rightarrow (Y, \eta)$ between two $T_0$-spaces, the following three conditions are
equivalent:
\begin{enumerate}[\rm (1)]
  \item   $f$ is $\mathcal{S}$-continuous.

  \item   $f(\cl_{\tau}D)\subseteq \cl_{\eta}f(D)$ for all directed subsets $D \subseteq X$.

  \item   For any net $\mathcal{N}$ in $X$ and $x\in  X$, $\mathcal{N}\stackrel{\mathcal{S}}\longrightarrow x$ in $(X, \tau)$ implies $f(\mathcal{N})\stackrel{\mathcal{S}}\longrightarrow f(x)$ in $(Y, \eta)$.

\end{enumerate}
\end{lemma}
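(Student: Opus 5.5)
We prove the cycle (1) $\Rightarrow$ (2) $\Rightarrow$ (3) $\Rightarrow$ (1), working directly with the definition of $md(\tau)$. By Proposition \ref{prop-O(S)=md}, $\mathcal{O}_\tau(\mathcal{S})=md(\tau)$ and $\mathcal{O}_\eta(\mathcal{S})=md(\eta)$, so $\mathcal{S}$-continuity says exactly that $f^{-1}(V)\in md(\tau)$ for every $V\in md(\eta)$.

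For (2) $\Rightarrow$ (1), take $V\in md(\eta)$ and a directed set $D\subseteq X$ with $\cl_\tau D\cap f^{-1}(V)\neq\emptyset$. Pick $x\in\cl_\tau D$ with $f(x)\in V$. By (2), $f(x)\in\cl_\eta f(D)$. To apply the defining property of $md(\eta)$ we need $f(D)$ to be directed, so $f$ must be monotone for the specialization orders. This follows from (2) applied to the one-point directed set $D=\{y\}$: if $x\leq y$, then $x\in\cl_\tau\{y\}$, hence $f(x)\in\cl_\eta\{f(y)\}$, i.e.\ $f(x)\leq f(y)$. So $f(D)$ is directed, and since $V\in md(\eta)$ we get $f(D)\cap V\neq\emptyset$, i.e.\ $D\cap f^{-1}(V)\neq\emptyset$. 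Thus $f^{-1}(V)\in md(\tau)$.

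For (1) $\Rightarrow$ (3), suppose $\mathcal{N}\stackrel{\mathcal{S}}\longrightarrow x$, witnessed by a directed set $D$ of eventual lower bounds of $\mathcal{N}$ with $x\in\cl_\tau D$. Monotonicity of $f$ comes from (1): by Corollary \ref{cor-upsilon-tau-QS-alpha}, the specialization order of $(X,md(\tau))$ agrees with that of $(X,\tau)$, and likewise for $Y$, and continuous maps preserve specialization orders. Hence $f(D)$ is directed and consists of eventual lower bounds of $f(\mathcal{N})$. It remains to show $f(x)\in\cl_\eta f(D)$. By Corollary \ref{cor-upsilon-tau-QS-alpha}(3), $x\in\cl_{md(\tau)}D$, so continuity of $f$ with respect to the $md$ topologies gives $f(x)\in\cl_{md(\eta)}f(D)$. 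Since $f(D)$ is directed, Corollary \ref{cor-upsilon-tau-QS-alpha}(3) again gives $\cl_{md(\eta)}f(D)=\cl_\eta f(D)$. Therefore $f(\mathcal{N})\stackrel{\mathcal{S}}\longrightarrow f(x)$.

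For (3) $\Rightarrow$ (2), let $D$ be directed and $x\in\cl_\tau D$. By Lemma \ref{lem-direct-set-S-convergence}, $(d)_{d\in D}\stackrel{\mathcal{S}}\longrightarrow x$, so by (3), $(f(d))_{d\in D}\stackrel{\mathcal{S}}\longrightarrow f(x)$. This gives a directed set $E$ of eventual lower bounds of $(f(d))_{d\in D}$ with $f(x)\in\cl_\eta E$. Each $e\in E$ satisfies $e\leq f(d)$ for some $d\in D$, so $E\subseteq{\da}f(D)\subseteq\cl_\eta f(D)$. Hence $f(x)\in\cl_\eta E\subseteq\cl_\eta f(D)$.

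The main obstacle is the monotonicity of $f$, which is needed in both (2) $\Rightarrow$ (1) and (1) $\Rightarrow$ (3). It is handled as above: in the first case by applying (2) to singleton directed sets, and in the second by the equality of specialization orders in Corollary \ref{cor-upsilon-tau-QS-alpha}(2).
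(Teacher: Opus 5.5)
Your proof is correct. The three implications you actually prove, (2)$\Rightarrow$(1), (1)$\Rightarrow$(3) and (3)$\Rightarrow$(2), close the cycle (1)$\Rightarrow$(3)$\Rightarrow$(2)$\Rightarrow$(1). Your opening sentence announces (1)$\Rightarrow$(2)$\Rightarrow$(3)$\Rightarrow$(1) instead, so fix that wording.

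The paper does not prove this lemma; it quotes it from Zhang, Bao and Xu. So there is no in-paper argument to match. The closest comparison is the proof of the $\mathcal{QS}$-analogue in the corollary that follows. It gets monotonicity of $f$ exactly as you do in (2)$\Rightarrow$(1), by applying the closure condition to singletons. It uses the net $(d)_{d\in D}$ as you do in (3)$\Rightarrow$(2), but goes straight from convergence back to $md$-openness rather than through the closure condition.

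Your route is heavier than necessary only in (1)$\Rightarrow$(3). There you need monotonicity, via equality of specialization orders, and the identity $\cl_{md(\eta)}E=\cl_\eta E$ for directed $E$ (Corollary~\ref{cor-upsilon-tau-QS-alpha}(2),(3)). Both facts are established before this lemma and do not depend on it, so nothing is circular.

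You could avoid them by proving (1)$\Rightarrow$(2) directly. Suppose $x\in\cl_\tau D$ but $f(x)\notin\cl_\eta f(D)$. Then $W=Y\setminus\cl_\eta f(D)\in\eta\subseteq md(\eta)$, so $f^{-1}(W)\in md(\tau)$. This set meets $\cl_\tau D$ at $x$, hence meets $D$, which is absurd since $f(D)\cap W=\emptyset$. After that, (2)$\Rightarrow$(3) is immediate: monotonicity comes from singletons, and $f(x)\in f(\cl_\tau D)\subseteq\cl_\eta f(D)$.

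That version uses only the definition of $md$ and the inclusion $\eta\subseteq md(\eta)$. Yours instead makes explicit that continuity with respect to the $md$-topologies already preserves closures of directed sets.
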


\begin{corollary}
For a mapping  $f: (X, \tau)\rightarrow (Y, \eta)$ between two $T_0$-spaces, the following conditions are
equivalent:
\begin{enumerate}[\rm (1)]

\item $f$ is $\mathcal{S}$-continuous.

  \item $f : (X, md(\tau)) \rightarrow (Y, md(\eta))$ is continuous.

  \item $f$ is $\mathcal{QS}$-continuous.

 \item  $f(\cl_{\tau}D)\subseteq \cl_{\eta}f(D)$ for all directed subsets $D \subseteq X$.

  \item  For any net $\mathcal{N}$ in $X$ and $x\in X$, $\mathcal{N}\stackrel{\mathcal{QS}}\longrightarrow x$ in $(X, \tau)$ implies $f(\mathcal{N})\stackrel{\mathcal{QS}}\longrightarrow f(x)$ in $(Y, \eta)$.
\end{enumerate}
\end{corollary}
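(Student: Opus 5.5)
The five conditions fall into two groups. Conditions (1), (2), (4) concern $\mathcal S$-convergence and the topologies $md(\tau)$, $md(\eta)$. Conditions (3) and (5) concern $\mathcal{QS}$-convergence. The plan is to close the first group with results already available, attach (3) through topology identities, and then prove (5) $\Leftrightarrow$ (4) directly.

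\textbf{The first group and (3).} By Proposition \ref{prop-O(S)=md} and Proposition \ref{prop-O(QS)=O(S)}, for every $T_0$-space we have
\[
\mathcal O_\tau(\mathcal S)=\mathcal O_\tau(\mathcal{QS})=md(\tau),
\]
and similarly for $\eta$. Hence the continuity statements in (1), (2) and (3) are literally the same statement. Lemma \ref{lem-S-continuous} then gives (1) $\Leftrightarrow$ (4).

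\textbf{(5) $\Rightarrow$ (4).} Let $D\subseteq X$ be directed and $x\in\cl_\tau D$.
\begin{enumerate}
\item By Lemma \ref{lem-direct-set-S-convergence}, $(d)_{d\in D}\stackrel{\mathcal{QS}}\longrightarrow x$, so (5) gives $(f(d))_{d\in D}\stackrel{\mathcal{QS}}\longrightarrow f(x)$.
\item By Remark \ref{rem-QS-convergence}(1), every $\eta$-closed set $C$ has open complement in $\mathcal O_\eta(\mathcal{QS})$, so $\mathcal{QS}$-limits are $\eta$-limits.
\item Take $C=\cl_\eta f(D)$. If $f(x)\notin C$, the net $(f(d))_{d\in D}$ would eventually lie in $Y\setminus C$, which is impossible because every $f(d)\in C$. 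Hence $f(x)\in\cl_\eta f(D)$.
\end{enumerate}

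\textbf{(4) $\Rightarrow$ (5).} This is the main step. Assume (4); by the first group, $f$ is also continuous from $md(\tau)$ to $md(\eta)$.
\begin{enumerate}
\item \emph{Monotonicity.} Apply (4) to $D=\{y\}$: then $x\le y$ implies $f(x)\le f(y)$.
\item \emph{Image family.} Let $\mathcal N\stackrel{\mathcal{QS}}\longrightarrow x$, witnessed by $\{F_d:d\in D\}$. Set $G_d=f(F_d)$.
 \begin{itemize}
 \item $\{\ua G_d\}$ is filtered: $F_{d_3}\subseteq\ua F_{d_1}\cap\ua F_{d_2}$ and monotonicity give $G_{d_3}\subseteq\ua G_{d_1}\cap\ua G_{d_2}$.
 \item Each $G_d$ is a quasi-eventual lower bound of $f(\mathcal N)$: $\mathcal N$ is eventually in $\ua F_d$, so $f(\mathcal N)$ is eventually in $f(\ua F_d)\subseteq\ua G_d$.
 \end{itemize}
\item \emph{Remaining condition.} It remains to show $\ua f(x)\in\cl_{P_S(Y)}\{\ua G_d:d\in D\}$. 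Since the sets $\Box V$, $V\in\eta$, form a base, suppose $f(x)\in V\in\eta$ and $\ua G_d\not\subseteq V$ for all $d$.
 \begin{itemize}
 \item By Lemma \ref{lem-filtred-family}(2), $\{\ua(F_d\setminus f^{-1}(V)):d\in D\}$ is filtered; note $f^{-1}(V)$ is an upper set by monotonicity.
 \item Rudin's Lemma \ref{lem-Rudin-lemma} gives a directed $E\subseteq\bigcup_d(F_d\setminus f^{-1}(V))$ meeting every $F_d$.
 \item As in the proof of Lemma \ref{lem-characterization-way-below-c-relation-2}, we get $\cl_{P_S(X)}\{\ua F_d\}\subseteq\Diamond\cl_\tau E$, so $x\in\cl_\tau E$.
 \item By (4), $f(x)\in\cl_\eta f(E)$, so $V\cap f(E)\ne\emptyset$. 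This contradicts $E\cap f^{-1}(V)=\emptyset$.
 \end{itemize}
 Hence some $\ua G_d\subseteq V$, and $f(\mathcal N)\stackrel{\mathcal{QS}}\longrightarrow f(x)$.
\end{enumerate}

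The main obstacle is this last condition: transferring Smyth-closure membership through $f$. I expect the Rudin's Lemma argument above to handle it, replacing the finite families by a single directed set to which (4) applies.
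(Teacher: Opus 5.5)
Your proposal is correct and matches the paper's proof step for step. You use $\mathcal O_\tau(\mathcal S)=\mathcal O_\tau(\mathcal{QS})=md(\tau)$ together with Lemma \ref{lem-S-continuous} for the first group, and the same monotonicity, image-family, Lemma \ref{lem-filtred-family}(2) and Rudin's Lemma argument to reach (5). The only cosmetic difference is that you close the loop via (5) $\Rightarrow$ (4) using $\eta\subseteq\mathcal O_\eta(\mathcal{QS})$, whereas the paper proves (5) $\Rightarrow$ (1) by checking directly that $f^{-1}(W)\in md(\tau)$ for $W\in\mathcal O_\eta(\mathcal{QS})$; both rest on the net $(d)_{d\in D}$ being $\mathcal{QS}$-convergent to every point of $\cl_\tau D$.
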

\begin{proof} (1) $\Leftrightarrow$ (2) $\Leftrightarrow$ (3) $\Leftrightarrow$ (4): By Proposition \ref{prop-O(QS)=O(S)} and Lemma \ref{lem-S-continuous}.

(1) $\Rightarrow$ (5): Firstly, we show that $f$ is order-preserving. Suppose that $x\leq y$ in $X$. Then by (1) and Lemma \ref{lem-S-continuous}, we have $f(x)\in f(\cl_{\tau}\{y\})\subseteq \cl_{\eta}\{f(y)\}$. So $f(x)\leq f(y)$.
Let $\mathcal{N}$ be a net in $X$ and $x\in X$ with $\mathcal{N}\stackrel{\mathcal{QS}}\longrightarrow x$ in $(X, \tau)$. Then there exists a family  $\{F_d : d\in D\}\subseteq X^{(<\omega)}$ of quasi-eventual lower bounds of $\mathcal{N}$ such that $\{{\ua} F_d : d\in D\}$ is a filtered family and
${\ua} x\in \cl_{P_S((X, \tau))}\{{\ua} F_d: d\in D\}$. For any $d\in D$, as $f$ is order-preserving, we have that ${\ua} f({\ua} F_d)={\ua} f(F_d)$. Hence $\{f(F_d) : d\in D\}\subseteq Y^{(<\omega)}$ is a family of quasi-eventual lower bounds of $f(\mathcal{N})$ and $\{{\ua} f(F_d) : d\in D\}$ is a filtered family. Now we show that ${\ua} f(x)\in \cl_{P_S((Y, \eta))}\{{\ua} f(F_d): d\in D\}$. Let $V\in \eta$ with $f(x)\in V$. Then $x\in f^{-1}(V)$. We claim that there is $d\in D$ such that $F_d\subseteq f^{-1}(V)$. Assume, on the contrary, that $F_d\nsubseteq f^{-1}(V)$ for all $d\in D$.
Since $f$ is order-preserving, $f^{-1}(V)$ is an upper set. By Lemma \ref{lem-filtred-family}, $\{{\ua} (F_d\setminus f^{-1}(V)) : d\in D\}\subseteq \mathbf{Fin}~\!\!X$ is a filtered family. Then by Lemma \ref{lem-Rudin-lemma} there is a directed subset $E$ of $(X, \tau)$ with $E\subseteq \bigcup_{d\in D}(F_d\setminus f^{-1}(V))$ such that $E\cap (F_d\setminus f^{-1}(V))\neq\emptyset$ for all $d\in D$. Hence $\{{\ua} F_d : d\in D\}\subseteq \Diamond \cl_{\tau}E$. So ${\ua} x\in \cl_{P_S((X, \tau))}\{{\ua} F_d : d\in D\}\subseteq \Diamond \cl_{\tau}E$, and hence $x\in \cl_{\tau}E$. By (1) and Lemma \ref{lem-S-continuous}, we get $f(x)\in f(\cl_{\tau}E)\subseteq \cl_{\eta}f(E)$, whence $V\cap\cl_{\eta}f(E) \neq\emptyset$. As $E\subseteq \bigcup_{d\in D}(F_d\setminus f^{-1}(V))$, we have $f(E)\cap V=\emptyset$, and hence $V\cap \cl_{\eta}f(E)=\emptyset$ by $V\in \eta$, a contradiction. So there is $d^\prime\in D$ such that $F_{d^\prime}\subseteq f^{-1}(V)$, and consequently, $\{{\ua} f(F_d): d\in D\}\cap \Box V\neq\emptyset$. Thus ${\ua} f(x)\in \cl_{P_S((Y, \eta))}\{{\ua} f(F_d): d\in D\}$, and this completes the proof that $f(\mathcal{N})\stackrel{\mathcal{QS}}\longrightarrow f(x)$ in $(Y, \eta)$.

(5) $\Rightarrow$ (1): Let $W\in \mathcal{O}_{\eta}(\mathcal{QS})$. Now we show that $f^{-1}(W)\in \mathcal{O}_{\tau}(\mathcal{QS})$. Suppose that $D$ is a directed subset with $f^{-1}(W)\cap\cl_{\tau}D\neq\emptyset$. Choose a point $x\in f^{-1}(W)\cap\cl_{\tau}D$. Then by Lemma \ref{lem-direct-set-S-convergence}, $(d)_{d\in D}$ $\mathcal{QS}$-converges to $x$. By (5), $(f(d))_{d\in D}$ $\mathcal{QS}$-converges to $f(x)$. Hence by $f(x)\in W\in \mathcal{O}_{\eta}(\mathcal{QS})$ and Proposition \ref{prop-O(QS)=O(S)}, we have $(f(d))_{d\in D}$ is eventually in $W$. So $D\cap f^{-1}(W)\neq\emptyset$. By Proposition \ref{prop-O(QS)=O(S)}, we have $f^{-1}(W)\in md(\tau)=\mathcal{O}_{\tau}(\mathcal{QS})$. Thus $f$ is $\mathcal{QS}$-continuous.
\end{proof}

Let $(X, \tau)$, $(Y, \eta)$ be two $T_0$-spaces. A pair $(f, g)$ of mappings $f: (X, \tau) \rightarrow (Y, \eta)$ and $g: (Y, \eta) \rightarrow (X, \tau)$ is said to be an \emph{adjunction}, provided that both $f$ and $g$ are order-preserving, and for any $(x, y)\in X\times Y$, $f(x)\geq y$ iff $x\geq g(y)$. In an adjunction $(f, g)$, the mapping $f$ is called the \emph{upper adjoint} and $g$ the \emph{lower adjoint}.

\begin{proposition}
Let $X, Y$ be two $T_0$-spaces and $f: (X, \tau)\rightarrow (Y, \eta)$ be the upper adjoint of $g: (Y, \eta)\rightarrow (X, \tau)$.
Consider the following three conditions:
\begin{enumerate}[\rm (1)]
   \item    $f$ is $\mathcal{S}$-continuous.

   \item   For any $V\in \mathcal{O}_{\eta}(\mathcal{S})$, ${\ua} g(V)\in \mathcal{O}_{\tau}(\mathcal{S})$.

   \item   For any $F\in Y^{(<\omega)}$, $g(\ii_{\mathcal{O}_{\eta}(\mathcal{S})}{\ua} F)\subseteq $ $\ii_{\mathcal{O}_{\tau}(\mathcal{S})}{\ua} g(F)$.

\end{enumerate}
Then \emph{(1)} $\Leftrightarrow$ \emph{(2)} $\Rightarrow$ \emph{(3)}, and all three conditions are equivalent if $(Y, \eta)$ is locally hypercompact.
\end{proposition}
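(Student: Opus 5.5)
We write $\sigma_X:=\mathcal{O}_\tau(\mathcal{S})=md(\tau)$ and $\sigma_Y:=\mathcal{O}_\eta(\mathcal{S})=md(\eta)$; by Corollary \ref{cor-upsilon-tau-QS-alpha}, both topologies induce the original specialization orders. The basic adjunction identity is $f^{-1}({\ua} y)={\ua} g(y)$, because $f(x)\geq y$ iff $x\geq g(y)$. More generally, for any upper set $V\subseteq Y$ we get $f^{-1}(V)={\ua} g(V)$. Indeed, if $x\geq g(v)$ with $v\in V$, then $f(x)\geq v$, so $f(x)\in V$. Conversely, if $f(x)\in V$, then $g(f(x))\leq x$ (take $y=f(x)$ in the adjunction), and so $x\in{\ua} g(V)$.

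\textbf{(1) $\Leftrightarrow$ (2).} By Remark \ref{rem-QS-convergence}(4) and Proposition \ref{prop-O(QS)=O(S)}, every $V\in\sigma_Y$ is an upper set. Hence $f^{-1}(V)={\ua} g(V)$, and $\mathcal{S}$-continuity of $f$, i.e.\ $f^{-1}(V)\in\sigma_X$ for all $V\in\sigma_Y$, is literally condition (2).

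\textbf{(2) $\Rightarrow$ (3).} Let $F\in Y^{(<\omega)}$ and put $V=\ii_{\sigma_Y}{\ua} F\in\sigma_Y$. By (2), ${\ua} g(V)\in\sigma_X$. Since $V\subseteq{\ua} F$ and $g$ is order-preserving, ${\ua} g(V)\subseteq{\ua} g({\ua} F)={\ua} g(F)$. Therefore $g(V)\subseteq{\ua} g(V)\subseteq\ii_{\sigma_X}{\ua} g(F)$, which is (3).

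\textbf{(3) $\Rightarrow$ (2) when $(Y,\eta)$ is locally hypercompact.} By Corollary \ref{cor-LHC-O(GS)=tau}, $\sigma_Y=\eta$. Let $V\in\eta$ and $v\in V$. Local hypercompactness gives $F\in Y^{(<\omega)}$ with $v\in\ii_\eta{\ua} F\subseteq{\ua} F\subseteq V$. Applying (3) and using $\ii_{\sigma_Y}{\ua} F=\ii_\eta{\ua} F$, we get
\[
g(v)\in\ii_{\sigma_X}{\ua} g(F)\subseteq{\ua} g(F)\subseteq{\ua} g(V),
\]
the last inclusion because $F\subseteq{\ua} F\subseteq V$. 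So every point of $g(V)$ lies in a $\sigma_X$-open set $W_v:=\ii_{\sigma_X}{\ua} g(F)$ contained in ${\ua} g(V)$. Since $\sigma_X$-open sets are upper sets, ${\ua} g(V)=\bigcup_{v\in V}W_v$, which is $\sigma_X$-open.

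The only delicate point is identifying $f^{-1}(V)$ with ${\ua} g(V)$, which needs $V$ to be an upper set. In the last implication, local hypercompactness is exactly what provides the finite sets $F$ that make hypothesis (3) applicable.
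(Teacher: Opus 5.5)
Your proof is correct. Your (2) $\Rightarrow$ (3) is essentially the paper's argument, but the other two steps take a different route.

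For (1) $\Leftrightarrow$ (2), the paper simply cites \cite[Proposition 3.13]{Zhang-Bao-Xu-2022}. You instead derive it from the identity $f^{-1}(V)={\ua} g(V)$ for upper sets $V\subseteq Y$, together with the fact that members of $\mathcal{O}_\eta(\mathcal{S})$ are upper sets (Remark \ref{rem-QS-convergence}(4)). This makes the equivalence self-contained and shows it is essentially a restatement of the definition.

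For the converse under local hypercompactness, the paper proves (3) $\Rightarrow$ (1) by checking that $f^{-1}(V)$ is monotone determined open:
\begin{enumerate}[\rm (a)]
\item it takes a directed $D$ and $x\in f^{-1}(V)\cap\cl_\tau D$, and picks $G$ with $f(x)\in\ii_\eta{\ua} G\subseteq{\ua} G\subseteq V$;
\item it uses (3) and $g(f(x))\leq x$ to get $x\in\ii_{\mathcal{O}_\tau(\mathcal{S})}{\ua} g(G)$;
\item it invokes $\cl_\tau D=\cl_{\mathcal{O}_\tau(\mathcal{S})}D$ to find a point of $D$ in ${\ua} g(G)$;
\item it passes back through the adjunction to get $f(D)\cap V\neq\emptyset$.
\end{enumerate}
You prove (3) $\Rightarrow$ (2) instead, by exhibiting ${\ua} g(V)$ as the union of the sets $\ii_{\mathcal{O}_\tau(\mathcal{S})}{\ua} g(F_v)$, $v\in V$.

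Your version is shorter. It needs nothing about $\mathcal{O}_\tau(\mathcal{S})$ beyond its open sets being upper sets: no directed sets, no identification with $md(\tau)$, no comparison of closures. The paper's version stays inside its monotone-determined framework. It uses the adjunction only at the last step, in the form ``${\ua} g(G)\cap D\neq\emptyset$ implies ${\ua} G\cap f(D)\neq\emptyset$'', which is exactly your identity $f^{-1}({\ua} G)={\ua} g(G)$.
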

\begin{proof} (1) $\Leftrightarrow$ (2): By \cite[Proposition 3.13]{Zhang-Bao-Xu-2022}.

(2) $\Rightarrow$ (3): Let $F\in Y^{(<\omega)}$ and $y\in Y$ with $y\in \ii_{\mathcal{O}_{\eta}(\mathcal{S})}{\ua} F$. Then there is $V\in \mathcal{O}_{\eta}(\mathcal{S}) $ such that $y\in V \subseteq{\ua} F$, whence $g(y)\in g(V)\subseteq g({\ua} F)\subseteq {\ua} g(F)$. By (2), we have $g(y)\in {\ua} g(V)\in \mathcal{O}_{\tau}(\mathcal{S})$ and ${\ua} g(V)\subseteq {\ua} g(F)$. Hence  $g(y)\in \ii_{\mathcal{O}_{\tau}(\mathcal{S})}{\ua} g(F)$. Thus $g(\ii_{\mathcal{O}_{\eta}(\mathcal{S})}{\ua} F)\subseteq $ $\ii_{\mathcal{O}_{\tau}(\mathcal{S})}{\ua} g(F)$.

(3) $\Rightarrow$ (1): Assume that $(Y, \eta)$ is locally hypercompact and $ V\in \mathcal{O}_{\eta}(\mathcal{S})$. Then by Corollary \ref{cor-LHC-O(GS)=tau} $\mathcal{O}_{\eta}(\mathcal{S})=\eta$, whence $V\in \eta$. Now we show that $f^{-1}(V)\in \mathcal{O}_{\tau}(\mathcal{S})$. Suppose that $D$ is a directed set in $(X, \tau)$ with $f^{-1}(V)\cap \cl_{\tau}D\neq\emptyset$. Then there is $x\in f^{-1}(V)\cap \cl_{\tau}D$ and hence $f(x)\in V\in \eta$. As $(Y, \eta)$ is locally hypercompact, there exists $G\in Y^{(<\omega)}$ such that $f(x)\in \ii_{\eta}{\ua} G\subseteq {\ua} G\subseteq V$. By (3) we get $g(f(x))\in g(\ii_{\eta}{\ua} G)=g(\ii_{\mathcal{O}_{\eta}(\mathcal{S})}{\ua} G)\subseteq \ii_{\mathcal{O}_{\tau}(\mathcal{S})}{\ua} g(G)$. Since $g(f(x))\leq x$, we get $x\in \ii_{\mathcal{O}_{\tau}(\mathcal{S})}{\ua} g(G)$. By Proposition \ref{prop-md-topology}(3) and Corollary \ref{cor-LHC-O(GS)=tau}, we have $x\in \cl_{\tau}D=\cl_{md(\tau)}D=\cl_{\mathcal{O}_{\tau}(\mathcal{S})}D$. Hence $\emptyset \neq D\cap \ii_{\mathcal{O}_{\tau}(\mathcal{S})}{\ua} g(G)\subseteq {\ua} g(G)\cap D$. As $f$ is the upper adjoint of $g$, we have ${\ua} G\cap f(D)\neq\emptyset$, and consequently,  $V\cap f(D)\neq\emptyset$, or equivalently, $f^{-1}(V)\cap D\neq\emptyset$. Therefore, $f^{-1}(V)\in md(\tau)$, whence $f^{-1}(V)\in \mathcal{O}_{\tau}(\mathcal{S})$ by Proposition \ref{prop-O(S)=md}. Thus $f$ is $\mathcal{S}$-continuous.
\end{proof}

\begin{definition} Let $(X, \tau)$ and  $(Y, \eta)$ be two $T_0$-spaces. A mapping $f: (X, \tau)\rightarrow (Y, \eta)$ is called $\mathcal F$-\emph{open} if for any $F\in X^{(<\omega)}$,  $f(\ii_{\mathcal{O}_{\tau}(\mathcal{S})}{\ua} F)\subseteq $ $\ii_{\mathcal{O}_{\eta}(\mathcal{S})}{\ua} f(F)$.
\end{definition}

Now we introduce the following two categories:

\vskip 1mm

(1) $\textbf{SC}_G$ has as objects locally hypercompact $T_0$-spaces and as morphisms $\mathcal{S}$-continuous mappings that have a lower adjoint.

\vskip 1mm

(2) $\textbf{SC}_D$ has as objects locally hypercompact $T_0$-spaces and as morphisms mappings which are $\mathcal F$-open and have an upper adjoint.

\vskip 1mm

Define functors $F :\textbf{SC}_D\rightarrow \textbf{SC}_G$ and $G :\textbf{SC}_G\rightarrow \textbf{SC}_D$, where $F(X) = X$ and $G(X) = X$ for any locally hypercompact $T_0$-space $X$. If a mapping $g : Y \rightarrow X$ has an upper adjoint $f$, then this upper adjoint is uniquely determined by $g$, and we denote it by $f = F(g) : X\rightarrow Y$.
 And if a mapping $f : X \rightarrow Y$ has a lower adjoint $g : Y \rightarrow X$, then this lower adjoint $g$ is uniquely determined by $f$, and we denote it by $g = G(f) : Y \rightarrow X$.

With a similar argument to that of the proof of Theorem IV-1.3 in \cite{GHKLMS-2003}, we get the following result.

\begin{theorem}
The categories $\textbf{SC}_D$ and $\textbf{SC}_G$ are dual under the functors $F$ and $G$ given through the
Galois connection of functions.
\end{theorem}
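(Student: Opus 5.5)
We verify directly that $F$ and $G$ are well-defined contravariant functors and are mutually inverse, following the pattern of Theorem IV-1.3 in \cite{GHKLMS-2003}. On objects both functors are the identity, so $GF$ and $FG$ are trivially the identity there. The work is entirely on morphisms.

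First we check that $G$ sends morphisms of $\textbf{SC}_G$ to morphisms of $\textbf{SC}_D$. Let $f:X\to Y$ be $\mathcal S$-continuous with lower adjoint $g=G(f):Y\to X$, where $X,Y$ are locally hypercompact. The map $g$ has an upper adjoint, namely $f$. By the preceding proposition, applied to the adjunction $(f,g)$, implication (1) $\Rightarrow$ (3) gives $g(\ii_{\mathcal{O}_{\eta}(\mathcal{S})}{\ua} F)\subseteq \ii_{\mathcal{O}_{\tau}(\mathcal{S})}{\ua} g(F)$ for every $F\in Y^{(<\omega)}$. This says exactly that $g$ is $\mathcal F$-open. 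Conversely, for $F$, let $g:Y\to X$ be $\mathcal F$-open with upper adjoint $f=F(g)$. Then $f$ has the lower adjoint $g$, and condition (3) of the preceding proposition holds. Since $Y$ is locally hypercompact, that proposition gives (3) $\Rightarrow$ (1), so $f$ is $\mathcal S$-continuous and hence a morphism of $\textbf{SC}_G$.

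Next we check functoriality, which is pure order theory. Adjoints are unique. The identity is its own lower and upper adjoint. If $(f_1,g_1)$ and $(f_2,g_2)$ are adjunctions with composable $f_1:X\to Y$ and $f_2:Y\to Z$, then $f_2f_1(x)\geq z$ iff $f_1(x)\geq g_2(z)$ iff $x\geq g_1g_2(z)$. Hence $G(f_2f_1)=G(f_1)G(f_2)$, and dually for $F$. We should also note that $\mathcal S$-continuous maps and $\mathcal F$-open maps with adjoints are closed under composition, so that $\textbf{SC}_G$ and $\textbf{SC}_D$ really are categories. For $\mathcal S$-continuity this is immediate, since it is continuity with respect to $\mathcal O(\mathcal S)$. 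For $\mathcal F$-openness we transfer through the equivalence just proved: a composite of $\mathcal F$-open maps has upper adjoint equal to the composite of the upper adjoints, which is $\mathcal S$-continuous, so the composite is $\mathcal F$-open.

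Finally, uniqueness of adjoints gives $F(G(f))=f$ and $G(F(g))=g$. So $F$ and $G$ are mutually inverse contravariant functors, and the two categories are dually isomorphic. The only substantive step is the morphism correspondence. It rests entirely on the preceding proposition, and specifically on local hypercompactness of the codomain $Y$, which is used for (3) $\Rightarrow$ (1). That is the one place where we must be careful that the proposition is applied with the roles of $X$ and $Y$ matching.
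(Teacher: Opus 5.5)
Your proposal is correct and follows the same route as the paper, which simply invokes the argument of Theorem IV-1.3 in \cite{GHKLMS-2003}. The preceding proposition supplies the morphism correspondence, with local hypercompactness of the domain of the lower adjoint needed only for (3) $\Rightarrow$ (1), and uniqueness and composition of adjoints give contravariant functoriality together with $FG=\mathrm{id}$ and $GF=\mathrm{id}$. (As a minor simplification, closure of $\mathcal F$-open maps under composition can be checked directly, since $g_1g_2(\ii_{\mathcal O(\mathcal S)}{\ua} F)\subseteq g_1(\ii_{\mathcal O(\mathcal S)}{\ua} g_2(F))\subseteq \ii_{\mathcal O(\mathcal S)}{\ua} g_1g_2(F)$, so you need not pass through the upper adjoints.)
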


\section{Quasi-liminf convergence in $T_0$-spaces}

In this section, we study the quasi-liminf convergence in $T_0$-spaces. A new kind of $T_0$-spaces, called \emph{WLH}-spaces, are introduced and investigated. We prove that every locally hypercompact $T_0$-space is a \emph{WLH}-space, and a $T_0$-space $(X, \tau)$ is a \emph{WLH}-space iff the quasi-liminf convergence in $(X, \tau)$ is topological. Therefore, the quasi-liminf convergence in $(X, \tau)$ is topological if  $(X, \tau)$ is locally hypercompact, and for a quasicontinuous poset $P$, the quasi-liminf convergence is topological and agrees with convergence in the Lawson topology $\lambda(P)$. Using the quasi-liminf convergence and approximate relation $\ll_c$, we give  several characterizations of $C$-spaces and continuous posets.

\begin{definition} (\cite[Definition 4.1]{Zhang-Bao-Xu-2022}) Let $X$ be a $T_0$-space, $x\in  X$ and $\mathcal{N}$ be a net in $X$.
\begin{enumerate}[\rm (1)]
    \item $x$ is said to be the \emph{liminf} of $\mathcal{N}$, written $\mathcal{N}\stackrel{l}\longrightarrow x$,  if  $\mathcal{N}\stackrel{\mathcal{S}}\longrightarrow x$  and $x\in {\ua} y$  for any eventual lower bound
 $y$ of $\mathcal{N}$.

    \item $\mathcal{N}$  is said to be \emph{liminf convergent} to $x$, written $\mathcal{N}\stackrel{L}\longrightarrow x$, if $\mathcal{N}_f\stackrel{l}\longrightarrow x$ for all subnets $\mathcal{N}_f$ of $\mathcal{N}$.

        \item Let $\mathcal{L}$ = $\{(\mathcal{N} , x) : \mathcal{N}\stackrel{L}\longrightarrow x\}$.
The topology $\mathcal{O}(\mathcal{L}) = \{U \subseteq  X:$ whenever $(\mathcal{N}, x) \in \mathcal{L}$ and
$x \in U$, then $\mathcal{N}$ is eventually in $U\}$ is called the \emph{liminf topology} on $X$.

\end{enumerate}
\end{definition}

\begin{proposition}\label{prop-liminf-convergence-charac} (\cite[Proposition 4.3]{Zhang-Bao-Xu-2022})
Let $X$ be a $T_0$-space, $x \in X$ and $\mathcal{N}$ a net in $X$. Then the following two conditions are
equivalent:
\begin{enumerate}[\rm (1)]

    \item $(\mathcal N, x)\in \mathcal{L}$.

     \item There exists a directed subset $D\subseteq X$ such that

\begin{enumerate}[\rm (i)]
\item ${\da} x=\cl~\!\! D$;
\item for each $d\in D$, $d$ is a eventual lower bound of $\mathcal N$;
\item for any $z\in X$, if $\mathcal N$ is usually in ${\ua} z$, then $x\in {\ua} z$.
\end{enumerate}
\end{enumerate}
\end{proposition}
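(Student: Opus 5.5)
We prove (1) $\Rightarrow$ (2) and (2) $\Rightarrow$ (1) separately.

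\textbf{(1) $\Rightarrow$ (2).} Suppose $\mathcal N\stackrel{L}\longrightarrow x$. Then in particular $\mathcal N\stackrel{l}\longrightarrow x$, since $\mathcal N$ is a subnet of itself. So $\mathcal N\stackrel{\mathcal S}\longrightarrow x$, which yields a directed set $D$ of eventual lower bounds of $\mathcal N$ with $x\in\cl~\!D$. Every $d\in D$ satisfies $x\in\ua d$, i.e.\ $d\leq x$. Hence $D\subseteq{\da}x$ and $\cl~\!D\subseteq{\da}x$. Conversely, $x\in\cl~\!D$ gives ${\da}x=\cl\{x\}\subseteq\cl~\!D$. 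This proves (i), and (ii) holds by the choice of $D$.

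For (iii), let $\mathcal N$ be usually in ${\ua}z$. Let $J=\{i\in I: x_i\in{\ua}z\}$. By the hypothesis, $J$ is cofinal in $I$, hence directed, and the inclusion map makes $(x_j)_{j\in J}$ a subnet $\mathcal N_f$ of $\mathcal N$. Every element of $\mathcal N_f$ lies in ${\ua}z$, so $z$ is an eventual lower bound of $\mathcal N_f$. Since $\mathcal N_f\stackrel{l}\longrightarrow x$, we obtain $x\in{\ua}z$.

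\textbf{(2) $\Rightarrow$ (1).} Fix $D$ satisfying (i)--(iii), and let $\mathcal N_f=(x_{f(e)})_{e\in E}$ be any subnet of $\mathcal N$. We verify $\mathcal N_f\stackrel{l}\longrightarrow x$ in two steps.

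First, each $d\in D$ is an eventual lower bound of $\mathcal N$, say $x_i\in\ua d$ for all $i\geq i_0$. By the subnet condition there is $e_0$ with $f(e)\geq i_0$ for all $e\geq e_0$, so $d$ is an eventual lower bound of $\mathcal N_f$. Together with $x\in{\da}x=\cl~\!D$, this gives $\mathcal N_f\stackrel{\mathcal S}\longrightarrow x$.

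Second, let $y$ be an eventual lower bound of $\mathcal N_f$, so $x_{f(e)}\in{\ua}y$ for all $e\geq e_1$. We claim $\mathcal N$ is usually in ${\ua}y$. Given $i\in I$, choose $e_2$ with $f(e)\geq i$ whenever $e\geq e_2$, and pick $e\geq e_1,e_2$. Then $j=f(e)\geq i$ and $x_j\in{\ua}y$, as required. By (iii), $x\in{\ua}y$, so $\mathcal N_f\stackrel{l}\longrightarrow x$.

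Since $\mathcal N_f$ was arbitrary, $\mathcal N\stackrel{L}\longrightarrow x$.

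The only delicate step is the passage between "usually in" and subnets, in both directions. In one direction we form the cofinal subnet indexed by $J$; in the other we check that "eventually in a subnet" implies "usually in the net". Both are handled explicitly above.
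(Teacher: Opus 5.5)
Your proof is correct and complete. It uses both subnet steps the statement needs: the cofinal subnet on $J=\{i : x_i\in{\ua}z\}$, and the fact that a subnet eventually in ${\ua}y$ forces $\mathcal N$ to be usually in ${\ua}y$. The paper gives no proof here, citing the result from Zhang--Bao--Xu, but your argument is exactly the pattern it uses for the quasi-liminf analogue, Proposition~\ref{prop-quasi-liminf-convergence-charac}: eventual lower bounds pass to subnets for the $\mathcal S$-part, and ``eventually in a subnet'' becomes ``usually in the net'' for the liminf condition.
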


\begin{lemma}\label{lem-Lawson-tau-O(L)} (\cite[Lemma 4.2]{Zhang-Bao-Xu-2022})
Let $(X, \tau )$ be  a $T_0$-space. Then we have the following two conclusions:

\begin{enumerate}[\rm (1)]
  \item   $\omega(X)\vee \tau\subseteq \mathcal{O}(\mathcal{L})$.

  \item   If $(X, \tau )$ is a $C$-space, then $\mathcal{O}(\mathcal{L})=\omega(X)\vee \tau$.
\end{enumerate}
\end{lemma}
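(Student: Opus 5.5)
The statement is Lemma \ref{lem-Lawson-tau-O(L)}, with two parts. In both I will work with the characterization of $\mathcal L$-convergence in Proposition \ref{prop-liminf-convergence-charac}. Recall that $\omega(X)\vee\tau$ is generated by the subbasic sets $U\in\tau$ and $X\setminus{\ua}z$, so for (1) it suffices to show that each such subbasic set lies in $\mathcal O(\mathcal L)$.

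For (1), first let $U\in\tau$ and $(\mathcal N,x)\in\mathcal L$ with $x\in U$. Take the directed set $D$ given by Proposition \ref{prop-liminf-convergence-charac}. Since ${\da}x=\cl D$, we have $x\in\cl D$, so the open set $U$ meets $D$, say at $d$. Then $d$ is an eventual lower bound of $\mathcal N$, so $\mathcal N$ is eventually in ${\ua}d\subseteq U$, because $U$ is an upper set. Next let $W=X\setminus{\ua}z$ with $x\in W$. If $\mathcal N$ were not eventually in $W$, then $\mathcal N$ would be usually in ${\ua}z$. Condition (iii) would then give $x\in{\ua}z$, a contradiction. Hence both kinds of subbasic sets lie in $\mathcal O(\mathcal L)$, and (1) follows.

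For (2), it remains to show $\mathcal O(\mathcal L)\subseteq\omega(X)\vee\tau$ when $X$ is a $C$-space. Let $V\in\mathcal O(\mathcal L)$ and $x\in V$, and suppose that no basic set $U\setminus{\ua}G$ (with $x\in U\in\tau$, $G$ finite, $x\notin{\ua}G$) is contained in $V$. I will use these basic neighbourhoods to build a net outside $V$ that nonetheless $\mathcal L$-converges to $x$, which contradicts $V\in\mathcal O(\mathcal L)$. The index set is $I=\{(U,G): x\in U\in\tau,\ G\in X^{(<\omega)},\ x\notin{\ua}G\}$, ordered by $U$ decreasing and $G$ increasing. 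For each index choose $x_{(U,G)}\in (U\setminus{\ua}G)\setminus V$.

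Now verify condition (2) of Proposition \ref{prop-liminf-convergence-charac} for this net, with $D=\{y: x\in\ii{\ua}y\}$.
\begin{itemize}
\item \emph{$D$ is directed.} Given $y_1,y_2\in D$, the $C$-space property applied to the open set $\ii{\ua}y_1\cap\ii{\ua}y_2$ gives $y$ with $x\in\ii{\ua}y\subseteq{\ua}y\subseteq \ii{\ua}y_1\cap\ii{\ua}y_2$. Thus $y\in D$ and $y\ge y_1,y_2$.
\item \emph{$\cl D={\da}x$.} Clearly $D\subseteq{\da}x$, so $\cl D\subseteq{\da}x$. Conversely, every open neighbourhood of $x$ contains some ${\ua}y$ with $y\in D$, hence meets $D$, so $x\in\cl D$.
\item \emph{Each $y\in D$ is an eventual lower bound.} Eventually $x_{(U,G)}\in U\subseteq\ii{\ua}y$, taking $U=\ii{\ua}y$ as the starting index.
\item \emph{Condition (iii).} If $x\notin{\ua}z$, then from the index $(X,\{z\})$ onward all terms avoid ${\ua}z$. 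So the net is not usually in ${\ua}z$.
\end{itemize}

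This gives $(\mathcal N,x)\in\mathcal L$ while the net never enters $V$, the desired contradiction.

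The main obstacle is the definitional subtlety that $\mathcal L$ requires the liminf property for all subnets. This is why I rely on the characterization in Proposition \ref{prop-liminf-convergence-charac}, where condition (iii), with ``usually'', absorbs the subnet requirement.
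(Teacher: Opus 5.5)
The paper gives no proof of this lemma; it is imported from Lemma 4.2 of Zhang, Bao and Xu. There is therefore no in-paper argument to compare with, and judged on its own your proof is correct.

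\textbf{Part (1).} This is right. Since $\mathcal{O}(\mathcal{L})$ is a topology, it suffices to check the subbasic sets $U\in\tau$ and $X\setminus{\ua}z$, and conditions (i)--(iii) of Proposition \ref{prop-liminf-convergence-charac} handle them exactly as you say.

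\textbf{Part (2).} You correctly verify:
\begin{enumerate}
\item directedness of $D=\{y : x\in \ii_\tau{\ua}y\}$;
\item the equality $\cl D={\da}x$;
\item that each $y\in D$ is an eventual lower bound;
\item condition (iii).
\end{enumerate}
Using Proposition \ref{prop-liminf-convergence-charac} is the right way to absorb the ``all subnets'' clause in the definition of $\mathcal{L}$.

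\textbf{One small repair.} You let $G$ range over $X^{(<\omega)}$, the nonempty finite sets. Then your index set $I$ is empty when $x$ is the greatest element of $X$. In that case there is no net, and no legitimate starting index $(\ii_\tau{\ua}y, G)$. Allow $G=\emptyset$, with ${\ua}\emptyset=\emptyset$. This is needed anyway, because the basic $\omega(X)\vee\tau$-neighbourhoods of $x$ include the open sets $U\ni x$ themselves. With that change the argument goes through verbatim.

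\textbf{An optional strengthening.} Your verification of (i)--(iii) never uses how the net was built. It only uses that the net converges to $x$ in $\omega(X)\vee\tau$:
\begin{enumerate}
\item it is eventually inside every $\tau$-open set $\ii_\tau{\ua}y$ with $y\in D$;
\item it is eventually outside every ${\ua}z$ with $z\not\leq x$.
\end{enumerate}
So you can drop the contradiction set-up and prove directly that in a $C$-space every net converging to $x$ in $\omega(X)\vee\tau$ is $\mathcal{L}$-convergent to $x$. Combined with (1), this gives that $\mathcal{L}$-convergence coincides with convergence in $\omega(X)\vee\tau$. In other words, liminf convergence in a $C$-space is topological. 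The equality $\mathcal{O}(\mathcal{L})=\omega(X)\vee\tau$ then follows from $\mathcal{O}(\mathcal{C}(\omega(X)\vee\tau))=\omega(X)\vee\tau$. This is slightly more than the lemma asks for, and it is the form in which the result is used later, in the discussion of WC-spaces.
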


\begin{definition} Let $X$ be a $T_0$-space, $x\in  X$ and $\mathcal{N}$ be a net in $X$.
\begin{enumerate}[\rm (1)]
    \item $x$ is said to be the \emph{quasi-liminf} of $\mathcal{N}$, written $\mathcal{N}\stackrel{ql}\longrightarrow x$, if  $\mathcal{N}\stackrel{\mathcal{QS}}\longrightarrow x$  and $x\in {\ua} F$  for any quasi-eventual lower bound
 $F$ of $\mathcal{N}$.

    \item $\mathcal{N}$  is said to be \emph{quasi-liminf convergent} to $x$, written $\mathcal{N}\stackrel{QL}\longrightarrow x$, if $\mathcal{N}_f\stackrel{ql}\longrightarrow x$ for all subnets $\mathcal{N}_f$ of $\mathcal{N}$.

        \item Let $\mathcal{QL}$ = $\{(\mathcal{N} , x) : \mathcal{N}\stackrel{QL}\longrightarrow x\}$.
The topology $\mathcal{O}(\mathcal{QL}) = \{U \subseteq  X:$ whenever $(\mathcal{N}, x) \in \mathcal{QL}$ and
$x \in U$, then $\mathcal{N}$ is eventually in $U\}$ is called the \emph{quasi-liminf topology} on $X$.
\end{enumerate}
\end{definition}

\begin{proposition}\label{prop-quasi-liminf-convergence-charac}

Let $X$ be a $T_0$-space, $x \in X$ and $\mathcal{N}$ a net in $X$. Then the following four conditions are
equivalent:
\begin{enumerate}[\rm (1)]

    \item $(\mathcal N, x)\in \mathcal{QL}$.

     \item There exists a family $\{F_d : d\in D\}\subseteq X^{(<\omega)}$ such that

\begin{enumerate}[\rm (i)]
\item $\{{\ua} F_d : d\in D\}$ is a filtered family;
\item for each $d\in D$, $F_d$ is a quasi-eventual lower bound of $\mathcal N$;
\item for any $F\in X^{(<\omega)}$, if $\mathcal N$ is usually in ${\ua} F$, then $x\in {\ua} F$; and
\item $\cl_{P_S(X)}\{{\ua} F_d : d\in D\}=\cl_{P_S(X)}\{{\ua} x\}$.
\end{enumerate}
\item $(\mathcal N, x)\in \mathcal{QS}$ and for any $F\in X^{(<\omega)}$, if $\mathcal N$ is usually in ${\ua} F$, then $x\in {\ua} F$;
\item $(\mathcal N, x)\in \mathcal{QS}$ and for any $z\in X$, if $\mathcal N$ is usually in ${\ua} z$, then $x\in {\ua} z$.

\end{enumerate}
\end{proposition}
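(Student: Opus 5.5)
The plan is to prove the cycle (1) $\Rightarrow$ (3) $\Rightarrow$ (1), then (2) $\Leftrightarrow$ (3) and (3) $\Leftrightarrow$ (4). Two facts about subnets drive everything.

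\emph{Fact A:} if $F$ is a quasi-eventual lower bound of $\mathcal N$, then it is one of every subnet $\mathcal N_f$. This follows from condition (ii) in the definition of subnet.

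\emph{Fact B:} $\mathcal N$ is usually in ${\ua} F$ iff some subnet of $\mathcal N$ is eventually in ${\ua} F$. For the forward direction, restrict $\mathcal N$ to the cofinal set $\{i : x_i\in{\ua} F\}$. For the converse, if a subnet is eventually in ${\ua} F$, then by the subnet condition every $i$ lies below some index with $x_j\in{\ua} F$.

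(1) $\Rightarrow$ (3): taking the trivial subnet $\mathcal N$ itself gives $\mathcal N\stackrel{ql}\longrightarrow x$, hence $(\mathcal N,x)\in\mathcal{QS}$. If $\mathcal N$ is usually in ${\ua} F$, Fact B gives a subnet $\mathcal N_f$ having $F$ as a quasi-eventual lower bound. Since $\mathcal N_f\stackrel{ql}\longrightarrow x$, we get $x\in{\ua} F$.

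(3) $\Rightarrow$ (1): let $\mathcal N_f$ be a subnet. The family $\{F_d\}$ witnessing $(\mathcal N,x)\in\mathcal{QS}$ also witnesses $(\mathcal N_f,x)\in\mathcal{QS}$ by Fact A; conditions (1) and (3) of Definition \ref{def-QS-convergence} do not involve the net. If $F$ is a quasi-eventual lower bound of $\mathcal N_f$, then by Fact B $\mathcal N$ is usually in ${\ua} F$, so $x\in{\ua} F$ by (3). Hence $\mathcal N_f\stackrel{ql}\longrightarrow x$.

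(2) $\Rightarrow$ (3): condition (iv) gives ${\ua} x\in\cl_{P_S(X)}\{{\ua} F_d : d\in D\}$. Together with (i) and (ii) this yields $(\mathcal N,x)\in\mathcal{QS}$, and (iii) is the remaining clause.

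(3) $\Rightarrow$ (2): take the family $\{F_d : d\in D\}$ witnessing $(\mathcal N,x)\in\mathcal{QS}$; then (i) and (ii) hold, and (iii) is assumed. For (iv), the inclusion $\cl\{{\ua} x\}\subseteq\cl\{{\ua} F_d : d\in D\}$ holds because ${\ua} x$ lies in the latter closed set. For the reverse inclusion it suffices to show ${\ua} F_d\in\cl_{P_S(X)}\{{\ua} x\}$ for each $d$. By Remark \ref{rem-Smyth-specialization-order}, this means ${\ua} x\subseteq{\ua} F_d$, i.e.\ $x\in{\ua} F_d$. This holds by (iii), since $\mathcal N$ is eventually, hence usually, in ${\ua} F_d$.

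(3) $\Rightarrow$ (4) is the special case $F=\{z\}$.

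(4) $\Rightarrow$ (3) is the only step needing a small argument. Suppose $F=\{z_1,\dots,z_n\}$ and $J=\{i : x_i\in{\ua} F\}$ is cofinal in $I$. Write $J=J_1\cup\dots\cup J_n$ with $J_k=\{i : x_i\in{\ua} z_k\}$. I claim some $J_k$ is cofinal. If not, choose for each $k$ an index $i_k$ with no element of $J_k$ above it. Take an upper bound $i_0$ of $i_1,\dots,i_n$, and pick $j\in J$ with $j\geq i_0$. Then $j$ lies in some $J_k$ and $j\geq i_k$, a contradiction. So $\mathcal N$ is usually in ${\ua} z_k$, and (4) gives $x\in{\ua} z_k\subseteq{\ua} F$.

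The main obstacle is modest: the pigeonhole step in (4) $\Rightarrow$ (3) and correct handling of the Smyth specialization order in (iv). Everything else is unwinding definitions.
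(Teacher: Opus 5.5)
Your proof is correct and follows essentially the same argument as the paper. The paper runs the cycle (1)$\Rightarrow$(2)$\Rightarrow$(3)$\Rightarrow$(1) plus (3)$\Leftrightarrow$(4), using the same subnet facts (left implicit there), the Smyth specialization order for (iv), and the same finite pigeonhole argument for (4)$\Rightarrow$(3); your routing through (3) as a hub, with Facts A and B made explicit, only spells out steps the paper leaves tacit.
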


\begin{proof} (1) $\Rightarrow$ (2): As $\mathcal N$ is a subnet of itself, we have $\mathcal N\stackrel{ql}\longrightarrow x$. Then there exists a family $\{F_d : d\in D\}\subseteq X^{(<\omega)}$ of quasi-eventual lower bounds of $\mathcal N$ such that $\{{\ua} F_d : d\in D\}$ is a filtered family and ${\ua} x\in\cl_{P_S(X)}\{{\ua} F_d : d\in D\}$ (equivalently, $\cl_{P_S(X)}\{{\ua} x\}\subseteq\cl_{P_S(X)}\{{\ua} F_d : d\in D\}$). For any $F\in X^{(<\omega)}$, if $\mathcal N$ is usually in ${\ua} F$, then there is a subset $\mathcal N_f$ of $\mathcal N$ such that
$\mathcal N_f$ is in ${\ua} F$. By $\mathcal N\stackrel{QL}\longrightarrow x$, we get $\mathcal N_f\stackrel{ql}\longrightarrow x$ and hence $x\in {\ua} F$. In particular, for any $d\in D$, we have $x\in {\ua} F_d$ or, equivalently, ${\ua} F_d\in \cl_{P_S(X)}\{{\ua} x\}$. So $\cl_{P_S(X)}\{{\ua} F_d : d\in D\}\subseteq \cl_{P_S(X)}\{{\ua} x\}$ and hence $\cl_{P_S(X)}\{{\ua} x\}=\cl_{P_S(X)}\{{\ua} F_d : d\in D\}$.

(2) $\Rightarrow$ (3): Trivial.

(3) $\Rightarrow$ (1): As $(\mathcal N, x)\in \mathcal{QS}$, there exists a family $\{F_d : d\in D\}\subseteq X^{(<\omega)}$ of quasi-eventual lower bounds of $\mathcal N$ such that $\{{\ua} F_d : d\in D\}$ is a filtered family and ${\ua} x\in\cl_{P_S(X)}\{{\ua} F_d : d\in D\}$. Let $\mathcal N_f$ be a subnet of $\mathcal N$. Then $\mathcal{N}_f\stackrel{\mathcal{QS}}\longrightarrow x$. For $F\in X^{(<\omega)}$, if $F$ is a quasi-eventual lower bound of $\mathcal N_f$, then $\mathcal N$ is usually in ${\ua} F$. Hence $x\in {\ua} F$ by (3). Therefore,  $\mathcal{N}_f\stackrel{ql}\longrightarrow x$. Thus $(\mathcal N , x)\in \mathcal{QL}$.

 (3) $\Rightarrow$ (4): Trivial.

 (4) $\Rightarrow$ (3): Let $(\mathcal N, x)\in \mathcal{QS}$ and  $F=\{u_1, u_2, ..., u_n\}\in X^{(<\omega)}$. Suppose that $\mathcal N$ is usually in ${\ua} F$. If for any $1\leq i\leq n$, $\mathcal N$ is not usually in ${\ua} u_i$. Then for each $i\in \{1, 2, ..., n\}$, $\mathcal N$ is eventually in $X\setminus {\ua} u_i$, and hence $\mathcal N$ is eventually in $\bigcap\limits_{i=1}^n (X\setminus {\ua} u_i)=X\setminus \bigcup\limits_{i=1}^n{\ua} u_i=X\setminus {\ua} F$, which contradicts the assumption that $\mathcal N$ is usually in ${\ua} F$. So there is $i\in \{1, 2, ..., n\}$ such that $\mathcal N$ is usually in ${\ua} u_i$. Thus by (4) we have $x\in {\ua} u_i\subseteq{\ua} F$.
\end{proof}

\begin{remark}\label{rem-QL-convergence=Kou-L-convergence}  Let $(X, \tau)$ be a $T_o$-space and $P$ be a dcpo.
\begin{enumerate}[\rm (a)]
\item By the equivalence of conditions (1) and (4) of Proposition \ref{prop-quasi-liminf-convergence-charac}, the $\mathcal{QL}$-convergence in $(X, \tau)$ is equivalent to the $\mathcal{L}^*$-convergence $(X, \tau)$ introduced by Chen and Kou in
    \cite[Definition 4.3]{Chen-Kou-2024}. For more consistent notation with the $\mathcal L$-convergence, here we call such a convergence the $\mathcal{QL}$-convergence.
\item For the Scott space $(P, \sigma (P))$, the quasi-limiinf convergence was first introduced by Zhou and Li \cite{Zhou-Li-2013}, which was called the $\mathscr{S}^*_2$-\emph{convergence} in \cite{Zhou-Li-2013}.
\end{enumerate}
\end{remark}

\begin{lemma}\label{lem-direct-set-QL}
Let $(X, \tau )$ be a $T_0$-space, $x\in X$ and $D$ be a directed subset in $X$. If $\cl_{\tau}D={\da} x$, then $((d)_{d\in D}, x)\in \mathcal{QL}$.
\end{lemma}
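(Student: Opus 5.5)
We use the characterization in Proposition \ref{prop-quasi-liminf-convergence-charac}, specifically the equivalence (1) $\Leftrightarrow$ (4). It therefore suffices to verify two things for the net $(d)_{d\in D}$: that $((d)_{d\in D}, x)\in \mathcal{QS}$, and that for every $z\in X$, if $(d)_{d\in D}$ is usually in ${\ua} z$, then $x\in {\ua} z$.

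For the first part, note that $x\in {\da} x=\cl_{\tau}D$. Lemma \ref{lem-direct-set-S-convergence} then gives that $(d)_{d\in D}$ $\mathcal{S}$-converges, and hence $\mathcal{QS}$-converges, to $x$. If one wants to be explicit, the witnessing family is $\{\{d\} : d\in D\}$. It is filtered because $D$ is directed. Each $\{d\}$ is a quasi-eventual lower bound, since $e\geq d$ implies $e\in {\ua} d$. Finally, ${\ua} x\in \Diamond\cl_{\tau}D=\cl_{P_S(X)}\{{\ua} d : d\in D\}$ by Lemma \ref{lem-closure-in-Smyth}.

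For the second part, suppose $z\in X$ and the net $(d)_{d\in D}$ is usually in ${\ua} z$. Fix any $d_0\in D$. By the "usually" condition there is $d\geq d_0$ with $z\leq d$. Every element of $D$ lies in $\cl_{\tau}D={\da} x$, so $d\leq x$, and therefore $z\leq d\leq x$, i.e.\ $x\in{\ua} z$. Only the nonemptiness of $D$ is used here, together with the fact that $D\subseteq{\da} x$.

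Putting the two parts together, condition (4) of Proposition \ref{prop-quasi-liminf-convergence-charac} holds, so $((d)_{d\in D}, x)\in\mathcal{QL}$. There is no real obstacle. The only care needed is to use the full equality $\cl_{\tau}D={\da} x$: the inclusion ${\da} x\subseteq \cl_\tau D$ gives $\mathcal{QS}$-convergence, and the inclusion $D\subseteq{\da} x$ gives the upper-bound condition.
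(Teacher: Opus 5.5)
Your proof is correct and follows essentially the paper's route: both use the singleton family $\{\{d\} : d\in D\}$ as the witness and conclude via Proposition \ref{prop-quasi-liminf-convergence-charac}. The only difference is that you check condition (4) directly from $D\subseteq{\da}x$ (so $z\le d\le x$). The paper instead checks conditions (i)--(iv) of (2): it first proves $\cl_{P_S(X)}\{{\ua} d : d\in D\}=\cl_{P_S(X)}\{{\ua} x\}$ and then obtains ${\ua}x\subseteq{\ua}F$ through Remark \ref{rem-Smyth-specialization-order}, a detour your argument avoids.
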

\begin{proof} First, we show that $\cl_{P_S((X, \tau))}\{{\ua} d : d\in D\}=\cl_{P_S((X, \tau))}\{{\ua} x\}$. For any $U\in \mathcal O(X)$, we have that
$$\begin{array}{lll}
{\ua} x\in \Box U&\Leftrightarrow&{\da} x\cap U=U\cap \cl_{\tau} \{x\}\neq\emptyset\\
&\Leftrightarrow&U\cap\cl_{\tau} D\neq\emptyset ~(\mbox{by~}\cl_{\tau}D={\da} x)\\
&\Leftrightarrow&U\cap D\neq\emptyset\\
&\Leftrightarrow&\{{\ua} d : d\in D\}\cap \Box U\neq\emptyset.
\end{array}$$

\noindent Hence $\cl_{P_S((X, \tau))}\{{\ua} d : d\in D\}=\cl_{P_S((X, \tau))}\{{\ua} x\}$.
Now consider the family $\{\{d\} : d\in D\}$. Clearly, $\{{\ua} d={\ua} \{d\} : d\in D\}$ is a filtered family, and for each $d^\prime\in D$, $\{d^\prime\}$ is a quasi-eventual lower bound of $(d)_{d\in D}$, that is, $(d)_{d\in D}$ is eventually in ${\ua} d^\prime$. Assume that $F\in X^{(<\omega)}$ for which $(d)_{d\in D}$ is usually in ${\ua} F$. Then there is $d^*$ such that $d^*\in {\ua} F$, whence by Remark \ref{rem-Smyth-specialization-order} ${\ua} F\in \cl_{P_S((X, \tau))}\{{\ua} d^*\}\subseteq \cl_{P_S((X, \tau))}\{{\ua} d : d\in D\}=\cl_{P_S((X, \tau))}\{{\ua} x\}$. By Remark \ref{rem-Smyth-specialization-order} again, we have ${\ua} x\subseteq {\ua} F$. So $\{\{d\} : d\in D\}$ and $((d)_{d\in D}, x)$ satisfy conditions (i)-(iv) of Proposition \ref{prop-quasi-liminf-convergence-charac}(2). Therefore, $((d)_{d\in D}, x)\in \mathcal{QL}$ by Proposition \ref{prop-quasi-liminf-convergence-charac}.
\end{proof}

More generally, we have the following (comparing it with Lemma \ref{lemma-filtered-family-QS-net}).

\begin{lemma}\label{lem-filtered-QL}
Let $(X, \tau )$ be a $T_0$-space, $x\in X$ and $\{F_d : d\in D\}\subseteq X^{(<\omega)}$ such that $\{{\ua} F_d : d\in D\}$ is a filtered family and $\cl_{P_S((X, \tau))}\{{\ua} F_d: d\in D\}=\cl_{P_S((X, \tau))}\{{\ua} x\}$. Then there is a net $\mathcal N$ in $\bigcup_{d\in D} F_d$ such that $(\mathcal N, x)\in \mathcal{QL}$.
\end{lemma}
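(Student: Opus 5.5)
We reuse the net constructed in the proof of Lemma \ref{lemma-filtered-family-QS-net} and then verify condition (2) of Proposition \ref{prop-quasi-liminf-convergence-charac}. Let $I=\{(F_d, n, e): d\in D, n\in\mathbb{N}, e\in F_d\}$, ordered by $(F_{d_1}, n_1, e_1)< (F_{d_2}, n_2, e_2)$ iff either ${\ua} F_{d_2}\subsetneq {\ua} F_{d_1}$, or ${\ua} F_{d_2}={\ua} F_{d_1}$ and $n_2>n_1$. Define $x_{(F_d,n,e)}=e$. As shown there, $I$ is directed (this uses that $\{{\ua} F_d : d\in D\}$ is filtered) and each $F_d$ is a quasi-eventual lower bound of $\mathcal N=(x_i)_{i\in I}$. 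So conditions (i) and (ii) of Proposition \ref{prop-quasi-liminf-convergence-charac}(2) hold, and (iv) is exactly our hypothesis. Clearly $\mathcal N$ lies in $\bigcup_{d\in D}F_d$.

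It remains to verify condition (iii). Let $F\in X^{(<\omega)}$ be such that $\mathcal N$ is usually in ${\ua} F$; we must show $x\in {\ua} F$. First, $\mathcal N$ meets ${\ua} F$ at some index, so some element $e\in F_{d}$ with $e\in{\ua} F$ exists. We do not need anything finer. By Remark \ref{rem-Smyth-specialization-order}, ${\ua} e\subseteq {\ua} F$ gives ${\ua} F\in \cl_{P_S(X)}\{{\ua} e\}$.

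The key step is to show ${\ua} e\in \cl_{P_S(X)}\{{\ua} F_d : d\in D\}$. Take $U\in\mathcal O(X)$ with ${\ua} e\in\Box U$, i.e.\ $e\in U$. Since $e\in F_d$, the set ${\ua} F_d$ meets $U$, but this is not enough for ${\ua} F_d\subseteq U$. Instead we argue via the closed basic sets: the closure is $\bigcap\{\Diamond C : C\in\Gamma(X),\ \{{\ua} F_d\}\subseteq\Diamond C\}$. If ${\ua} F_{d'}\in\Diamond C$ for all $d'$, we must show $e\in C$. This can fail for an arbitrary $e$, so a direct argument does not work. We therefore use hypothesis (iv) instead, which gives ${\ua} F_d\in\cl\{{\ua} x\}$, hence $x\in{\ua} F_d$ for every $d$. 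This still does not by itself locate $e$ relative to $x$.

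To handle this, we exploit the fact that $\mathcal N$ is usually in ${\ua} F$ and not merely that it meets ${\ua} F$ once. Suppose $x\notin{\ua} F$. For each $d$, choose an index beyond $(F_d,0,e_d)$ at which $\mathcal N$ lies in ${\ua} F$. The order on $I$ forces that index to have first coordinate $F_{d'}$ with ${\ua} F_{d'}\subseteq{\ua} F_d$. Hence $F_{d'}\setminus (X\setminus{\ua}F)$, that is $F_{d'}\cap{\ua} F$, is nonempty. This holds cofinally, and by filteredness for all $d$: every $F_d$ meets ${\ua}F$ in the sense that $F_d\cap {\ua} F\neq\emptyset$ whenever we pass below. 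Then $\{{\ua}(F_d\cap{\ua}F)\}$ is not directly given by Lemma \ref{lem-filtred-family}, so we apply Rudin's Lemma \ref{lem-Rudin-lemma} to the filtered family of the $F_d$ restricted to a cofinal subfamily. This yields a directed $E\subseteq\bigcup_d (F_d\cap{\ua} F)$ meeting every $F_d$, so that $\cl_{P_S}\{{\ua}F_d\}\subseteq\Diamond\cl E$. Since ${\ua}F$ is an upper set containing $E$ and $\cl E$ is the closure in $X$, every point of $E$ lies in ${\ua} F$.

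We still need $\cl_{P_S}\{{\ua} F_d\}\subseteq \cl_{P_S}\{{\ua} F\}$, in order to conclude via (iv) that ${\ua} F\in\cl\{{\ua} x\}$'s reverse direction, i.e.\ ${\ua}x\in\cl\{{\ua}F\}$, which gives ${\ua} x\subseteq{\ua} F$. This last inclusion is the main obstacle. I would try to obtain it by showing that each ${\ua}F_d$ contains a point of ${\ua}F$ in every open set around it, reducing via $\Diamond$-closed sets as in Lemma \ref{lem-closure-in-Smyth}.
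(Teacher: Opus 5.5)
There is a genuine gap, and it cannot be closed with the net you chose. When the net takes values in all of $F_d$, condition (iii) of Proposition \ref{prop-quasi-liminf-convergence-charac}(2) is false in general.

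Take the Sierpi\'nski space $X=\{x,z\}$ with $\tau=\{\emptyset,\{z\},X\}$, so that $x<z$. Use the one-member family $F_0=\{x,z\}$. Then ${\ua}F_0=X={\ua}x$, so every hypothesis of the lemma holds. Your index set consists of the triples $(F_0,n,x)$ and $(F_0,n,z)$ for $n\in\mathbb N$, and the net takes the value $z$ cofinally. Hence it is usually in ${\ua}z=\{z\}$, although $x\notin{\ua}z$. So $(\mathcal N,x)\notin\mathcal{QL}$; indeed, the subnet that is constantly equal to $z$ already fails to $ql$-converge to $x$.

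Thus the ``main obstacle'' you isolate is not a technical difficulty: the inclusion you want is false for this net. The attempted repair via Rudin's Lemma could not have worked anyway. Since $\cl_\tau E$ is a lower set, knowing $x\in\cl_\tau E$ with $E\subseteq{\ua}F$ says nothing about whether $x\in{\ua}F$. Also, by Remark \ref{rem-Smyth-specialization-order}, ${\ua}x\in\cl_{P_S(X)}\{{\ua}F\}$ means ${\ua}F\subseteq{\ua}x$, which is the opposite of what you need.

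The missing idea is one you wrote down but did not use. Hypothesis (iv) gives ${\ua}x\subseteq{\ua}F_d$, i.e.\ $F_d\cap{\da}x\neq\emptyset$, for every $d$. Use this to change the net rather than to analyze the old one:
\begin{enumerate}
\item By Lemma \ref{lem-filtred-family}(1), the family $\{{\ua}(F_d\cap{\da}x): d\in D\}$ is filtered.
\item Repeat the same construction with triples $(F_d\cap{\da}x,n,e)$, where $e\in F_d\cap{\da}x$. This gives a directed index set and a net in $\bigcup_{d}F_d$.
\item Each $F_d$ is still a quasi-eventual lower bound, because later values lie in ${\ua}(F_d\cap{\da}x)\subseteq{\ua}F_d$. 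Condition (iv) is the hypothesis.
\item Condition (iii) becomes immediate: if the net is usually in ${\ua}F$, some value $e\in{\ua}F$ satisfies $e\le x$, so $x\in{\ua}F$.
\end{enumerate}
This is exactly the paper's proof. An equivalent route is given in Remark \ref{rem-simpler-proof}: Rudin's Lemma applied to $\{F_d\cap{\da}x\}$ gives a directed $E\subseteq{\da}x$ with $\cl_\tau E={\da}x$, and Lemma \ref{lem-direct-set-QL} finishes.
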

\begin{proof} For any $d^\prime\in D$, we have ${\ua} F_{d^\prime}\in \cl_{P_S((X, \tau))}\{{\ua} F_d: d\in D\}=\cl_{P_S((X, \tau))}\{{\ua} x\}$, whence by Remark \ref{rem-Smyth-specialization-order} ${\ua} x\subseteq {\ua} F_{d^\prime}$ or, equivalently, $F_{d^\prime}\cap {\da} x\neq\emptyset$. Therefore, $F_d\cap {\da} x$ is a non-empty finite set for any $d\in D$. By Lemma \ref{lem-filtred-family}  $\{{\ua} (F_d\cap {\da} x) : d\in D\}$ is a filtered family.

Let $I=\{(F_d\cap {\da} x, n, e): d\in D, n\in \mathbb{N}$ and $e\in  F_d\cap {\da} x\}$ and define an order on $I$ as follows: $(F_{d_1}\cap {\da} x, n_1, e_1)< (F_{d_2}\cap {\da} x, n_2, e_2)$ iff ${\ua} (F_{d_2}\cap {\da} x)$ is a proper subset of ${\ua} (F_{d_1}\cap {\da} x)$ or ${\ua} (F_{d_2}\cap {\da} x)={\ua} (F_{d_1}\cap {\da} x)$ and $n_1<n_2$. Then $I$ is a directed set. Define $x_{(F_d\cap {\da} x, n, e)}=e$ for any $(F_d\cap {\da} x, n,  e)\in I$.
 Then we get a net $\mathcal N=(x_i)_{i\in I}$. Now we show that $\{F_d  : d\in D\}$ and ($\mathcal N, x)$ satisfy conditions (ii) and (iii) of Proposition \ref{prop-quasi-liminf-convergence-charac}(2). Let $d\in D$. Choose an $e\in F_d\cap {\da} x$ and $m\in \mathbb{N}$. Then $(F_d\cap {\da} x, m, e)\in I$ and for any $(F_{d^\prime}\cap {\da} x, n, e^\prime)> (F_d\cap {\da} x, m, e)$ in $I$, we have $e^\prime=x_{(F_{d^\prime}\cap {\da} x, n, e^\prime)}\in F_{d^\prime}\cap {\da} x\subseteq {\ua} (F_d\cap {\da} x)\subseteq{\ua} F_d$. Hence $F_d$ is a quasi-eventual lower bound of $(x_i)_{i\in I}$. Assume that $F\in X^{(<\omega)}$ for which $\mathcal N$ is usually in ${\ua} F$. Then there is $(F_{d*}\cap {\da} x, n^*, e^*)\in I$ such that $e^*=x_{(F_{d^*}\cap {\da} x, n^*, e^*)}\in {\ua} F$. As $e^*\in F_{d*}\cap {\da} x$, we have $e^*\leq x$ and hence $x\in {\ua} F$. Therefore, $\{F_d : d\in D\}$ and $(\mathcal N, x)$ satisfy conditions (i)-(iv) of Proposition \ref{prop-quasi-liminf-convergence-charac}(2). It follows from Proposition \ref{prop-quasi-liminf-convergence-charac} that $(\mathcal N, x)\in \mathcal{QL}$.
\end{proof}

\begin{remark}\label{rem-simpler-proof} Using Lemma \ref{lem-direct-set-QL} we can provide a relatively simpler proof of Lemma \ref{lem-filtered-QL}. Indeed, by Lemma \ref{lem-Rudin-lemma}, there exists a directed set $E\subseteq \bigcup_{d\in D}F_d\cap {\da} x$ such that $E\cap F_d\cap {\da} x\neq\emptyset$ for all $d\in D$. Hence $\{{\ua} (F_d\cap {\da} x) : d\in D\}\subseteq\Diamond \cl_{\tau}E$, and consequently, ${\ua} x\in\cl_{P_S((X, \tau))}\{{\ua} x\}=\cl_{P_S((X, \tau))}\{{\ua} F_d: d\in D\}\subseteq \cl_{P_S((X, \tau))}\{{\ua} (F_d\cap {\da} x): d\in D\}\subseteq\Diamond \cl_{\tau}E$. Then $x\in \cl_{\tau}E$, whence ${\da} x\subseteq\cl_\tau E$. As $E\subseteq \bigcup_{d\in D} F_d\cap {\da} x\subseteq {\da} x$, we have $\cl_{\tau}E\subseteq {\da} x$. Thus $\cl_\tau E={\da} x$. By Lemma \ref{lem-direct-set-QL},
we have $((e)_{e\in E}, x)\in \mathcal{QL}$. Let $\mathcal N=(e)_{e\in E}$. Then $\mathcal N\subseteq\bigcup_{d\in D} F_d$ and $(\mathcal N, x)\in \mathcal{QL}$.
\end{remark}

\begin{proposition}\label{prop-L-convergence-QL-convergence}
Let $X$ be a $T_0$-space. Then $\mathcal{L}\subseteq\mathcal{QL}$ and hence $\mathcal{O}(\mathcal{QL})\subseteq \mathcal{O}(\mathcal{L})$.
\end{proposition}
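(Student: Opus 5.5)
To prove $\mathcal{L}\subseteq\mathcal{QL}$, I will take a pair $(\mathcal N, x)\in\mathcal L$ and verify condition (4) of Proposition \ref{prop-quasi-liminf-convergence-charac}. Condition (4) asks for two things: that $(\mathcal N, x)\in\mathcal{QS}$, and that $x\in{\ua} z$ whenever $\mathcal N$ is usually in ${\ua} z$. Once the inclusion of classes is established, the inclusion of topologies follows formally. Indeed, if $U\in\mathcal O(\mathcal{QL})$, $(\mathcal N,x)\in\mathcal L$ and $x\in U$, then $(\mathcal N,x)\in\mathcal{QL}$, so $\mathcal N$ is eventually in $U$. Hence $U\in\mathcal O(\mathcal L)$.

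The first step is to apply Proposition \ref{prop-liminf-convergence-charac} to $(\mathcal N, x)\in\mathcal L$. This yields a directed set $D\subseteq X$ such that ${\da} x=\cl~\!D$, each $d\in D$ is an eventual lower bound of $\mathcal N$, and condition (iii) holds: if $\mathcal N$ is usually in ${\ua} z$, then $x\in{\ua} z$. Condition (iii) is verbatim the second requirement of Proposition \ref{prop-quasi-liminf-convergence-charac}(4), so that part needs no further work.

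It remains to show $(\mathcal N,x)\in\mathcal{QS}$. I will use the family $\{\{d\}: d\in D\}$ and check the conditions of Definition \ref{def-QS-convergence} in turn.
\begin{enumerate}[\rm (1)]
\item $\{{\ua} d : d\in D\}$ is filtered because $D$ is directed.
\item Each $\{d\}$ is a quasi-eventual lower bound of $\mathcal N$, since $d$ is an eventual lower bound.
\item Since $x\in{\da} x=\cl_\tau D$, Lemma \ref{lem-closure-in-Smyth} gives ${\ua} x\in\Diamond\cl_\tau D=\cl_{P_S(X)}\{{\ua} d: d\in D\}$.
\end{enumerate}
Alternatively, the $\mathcal S$-convergence $\mathcal N\stackrel{\mathcal S}\longrightarrow x$ is immediate from $D$, and Remark \ref{rem-QS-convergence}(2) then gives $\mathcal{QS}$-convergence directly.

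No step here is a genuine obstacle. The only point needing care is to invoke the right characterization: condition (4) of Proposition \ref{prop-quasi-liminf-convergence-charac}, which is stated with single points $z$, rather than condition (3), which is stated with finite sets. With (4), the liminf condition (iii) transfers without modification.
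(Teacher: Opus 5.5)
Your proof is correct and follows the paper's argument: the paper also takes the directed set $D$ from Proposition~\ref{prop-liminf-convergence-charac} and uses the family $\{\{d\} : d\in D\}$. The only difference is that the paper checks conditions (i)--(iv) of Proposition~\ref{prop-quasi-liminf-convergence-charac}(2) and re-derives the finite-set condition from the pointwise one inline, whereas you invoke condition (4), which already packages that reduction through the implication (4)$\Rightarrow$(3) proved there.
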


\begin{proof}
Let $((x_j)_{j\in J}, x)\in \mathcal{L}$. Now we show that  $((x_j)_{j\in J}, x)\in \mathcal{QL}$. As $((x_j)_{j\in J}, x)\in \mathcal{L}$, there exists a directed subset $D\subseteq X$ such that $D$ and $((x_j)_{j\in J}, x)$ satisfy conditions (i)-(iii) of Proposition \ref{prop-liminf-convergence-charac}(2). Then $\{\{d\} : d\in D\}$ and $((x_j)_{j\in J}, x)$ satisfy conditions (i), (ii) and (iv) of Proposition \ref{prop-quasi-liminf-convergence-charac}(2). Let $F=\{f_1, f_2, \ldots, f_n\}\in X^{(<\omega)}$. Suppose that $(x_j)_{j\in J}$ is usually in ${\ua} F$. We claim that there is a $1\leq i_0\leq n $ such that $\mathcal N$ is usually in ${\ua} f_{i_0}$. Assume, on the contrary, that for any $1\leq i\leq n$, $(x_j)_{j\in J}$ is not usually in ${\ua} f_{i}$. Then for each $1\leq i\leq n$, $(x_j)_{j\in J}$ is eventually in  $X\setminus {\ua} f_{i}$. Hence $(x_j)_{j\in J}$ is eventually in $\bigcap\limits_{i=1}^n(X\setminus {\ua} f_{i})=X\setminus \bigcup\limits_{i=1}^n{\ua} f_i=X\setminus {\ua} F$, which contradicts the assumption that $(x_j)_{j\in J}$ is usually in ${\ua} F$. Therefore, $(x_j)_{j\in J}$ is usually in ${\ua} f_{i_0}$ for some $i_0\in \{1, 2, \ldots,  n\}$. As  $((x_j)_{j\in J}, x)\in \mathcal{L}$, by Proposition \ref{prop-liminf-convergence-charac} (2)(iii) we have $x\in {\ua} f_{i_0}$, and hence $x\in {\ua} F$. Therefore, $\{\{d\} : d\in D\}$ and $((x_j)_{j\in J}, x)$ satisfy conditions (i)-(iv) of Proposition \ref{prop-quasi-liminf-convergence-charac}(2). Thus  $((x_j)_{j\in J}, x)\in \mathcal{QL}$. So $\mathcal{L}\subseteq\mathcal{QL}$, and hence $\mathcal{O}(\mathcal{QL})\subseteq \mathcal{O}(\mathcal{L})$.
\end{proof}

From Proposition \ref{prop-O(QS)=O(S)} we know that for any $T_0$-space $X$, $\mathcal{O}(\mathcal{QS})=\mathcal{O}(\mathcal{S})$. But we do not know whether $\mathcal{O}(\mathcal{QL})=\mathcal{O}(\mathcal{L})$ always holds. So we pose the following question.

\begin{question}
  For a $T_0$-space $X$, does  $\mathcal{O}(\mathcal{QL})= \mathcal{O}(\mathcal{L})$ hold?
\end{question}

\begin{lemma}\label{lem-Lawson-tau-O(QL)} \emph{(\cite[Proposition 4.4 and Theorem 4.5]{Chen-Kou-2024} )}
Let $(X, \tau )$ be  a $T_0$-space. Then we have the following two conclusions:

\begin{enumerate}[\rm (1)]
  \item   $\omega(X)\vee \tau\subseteq \mathcal{O}(\mathcal{QL})$.

  \item  If $(X, \tau )$ is locally hypercompact, then $\mathcal{QL}$-convergence is topological and $\mathcal{O}(\mathcal{QL})=\omega(X)\vee \tau$.

\end{enumerate}
\end{lemma}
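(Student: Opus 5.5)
The plan is to prove (1) directly from the characterization in Proposition \ref{prop-quasi-liminf-convergence-charac}. For (2), I will show that, when $(X,\tau)$ is locally hypercompact, $\mathcal{QL}$ coincides exactly with the convergence class $\mathcal C(\omega(X)\vee\tau)$. Once that is established, $\mathcal{QL}$ is topological, and $\mathcal O(\mathcal{QL})=\mathcal O(\mathcal C(\omega(X)\vee\tau))=\omega(X)\vee\tau$ by the general identity $\tau'=\mathcal O(\mathcal C(\tau'))$ noted in Section 2.

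For (1), since $\mathcal O(\mathcal{QL})$ is a topology, it suffices to show that it contains $\tau$ and every subbasic set $X\setminus{\ua} y$ of $\omega(X)$.

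First, $\mathcal{QL}\subseteq\mathcal{QS}$. Indeed, taking the net as a subnet of itself, $(\mathcal N,x)\in\mathcal{QL}$ gives $\mathcal N\stackrel{ql}\longrightarrow x$, which includes $\mathcal N\stackrel{\mathcal{QS}}\longrightarrow x$. Hence $\mathcal O(\mathcal{QS})\subseteq\mathcal O(\mathcal{QL})$, and Remark \ref{rem-QS-convergence}(1) gives $\tau\subseteq\mathcal O(\mathcal{QS})\subseteq \mathcal O(\mathcal{QL})$.

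Next, let $(\mathcal N,x)\in\mathcal{QL}$ with $x\in X\setminus{\ua} y$. If $\mathcal N$ were usually in ${\ua} y$, then condition (4) of Proposition \ref{prop-quasi-liminf-convergence-charac} would force $x\in{\ua} y$, a contradiction. So $\mathcal N$ is eventually in $X\setminus {\ua} y$, and therefore $X\setminus{\ua} y\in\mathcal O(\mathcal{QL})$.

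For (2), one inclusion is immediate. If $(\mathcal N,x)\in\mathcal{QL}$, then by (1) $\mathcal N$ is eventually in every $\omega(X)\vee\tau$-open set containing $x$, so $\mathcal N$ converges to $x$ in $\omega(X)\vee\tau$.

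For the converse, assume $\mathcal N$ converges to $x$ in $\omega(X)\vee\tau$. I verify condition (4) of Proposition \ref{prop-quasi-liminf-convergence-charac}, which has two parts.
\begin{itemize}
\item \emph{$(\mathcal N,x)\in\mathcal{QS}$.} Since $\tau\subseteq\omega(X)\vee\tau$, $\mathcal N$ converges to $x$ in $\tau$. Because $X$ is locally hypercompact, Theorem \ref{theor-LCH-QS-topological} says that $\mathcal{QS}$-convergence coincides with $\tau$-convergence. Hence $\mathcal N\stackrel{\mathcal{QS}}\longrightarrow x$.
\item \emph{The ${\ua} z$ condition.} Suppose $\mathcal N$ is usually in ${\ua} z$ but $x\notin{\ua} z$. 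Then $X\setminus{\ua} z$ is an $\omega(X)$-open neighbourhood of $x$, so $\mathcal N$ is eventually in it. This contradicts $\mathcal N$ being usually in ${\ua} z$.
\end{itemize}
Thus $(\mathcal N,x)\in\mathcal{QL}$, and so $\mathcal{QL}=\mathcal C(\omega(X)\vee\tau)$.

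I expect no genuine obstacle. The only substantive input is Theorem \ref{theor-LCH-QS-topological}, used to upgrade $\tau$-convergence to $\mathcal{QS}$-convergence, and this is exactly where local hypercompactness enters. The one point to watch is stating the final step correctly: a class equal to $\mathcal C(\tau')$ for some topology $\tau'$ is topological with $\mathcal O$ equal to $\tau'$. This avoids having to check Kelley's axioms in Theorem \ref{theor-convergence-topological} separately.
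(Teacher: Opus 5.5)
Your proof is correct. The paper gives no argument for this lemma; it quotes it from Chen and Kou. Its own later results for locally hypercompact spaces (Theorem~\ref{theor-LH-is-WLH}, Corollary~\ref{cor-LHC-quasi-liminf-Kou}) use the lemma rather than re-derive it. Your argument is therefore a genuine self-contained proof from Proposition~\ref{prop-quasi-liminf-convergence-charac}, Remark~\ref{rem-QS-convergence}(1) and Theorem~\ref{theor-LCH-QS-topological}, none of which depends on the lemma.

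It is worth stating explicitly what your argument shows. The sets $X\setminus{\ua} z$ form a subbase of $\omega(X)$. So the second clause of condition (4) in Proposition~\ref{prop-quasi-liminf-convergence-charac} says exactly that $\mathcal N$ converges to $x$ in $\omega(X)$. Hence in every $T_0$-space $\mathcal{QL}=\mathcal{QS}\cap\mathcal C(\omega(X))$. Since $\mathcal C(\tau_1)\cap\mathcal C(\tau_2)=\mathcal C(\tau_1\vee\tau_2)$, part (2) follows at once from $\mathcal{QS}=\mathcal C(\tau)$.

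The same reasoning gives a more general fact: whenever $\mathcal{QS}$-convergence is topological, $\mathcal{QL}$ is topological with $\mathcal O(\mathcal{QL})=\omega(X)\vee md(\tau)$, because then $\mathcal{QS}=\mathcal C(md(\tau))$ by Proposition~\ref{prop-O(QS)=O(S)}. The paper's WLH-based treatment of $\mathcal{QL}$ uses Rudin's Lemma and the iterated-limits axiom, and in return gives an \emph{exact} characterization of when $\mathcal{QL}$ is topological (Theorem~\ref{theor-characterization-WLH-space-1}). Your approach yields only a sufficient condition, but it is far shorter and never needs Kelley's axioms.

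Finally, your one substantive input, Theorem~\ref{theor-LCH-QS-topological}, rests on the cited Theorem~\ref{theor-QS-topological-charact}. You can replace it by a direct check using local hypercompactness. If $\mathcal N\to x$ in $\tau$, take the family $\{F\in X^{(<\omega)} : x\in\ii_\tau{\ua} F\}$. It is filtered, each member is a quasi-eventual lower bound of $\mathcal N$, and it witnesses ${\ua} x\in\cl_{P_S(X)}\{{\ua} F : x\in\ii_\tau{\ua} F\}$.
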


\begin{proposition}\label{prop-Lawson-tau-O(QL)-O(L)}
Let $(X, \tau )$ be a $T_0$-space. Then we have the following conclusions:

\begin{enumerate}[\rm (1)]

  \item $\omega(X)\vee \tau\subseteq \omega(X)\vee md(\tau)=\omega(X)\vee \mathcal O(\mathcal{QS})\subseteq\mathcal{O}(\mathcal{QL})\subseteq \mathcal O(\mathcal{L})$.

  \item   If $(X, \tau )$ is locally hypercompact, then $\omega(X)\vee \tau=\omega(X)\vee md(\tau)=\mathcal{O}(\mathcal{QL})\subseteq  \mathcal{O}(\mathcal{L})$.

  \item   If $(X, \tau )$ is a $C$-space, then $\omega(X)\vee \tau=\omega(X)\vee md(\tau)=\mathcal{O}(\mathcal{QL})=\mathcal{O}(\mathcal{L})$.
\end{enumerate}
\end{proposition}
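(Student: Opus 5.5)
We prove the three parts in order; each later part reuses the chain of inclusions established in (1).

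For (1), the first inclusion $\omega(X)\vee \tau\subseteq \omega(X)\vee md(\tau)$ follows from $\tau\subseteq md(\tau)$ (Proposition \ref{prop-md-topology}(1)). The equality $\omega(X)\vee md(\tau)=\omega(X)\vee \mathcal O(\mathcal{QS})$ is Proposition \ref{prop-O(QS)=O(S)}. The last inclusion $\mathcal{O}(\mathcal{QL})\subseteq \mathcal O(\mathcal{L})$ is Proposition \ref{prop-L-convergence-QL-convergence}.

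The substantive step in (1) is $\omega(X)\vee\mathcal O(\mathcal{QS})\subseteq \mathcal O(\mathcal{QL})$. Since $\mathcal O(\mathcal{QL})$ is a topology, it suffices to show that it contains $\omega(X)$ and $\mathcal O(\mathcal{QS})$ separately. The first containment is given by Lemma \ref{lem-Lawson-tau-O(QL)}(1). For the second, let $U\in\mathcal O(\mathcal{QS})$, and take $(\mathcal N,x)\in\mathcal{QL}$ with $x\in U$. By Proposition \ref{prop-quasi-liminf-convergence-charac}, $(\mathcal N,x)\in\mathcal{QS}$, so $\mathcal N$ is eventually in $U$. Hence $U\in\mathcal O(\mathcal{QL})$, and (1) follows. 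I expect no real obstacle here; this unwinding of definitions is the only point in (1) needing an argument.

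For (2), assume $X$ is locally hypercompact. Then $md(\tau)=\tau$ by Corollary \ref{cor-LHC-O(GS)=tau} (equivalently, Proposition \ref{lem-Erne-MD-space}(2) together with Proposition \ref{prop-md-topology}(4)), so $\omega(X)\vee\tau=\omega(X)\vee md(\tau)$. Lemma \ref{lem-Lawson-tau-O(QL)}(2) gives $\mathcal O(\mathcal{QL})=\omega(X)\vee\tau$. The inclusion $\mathcal O(\mathcal{QL})\subseteq\mathcal O(\mathcal L)$ is inherited from (1).

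For (3), a $C$-space is locally hypercompact (Proposition \ref{prop-C-space-LHC-web}), so (2) applies. Lemma \ref{lem-Lawson-tau-O(L)}(2) gives $\mathcal O(\mathcal L)=\omega(X)\vee\tau$. Combining this with (2), all four topologies coincide.
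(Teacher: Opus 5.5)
Your proposal is correct and follows essentially the same route as the paper. For (1), the paper likewise combines $\mathcal{QL}\subseteq\mathcal{QS}$ (giving $\mathcal O(\mathcal{QS})\subseteq\mathcal O(\mathcal{QL})$) with $\tau\subseteq md(\tau)=\mathcal O(\mathcal{QS})$, Lemma \ref{lem-Lawson-tau-O(QL)}(1) and Proposition \ref{prop-L-convergence-QL-convergence}; (2) and (3) then follow, as in your argument, from $md(\tau)=\tau$ for locally hypercompact spaces together with Lemma \ref{lem-Lawson-tau-O(QL)}(2) and Lemma \ref{lem-Lawson-tau-O(L)}(2).
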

\begin{proof} (1): Clearly, $\mathcal{QL}\subseteq\mathcal{QS}$, whence $\mathcal O(\mathcal{QS}) \subseteq\mathcal{O}(\mathcal{QL})$. Therefore, by Corollary \ref{cor-upsilon-tau-QS-alpha}(1), Proposition \ref{prop-L-convergence-QL-convergence} and Lemma \ref{lem-Lawson-tau-O(QL)}(1), we have $\omega(X)\vee \tau\subseteq \omega(X)\vee md(\tau)=\omega(X)\vee \mathcal O(\mathcal{QS})\subseteq\mathcal{O}(\mathcal{QL})\subseteq \mathcal O(\mathcal{L})$.

(2): As $(X, \tau )$ is locally hypercompact, by Proposition \ref{lem-Erne-MD-space}(2) and Proposition \ref{prop-md-topology}(4) we have $\tau=md(\tau)$. Therefore, by (1) and Lemma \ref{lem-Lawson-tau-O(QL)}(2) we get $\omega(X)\vee \tau=\omega(X)\vee md(\tau)=\mathcal{O}(\mathcal{QL})\subseteq  \mathcal{O}(\mathcal{L})$.

(3): By (2) and Lemma \ref{lem-Lawson-tau-O(L)}(2).
\end{proof}

\begin{definition}\label{def-ll-c-relation} (\cite[Definition 4.5]{Zhang-Bao-Xu-2022})
 Let $(X, \tau )$ be a $T_0$-space, $x, y\in X$. We say that $x$ \emph{approximates} $y$, in symbols $x\ll_c y$,
if for any directed set $D$ in $X$, $\cl_{\tau}D={\downarrow} y$  implies $x\in{\da} D$.  Let ${\Downarrow_c} x = \{ y \in X : y \ll_c x \} $ and ${\Uparrow_c} x = \{ y \in X : x \ll_c y \} $.
\end{definition}

More generally, we introduce the following.

\begin{definition} Let $(X, \tau )$ be a $T_0$-space, $A, B\subseteq X$. We say that $A$ \emph{approximates} $B$, in symbols $A\ll_c B$,
if for any directed set $D$ in $X$, $\cl_{\tau}D={\downarrow} b$ for some $b\in B$ implies $D\cap {\uparrow} A\neq\emptyset$.
We write $A\ll_{c} x$ for $A\ll_{c} \{x\}$ and $y\ll_{c} B$ for $\{y\}\ll_{c} B$. For $x\in X$ and $F\in X^{(<\omega)}$, we write $w_c(x)=\{{\ua} F : F\in X^{(<\omega)}, F\ll_{c}x\}$ and ${\dua_{c}}F=\{x\in X : F\ll_{c} x\}$.
\end{definition}

In what follows, for a pair of subsets $A, B$ of a $T_0$-space $X$, we use the symbol $A\leq B$ to denote ${\ua} B\subseteq {\ua} A$ for convenience. We can easily verify the following facts.

\begin{remark}\label{rem-way-below-c}
Let $X$ be a $T_0$-space and $H$, $K$, $M\subseteq X$. Then we have the following conclusions:
\begin{enumerate}[\rm (1)]
  \item  $G\ll_{c} H$ $\Leftrightarrow$ $G\ll_{c} h$ for all $h\in H$.

  \item  $G\ll_{c} H$ $\Rightarrow$ $G\leq H$ (that is, ${\ua} H\subseteq {\ua} G$).

  \item  $G\leq H\ll_{c} K$ $\Rightarrow$ $G\ll_{c} K$.

  \item If $x\in \ii_{\mathcal{O}(\mathcal{QL})}{\ua} F$, then $F\ll_{c} x$ by Lemma \ref{lem-direct-set-QL}.

  \item  $G\ll_{c} x$ $\Rightarrow$  $(G\cap {\da} x)\ll_{c} x$.
\end{enumerate}
\end{remark}

The following example illustrates that for a locally hypercompact $T_0$-space $(X, \tau)$, $\dua_{c}F$ is not always an upper set for $F\in X^{(<\omega)}$.

\begin{example}\label{exam-locally-hypercompact-not-upper}
Let $P =\{\top\}\cup \{a_n : n \in \mathbb{N}\}\cup \{b_n : n \in \mathbb{N}\}$ (see Figure 2) with the order generated by
\begin{enumerate}[\rm (1)]
 \item   $a_n\leq a_m$ for all $m, n\in \mathbb{N}$ with $n\leq m$;

 \item   $b_n\leq b_m$ for all $m, n\in \mathbb{N}$ with $n\leq m$; and

 \item   $a_n< \top$ and $b_n< \top$ for all $n\in \mathbb{N}$.
\end{enumerate}

\begin{figure}[h]
    \centering
    \includegraphics[width=12cm, height=4cm, keepaspectratio]{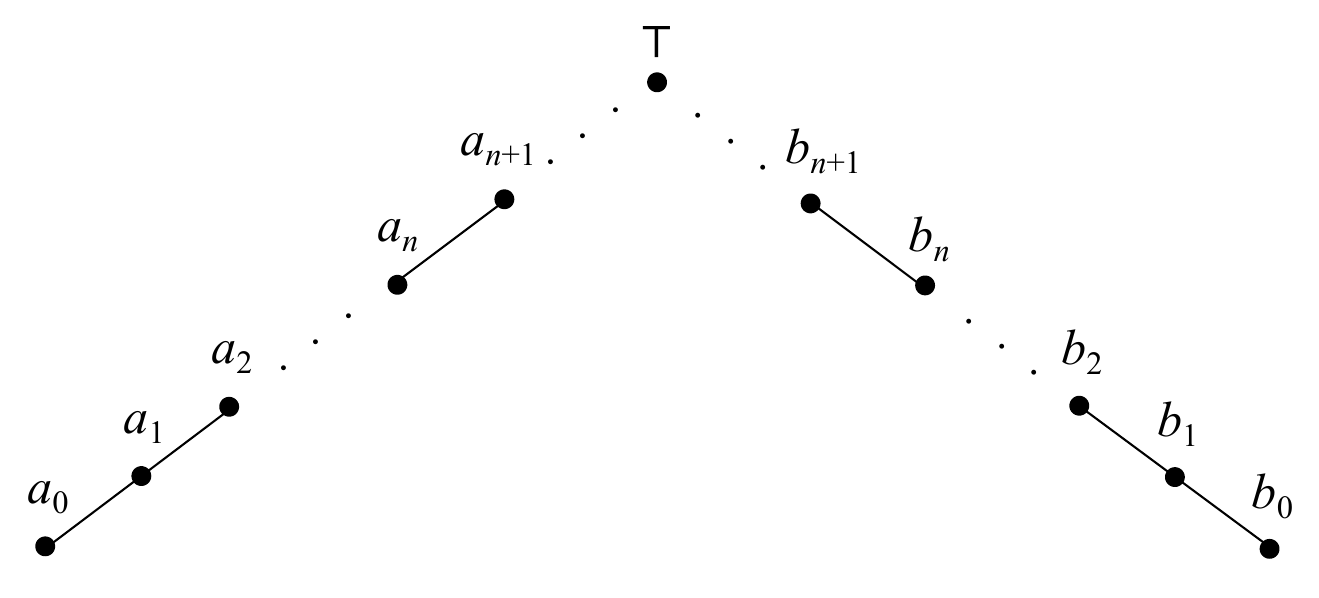}
    \caption{The countable quansicontinuous domain $P$ in Example \ref{exam-locally-hypercompact-not-upper}}
\end{figure}

\noindent Clearly, $P$ is a dcpo and $\mathrm{Id}(P)=\{{\da} x : x\in P\}$. It is easy to verify that $P$ is a quasicontinuous domain, and consequently, by Proposition \ref{prop-quasicontinuous-local-hypercompact} the Scott space $\Sigma~\!\!P$ is locally hypercompact. Let $F=\{a_0\}$. As $\cl_{\sigma(P)}\{b_n: n\in \mathbb{N}\}={\da} \top$ and $a_0\notin \{b_n: n\in \mathbb{N}\}$, we have $\top\notin \dua_{c}F$.
By $\mathrm{Id}(P)=\{{\da} x : x\in P\}$ and ${\da}\top=P=\cl_{\sigma(P)}\{b_n : n\in\mathbb{N}$, we get ${\dua_{c}}F=\{a_n : n\in \mathbb{N}\}$ and $\top\not\in {\dua_{c}}F$. Hence ${\dua_{c}}F$ is not an upper set.
\end{example}

\begin{lemma}\label{lem-characterization-way-below-c-relation-1}
Let $(X, \tau)$ be a $T_0$-space, $F\in X^{(<\omega)}$ and $y\in X$. Then the following two conditions are equivalent:
\begin{enumerate}[\rm (1)]

\item $F\ll_c y$.

 \item For any net $\mathcal{N}$ in $X$ with $(\mathcal{N}, y)\in \mathcal{QL}$, $\mathcal{N}$ is eventually in ${\ua} F$.
\end{enumerate}
\end{lemma}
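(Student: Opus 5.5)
The plan mirrors the proof of Lemma \ref{lem-characterization-way-below-c-relation-2}, with $\mathcal{QS}$ replaced by $\mathcal{QL}$, and with Proposition \ref{prop-quasi-liminf-convergence-charac} and Lemma \ref{lem-direct-set-QL} in place of Definition \ref{def-QS-convergence} and Lemma \ref{lem-direct-set-S-convergence}.

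For (2) $\Rightarrow$ (1), let $D$ be a directed set with $\cl_{\tau}D={\da} y$. Lemma \ref{lem-direct-set-QL} gives $((d)_{d\in D}, y)\in \mathcal{QL}$. By (2), the net $(d)_{d\in D}$ is eventually in ${\ua} F$, so $D\cap {\ua} F\neq\emptyset$. Hence $F\ll_c y$.

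For (1) $\Rightarrow$ (2), take $(\mathcal N, y)\in\mathcal{QL}$. Proposition \ref{prop-quasi-liminf-convergence-charac}(2) provides a family $\{F_d : d\in D\}\subseteq X^{(<\omega)}$ such that
\begin{itemize}
\item $\{{\ua} F_d\}$ is filtered,
\item each $F_d$ is a quasi-eventual lower bound of $\mathcal N$,
\item $\cl_{P_S(X)}\{{\ua} F_d : d\in D\}=\cl_{P_S(X)}\{{\ua} y\}$.
\end{itemize}
It suffices to find $d$ with ${\ua} F_d\subseteq {\ua} F$, since $\mathcal N$ is eventually in ${\ua} F_d$.

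Suppose instead that ${\ua}F_d\not\subseteq{\ua}F$ for every $d$. The key step is to localize below $y$. By Remark \ref{rem-Smyth-specialization-order}, each ${\ua} F_d$ lies in the closure of $\{{\ua} y\}$, so $y\in{\ua}F_d$ and $F_d\cap{\da}y\neq\emptyset$. Before using this, replace $F_d$ by $G_d=F_d\cap{\da}y$. The family $\{{\ua} G_d\}$ is filtered by Lemma \ref{lem-filtred-family}(1), and ${\ua}G_d\subseteq{\ua}F_d$, so ${\ua}G_d\not\subseteq{\ua}F$ for all $d$. Lemma \ref{lem-filtred-family}(2) then makes $\{{\ua}(G_d\setminus{\ua}F)\}$ filtered. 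Rudin's Lemma \ref{lem-Rudin-lemma} yields a directed $E\subseteq\bigcup_d (G_d\setminus{\ua}F)$ that meets every $G_d\setminus{\ua}F$.

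The main obstacle is showing $\cl_\tau E={\da}y$, which is what $\ll_c$ requires rather than merely $y\in\cl_\tau E$. Since $E\subseteq{\da}y$, we have $\cl_\tau E\subseteq{\da}y$. For the reverse inclusion, argue as in Remark \ref{rem-simpler-proof}. Each ${\ua}F_d$ meets $\cl_\tau E$, because ${\ua}F_d\supseteq{\ua}G_d$ and $E$ meets $G_d$. Thus $\{{\ua}F_d\}\subseteq\Diamond\cl_\tau E$, which is closed in $P_S(X)$. Hence ${\ua}y\in\cl_{P_S(X)}\{{\ua}F_d\}\subseteq\Diamond\cl_\tau E$, so $y\in\cl_\tau E$ (as $\cl_\tau E$ is a lower set), and therefore ${\da}y\subseteq\cl_\tau E$.

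Now $F\ll_c y$ forces $E\cap{\ua}F\neq\emptyset$. This contradicts $E\cap{\ua}F=\emptyset$, which holds by the choice of $E$, and completes the proof.
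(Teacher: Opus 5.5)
\textbf{There is a genuine gap in (1)$\Rightarrow$(2).} Your (2)$\Rightarrow$(1), and your Rudin's-lemma/closure argument showing $\cl_\tau E={\da}y$, are fine. The problem is the reduction. The step ``${\ua}G_d\subseteq{\ua}F_d$, so ${\ua}G_d\not\subseteq{\ua}F$'' is a \emph{non sequitur}: from $A\subseteq B$ and $B\not\subseteq C$ nothing follows about whether $A\subseteq C$; only the converse implication is valid.

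Your target ``some ${\ua}F_d\subseteq{\ua}F$'' can actually fail for the family supplied by Proposition \ref{prop-quasi-liminf-convergence-charac}. Let $X=\mathbb{N}\cup\{\omega,y\}$ with $0<1<2<\cdots<\omega$, $y<\omega$, and $y$ incomparable to every $n$, and give $X$ the Scott topology.
\begin{itemize}
\item We have $\{y\}\ll_c y$, because $\{y\}$ is the only directed set with closure $\{y\}$.
\item Take the constant net with value $y$ and $F_n=\{n,y\}$. This satisfies (i)--(iv) of Proposition \ref{prop-quasi-liminf-convergence-charac}(2). For (iv), every Scott open $U\supseteq\{y,\omega\}$ contains some $n$, hence contains ${\ua}F_n$; and ${\ua}y\subseteq{\ua}F_n$ for all $n$.
\item Yet no ${\ua}F_n$ is contained in ${\ua}y=\{y,\omega\}$.
\end{itemize}

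\textbf{What your argument does give, and what is missing.} Once you assume directly that $G_d\not\subseteq{\ua}F$ for all $d$, your contradiction argument correctly yields some $d^*$ with $G_{d^*}=F_{d^*}\cap{\da}y\subseteq{\ua}F$. To finish, you must also show that each $G_d$ is itself a quasi-eventual lower bound of $\mathcal N$. This needs condition (iii) of Proposition \ref{prop-quasi-liminf-convergence-charac}(2), which your proof never uses. Without (iii) the lemma is false: in the space above, the net $(n)_{n\in\mathbb{N}}$ with the same $F_n$ satisfies (i), (ii) and (iv), but never enters ${\ua}y$.

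The missing argument goes as follows. $\mathcal N$ is eventually in ${\ua}F_d$. If it were usually outside ${\ua}G_d$, then at those indices $x_i\ge u$ for some $u\in F_d\setminus{\da}y$, so $\mathcal N$ would be usually in ${\ua}(F_d\setminus{\da}y)$. Condition (iii) would then give $y\in{\ua}(F_d\setminus{\da}y)$, i.e. some $u\in F_d$ with both $u\le y$ and $u\not\le y$, which is absurd. This is the paper's Claim 3. Combined with your Rudin argument applied to the sets $G_d$ (the paper's Claim 5), it completes the proof.
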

\begin{proof} (1) $\Rightarrow$ (2): Suppose that $F\ll_c y$ and $\mathcal{N}$ is a net in $X$ with $(\mathcal{N}, y)\in \mathcal{QL}$. Then there exists a family $\{F_d : d\in D\}\subseteq X^{(<\omega)}$ such that $\{F_d : d\in D\}$ and $(\mathcal N, y)$ satisfy conditions (i)-(iv) of Proposition \ref{prop-quasi-liminf-convergence-charac}(2).

 {\bf Claim 1:} For any $d\in D$, $F_d\cap {\da} y$ is nonempty.

 As $\{F_d : d\in D\}$ and $(\mathcal N, y)$ satisfy condition (iv) of Proposition \ref{prop-quasi-liminf-convergence-charac}(2), we have ${\ua} F_d\in \cl_{P_S((X, \tau))}\{{\ua} y\}$, whence ${\uparrow} y\subseteq {\ua} F_d$, or equivalently, $F_d\cap {\da} y\neq\emptyset$.

  {\bf Claim 2:} For any $d\in D$, ${\ua} (F_d\cap {\da} y)={\ua} ({\ua} F_d\cap {\da} y)$.

  Clearly,  ${\ua} (F_d\cap {\da} y)\subseteq{\ua} ({\ua} F_d\cap {\da} y)$. On the other hand, let $u\in {\ua} ({\ua} F_d\cap {\da} y)$. Then there is $v\in {\ua} F_d\cap {\da} y$ such that $v\leq u$, and consequently, $w\leq v\leq y$ for some $w\in F_d$. So $w\in F_d\cap {\da} y$ and $w\leq u$. Hence $u\in {\ua} (F_d\cap {\da} y)$. Therefore, ${\ua} ({\ua} F_d\cap {\da} y)\subseteq{\ua} (F_d^\prime\cap {\da} y)$. Thus ${\ua} (F_d\cap {\da} y)={\ua} ({\ua} F_d\cap {\da} y)$.

  {\bf Claim 3:}  For any $d\in D$, $F_d\cap {\da} y$  is a quasi-eventual lower bound of $\mathcal{N}$.

Let $\mathcal{N}=(x_i)_{i\in I}$ and $d\in D$.  Assume, on the contrary, that $F_d\cap {\da} y$ is not a quasi-eventual lower
bound of $\mathcal{N}$. As $\mathcal N$ is eventually in ${\ua} F_d$, there is $i_0\in \mathcal{N}$ such that $x_i\in {\ua} F_d$ for all $i\geq i_0$. For any $j\in I$, since $\mathcal N$ is usually in $X\setminus {\ua}(F_d\cap {\da} y)$, there is $i^{\prime}\in I$ with $i_0\leq i^{\prime}$ and $j\leq i^{\prime}$ such that $x_{i^{\prime}}\in X\setminus {\ua} (F_d\cap {\da} y)$. Then $x_i^{\prime}\in {\ua} F_d$, and hence there is $u\in F_d$ with $u\leq x_{i^{\prime}}$. So $u\in F_d\setminus {\da} y$ by $x_{i^{\prime}}\notin {\ua} (F_d\cap {\da} y)$. Therefore, $\mathcal N$ is usually in ${\ua} (F_d\setminus {\da} y)$. Since $\{F_{d} : d\in D\}\subseteq X^{(<\omega)}$ and $(\mathcal N, y)$ satisfy condition (iii) of Proposition \ref{prop-quasi-liminf-convergence-charac}(2), we have $y\in {\ua} (F_d\setminus {\da} y)\subseteq X\setminus {\da} y$, a contradiction. Thus $F_d\cap {\da} y$ is a quasi-eventual lower bound of $\mathcal{N}$.

 {\bf Claim 4:} $\cl_{P_S((X, \tau))}\{{\ua} (F_d\cap {\da} y): d\in D\}=\cl_{P_S((X, \tau))}\{{\ua} y\}$.

By Remark \ref{rem-Smyth-specialization-order} and  condition (iv) of Proposition \ref{prop-quasi-liminf-convergence-charac}(2), we have that  ${\ua} y\in\cl_{P_S((X, \tau))}\{{\ua} F_d: d\in D\}\subseteq\cl_{P_S((X, \tau))}\{{\ua} (F_d\cap {\da} y): d\in D\}$ and ${\ua} y \subseteq {\ua} (F_d\cap {\da} y)$ for any $d\in D$. Hence $\cl_{P_S((X, \tau))}\{{\ua} (F_d\cap {\da} y): d\in D\}=\cl_{P_S((X, \tau))}\{{\ua} y\}$.

   {\bf Claim 5:}  There is $d^\ast\in D$ such that $F_{d^\ast}\cap {\da} y\subseteq {\ua} F$.

 Assume, on the contrary, that $F_d\cap {\da} y\nsubseteq {\ua} F$ for all $d\in D$. As $\{{\ua} F_{d} : d\in D\}$ is a filtered family, it follows from Claim 2 (or Lemma \ref{lem-filtred-family}) that $\{{\ua} (F_d\cap {\da} y) : d\in D\}$ is a filtered family. Then by Lemma \ref{lem-filtred-family} $\{{\ua} ((F_d\cap {\da} y)\setminus {\ua} F) : d\in D\}\subseteq \mathbf{Fin}~\!\!X$ is a filtered family.
 By Lemma \ref{lem-Rudin-lemma}, there is a directed subset $E\subseteq \mathop{\bigcup}\limits_{d \in D } ((F_d\cap {\da} y)\setminus {\ua} F)$ such that $E\cap ((F_d\cap {\da} y)\setminus {\ua} F)\neq\emptyset$ for all $d\in D$.
  Then ${\ua} y\in \cl_{P_S((X, \tau))}\{{\ua} F_d: d\in D\}\subseteq \Diamond \cl_{\tau}E$, and hence $ y\in \cl_{\tau}E$. As $E\subseteq \mathop{\bigcup}\limits_{d \in D } ((F_d\cap {\da} y)\setminus {\ua} F)$, we have $E\subseteq {\da} y$ and hence ${\da} y= \cl_{\tau}E$.
   By $F\ll_c y$, there is $e\in E$ such that $e\in {\ua} F$, which contradicts $E\subseteq \mathop{\bigcup}\limits_{d \in D } ((F_d\cap {\da} y)\setminus {\ua} F)\subseteq P\setminus {\uparrow} F$. Therefore, there is $d^\ast\in D$ such that $F_d^\ast\cap {\da} y\subseteq {\ua} F$.

   By Claim 3 and Claim 5,  $\mathcal{N}$ is eventually in ${\ua} F$.

(2) $\Rightarrow$ (1): Let $D$ be a  directed subset  of $(X, \tau)$ with ${\da} y=\cl_{\tau}D$. Then $((d)_{d\in D}, y)\in \mathcal{QL}$ by Lemma \ref{lem-direct-set-QL}. By (2) $(d)_{d\in D}$ is eventually in ${\ua} F$, whence $D\cap {\ua} F\neq\emptyset$. Thus $F\ll_c y$.
\end{proof}

\begin{definition}\label{def-WC-space} (\cite[Definition 4.8]{Zhang-Bao-Xu-2022})
 A $T_0$-space $(X,\tau)$ is called a \emph{WC}-\emph{space} if for any $x\in X$, the following three conditions hold:
\begin{enumerate}[\rm (1)]

   \item  ${\Downarrow_c} x $ is directed;

   \item  $ x\in \cl_{\tau}{\Downarrow_c} x$;

   \item  ${\Uparrow_c} x\in \mathcal{O}(\mathcal{L})$.
\end{enumerate}
\end{definition}

\begin{theorem}\label{theor-WC-liminf-topological} (\cite[Theorem 4.11]{Zhang-Bao-Xu-2022})
For a $T_0$-space $(X, \tau)$, the following two conditions are equivalent:
\begin{enumerate}[\rm (1)]

  \item  $(X, \tau )$ is a WC-space.

  \item  The liminf convergence in $(X, \tau)$ is topological, that is, for all $x \in X$ and all nets $\mathcal{N}$ in $X$,
\begin{center}
$\mathcal{N}\stackrel{L}\longrightarrow x$ in $(X, \tau)$ if and only if $\mathcal{N}$ converges to $x$ with respect to $\mathcal{O}(\mathcal{L})$.
\end{center}
\end{enumerate}
\end{theorem}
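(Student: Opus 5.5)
The plan is to verify the four Kelley axioms of Theorem \ref{theor-convergence-topological} for the class $\mathcal L$ and, conversely, to extract the three WC-conditions from topologicity. Throughout, the characterization of liminf convergence in Proposition \ref{prop-liminf-convergence-charac} is used. I also use the singleton analogue of Lemma \ref{lem-characterization-way-below-c-relation-1}: $x\ll_c y$ holds iff every net $\mathcal N$ with $(\mathcal N,y)\in\mathcal L$ is eventually in ${\ua}x$. Its proof is the same as that lemma, with Lemma \ref{lem-direct-set-QL} replaced by the observation that $((d)_{d\in D},y)\in\mathcal L$ whenever $\cl_\tau D={\da}y$.

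\textbf{(1) $\Rightarrow$ (2).} Constants hold because ${\da}x=\cl\{x\}$. Subnets hold because condition (2)(i)--(iii) of Proposition \ref{prop-liminf-convergence-charac} passes to subnets: ``eventually'' is preserved, and ``usually'' in a subnet implies ``usually'' in the net. Divergence holds since $\mathcal L$ is defined by requiring $\stackrel{l}\longrightarrow$ for all subnets. If $(\mathcal N,x)\notin\mathcal L$, choose a subnet $\mathcal N_f$ with $\mathcal N_f\not\stackrel{l}\longrightarrow x$; then no subnet of $\mathcal N_f$ lies in $\mathcal L$.

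The real work is the iterated limits axiom. Suppose $(x_i)\stackrel{L}\to x$ and $(x_{i,j})_j\stackrel{L}\to x_i$. I would take $D={\Downarrow_c}x$, which is directed by WC(1), and which satisfies $\cl D={\da}x$ by WC(2) together with $D\subseteq{\da}x$. Each $d\in D$ is an eventual lower bound of the iterated net. Indeed, $x\in{\Uparrow_c}d\in\mathcal O(\mathcal L)$ by WC(3), so $x_i\in{\Uparrow_c}d$ eventually. For such $i$, $d\ll_c x_i$ gives, by the singleton lemma, that $x_{i,j}\in{\ua}d$ eventually in $j$, and this yields eventuality in $I\times M$ by the standard argument. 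For condition (iii), let the iterated net be usually in ${\ua}z$. Then for cofinally many $i$, the net $(x_{i,j})_j$ is usually in ${\ua}z$, hence $x_i\in{\ua}z$. So $(x_i)$ is usually in ${\ua}z$, giving $x\in{\ua}z$. This bookkeeping with the product directed set, namely turning ``usually in $I\times M$'' into ``usually in $j$ for cofinally many $i$'', is where care is needed: I would negate it and build $f\in M$ witnessing eventual avoidance.

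\textbf{(2) $\Rightarrow$ (1).} For WC(3), fix $y\ll_c x$ and let $(\mathcal N,x')\in\mathcal L$ with $x'\in{\Uparrow_c}y$. Then $\mathcal N$ is eventually in ${\Uparrow_c}y$. I would first show ${\Uparrow_c}y$ is $\mathcal O(\mathcal L)$-open by proving, via iterated limits, an interpolation property: $y\ll_c x$ implies $y\ll_c z\ll_c x$ for some $z$. To get this, suppose the contrary and use the nets $(d)_{d\in D}$ for directed $D$ with $\cl D={\da}x$, where each $d$ is approached by directed sets $E_d$ with $\cl E_d={\da}d$ avoiding ${\ua}y$. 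Iterating gives a net converging in $\mathcal L$ to $x$ but never in ${\ua}y$, contradicting the singleton lemma. This interpolation is the main obstacle.

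For WC(1) and WC(2), I would consider the neighbourhood filter of $x$ in $\mathcal O(\mathcal L)$. Topologicity yields a net converging to $x$ in $\mathcal O(\mathcal L)$, which is therefore in $\mathcal L$. The witnessing directed set $D$ from Proposition \ref{prop-liminf-convergence-charac} then consists of elements $d\ll_c x$: every directed $E$ with $\cl E={\da}x$ gives an $\mathcal L$-convergent net, and an iteration/diagonal argument shows $d$ is eventually below it. Thus $D\subseteq{\Downarrow_c}x$ and $x\in\cl D\subseteq\cl{\Downarrow_c}x$. Directedness of ${\Downarrow_c}x$ follows by combining two such witnesses through a single convergent net built from both directed sets.
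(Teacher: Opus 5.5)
The paper only quotes this theorem from Zhang--Bao--Xu. Its proof of the analogous Theorem~\ref{theor-characterization-WLH-space-1} is the natural template, and measured against it your plan has two genuine problems.

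\textbf{Direction (1)$\Rightarrow$(2): the Divergence step.} Your justification of (Divergences) is false, because a net that does not $l$-converge to $x$ can have subnets in $\mathcal L$. In the discrete space $\{a,b\}$, the alternating net has no eventual lower bound, so it does not $l$-converge to $a$. Yet its constant subnet at $a$ lies in $\mathcal L$ with limit $a$.

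Divergence is true for WC-spaces, but your argument does not prove it. The natural proof uses (1)--(3), and once you use them Kelley's theorem is superfluous, as in the paper's WLH proof:
\begin{itemize}
\item Suppose $\mathcal N$ converges to $x$ in $\mathcal O(\mathcal L)$. Take $D={\Downarrow_c}x$; it is directed with $\cl D={\da}x$ by WC(1),(2) and $D\subseteq{\da}x$.
\item For $d\in D$ we have $x\in{\Uparrow_c}d\in\mathcal O(\mathcal L)$, so $\mathcal N$ is eventually in ${\Uparrow_c}d\subseteq{\ua}d$.
\item If $\mathcal N$ is usually in ${\ua}z$ while $x\notin{\ua}z$, then $X\setminus{\ua}z\in\omega(X)\subseteq\mathcal O(\mathcal L)$ (Lemma~\ref{lem-Lawson-tau-O(L)}(1)) gives a contradiction.
\item Proposition~\ref{prop-liminf-convergence-charac} then yields $(\mathcal N,x)\in\mathcal L$. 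The converse implication is just the definition of $\mathcal O(\mathcal L)$.
\end{itemize}
Your iterated-limits verification is correct, just not needed.

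\textbf{Direction (2)$\Rightarrow$(1): WC(3).} Interpolation does not give WC(3). From $y\ll_c z\ll_c x'$ and $(\mathcal N,x')\in\mathcal L$ you only obtain that $\mathcal N$ is eventually in ${\ua}z$. You would then need ${\ua}z\subseteq{\Uparrow_c}y$, but $\ll_c$ is not monotone in its right argument (Example~\ref{exam-locally-hypercompact-not-upper}). In fact ${\Uparrow_c}y$ need not be an upper set even in a WC-space. Take $X=\mathbb N\cup\{\infty,z\}$ with $0<1<2<\cdots<\infty$, $z<\infty$, and $z$ incomparable to every $n$, in the upper topology $\upsilon(X)$. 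This space is WC, with ${\Downarrow_c}\infty=\mathbb N$, ${\Downarrow_c}n={\da}n$ and ${\Downarrow_c}z=\{z\}$. Yet ${\Uparrow_c}z=\{z\}$. Your interpolation sketch also silently needs $D\subseteq{\Downarrow_c}x$, i.e.\ WC(1),(2) at $x$.

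The missing idea is to land in ${\Uparrow_c}y$ pointwise, using \emph{left} monotonicity: $y\le d\ll_c w\Rightarrow y\ll_c w$.
\begin{itemize}
\item First establish WC(1),(2) everywhere. Index the neighbourhood net by triples $(U,n,a)$ as the paper does. Then each element $d$ of the witness $D$ satisfies $x\in\ii_{\mathcal O(\mathcal L)}{\ua}d$, hence $d\ll_c x$; no diagonal argument is needed.
\item Since $\cl D={\da}x$, every $u\ll_c x$ lies below some element of $D$. So $D$ is cofinal in ${\Downarrow_c}x$, which is therefore directed, and $x\in\cl{\Downarrow_c}x$.
\item Now let $(x_i)_{i\in I}\stackrel{L}\longrightarrow x'$ with $y\ll_c x'$. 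Apply (Iterated limits) to the nets $(d)_{d\in{\Downarrow_c}x_i}$, which lie in $\mathcal L$ with limits $x_i$.
\item The diagonal net $\mathcal L$-converges to $x'$, so by your singleton lemma it is in ${\ua}y$ beyond some $(k,g)$. For each $i\ge k$ this gives $g(i)\in{\Downarrow_c}x_i\cap{\ua}y$, hence $y\ll_c x_i$.
\end{itemize}
This is exactly how the paper proves $\dua_cF\in\mathcal O(\mathcal{QL})$ in Theorem~\ref{theor-characterization-WLH-space-1}.
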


\begin{corollary}\label{cor-Scott-WC-space}
For a poset $P$, the following two conditions are equivalent:
\begin{enumerate}[\rm (1)]

  \item  $\Sigma P$ is a WC-space.

 \item  The liminf convergence in $\Sigma P$ is topological.
\end{enumerate}
\end{corollary}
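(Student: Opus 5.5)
The corollary follows by specializing Theorem \ref{theor-WC-liminf-topological} to the Scott space. That theorem is stated for an arbitrary $T_0$-space $(X,\tau)$, and $\Sigma P=(P,\sigma(P))$ is always $T_0$: its specialization order is the original order of $P$, since $\cl_{\sigma(P)}\{y\}={\da} y$. So we may take $(X,\tau)=\Sigma P$.

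With this choice, condition (1) of the theorem reads exactly as condition (1) of the corollary. Condition (2) of the theorem says that $\mathcal{N}\stackrel{L}\longrightarrow x$ in $\Sigma P$ if and only if $\mathcal{N}$ converges to $x$ in $\mathcal{O}(\mathcal{L})$, where $\mathcal{L}$ is formed with respect to $\sigma(P)$. This is precisely what ``the liminf convergence in $\Sigma P$ is topological'' means. Hence (1) $\Leftrightarrow$ (2) is immediate.

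The only point needing care is to confirm that every notion in both statements is computed in $\Sigma P$ with its specialization order. This covers eventual lower bounds, closures of directed sets, $\ll_c$, ${\Downarrow_c}$, ${\Uparrow_c}$ and $\mathcal{O}(\mathcal{L})$. Once that is checked, the two conditions are verbatim instances of the theorem's, so there is no real obstacle.

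It is worth remarking, though not needed, that for a dcpo $P$ the relation $\ll_c$ in $\Sigma P$ can be compared with the way-below relation, because Scott-closures of directed sets satisfy $\cl_{\sigma(P)}D={\da}\vee D$. This connects the corollary with the classical Theorem \ref{theor-quansicontinuous-domain-Lawson-topology}(2).
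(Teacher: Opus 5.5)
Your proposal is correct and matches the paper's approach. The corollary is just Theorem \ref{theor-WC-liminf-topological} applied to the $T_0$-space $\Sigma P=(P,\sigma(P))$, which the paper states without further argument.
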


\begin{definition}\label{def-WLH-space}
 A $T_0$-space $X$ is called a \emph{weakly locally hypercompact space} (shortly \emph{WLH}-\emph{space}) if for any $x\in X$ and $F\in X^{(<\omega)}$, the following three conditions hold:
\begin{enumerate}[\rm (1)]

   \item   $w_c(x)$ is a filtered subfamily of $\mathbf{Fin}~\!\!X$.

   \item  ${\ua} x\in \cl_{P_S(X)}w_c(x)$.

   \item  $\dua_{c}F\in \mathcal{O}(\mathcal{QL})$.
\end{enumerate}
\end{definition}

\begin{remark}\label{rem-WLH-space} Let $X$ be a $T_0$-space and $x\in X$. If ${\ua} F\in w_c(x)$, then ${\ua} x\subseteq {\ua} F$. Hence by Remark \ref{rem-Smyth-specialization-order}, we have ${\ua} F\in \cl_{P_S(X)}\{{\ua} x\}$. Therefore, $\cl_{P_S(X)}w_c(x)\subseteq \cl_{P_S(X)}\{{\ua} x\}$. So condition (2) of Definition \ref{def-WLH-space} is equivalent to the condition $\cl_{P_S(X)}w_c(x)=\cl_{P_S(X)}\{{\ua} x\}$.
\end{remark}

\begin{proposition}\label{prop-WLH-condition-1-2} Let $X$ be a $T_0$-space and $x \in X$. Then conditions (1) and (2) of Definition \ref{def-WLH-space} hold
if and only if there exists a filtered subfamily $\mathcal{F}\subseteq w_c(x)$ such that $min(F)\subseteq {\da} x$ for any ${\ua} F\in \mathcal{F}$ and ${\ua} x\in \cl_{P_S(X)}\mathcal{F}$ .
\end{proposition}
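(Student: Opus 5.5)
For the forward direction I take $\mathcal F=\{{\ua}(F\cap{\da}x) : {\ua}F\in w_c(x)\}$, after normalizing each $F$ to $F\cap {\da} x$.

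\emph{Well-definedness and membership in $w_c(x)$.} If ${\ua}F\in w_c(x)$, Remark \ref{rem-way-below-c}(2) gives ${\ua}x\subseteq{\ua}F$, so $F\cap{\da}x\neq\emptyset$. By Remark \ref{rem-way-below-c}(5), $F\cap{\da}x\ll_c x$, hence ${\ua}(F\cap{\da}x)\in w_c(x)$. Since ${\ua}(F\cap {\da}x)={\ua}(\min(F\cap{\da}x))$, I may replace $F\cap{\da}x$ by its set of minimal elements. This keeps the family inside $w_c(x)$ and gives $\min(G)\subseteq{\da}x$ for every ${\ua}G\in\mathcal F$.

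\emph{Filteredness.} Given ${\ua}F_1,{\ua}F_2\in w_c(x)$, condition (1) supplies ${\ua}F_3\in w_c(x)$ with ${\ua}F_3\subseteq{\ua}F_1\cap{\ua}F_2$. Then ${\ua}(F_3\cap{\da}x)\subseteq{\ua}F_3\subseteq {\ua}(F_i\cap{\da}x)$ for $i=1,2$. The last inclusion holds because each $f\in F_3\cap{\da}x$ lies above some $g\in F_i$, and $g\le f\le x$ forces $g\in F_i\cap{\da}x$. This is the argument of Lemma \ref{lem-filtred-family}(1).

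\emph{Closure condition.} For $U$ open with ${\ua}x\subseteq U$, condition (2) gives ${\ua}F\in w_c(x)$ with ${\ua}F\subseteq U$, and then ${\ua}(F\cap{\da}x)\subseteq{\ua}F\subseteq U$. Hence ${\ua}x\in\cl_{P_S(X)}\mathcal F$.

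For the converse, suppose $\mathcal F\subseteq w_c(x)$ is filtered, satisfies the $\min$ condition, and has ${\ua}x\in\cl_{P_S(X)}\mathcal F$.

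\emph{Condition (2).} Since $\mathcal F\subseteq w_c(x)$, condition (2) is immediate: $\cl\mathcal F\subseteq\cl w_c(x)$.

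\emph{Condition (1).} Take ${\ua}F_1,{\ua}F_2\in w_c(x)$; by Remark \ref{rem-way-below-c}(5) I may assume $F_i\subseteq{\da}x$. The key claim, the analogue of Proposition \ref{prop-quasicontinuous-sufficient-condition}, is that each ${\ua}F_i$ contains some member of $\mathcal F$. Suppose instead that ${\ua}G\not\subseteq{\ua}F_i$ for every ${\ua}G\in\mathcal F$; we may take $G=\min G$. By Lemma \ref{lem-filtred-family}(2), the family $\{{\ua}(G\setminus{\ua}F_i)\}$ is filtered, and Rudin's Lemma \ref{lem-Rudin-lemma} yields a directed $E\subseteq\bigcup(G\setminus{\ua}F_i)$ meeting each such set. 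Every $G$ lies in ${\da}x$ by the $\min$ condition, so $E\subseteq{\da}x$. Moreover ${\ua}x\in\cl\mathcal F\subseteq\Diamond\cl_\tau E$, so $x\in\cl_\tau E$. Together these give $\cl_\tau E={\da}x$. Since $F_i\ll_c x$, $E$ meets ${\ua}F_i$, a contradiction. With the claim in hand, pick ${\ua}G_i\subseteq{\ua}F_i$ in $\mathcal F$ and use filteredness of $\mathcal F$ to get ${\ua}G_3\subseteq{\ua}G_1\cap{\ua}G_2$, with ${\ua}G_3\in w_c(x)$. This proves condition (1).

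The main obstacle is the Rudin step: it is exactly where the $\min(F)\subseteq{\da}x$ hypothesis is needed, to ensure $\cl_\tau E={\da}x$ rather than merely $x\in\cl_\tau E$.
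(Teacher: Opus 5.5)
Your proof is correct and follows the paper's argument essentially step for step. For necessity you normalize via $F\mapsto F\cap{\da}x$ using Remark \ref{rem-way-below-c}(5) and the argument of Lemma \ref{lem-filtred-family}. For sufficiency you apply Rudin's Lemma to $\{{\ua}(G\setminus{\ua}F_i)\}$, using $\min(G)\subseteq{\da}x$ to force $\cl_\tau E={\da}x$ and so contradict $F_i\ll_c x$.

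One small slip: in the forward filteredness step, the middle term of ${\ua}(F_3\cap{\da}x)\subseteq{\ua}F_3\subseteq{\ua}(F_i\cap{\da}x)$ is false in general. For example, take $F_3=F_i=\{x,y\}$ with $y$ incomparable to $x$ in a finite space. Your justification nevertheless proves the direct inclusion ${\ua}(F_3\cap{\da}x)\subseteq{\ua}(F_i\cap{\da}x)$, which is all that is needed.
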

\begin{proof}
We first prove the necessity. Suppose that conditions (1) and (2) of Definition \ref{def-WLH-space} hold. For any $F\in w_c(x)$, we have $x\in {\ua} F$ or, equivalently, $F\cap {\da} x\neq\emptyset$. Hence by Lemma \ref{lem-filtred-family} and the filteredness of $w_c(x)$, $\mathcal{F}=\{{\ua} (F\cap {\da} x): {\ua} F\in w_c(x)\}$ is filtered. Then by Remark \ref{rem-way-below-c}(5) we have $\mathcal{F}\subseteq w_c(x)$. Clearly, $min(F\cap {\da} x)\subseteq {\da} x$ for any ${\ua} F\in w_c(x)$. It follows from condition (2) of Definition \ref{def-WLH-space} that ${\ua} x\in \cl_{P_S(X)}w_c(x)\subseteq \cl_{P_S(X)}\mathcal{F}$ (in fact, $\cl_{P_S(X)}w_c(x)=\cl_{P_S(X)}\mathcal{F}$).

To show the sufficiency, we assume that there exists a filtered subfamily $\mathcal{F}\subseteq w_c(x)$ such that $min(F)\subseteq {\da} x$ for any ${\ua} F\in \mathcal{F}$ and ${\ua} x\in \cl_{P_S(X)}\mathcal{F}$. Then ${\ua} x\in \cl_{P_S(X)}\mathcal{F}\subseteq \cl_{P_S(X)}w_c(x)$. Now we verify that condition (1) of Definition \ref{def-WLH-space} holds. Let ${\ua} G\in w_c(x)$. We claim that there is ${\ua} F \in \mathcal{F}$ such that ${\ua} F\subseteq {\ua} G$. Assume, on the contrary, that ${\ua} F\nsubseteq {\ua} G$ for all ${\ua} F \in \mathcal{F}$. Then by Lemma \ref{lem-filtred-family} $\{{\ua} min(F\setminus {\ua} G): {\ua} F\in \mathcal{F}\}=\{{\ua} (F\setminus {\ua} G): {\ua} F\in \mathcal{F}\}\subseteq \mathbf{Fin}~\!\!X$ is filtered (note that ${\ua} H={\ua} min(H)$ for any finite subset $H$ of $X$). By Lemma \ref{lem-Rudin-lemma}, there is a directed subset $D\subseteq \bigcup_{{\ua} F\in \mathcal F} min(F\setminus {\ua} G)$ such that $D\cap min(F\setminus {\ua} G)\neq\emptyset$  for all ${\ua} F \in \mathcal{F}$. For any ${\ua} F\in \mathcal F$, it is straightforward to verify that $min(F\setminus {\ua} G)\subseteq min(F)$. Hence $D\subseteq \bigcup_{{\ua} F\in \mathcal F} min(F\setminus {\ua} G)\subseteq \bigcup_{{\ua} F\in \mathcal F} min(F)\subseteq {\da} x$ and $\emptyset\neq D\cap min(F\setminus {\ua} G)\subseteq D\cap min(F)$ for all ${\ua} F\in\mathcal F$. Therefore, ${\ua} x\in \cl_{P_S(X)}\mathcal{F}\subseteq \Diamond \cl_{\mathcal O(X)} D$ and $\cl_{\mathcal O(X)}~\!\! D \subseteq {\da} x$. Thus $\cl_{\mathcal O(X)}~\!\! D ={\da} x$. By $G\ll_{c} x$, we have $D\cap {\ua} G\neq\emptyset$, which contradicts $D\subseteq \bigcup_{{\ua} F\in \mathcal F} min(F\setminus {\ua} G)\subseteq X\setminus {\ua} G$. So there is ${\ua} F \in \mathcal{F}$ such that ${\ua} F\subseteq {\ua} G$. Let ${\ua} G_1, {\ua} G_2\in w_c(x)$. Then there are ${\ua} F_1, {\ua} F_2\in \mathcal{F}$ such that ${\ua} F_1\subseteq {\ua} G_1$  and ${\ua} F_2\subseteq {\ua} G_2$. As $\mathcal{F}$ is filtered, there is $ {\ua} F_3\in \mathcal{F}$ with ${\ua} F_3\subseteq {\ua} F_1\cap{\ua} F_2$. Then ${\ua} F_3\subseteq {\ua} G_1\cap{\ua} G_2$. So $w_c(x)$ is a filtered subfamily of $\mathbf{Fin}~\!\!X$. This completes the proof.
\end{proof}

\begin{remark}
Let $X$ be a $T_0$-space and $x\in X$. Then $\{{\ua} u : u\in {\Downarrow_c} x\}\subseteq w_c(x)$. If $X$ satisfy conditions (1) and (2) of Definition \ref{def-WC-space}, then $\{{\ua} u : u\in {\Downarrow_c} x\}$ is filtered and ${\ua} x\in \cl_{P_S(X)}\{{\ua} u : u\in{\Downarrow_c} x\}$. Therefore, conditions (1) and (2) of Definition \ref{def-WLH-space} are satisfied by Proposition \ref{prop-WLH-condition-1-2}.
\end{remark}

Now we give a characterization of $T_0$-spaces for the quasi-liminf convergence being topological.

\begin{theorem}\label{theor-characterization-WLH-space-1}
For a $T_0$-space $(X, \tau)$, the following two conditions are equivalent:
\begin{enumerate}[\rm (1)]

  \item  $(X, \tau )$ is a WLH-space.

  \item  The quasi-liminf convergence in $(X, \tau)$ is topological, that is, for all $x \in X$ and all nets $\mathcal{N}$ in $X$,
\begin{center}
$\mathcal{N}\stackrel{QL}\longrightarrow x$ in $(X, \tau)$ if and only if $\mathcal{N}$ converges to $x$ with respect to $\mathcal{O}(\mathcal{QL})$.
\end{center}
\end{enumerate}
\end{theorem}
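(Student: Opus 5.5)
We prove both directions with Kelley's criterion, Theorem \ref{theor-convergence-topological}. It reduces ``$\mathcal{QL}$ is topological'' to the four axioms Constants, Subnets, Divergences and Iterated limits. Constants is immediate: for the constant net with value $x$, the family $\{\{x\}\}$ witnesses condition (2) of Proposition \ref{prop-quasi-liminf-convergence-charac}. Subnets is built into the definition of $\stackrel{QL}\longrightarrow$. Divergences follows the standard argument for liminf convergence. If $(\mathcal N,x)\notin\mathcal{QL}$, then some subnet $\mathcal N_f$ fails $\stackrel{ql}\longrightarrow x$. Either $\mathcal N_f$ has a quasi-eventual lower bound $F$ with $x\notin\ua F$, and that property passes to all further subnets; or $\mathcal N_f$ is not $\mathcal{QS}$-convergent to $x$. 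In the second case we pass to a further subnet and use that $\mathcal{QS}$-convergence is preserved by subnets and ``usually'' conditions. Thus the whole weight of the theorem lies in the Iterated limits axiom, and the WLH conditions are exactly what is needed for it.

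\textbf{(1) $\Rightarrow$ (2).} The first task is to show that in a WLH-space, for every $x$ and every net $\mathcal N$, $(\mathcal N,x)\in\mathcal{QL}$ iff $\mathcal N$ converges to $x$ in $\mathcal O(\mathcal{QL})$ together with the ``usually'' condition (4) of Proposition \ref{prop-quasi-liminf-convergence-charac}. This is easier than checking Iterated limits directly.

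For the forward implication there is nothing to do. For the converse, suppose $\mathcal N\to x$ in $\mathcal O(\mathcal{QL})$. By WLH condition (3), each $\dua_c F$ with $\ua F\in w_c(x)$ is an $\mathcal O(\mathcal{QL})$-open set containing $x$. Hence $\mathcal N$ is eventually in $\dua_cF\subseteq\ua F$, using Remark \ref{rem-way-below-c}(2). So every member of $w_c(x)$ is a quasi-eventual lower bound. Conditions (1) and (2) of Definition \ref{def-WLH-space} then make $w_c(x)$ a witness for $(\mathcal N,x)\in\mathcal{QS}$.

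For the ``usually'' condition, suppose $\mathcal N$ is usually in $\ua z$ but $x\notin\ua z$. Then $X\setminus\ua z\in\omega(X)\subseteq\mathcal O(\mathcal{QL})$ by Lemma \ref{lem-Lawson-tau-O(QL)}(1), and it contains $x$. So $\mathcal N$ is eventually outside $\ua z$, a contradiction. By Proposition \ref{prop-quasi-liminf-convergence-charac}(4), $(\mathcal N,x)\in\mathcal{QL}$. Since $\mathcal{QL}$-convergence always implies $\mathcal O(\mathcal{QL})$-convergence, the two coincide, which is (2).

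\textbf{(2) $\Rightarrow$ (1).} Assume $\mathcal{QL}$ is topological. Condition (3) comes from Lemma \ref{lem-characterization-way-below-c-relation-1} together with an interpolation argument. If $F\ll_c x$, then every $\mathcal{QL}$-net to $x$ is eventually in $\ua F$. Topologicality makes $\ua F$ an $\mathcal O(\mathcal{QL})$-neighbourhood of $x$, so $x\in\ii_{\mathcal O(\mathcal{QL})}\ua F$. Conversely, Remark \ref{rem-way-below-c}(4) shows that this interior is contained in $\dua_cF$. Hence $\dua_cF=\ii_{\mathcal O(\mathcal{QL})}\ua F$, which is open.

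For conditions (1) and (2), consider the $\mathcal O(\mathcal{QL})$-neighbourhood filter of $x$. Index a net by pairs $(U,u)$ with $x\in U\in\mathcal O(\mathcal{QL})$ and $u\in U$, with the usual order, and send $(U,u)$ to $u$. This net converges to $x$ in $\mathcal O(\mathcal{QL})$, so by (2) it is $\mathcal{QL}$-convergent to $x$. Let $\{F_d\}$ be its witnessing family from Proposition \ref{prop-quasi-liminf-convergence-charac}(2). Each $F_d$ is a quasi-eventual lower bound, so some $U\ni x$ satisfies $U\subseteq\ua F_d$. Hence $x\in\ii_{\mathcal O(\mathcal{QL})}\ua F_d$, and $F_d\ll_c x$ by Remark \ref{rem-way-below-c}(4). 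Therefore $\mathcal F=\{\ua F_d\}$ is a filtered subfamily of $w_c(x)$ with $\ua x\in\cl_{P_S(X)}\mathcal F$. Replacing $F_d$ by $F_d\cap\da x$, which remains in $w_c(x)$ by Remark \ref{rem-way-below-c}(5) and stays filtered by Lemma \ref{lem-filtred-family}, we meet the minimality hypothesis of Proposition \ref{prop-WLH-condition-1-2}. That proposition then yields conditions (1) and (2) of Definition \ref{def-WLH-space}.

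The main obstacle is the characterization step in (1) $\Rightarrow$ (2): checking that the WLH conditions give exactly $\mathcal{QS}$-convergence from $\mathcal O(\mathcal{QL})$-convergence. In particular it must be confirmed that $\omega(X)\subseteq\mathcal O(\mathcal{QL})$ handles the ``usually'' clause. If a gap appears there, I would instead verify the Iterated limits axiom directly, again using the open sets $\dua_cF$ from condition (3).
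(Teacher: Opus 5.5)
Your proof is correct. The direction (1)$\Rightarrow$(2) and your derivation of conditions (1)--(2) of Definition \ref{def-WLH-space} in (2)$\Rightarrow$(1) are the paper's own arguments: for (1)$\Rightarrow$(2), the witness family $w_c(x)$ together with the open sets $\dua_cF$ and $X\setminus{\ua}z$; for (2)$\Rightarrow$(1), the neighbourhood-indexed net, Remark \ref{rem-way-below-c}(4), passage to $F_d\cap{\da}x$, and Proposition \ref{prop-WLH-condition-1-2}. So the worry in your last paragraph is unfounded.

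The genuine difference is condition (3). The paper fixes $y\in\dua_cF$ and $((x_i)_{i\in I},y)\in\mathcal{QL}$. It then builds, from $w_c(x_i)$, a net $\mathcal{QL}$-converging to each $x_i$, which requires conditions (1)--(2) at every point first. Next it applies the Iterated limits axiom to the diagonal net. Finally, it uses the modified choice functions $f_s$ to obtain ${\ua}(G_{i'}\cap{\da}x_{i'})\subseteq{\ua}F$, hence $F\ll_c x_{i'}$ eventually.

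You instead read Lemma \ref{lem-characterization-way-below-c-relation-1} through topologicality. $F\ll_c y$ holds iff every $\mathcal O(\mathcal{QL})$-convergent net to $y$ is eventually in ${\ua}F$, iff $y\in\ii_{\mathcal O(\mathcal{QL})}{\ua}F$. The last step is the elementary fact that otherwise one could choose $u_U\in U\setminus{\ua}F$ for each $\mathcal O(\mathcal{QL})$-neighbourhood $U$ of $y$, giving a convergent net never in ${\ua}F$. Combined with Remark \ref{rem-way-below-c}(4), this yields the identity $\dua_cF=\ii_{\mathcal O(\mathcal{QL})}{\ua}F$. This is valid and much shorter. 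It is also independent of conditions (1)--(2), and under (2) it gives an explicit description of $\dua_cF$. The paper's diagonal argument only follows the classical template used for liminf convergence, at considerably greater length. Despite your wording, no interpolation is involved in your argument.

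Your opening paragraph on Kelley's axioms plays no role, since both directions work directly with the definition of ``topological.'' Its sketch of Divergences is also not a proof: the second case is exactly where such arguments break down, and the analogous axiom already fails for $\mathcal S$-convergence on simple non-continuous dcpos. It is best deleted.
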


\begin{proof}
(1) $\Rightarrow$ (2): Obviously, $\mathcal{N}\stackrel{QL}\longrightarrow x$ implies $\mathcal{N}$ converges to $x$ with respect to $\mathcal{O}(\mathcal{QL})$. Conversely, suppose that $\mathcal{N}$ converges to $x$ with respect to $\mathcal{O}(\mathcal{QL})$. Now we show that $\mathcal{N}\stackrel{QL}\longrightarrow x$. Firstly, for any ${\ua} F\in w_c(x)$, $\mathcal{N}$ is  eventually in $\dua_{c}F$ by $x\in \dua_{c}F\in \mathcal{O}(\mathcal{QL})$. As $\dua_{c}F\subseteq {\ua} F$, $\mathcal{N}$ is eventually in ${\ua} F$. By (1) and Remark \ref{rem-WLH-space}, we have $\cl_{P_S((X, \tau))}w_c(x)=\cl_{P_S((X, \tau))}\{{\ua} x\}$. Suppose that $G\in X^{(<\omega)}$ and $\mathcal N$ is usually in ${\ua} G$. If $x\not\in {\ua} G$, then $x\in X\setminus {\ua} G\in  \mathcal{O}(\mathcal{QL})$ by Lemma \ref{lem-Lawson-tau-O(QL)}(1). Hence $\mathcal{N}$ is eventually in $X\setminus {\ua} G$, which contradicts the assumption that $\mathcal N$ is usually in ${\ua} G$. So $x\in {\ua} G$. Therefore, $\{F\in X^{(<\omega)} :  F\ll_c x\}$ and $(\mathcal{N}, x)$ satisfy conditions (i)-(iv) of Proposition \ref{prop-quasi-liminf-convergence-charac}(2). Thus $\mathcal{N}\stackrel{QL}\longrightarrow x$ by Proposition \ref{prop-quasi-liminf-convergence-charac}. Thus the quasi-liminf convergence in $(X, \tau)$ is topological.

(2) $\Rightarrow$ (1): Let $x\in X$. We show that $w_c(x)$ is filtered and ${\ua} x\in \cl_{P_S((X, \tau))}w_c(x)$. Let $I=\{(U, n, a): x\in U\in \mathcal{O}(\mathcal{QL}), a\in U, n\in\mathbb{N}\}$ and define an order on $I$ as follows: $(U_1, n_1, a_1)< (U_2, n_2, a_2)$ iff $U_2$ is a proper subset of $U_1$ or $U_2=U_1$ and $n_2>n_1$. Then $I$ is a direct set. Define $x_{(U, n, e)}=e$ for any $(U, n, e)\in I$. It is easy to verify that $(x_i)_{i\in I}$ converges to $x$ with respect to $\mathcal{O}(\mathcal{QL})$. By (2), $(x_i)_{i\in I}\stackrel{QL}\longrightarrow x$, whence there exists a family $\{F_d : d\in D\}\subseteq X^{(<\omega)}$ such that $\{F_d : d\in D\}$ and $((x_i)_{i\in I}, x)$ satisfy conditions (i)-(iv) of Proposition \ref{prop-quasi-liminf-convergence-charac}(2). We claim that $\{{\ua} F_d: d\in D\}\subseteq w_c(x)$. For any $d\in D$, as $F_d$ is a quasi-eventual lower bound of $(x_i)_{i\in I}$, there is $(W, m, c)\in I$ such that $b\in {\ua} F_d$ for any $(V, n, b)>(W, m, c)$. Then for any $t\in W$, we have $(W, m, c)<(W, m+1, t)$, whence $t\in {\ua} F_d$. So $x\in W\subseteq {\ua} F_d$, and hence $x\in \ii_{\mathcal{O}(\mathcal{QL})}{\ua} F_d$. By Remark \ref{rem-way-below-c}(4) we have $F_d\ll_{c} x$. Thus $\{{\ua} F_d: d\in D\}\subseteq w_c(x)$. By Lemma \ref{lem-filtred-family} and Remark \ref{rem-way-below-c}(5), $\{{\ua} (F_d\cap {\da} x) : d\in D\}\subseteq \mathbf{Fin}~\!\!X$ is filtered, $\{{\ua} (F_d\cap {\da} x) : d\in D\}\subseteq w_c(x)$ and ${\ua} x\in \cl_{P_S((X, \tau))} \{{\ua} F_d: d\in D\}\subseteq \cl_{P_S((X, \tau))} \{{\ua} (F_d\cap {\da} x) : d\in D\}$. By Proposition \ref{prop-WLH-condition-1-2}, we have that $w_c(x)$ is filtered and ${\ua} x\in \cl_{P_S((X, \tau))}w_c(x)$. Therefore, $\cl_{P_S((X, \tau))}w_c(x)=\cl_{P_S((X, \tau))}\{{\ua} x\}$ by Remark \ref{rem-WLH-space}.

Now we prove that $\dua_{c}F\in \mathcal{O}(\mathcal{QL})$ for any $F\in X^{(<\omega)}$. Let $y\in \dua_{c}F$ and $((x_i)_{i\in I}, y)\in \mathcal{QL}$. We shall show that $(x_i)_{i\in I}$ is eventually in $\dua_{c}F$. For $i\in I$, let $J(i)=\{(G\cap {\da} x_i, n, e): {\ua} G\in w_c(x_i), e\in  G\cap {\da} x_i, n\in\mathbb{N}\}$ and define an order on $J(i)$ as follows: $(G_1\cap {\da} x_i, n_1, e_1)< (G_2\cap {\da} x_i, n_2, e_2)$ iff ${\ua} (G_2\cap {\da} x_i)$ is a proper subset of ${\ua} (G_1\cap {\da} x_i)$ or ${\ua} (G_2\cap {\da} x_i)={\ua} (G_1\cap {\da} x_i)$ and $n_2>n_1$. By Lemma \ref{lem-filtred-family} and Remark \ref{rem-way-below-c}(5), $\{{\ua} (G\cap {\da} x_i) : {\ua} G\in w_c(x_i)\}\subseteq w_c(x_i)$ is filtered, and hence $J(i)$ is a directed set. For any $j=(G\cap {\da} x_i, n, e)$, define $x_{i, j}=x_{i, (G\cap {\da} x_i, n, e)}=e$. Then we get a net $(x_{i, j})_{j\in J(i)}$. As $\cl_{P_S((X, \tau))}\{{\ua} x_i\}= \cl_{P_S((X, \tau))}w_c(x_i)$, by carrying out a proof similar to that of Lemma \ref{lem-filtered-QL}, we get that $((x_{i, j})_{j\in J(i)}, x_i)\in \mathcal{QL}$. It follows from Theorem \ref{theor-convergence-topological}({\bf Iterated limits}) that $((x_{i,f(i)})_{(i, f)\in I\times M}, y)\in \mathcal{QL}$, where $M=\prod_{i\in I}J(i)=\{f: I\rightarrow \bigcup_{i\in I}J(i): f(i)\in J(i)$ for all $i\in I\}$. Thus $(x_{(i,f(i))})_{(i, f)\in I\times M}$ is eventually in ${\ua} F$ by Lemma \ref{lem-characterization-way-below-c-relation-1}. So there is $(k,g)\in I\times M$ such that $x_{(i,f(i))}\in {\ua} F$ for all $(i, f)> (k,g)$. For each $i^\prime > k$, assume $g(i^\prime)=(G_{i^\prime}\cap {\da} x_{i^\prime}, n^{\prime}, e_{i^\prime})$. For each $s\in G_{i^\prime}\cap {\da} x_{i^\prime}$, define an $f_s\in M$ by

$$f_s (i)=
\begin{cases}
	(G_{i^\prime}\cap {\da} x_{i^\prime}, n^{\prime}+1, s) & i=i^\prime,\\
    g(i)& i\neq i^\prime.
	\end{cases}$$

\noindent Then $(i^\prime, f_s) > (k, g)$, and hence $s=x_{i^\prime, (G_{i^\prime}\cap {\da} x_{i^\prime}, n^{\prime}+1, s)}=x_{i^\prime, f_s(i^\prime)}\in {\ua} F$. Therefore, $x_{i^\prime}\in {\ua} (G_{i^\prime}\cap {\da} x_{i^\prime})\subseteq {\ua} F$. By Remark \ref{rem-way-below-c}(5) we have $x_{i^\prime}\in \dua_c (G_{i^\prime}\cap {\da} x_{i^\prime})\subseteq \dua_c F$ for all $i^\prime > k$. Hence $(x_i)_{i\in I}$ is eventually in $\dua_{c}F$. So $\dua_{c}F\in \mathcal{O}(\mathcal{QL})$. This completes the proof that $(X, \tau )$ is a \emph{WLH}-space.
\end{proof}

\begin{corollary}\label{cor-characterization-Scott-WLH-space-1}
For a poset $P$, the following two conditions are equivalent:
\begin{enumerate}[\rm (1)]

  \item  $\Sigma~\!\! P$ is a WLH-space.

 \item  The quasi-liminf convergence in $\Sigma~\!\! P$ is topological, that is, for all $x \in P$ and all nets $\mathcal{N}$ in $P$,
\begin{center}
$\mathcal{N}\stackrel{QL}\longrightarrow x$ in $\Sigma~\!\!P$ if and only if $\mathcal{N}$ converges to $x$ with respect to $\mathcal{O}(\mathcal{QL})$.
\end{center}
\end{enumerate}
\end{corollary}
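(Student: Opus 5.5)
This corollary is the special case of Theorem \ref{theor-characterization-WLH-space-1} in which $(X,\tau)=\Sigma~\!\!P=(P,\sigma(P))$. The plan is simply to note that $\Sigma~\!\!P$ is a $T_0$-space and apply the theorem to it. Nothing in that theorem requires completeness or any structure on the space beyond $T_0$. Therefore no dcpo hypothesis on $P$ is needed.

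The only check is that every notion in the corollary is read with respect to $\sigma(P)$. This includes:
\begin{itemize}
\item the relation $\ll_c$ and the family $w_c(x)$;
\item the Smyth power space $P_S(\Sigma~\!\!P)$;
\item the convergence class $\mathcal{QL}$;
\item the topology $\mathcal{O}(\mathcal{QL})$.
\end{itemize}
We should also confirm that the specialization order of $\Sigma~\!\!P$ is the original order of $P$. This holds because Scott-open sets are upper sets and the complement of ${\da} x$ is Scott open. So ${\ua}$, ${\da}$ and the quasi-eventual lower bounds mean the same thing whether they are read in $P$ or in $\Sigma~\!\!P$.

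With these identifications in place, condition (1) of the corollary is literally condition (1) of Theorem \ref{theor-characterization-WLH-space-1} for $X=\Sigma~\!\!P$, and the same is true of condition (2). The equivalence then follows at once. I expect no real obstacle here. The only point needing care is the specialization-order remark above, which guarantees that the order-theoretic ingredients in the definitions of $\mathcal{QL}$ and WLH-space agree with those of the poset $P$.
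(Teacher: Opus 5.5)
Your proposal is correct and matches the paper, which states this corollary without a separate proof as the direct specialization of Theorem \ref{theor-characterization-WLH-space-1} to the $T_0$-space $\Sigma~\!\!P$, with no dcpo hypothesis on $P$. Your check that the specialization order of $\Sigma~\!\!P$ is the given order of $P$ is harmless extra care, since the paper's convention already reads all order-theoretic notions via the specialization order of the space.
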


\begin{theorem}\label{theor-LH-is-WLH}
Every locally hypercompact $T_0$-space is a WLH-space.
\end{theorem}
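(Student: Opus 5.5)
The quickest route combines two results already stated. By Lemma \ref{lem-Lawson-tau-O(QL)}(2), which is \cite[Proposition 4.4 and Theorem 4.5]{Chen-Kou-2024}, a locally hypercompact $T_0$-space $(X,\tau)$ has topological $\mathcal{QL}$-convergence, with $\mathcal O(\mathcal{QL})=\omega(X)\vee\tau$. Theorem \ref{theor-characterization-WLH-space-1}, (2) $\Rightarrow$ (1), then shows that $(X,\tau)$ is a \emph{WLH}-space. So the whole proof is two lines.

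Since that cited result carries most of the weight, I would also record a direct verification of the three conditions of Definition \ref{def-WLH-space}, to keep the argument self-contained.

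\textbf{Conditions (1) and (2).} Fix $x\in X$ and put $\mathcal G=\{{\ua}F: F\in X^{(<\omega)},\ x\in \ii_\tau{\ua}F\}$.
\begin{itemize}
\item[(a)] $\mathcal G$ is filtered. Given two members ${\ua}F_1,{\ua}F_2$, the set $\ii_\tau{\ua}F_1\cap\ii_\tau{\ua}F_2$ is an open neighbourhood of $x$. Local hypercompactness gives a finite $F_3$ with $x\in\ii_\tau{\ua}F_3\subseteq{\ua}F_3$ contained in that intersection.
\item[(b)] ${\ua}x\in\cl_{P_S(X)}\mathcal G$. For any $U\in\tau$ with ${\ua}x\in\Box U$, local hypercompactness gives some ${\ua}F\in\mathcal G$ with ${\ua}F\subseteq U$.
\item[(c)] $\mathcal G\subseteq w_c(x)$. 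Suppose $\cl_\tau D={\da}x$ for a directed set $D$. Then $x\in\cl_\tau D$ meets the open set $\ii_\tau{\ua}F$, so $D\cap{\ua}F\neq\emptyset$; hence $F\ll_c x$.
\end{itemize}
Now replace each $F$ by $\min(F\cap{\da}x)$. This is nonempty because $x\in{\ua}F$, and the family stays inside $w_c(x)$ by Remark \ref{rem-way-below-c}(5). It stays filtered by Lemma \ref{lem-filtred-family}(1). Its closure in $P_S(X)$ only grows, since each new upper set is contained in the old one. Proposition \ref{prop-WLH-condition-1-2} then yields conditions (1) and (2).

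\textbf{Condition (3)} is the expected obstacle, because Example \ref{exam-locally-hypercompact-not-upper} shows $\dua_cF$ need not be an upper set, so it need not be $\tau$-open. The plan is to show $\dua_cF$ is open in $\omega(X)\vee\tau\subseteq\mathcal O(\mathcal{QL})$ (Lemma \ref{lem-Lawson-tau-O(QL)}(1)). Take $y\in\dua_cF$. First I would show $F\ll_c y$ implies $y\in\ii_\tau{\ua}F'$ for a suitable finite $F'$ with ${\ua}F'\cap{\da}y\subseteq{\ua}F$.
\begin{itemize}
\item[(i)] Argue by contradiction with Rudin's Lemma \ref{lem-Rudin-lemma}, applied to the filtered family $\{{\ua}((G\cap{\da}y)\setminus{\ua}F)\}$ for ${\ua}G\in\mathcal G(y)$. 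This is the same pattern as in Proposition \ref{prop-WLH-condition-1-2}.
\item[(ii)] Rudin's Lemma produces a directed set $E\subseteq{\da}y$ with $\cl_\tau E={\da}y$ and $E\cap{\ua}F=\emptyset$. This contradicts $F\ll_c y$.
\item[(iii)] Hence some $G$ with $y\in\ii_\tau{\ua}G$ satisfies $G\cap{\da}y\subseteq{\ua}F$.
\end{itemize}
Then the neighbourhood
$$W=\ii_\tau{\ua}G\setminus{\ua}\bigl(G\setminus{\ua}F\bigr)$$
is $\omega(X)\vee\tau$-open and contains $y$.

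It remains to show $W\subseteq\dua_cF$, and I expect this to be the delicate step. For $z\in W$, the elements of $G$ below $z$ all lie in ${\ua}F$. If $\cl_\tau D={\da}z$, then $D$ meets ${\ua}G$, hence meets ${\ua}(G\cap{\da}z)\subseteq{\ua}F$. So $F\ll_c z$, giving $W\subseteq\dua_cF$.
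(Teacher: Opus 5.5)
Your proposal is correct. The two-line route is a genuinely different argument: Lemma \ref{lem-Lawson-tau-O(QL)}(2), imported from Chen--Kou, says $\mathcal{QL}$-convergence is topological in a locally hypercompact space, and Theorem \ref{theor-characterization-WLH-space-1}, (2) $\Rightarrow$ (1), turns this into the \emph{WLH} property. That is valid, but the paper does not argue this way.

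The paper's proof is essentially your self-contained verification:
\begin{itemize}
\item[(a)] Your steps (a)--(c), followed by the passage to $F\cap{\da}x$, are its Claims 2--3, which also go through Proposition \ref{prop-WLH-condition-1-2}.
\item[(b)] Your Rudin's Lemma steps (i)--(iii) are the first half of its Claim 5.
\item[(c)] Your check that $W\subseteq\dua_cF$ corresponds to its Claim 4, which shows in general that every $\omega(X)\vee\tau$-open subset of ${\ua}F$ lies in $\dua_cF$. The paper applies this to the slightly smaller neighbourhood $\ii_\tau{\ua}G\setminus{\ua}(G\setminus{\da}y)$, whereas you verify your $W$ directly.
\end{itemize}

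The direct route buys independence from the deep part of Chen--Kou's Theorem 4.5. It needs only the elementary inclusion $\omega(X)\vee\tau\subseteq\mathcal O(\mathcal{QL})$ from Lemma \ref{lem-Lawson-tau-O(QL)}(1). The paper cites part (2) at the end of Claim 5, but part (1) suffices there, exactly as you use it. Consequently, Theorem \ref{theor-LH-is-WLH} combined with Theorem \ref{theor-characterization-WLH-space-1} gives the topologicity half of Corollary \ref{cor-LHC-quasi-liminf-Kou} as a real consequence. With your two-line proof, that half of the corollary would simply be its own input restated.
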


\begin{proof} Suppose that $(X, \tau)$ is a locally hypercompact $T_0$-space. We will show that it is a WLH-space. For $x\in X$, let $\mathcal F_x=\{{\ua} (F\cap {\da} x) : F\in X^{(<\omega)}$, $x\in \ii_{\tau}{\ua} F\}$.

{\bf Claim 1:} For any $G\in X^{(<\omega)}$ with $x\in \ii_{\tau}{\ua} G$, $x\not \in {\ua} (G\setminus (G\cap {\da} x))$.

If $x\in {\ua} (G\setminus (G\cap {\da} x))$, then there is $u\in G\setminus (G\cap {\da} x)$ with $u\leq x$. Then $u\in G\cap {\da}x$, which contradicts $u\in G\setminus (G\cap {\da} x)$. Hence $x\not \in {\ua} (G\setminus (G\cap {\da} x))$.

{\bf Claim 2:} $\mathcal F_x$ is a filtered subfamily of $w_c(x)$ and $\cl_{P_S((X, \tau))}\mathcal F_x=\cl_{P_S((X, \tau))}\{{\ua} x\}$.

 For any $H\in X^{(<\omega)}$ with $x\in\ii_\tau {\ua} H$, we have $H\ll_c x$ and hence $H\cap {\da} x\ll_c x$ by Remark \ref{rem-way-below-c}(5). So $\mathcal F_x\subseteq w_c(x)$.  As $(X, \tau)$ is locally hypercompact, $\{{\ua} F : F\in X^{(<\omega)}$, $x\in \ii_{\tau}{\ua} F\}$ is filtered. It follows from Lemma \ref{lem-filtred-family} that $\mathcal F_x=\{{\ua} (F\cap {\da} x) :  F\in X^{(<\omega)}$, $x\in \ii_{\tau}{\ua} F\}$ is filtered. By the local hypercompactness of $(X, \tau)$, we have that $\cl_{P_S((X, \tau))}\{{\ua} F: F\in X^{(<\omega)}$, $x\in \ii_{\tau}{\ua} F\}=\cl_{P_S((X, \tau))}\{{\ua} x\}$. Hence ${\ua} x\in \cl_{P_S((X, \tau))}\{{\ua} F: F\in X^{(<\omega)}$, $x\in \ii_{\tau}{\ua} F\}\subseteq \cl_{P_S((X, \tau))}\{{\ua} (F\cap {\da} x) : F\in X^{(<\omega)}$, $x\in \ii_{\tau}{\ua} F\}\subseteq \cl_{P_S((X, \tau))}\{{\ua} x\}$ (note that ${\ua} x\subseteq {\ua} (F\cap {\da} x)\subseteq {\ua} F$ for any $F\in X^{(<\omega)}$ with $x\in\ii_\tau {\ua} F$). Thus $\cl_{P_S((X, \tau))}\mathcal F_x=\cl_{P_S((X, \tau))}\{{\ua} x\}$.

{\bf Claim 3:} $w_c(x)$ is filtered and $\cl_{P_S((X, \tau))}w_c(x)=\cl_{P_S((X, \tau))}\{{\ua} x\}$.

By Claim 2 and Proposition \ref{prop-WLH-condition-1-2}, we have that $w_c(x)$ is filtered  and $\cl_{P_S((X, \tau))}w_c(x)=\cl_{P_S((X, \tau))}\{{\ua} x\}$.

{\bf Claim 4:}  For any ${\ua} F\in \mathbf{Fin}X$ and $U\in \omega(X)\vee \tau$ with $U\subseteq {\ua} F$, $U\subseteq \dua_{c}F$.

Suppose that $u\in U$. Then there exist $V\in \tau$ and $G\in X^{(<\omega)}$ such that $u\in V\setminus {\ua} G= V\cap (X\setminus {\ua} G)\subseteq U$. Let $D$ be a directed subset in $(X, \tau)$ with ${\da} u=\cl_{\tau}D$. Then $D\subseteq {\da} u\subseteq X\setminus {\ua} G$. As $u\in V\cap\cl_{\tau}D$, we have $D\cap V\neq\emptyset$, whence $\emptyset\neq D\cap V=D\cap (X\setminus {\ua} G)\cap V=D\cap (V\setminus {\ua} G)\subseteq D\cap U\subseteq D\cap {\ua} F$. Hence $u\in \dua_{c}F$. Thus $U\subseteq \dua_{c}F$.

{\bf Claim 5:} For any $F\in X^{(<\omega)}$, $\dua_{c}F\in \mathcal{O}(\mathcal{QL})$.

We first show $\dua_{c}F\in \omega(X)\vee \tau$. Let $z\in \dua_{c}F$ and $\{G_j : j\in J\}=\{G\in X^{<\omega} : z\in \ii_{\tau}{\ua} G\}$. Then $\mathcal F_z=\{{\ua} (G_j\cap {\da} z) : j\in J\}$.  We claim that there is $i\in J$ such that ${\ua} (G_{i}\cap {\da} z)\subseteq {\ua} F$. Assume, on the contrary, that ${\ua} (G_j\cap {\da} z)\nsubseteq {\ua} F$ for all $j\in J$. Then by Claim 2 and Lemma \ref{lem-filtred-family}(2), $\{{\ua} ((G_j\cap {\da} z)\setminus {\ua} F) : j\in J\}$ is a filtered subfamily of $\mathbf{Fin} X$. By Lemma \ref{lem-Rudin-lemma}, there is a directed subset $D\subseteq \bigcup_{j\in J} ((G_j\cap {\da} z)\setminus {\ua} F)$ such that $D\cap ((G_j\cap {\da} z)\setminus {\ua} F)\neq\emptyset$  for all $j\in J$. For any $j\in J$, we have $(G_j\cap {\da} z)\setminus {\ua} F\subseteq G_j\cap {\da} z\subseteq {\da} z$, whence $D\subseteq \bigcup_{j\in J} ((G_j\cap{\da} z) \setminus {\ua} F)\subseteq {\da} z$ and $\emptyset\neq D\cap ((G_j\cap {\da} z)\setminus {\ua} F)\subseteq D\cap (G_j\cap {\da} z)$. Therefore, by Claim 1, ${\ua} z\in \cl_{P_S(X)}\mathcal{F}_z\subseteq \Diamond \cl_{\tau} D$ and $\cl_{\tau}~\!\! D \subseteq {\da} z$. Thus $\cl_{\tau}~\!\! D ={\da} z$. By $F\ll_{c} z$, we have $D\cap {\ua} F\neq\emptyset$, which contradicts $D\subseteq \bigcup_{j\in J} ((G_j\cap {\da} z)\setminus {\ua} F)\subseteq X\setminus {\ua} F$.
 So there is $i\in J$ such that ${\ua} (G_i\cap {\da} z)\subseteq {\ua} F$.
 Then by Claim 1,  $z\in \ii_{\tau}{\ua} G_i\setminus {\ua} (G_i\setminus (G_i\cap {\da} z))\subseteq{\ua} G_i\setminus {\ua} (G_i\setminus (G_i\cap {\da} z)) \subseteq {\ua} (G_i\cap {\da} z)\subseteq {\ua} F$.
 By Claim 4, we have $z\in \ii_{\tau}{\ua} G_i\setminus {\ua} (G_i\setminus (G_i\cap {\da} z))\subseteq \dua_c F$. Hence $\dua_{c}F\in \omega(X)\vee \tau$. By Lemma \ref{lem-Lawson-tau-O(QL)}(2), $\dua_{c}F\in \mathcal{O}(\mathcal{QL})$.

By Claims 3 and 5, $(X, \tau )$ is a \emph{WLH}-space.
\end{proof}

By Lemma \ref{lem-Lawson-tau-O(QL)}(2), Theorem \ref{theor-characterization-WLH-space-1} and  Theorem \ref{theor-LH-is-WLH}, we obtain the following.

\begin{corollary}\label{cor-LHC-quasi-liminf-Kou}  (\cite[Theorem 4.5]{Chen-Kou-2024})
For a locally hypercompact $T_0$-space $(X, \tau)$, the quasi-liminf convergence (i.e., the $\mathcal{QL}$-convergence) is topological and agrees with convergence in $\omega(X)\vee \tau$.
\end{corollary}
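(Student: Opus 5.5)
The statement is Corollary \ref{cor-LHC-quasi-liminf-Kou}, and it has two parts: $\mathcal{QL}$-convergence is topological, and the topology it determines is $\omega(X)\vee\tau$. I will obtain each part from results already in the paper.

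\emph{Topologicity.} Since $(X,\tau)$ is locally hypercompact, Theorem \ref{theor-LH-is-WLH} shows that $(X,\tau)$ is a WLH-space. The implication (1) $\Rightarrow$ (2) of Theorem \ref{theor-characterization-WLH-space-1} then gives the following: for every net $\mathcal N$ in $X$ and every $x\in X$, $\mathcal N\stackrel{QL}\longrightarrow x$ if and only if $\mathcal N$ converges to $x$ with respect to $\mathcal O(\mathcal{QL})$. This is exactly the statement that $\mathcal{QL}$-convergence is topological.

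\emph{Identifying the topology.} Lemma \ref{lem-Lawson-tau-O(QL)}(2), or equivalently Proposition \ref{prop-Lawson-tau-O(QL)-O(L)}(2), gives $\mathcal O(\mathcal{QL})=\omega(X)\vee\tau$ for a locally hypercompact $(X,\tau)$. Substituting this into the equivalence from the first step, $\mathcal N\stackrel{QL}\longrightarrow x$ holds precisely when $\mathcal N$ converges to $x$ in $\omega(X)\vee\tau$. That is the claimed agreement of convergences.

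The only nontrivial ingredient is the equality $\mathcal O(\mathcal{QL})=\omega(X)\vee\tau$, and it is already supplied by the cited lemma. If one prefers not to rely on its second part, the equality can be rebuilt from pieces in the paper. The inclusion $\omega(X)\vee\tau\subseteq\mathcal O(\mathcal{QL})$ is Lemma \ref{lem-Lawson-tau-O(QL)}(1). For the reverse inclusion, take $U\in\mathcal O(\mathcal{QL})$ and $x\in U$. Apply Lemma \ref{lem-filtered-QL} to the filtered family $\mathcal F_x$ from Claim 2 of the proof of Theorem \ref{theor-LH-is-WLH}; this gives a net that $\mathcal{QL}$-converges to $x$. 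Because $U\in\mathcal O(\mathcal{QL})$, this net is eventually in $U$. Using Claims 1 and 4 of that proof, one then locates a basic set $\ii_\tau{\ua}G\setminus{\ua}H$ of $\omega(X)\vee\tau$ with $x\in\ii_\tau{\ua}G\setminus{\ua}H\subseteq U$. This last step, producing the basic neighbourhood inside $U$, is the only delicate part of the argument.
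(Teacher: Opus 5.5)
Your main argument (the first two paragraphs) is correct and is the paper's own derivation. Theorem \ref{theor-LH-is-WLH} gives the WLH property, Theorem \ref{theor-characterization-WLH-space-1} gives topologicity, and Lemma \ref{lem-Lawson-tau-O(QL)}(2) identifies $\mathcal O(\mathcal{QL})$ with $\omega(X)\vee\tau$.

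The optional fallback in your last paragraph, however, would not work as sketched. The net that Lemma \ref{lem-filtered-QL} builds from $\mathcal F_x$ takes all its values in ${\da}x$. So knowing that this net is eventually in $U$ only constrains $U\cap{\da}x$. Claim 1 gives you the set $\ii_\tau{\ua}G\setminus{\ua}(G\setminus{\da}x)$, which is contained in ${\ua}(G\cap{\da}x)$. That set lies inside $U$ only if $U$ is an upper set, and members of $\mathcal O(\mathcal{QL})$ need not be upper, since they include every set in $\omega(X)$. Claim 4 concerns $\dua_c F$ and does not help.

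Concretely, take $X=\mathbb R$ with the Scott topology. This is a $C$-space in which $\omega(X)\vee\tau$ is the Euclidean topology. Let $x=0$ and $U=(-1,1)$. The set $(-1,0]$ agrees with $U$ on ${\da}0$, yet it is not a neighbourhood of $0$. So no argument that uses $U\in\mathcal O(\mathcal{QL})$ only through this one net can succeed.

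The standard repair is to show that every net $\mathcal N$ converging to $x$ in $\omega(X)\vee\tau$ satisfies $(\mathcal N,x)\in\mathcal{QL}$:
\begin{itemize}
\item The finite sets $F$ with $x\in\ii_\tau{\ua}F$ are quasi-eventual lower bounds of $\mathcal N$. Local hypercompactness makes $\{{\ua}F : x\in\ii_\tau{\ua}F\}$ a filtered family with ${\ua}x$ in its closure in $P_S(X)$.
\item If $\mathcal N$ is usually in ${\ua}G$, then $x\in{\ua}G$. Otherwise $X\setminus{\ua}G\in\omega(X)$ would be a neighbourhood of $x$ that the net eventually stays in.
\item Proposition \ref{prop-quasi-liminf-convergence-charac} then gives $(\mathcal N,x)\in\mathcal{QL}$.
\end{itemize}
Combined with Lemma \ref{lem-Lawson-tau-O(QL)}(1), this yields $\mathcal C(\omega(X)\vee\tau)\subseteq\mathcal{QL}\subseteq\mathcal C(\mathcal O(\mathcal{QL}))\subseteq\mathcal C(\omega(X)\vee\tau)$. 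These equalities prove both halves of the corollary at once, without the WLH machinery.
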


From Proposition \ref{prop-Lawson-tau-O(QL)-O(L)}(2) and Corollary \ref{cor-LHC-quasi-liminf-Kou} we deduce the following.

\begin{corollary}\label{cor-quasicontinuous-quas-liminf}
For a quasicontinuous poset $P$, the quasi-liminf convergence is topological and agrees with convergence in the Lawson topology $\lambda(P)$.
\end{corollary}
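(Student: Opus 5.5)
The statement is Corollary~\ref{cor-quasicontinuous-quas-liminf}. I would obtain it by specializing Corollary~\ref{cor-LHC-quasi-liminf-Kou} to the Scott space $\Sigma\,P$ and then identifying $\omega(P)\vee\sigma(P)$ with $\lambda(P)$. Throughout, the ambient $T_0$-space is $(P,\sigma(P))$. Its specialization order is the original order of $P$: a point $x$ lies in $\cl_{\sigma(P)}\{y\}$ iff $x\leq y$, since ${\da} y$ is Scott closed. Hence the lower topology $\omega(X)$ of the space $X=\Sigma\,P$, formed from its specialization order, is exactly $\omega(P)$.

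First, since $P$ is a quasicontinuous poset, Definition~\ref{def-continuous-poset} says precisely that $\Sigma\,P$ is locally hypercompact. So Corollary~\ref{cor-LHC-quasi-liminf-Kou} applies to $(X,\tau)=(P,\sigma(P))$. It gives that $\mathcal{QL}$-convergence in $\Sigma\,P$ is topological and agrees with convergence in $\omega(X)\vee\tau=\omega(P)\vee\sigma(P)$. By the definition recalled in Section~2, this join is the Lawson topology $\lambda(P)$.

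Alternatively, I could route through Theorem~\ref{theor-LH-is-WLH} and Theorem~\ref{theor-characterization-WLH-space-1} to get topologicality. I would then use Proposition~\ref{prop-Lawson-tau-O(QL)-O(L)}(2), or Lemma~\ref{lem-Lawson-tau-O(QL)}(2), to get $\mathcal O(\mathcal{QL})=\omega(P)\vee\sigma(P)=\lambda(P)$. Topologicality means a net $\mathcal{QL}$-converges to $x$ iff it converges to $x$ in $\mathcal O(\mathcal{QL})$, so the two convergences coincide.

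There is no real obstacle here. The only point needing care is that the statement is meant for the Scott space of $P$, and that the specialization order of $\Sigma\,P$ is the given order, so that $\omega(X)$ is indeed $\omega(P)$.
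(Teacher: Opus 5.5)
Your proposal is correct and follows the paper's route. The paper likewise applies Corollary~\ref{cor-LHC-quasi-liminf-Kou}, together with Proposition~\ref{prop-Lawson-tau-O(QL)-O(L)}(2), to the locally hypercompact space $\Sigma\,P$, and identifies $\omega(P)\vee\sigma(P)$ with $\lambda(P)$; your check that the specialization order of $\Sigma\,P$ is the given order of $P$ is a detail the paper leaves implicit.
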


By Proposition \ref{prop-quasicontinuous-local-hypercompact} and Corollary \ref{cor-quasicontinuous-quas-liminf}, we get the following.

\begin{corollary}\label{cor-quasicontinuous-domain-quas-liminf} (\cite[Theorems 4.1 and 4.4]{Zhou-Li-2013})
For a quasicontinuous domain $P$, the quasi-liminf convergence is topological and agrees with convergence in the Lawson topology $\lambda(P)$.
\end{corollary}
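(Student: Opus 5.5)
The plan is to derive Corollary \ref{cor-quasicontinuous-domain-quas-liminf} from the poset version, Corollary \ref{cor-quasicontinuous-quas-liminf}. The only point to settle is that the quasi-liminf convergence in the Zhou--Li sense is the one used here. By Remark \ref{rem-QL-convergence=Kou-L-convergence}(b), the $\mathscr{S}^*_2$-convergence of \cite{Zhou-Li-2013} is the quasi-liminf convergence of the Scott space $\Sigma~\!\!P$. So we work throughout in $(P,\sigma(P))$, and the claim becomes: $\mathcal{QL}$-convergence in $\Sigma~\!\!P$ is topological and coincides with convergence in $\lambda(P)$.

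First we check that $P$ is a quasicontinuous poset in the sense of Definition \ref{def-continuous-poset}, i.e. that $\Sigma~\!\!P$ is locally hypercompact. Since $P$ is a dcpo, this is exactly Proposition \ref{prop-quasicontinuous-local-hypercompact}, (1) $\Rightarrow$ (2).

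Next we apply Corollary \ref{cor-LHC-quasi-liminf-Kou} to the locally hypercompact space $X=\Sigma~\!\!P$. This shows that $\mathcal{QL}$-convergence is topological and agrees with convergence in $\omega(X)\vee\sigma(P)$. The specialization order of $\Sigma~\!\!P$ is the original order of $P$, because Scott-closed sets are lower sets and ${\da} x$ is Scott closed. Hence $\omega(X)=\omega(P)$, and $\omega(P)\vee\sigma(P)=\lambda(P)$ by the definition of the Lawson topology. Equivalently, we can quote Proposition \ref{prop-Lawson-tau-O(QL)-O(L)}(2) to get $\mathcal{O}(\mathcal{QL})=\lambda(P)$ directly.

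There is no real obstacle. The only care needed is the identification of the two orders, so that the lower topology of the space coincides with the lower topology of the poset, together with the terminological match with \cite{Zhou-Li-2013}.
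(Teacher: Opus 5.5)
Your proposal is correct and follows essentially the paper's route. The paper derives this corollary from Proposition \ref{prop-quasicontinuous-local-hypercompact} (a quasicontinuous domain has a locally hypercompact Scott space) together with Corollary \ref{cor-quasicontinuous-quas-liminf}. You unfold that corollary into Corollary \ref{cor-LHC-quasi-liminf-Kou} plus the identification $\omega(\Sigma~\!\!P)\vee\sigma(P)=\lambda(P)$, which is exactly how the paper itself obtained it.
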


The following example shows that a weakly locally hypercompact $T_0$-space may not be locally hypercompact.

\begin{example}\label{exmp-WLH-not-LH}
Let $P=\{a_n: n\in \mathbb{N}\}\cup\{b_n: n\in \mathbb{N}^+\}\cup\{c_i: i\in \mathbb{N}^+\}$. Define an order on $P$ as follows (see Figure 3):

\begin{enumerate}[\rm (1)]
 \item  $a_n\leq a_m$ for all $m, n\in \mathbb{N}^+$ with $n\leq m$;
 \item  $b_n\leq b_m$ for all $m, n\in \mathbb{N}^+$ with $n\leq m$;
 \item  $a_n< a_0$ and $b_n< a_0$ for all $n\in \mathbb{N}^+$; and
 \item  $c_n$ and $c_m$ are incomparable for all $m, n\in \mathbb{N}^+$ with $n\leq m$.

\end{enumerate}

\begin{figure}[h]
    \centering
    \includegraphics[width=10cm, height=4cm, keepaspectratio]{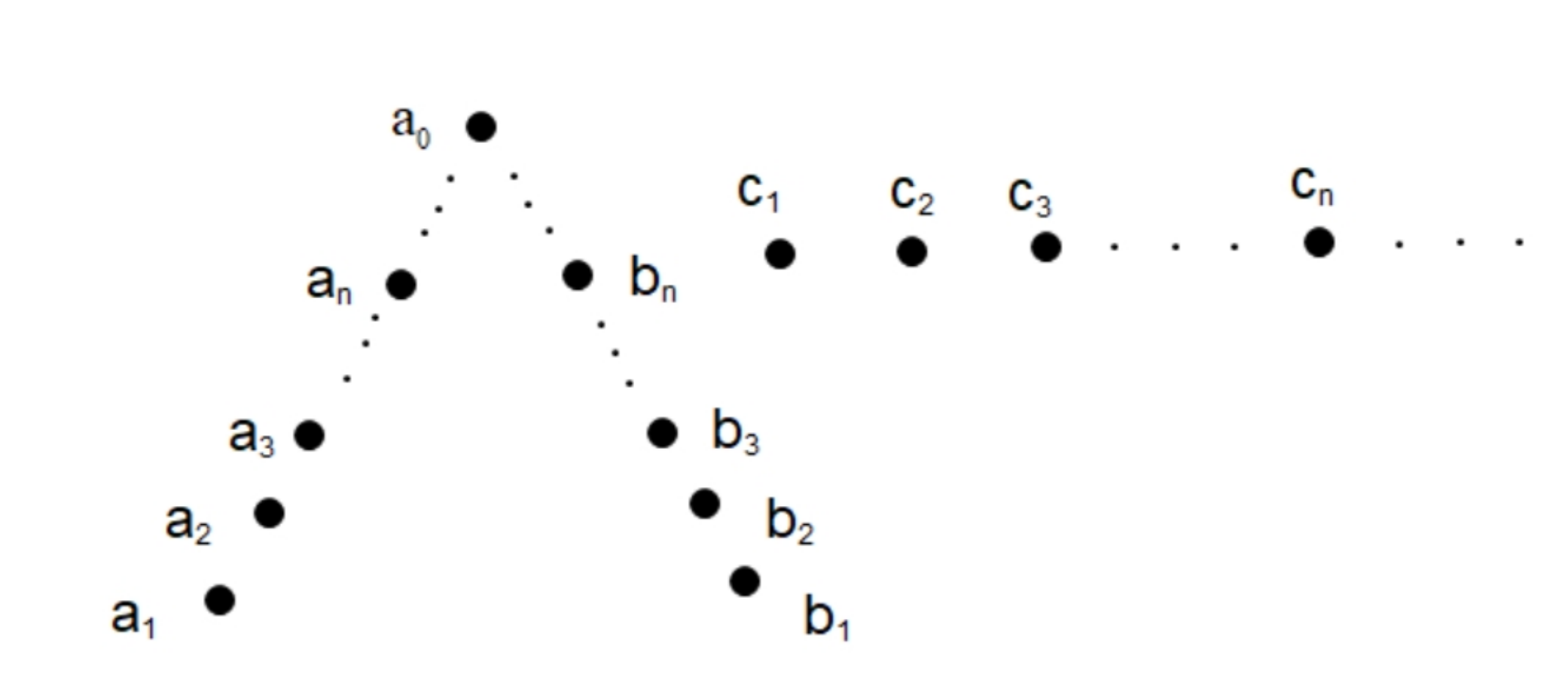}
    \caption{The countable quasicontinuous domain $P$ in Example \ref{exmp-WLH-not-LH}.}
\end{figure}

Consider the upper topology $\upsilon(P)$ on $P$. Then we have the following conclusions:

\begin{enumerate}[\rm (a)]
\item $\mathrm{Id}(P)=\{{\da} x: x\in P\}\cup\{\{a_n: n\in \mathbb{N}^+\},\{b_n: n\in \mathbb{N}^+\}\}$. Hence $P$ is a dcpo.

\item  $(P, \upsilon(P))$ is not locally hypercompact.

In fact, for any $G, H\in P^{(<\omega)}$, $(P\setminus {\da} G)\cap\{c_n: n\in \mathbb{N}^+\}=(P\setminus {\da} G)\cap (P\setminus {\da}a_0)\in\upsilon(P)$ is infinite and $\{c_n: n\in \mathbb{N}^+\}\cap {\uparrow} H$ is finite. Hence $\ii_{\upsilon(P)}{\ua} F=\emptyset$ for any $F\in P^{(<\omega)}$. Therefore, $(P, \upsilon(P))$ is not locally hypercompact.

\item $P$ is a quasicontinuous domain but is not a continuous domain.

Since ${\Downarrow} a_0=\{u\in P : u\ll a_0\}=\emptyset$, $P$ is not a continuous domain. Now we show that $P$ is a quasicontinuous domain.

\begin{enumerate}[\rm (i)]
 \item   If $x\in \{c_n:n\in \mathbb{N}^{+}\}$, then $x\ll x$.

  \item  If $x\in \{a_n:n\in \mathbb{N}^{+}\}$, then $\{{\ua}\{x,b_m\}:m\in \mathbb{N}^{+}\}$ is filtered with $\{{\ua}\{x, b_m\}:m\in \mathbb{N}^{+}\}\subseteq w(x)$ and $\bigcap_{m\in \mathbb{N}^{+}}{\ua}\{x, b_m\}={\ua} x$.

 \item   If $x\in \{b_n:n\in \mathbb{N}^{+}\}$, then $\{{\ua}\{x,a_m\}:m\in \mathbb{N}^{+}\}$ is filtered with $\{{\ua}\{x, a_m\}:m\in \mathbb{N}^{+}\}\subseteq w(x)$ and $\bigcap_{m\in \mathbb{N}^+}{\ua}\{x, a_m\}={\ua} x$.

    \item   If $x=a_0$, then $\{{\ua}\{a_n, b_n\}:n\in \mathbb{N}^{+}\}$ is filtered with $\{{\ua}\{a_n, b_n\}:n\in \mathbb{N}^{+}\}\subseteq w(x)$ and $\bigcap_{n\in \mathbb{N}^+}{\ua}\{a_n, b_n\}={\ua} a_0$.
\end{enumerate}

By (i)-(iv) and Proposition \ref{prop-quasicontinuous-sufficient-condition}, $P$ is a quasicontinuous domain.

\item  By (a) we have $\ll_c=\ll$ (i.e., for $x\in P$ and $F\in P^{(<\omega)}$, $F\ll_c x$ iff $F\ll x$).

\item  For any $x\in P$ and $F\in P^{(<\omega)}$, $w_c(x)$ is filtered  and $\dua_{c}F\in \sigma(P)$.

By (c) and (d), $w_c(x)=w(x)$ is filtered and ${\ua} x=\bigcap w_c(x)$, and by \cite[Proposition 4.1]{Gierz-Lawson-Stralka-1983} $\dua_{c}F=\dua F\in \sigma(P)$.

\item For any $F\in X^{(<\omega)}$, $\dua_{c}F\in \mathcal{O}(\mathcal{QL})$.

By (e), Proposition \ref{prop-md-upper-topology} and Proposition \ref{prop-Lawson-tau-O(QL)-O(L)}(1), we have $\dua_{c}F\in \sigma(P)=md(\upsilon(P))\subseteq \mathcal{O}(\mathcal{QL})$.

\item  For any $x\in P$, ${\ua} x\in \cl_{P_S(X)}w_c(x)$.

Let $U\in \upsilon(P)$ with $x\in U$. Then we have the following conclusions:

\begin{enumerate}[\rm (i)]
\item If $x\in \{c_n: n\in \mathbb{N}^+\}$, then $x\ll_c x$, that is, ${\ua} x=\{x\}\in w_c(x)$;

\item If $x\in \{a_n: n\in \mathbb{N}^+\}$, then by $\bigvee_{n\in \mathbb{N}^+}b_n=a_0\in {\ua} x\subseteq U\in \upsilon(P)\subseteq \sigma(P)$, there exists $m\in \mathbb{N}^+$ such that $b_m\in U$ and hence ${\ua}\{x, b_m\}\subseteq U$. Clearly,  ${\ua}\{x, b_m\}\in w(x)=w_c(x)$.

\item If $x\in \{b_n: n\in \mathbb{N}^+\}$, then by $\bigvee_{n\in \mathbb{N}^+}a_n=a_0\in {\ua} x\subseteq U\in \upsilon(P)\subseteq \sigma(P)$, there exists $l\in \mathbb{N}^+$ such that $a_l\in U$ and hence ${\ua}\{x, a_l\}\subseteq U$. Clearly,  ${\ua}\{x, a_l\}\in w(x)=w_c(x)$.

\item If $x=a_0$, then  by $\bigvee_{n\in \mathbb{N}^+}a_n=\bigvee_{n\in \mathbb{N}^+}b_n=a_0\in {\ua} x\subseteq U\in \upsilon(P)\subseteq \sigma(P)$, there exists $m_1, m_2\in \mathbb{N}^+$ such that $\{a_{m_1}, b_{m_2}\}\subseteq U$, and hence ${\ua}\{a_{m_1}, b_{m_2}\}\subseteq U$. Clearly, ${\ua} \{a_{m_1}, b_{m_2}\}\in w(x)=w_c(x)$;
\end{enumerate}

It follows from (i)-(iv) that $\Box U\cap w_c(x)\neq\emptyset$. Thus ${\ua} x\in \cl_{P_S(X)}w_c(x)$.

 \end{enumerate}

By (e), (f) and (g), $(P, \upsilon(P))$ is weakly locally hypercompact.
\end{example}

Finally, using the quasi-liminf convergence and approximate relation $\ll_c$, we give some characterizations of $C$-spaces.

\begin{lemma}\label{lem-GWC-space} Let $(X, \tau)$ be a $T_0$ space and $H\subseteq X$. Then the following conditions are equivalent:
\begin{enumerate}[\rm (1)]
\item $\dua_c H\in \tau$.
\item $\dua_c H=\ii_\tau {\ua} H$.
\item For any $x\in X$, $H\ll_c x$ iff $x\in \ii_{\tau} {\ua} H$.
\end{enumerate}
\end{lemma}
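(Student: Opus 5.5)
The approach is to prove the cycle (1) $\Rightarrow$ (2) $\Rightarrow$ (3) $\Rightarrow$ (1). The key preliminary fact is the inclusion
$$\ii_\tau {\ua} H\subseteq \dua_c H,$$
which holds for every $T_0$-space and every $H\subseteq X$. To prove it, let $x\in \ii_\tau{\ua} H$ and let $D$ be a directed set with $\cl_\tau D={\da} x$. Then $x\in \cl_\tau D$ and $\ii_\tau{\ua}H$ is an open set containing $x$, so $D$ meets $\ii_\tau {\ua}H\subseteq {\ua}H$. Hence $D\cap{\ua}H\neq\emptyset$. This is exactly the definition of $H\ll_c x$ for $B=\{x\}$, so $x\in\dua_c H$. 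It is essentially the argument of Remark \ref{rem-way-below-c}(4), and a special case of Claim 4 in the proof of Theorem \ref{theor-LH-is-WLH}.

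Next we need the reverse-type inclusion $\dua_c H\subseteq {\ua} H$. Here I use Remark \ref{rem-way-below-c}(2), or directly: if $H\ll_c x$, take the directed set $D=\{x\}$. Its closure is ${\da}x$, so $\{x\}\cap{\ua}H\neq\emptyset$, i.e.\ $x\in{\ua}H$.

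With these two facts the steps are short.

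(1) $\Rightarrow$ (2): If $\dua_c H\in\tau$, then $\dua_c H$ is an open subset of ${\ua}H$. Hence $\dua_c H\subseteq \ii_\tau{\ua}H$, and the general inclusion gives equality.

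(2) $\Rightarrow$ (3): This is a restatement of the equality pointwise: $H\ll_c x$ iff $x\in\dua_c H=\ii_\tau{\ua}H$.

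(3) $\Rightarrow$ (1): By (3), $\dua_c H=\ii_\tau{\ua}H$, which is open.

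The only point needing care is that $\dua_c H$ was defined for finite $F$, while here $H$ is arbitrary. I will read $\dua_c H=\{x\in X: H\ll_c x\}$ using the general definition of $A\ll_c B$, which makes sense for any subset $A$. There is no real obstacle. The main step is the inclusion $\ii_\tau{\ua}H\subseteq\dua_c H$, and it uses only that open sets meeting the closure of $D$ meet $D$.
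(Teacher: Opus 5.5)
Your proof is correct and follows the paper's argument. Both proofs establish $\ii_\tau{\ua}H\subseteq\dua_c H\subseteq{\ua}H$, so that openness of $\dua_c H$ forces equality and the remaining implications are trivial. The only difference is that you verify $\ii_\tau{\ua}H\subseteq\dua_c H$ directly from the definition of closure, whereas the paper passes through $\ii_{\mathcal{O}(\mathcal{QL})}{\ua}H$ using $\tau\subseteq\mathcal{O}(\mathcal{QL})$ (Lemma \ref{lem-Lawson-tau-O(QL)}(1)) and Remark \ref{rem-way-below-c}(4), so your version is more self-contained.
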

\begin{proof} (1) $\Rightarrow$ (2): Suppose that $\dua_c H\in \tau$. Then by Lemma \ref{lem-Lawson-tau-O(QL)}(1) and Remark \ref{rem-way-below-c}(4), we have $\ii_\tau {\ua} H\subseteq \ii_{\mathcal{O}(\mathcal{QL})} {\ua} H\subseteq \dua_c H\subseteq {\ua} H$. As $\dua_c H\in \tau$, we have $\dua_c H\subseteq \ii_\tau {\ua} H$. Hence $\dua_c H=\ii_{\tau} {\ua} H$.

(2) $\Rightarrow$ (1), (2) $\Leftrightarrow$ (3): Trivial.
\end{proof}

\begin{theorem}\label{theor-characterization-C-space-1} For a $T_0$-space $(X, \tau)$, the following conditions are equivalent:
\begin{enumerate}[\rm (1)]
\item $(X, \tau)$ is a $C$-space.
\item For any $U\in \tau$ and $x\in U$, there exists $F \in X^{(<\omega)}$ such that $F\subseteq {\da} x$ and $x\in \ii_{\tau} {\ua} F\subseteq  {\ua} F\subseteq U$.
\item $(X, \tau)$ is locally hypercompact and for any $(x, F)\in X\times X^{(<\omega)}$, $x\in  \ii_{\tau} {\ua} F$ implies  $x\in  \ii_{\tau} {\ua} (F\cap {\da} x)$.
\item $(X, \tau)$ is locally hypercompact and $\dua_c F\in \tau$ for all $F\in X^{(<\omega)}$.
\item $(X, \tau)$ is a WLH-space and $\dua_c F\in \tau$ for all $F\in X^{(<\omega)}$.

\end{enumerate}
\end{theorem}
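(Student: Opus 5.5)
I will prove the chain (1) $\Rightarrow$ (2) $\Rightarrow$ (3) $\Rightarrow$ (1), and separately (1) $\Rightarrow$ (4) $\Rightarrow$ (5) $\Rightarrow$ (1).

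\emph{(1) $\Rightarrow$ (2).} Given $x\in U\in\tau$, the $C$-space property yields $y$ with $x\in \ii_\tau{\ua} y\subseteq{\ua} y\subseteq U$. Since $x\in{\ua} y$, we have $y\leq x$, so $F=\{y\}$ works.

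\emph{(2) $\Rightarrow$ (3).} Local hypercompactness is immediate from (2). Now let $x\in \ii_\tau{\ua} F$. Apply (2) to the open set $U=\ii_\tau{\ua} F$ to get $G\subseteq{\da} x$ with $x\in\ii_\tau{\ua} G\subseteq{\ua} G\subseteq{\ua} F$. Each $g\in G$ lies above some $f\in F$, and $f\leq g\leq x$, so $f\in F\cap{\da} x$. Hence ${\ua} G\subseteq{\ua}(F\cap{\da} x)$, and therefore $x\in\ii_\tau{\ua}(F\cap{\da} x)$.

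\emph{(3) $\Rightarrow$ (1).} Let $x\in U\in\tau$. Local hypercompactness gives $F$ with $x\in\ii_\tau{\ua} F\subseteq{\ua} F\subseteq U$, and (3) gives $x\in\ii_\tau{\ua}(F\cap{\da} x)$. The remaining task is to pass from the finite set $F\cap{\da} x$ to a single point. I would use Proposition \ref{prop-C-space-LHC-web}: it suffices to show $X$ is a web space, i.e.\ condition (4) of Lemma \ref{lem-closed-set-lattice-Heyting}, that ${\ua}(V\cap{\da} x)\in\tau$ for $V\in\tau$. Take $z\in{\ua}(V\cap{\da} x)$, say $v\leq z$ with $v\in V$ and $v\leq x$. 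Local hypercompactness at $v$ gives $G$ with $v\in\ii_\tau{\ua} G\subseteq V$, and (3) gives $v\in\ii_\tau{\ua}(G\cap{\da} v)$. Since $G\cap{\da} v\subseteq{\da} x\cap V$, we get ${\ua}(G\cap{\da} v)\subseteq{\ua}(V\cap{\da} x)$, so $z\in\ii_\tau{\ua}(G\cap{\da} v)\subseteq{\ua}(V\cap{\da} x)$. Thus ${\ua}(V\cap{\da} x)$ is open. I expect this to be the main obstacle, and I would check carefully that the interior really contains $z$: this holds because interiors of upper sets are upper sets.

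\emph{(1) $\Rightarrow$ (4).} Local hypercompactness is clear. By Lemma \ref{lem-GWC-space} it suffices to show $\dua_c F\subseteq\ii_\tau{\ua} F$; the reverse inclusion follows from Remark \ref{rem-way-below-c}(4) together with $\tau\subseteq\mathcal O(\mathcal{QL})$. Let $F\ll_c x$. The set $D=\{y: x\in\ii_\tau{\ua} y\}$ is directed with $\cl_\tau D={\da} x$:
\begin{itemize}
\item directedness comes from applying the $C$-space property to $\ii_\tau{\ua} y_1\cap\ii_\tau{\ua} y_2$;
\item closure comes from the $C$-space property, since every open neighbourhood of $x$ meets $D$, while $D\subseteq{\da} x$.
\end{itemize}
Hence some $y\in D\cap{\ua} F$, and then $x\in\ii_\tau{\ua} y\subseteq\ii_\tau{\ua} F$.

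\emph{(4) $\Rightarrow$ (5).} This is Theorem \ref{theor-LH-is-WLH}.

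\emph{(5) $\Rightarrow$ (1).} Let $x\in U\in\tau$. By WLH condition (2) there is ${\ua} F\in w_c(x)$ with ${\ua} F\subseteq U$, and by Remark \ref{rem-way-below-c}(5) we may assume $F\subseteq{\da} x$. Since $\dua_c F\in\tau$, Lemma \ref{lem-GWC-space} gives $x\in\dua_c F=\ii_\tau{\ua} F$. So condition (2) of the theorem holds, and then (2) $\Rightarrow$ (3) $\Rightarrow$ (1) finishes the argument.
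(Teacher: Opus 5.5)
Your proof is correct and follows essentially the same route as the paper: the same arguments for (1)$\Rightarrow$(2)$\Rightarrow$(3), the web-space criterion ${\ua}(V\cap{\da} x)\in\tau$ with Proposition \ref{prop-C-space-LHC-web} for (3)$\Rightarrow$(1), the directed set $\{y : x\in\ii_\tau{\ua} y\}$ for (1)$\Rightarrow$(4), Theorem \ref{theor-LH-is-WLH} for (4)$\Rightarrow$(5), and Lemma \ref{lem-GWC-space} with Remark \ref{rem-way-below-c}(5) to recover condition (2) from (5). The only cosmetic difference is in deriving (2) from (5): you pick ${\ua} F\in w_c(x)\cap\Box U$ directly, whereas the paper first identifies $w_c(x)$ with $\{{\ua} G : x\in\ii_\tau{\ua} G\}$.
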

\begin{proof} (1) $\Rightarrow$ (2): Suppose that $U\in \tau$ and $x\in U$. As $(X, \tau)$ is  a $C$-space, there exists $u \in X$ such that $x\in \ii_{\tau} {\ua} u\subseteq {\ua} u\subseteq U$. Let $F=\{u\}$. Then $F\subseteq {\da} x$ and $x\in \ii_{\tau} {\ua} F\subseteq {\ua} F\subseteq U$.

(2) $\Rightarrow$ (3): Assume that condition (2) holds. Clearly, $(X, \tau)$ is locally hypercompact. Suppose that $(x, F)\in X\times X^{(<\omega)}$ satisfying $x\in  \ii_{\tau} {\ua} F$. By (2), there is $G\in X^{(<\omega)}$ such that $G\subseteq {\da} x$ and $x\in \ii_{\tau} {\ua} G\subseteq  {\ua} G\subseteq \ii_{\tau}{\ua} F\subseteq {\ua} F$. Then ${\ua} G\subseteq {\ua} ({\ua} F\cap {\da} x)={\ua} (F\cap {\da} x)$. Hence  $x\in \ii_{\tau} {\ua} G\subseteq \ii_{\tau}{\ua} (F\cap {\da} x)$.

(3) $\Rightarrow$ (1): For any $x\in X$ and $U\in\mathcal O(X)$, we will show that ${\ua} (U\cap {\da} x)\in \tau$. Let $y\in {\ua} (U\cap {\da} x)$. Then there is a $u\in U\cap {\da} x$ such that $u\leq y$. As $(X, \tau)$ is locally hypercompact, there exists $F \in X^{(<\omega)}$ such that $u\in \ii_{\tau}{\ua} F\subseteq {\ua} F\subseteq U$. By (3), we have $u\in \ii_{\tau}{\ua} (F\cap {\da} u)$. Then $y\in \ii_{\tau}{\ua} (F\cap {\da} u)\subseteq {\ua} (U\cap {\da} u)\subseteq {\ua} (U\cap {\da} x)$. Hence ${\ua} (U\cap {\da} x)\in \tau$. By Lemma \ref{lem-closed-set-lattice-Heyting} and Proposition \ref{prop-C-space-LHC-web}, $(X, \tau)$ is a $C$-space.

(1) $\Rightarrow$ (4): Suppose that $(X, \tau)$ is a $C$-space and $F\in X^{(<\omega)}$. Then $(X, \tau)$ is locally hypercompact. For any $x\in \dua_c F$, let $D_x=\{d\in X : x\in \ii_{\tau}{\ua} d\}$. As $(X, \tau)$ is a $C$-space, $D_x$ is directed and $\cl_{\tau} D_x={\da} x$. By $F\ll_c x$, there is $d\in D_x$ such that $d\in {\ua} F$. Hence $x\in \ii_{\tau}{\ua} d\subseteq \ii_{\tau}{\ua} F\subseteq \ii_{\mathcal{O}(\mathcal{QL})}{\ua} F\subseteq \dua_c F$ by Proposition \ref{prop-Lawson-tau-O(QL)-O(L)}(1) and Remark \ref{rem-way-below-c}(4). Thus $\dua_c F\in \tau$.

(4) $\Rightarrow$ (5): By Theorem \ref{theor-LH-is-WLH}.

(5) $\Rightarrow$ (2): For $U\in\mathcal O(X)$ and $x\in U$, let $\mathcal F_x=\{{\ua} G : G\in X^{(<\omega)}, x\in \ii_{\tau}{\ua} G\}$. Then $w_c(x)=\mathcal F_x$ by (5) and Lemma \ref{lem-GWC-space}. As $(X, \tau)$ is a WLH-space, we have that $\mathcal F_x$ is filtered and ${\ua} x\in\cl_{P_S(X)}\mathcal F_x$. By ${\ua} x\in \Box U\in \mathcal O(P_S(X))$, there is ${\ua} H\in \mathcal F_x$ such that ${\ua} H\in \Box U$ or, equivalently, ${\ua} H\subseteq U$. Hence $x\in \ii_{\tau} {\ua} H\subseteq {\ua} H\subseteq U$. Then $H\ll_c x$. By Remark \ref{rem-way-below-c}(5), we have $H\cap {\da} x\ll_c x$. It follows from (5) and Lemma \ref{lem-GWC-space} that $x\in \ii_{\tau}{\ua} (H\cap {\da} x)\subseteq {\ua} (H\cap {\da} x)\subseteq {\ua} H\subseteq U$. Let $F=H\cap {\da} x$. Then $F \in X^{(<\omega)}$, $F\subseteq {\da} x$ and $x\in \ii_{\tau} {\ua} F\subseteq  {\ua} F\subseteq U$.
\end{proof}

\begin{theorem}\label{theor-characterization-C-space-2} For a $T_0$-space $(X, \tau)$, the following conditions are equivalent:
\begin{enumerate}[\rm (1)]
\item $(X, \tau)$ is a $C$-space.
\item $\dua_c F\in \tau$ for all $F\in X^{(<\omega)}$, and the quasi-liminf convergence coincides with convergence in the topology $\omega(X)\vee \tau$, that is, for all $x\in X$ and all nets $(x_i)_{i\in I}$ in $X$,
\begin{center}
$(x_i)_{i\in I}$  $\stackrel{QL}\longrightarrow x$ if and only if $(x_i)_{i\in I}$ converges to $x$ with respect to the topology $\omega(X)\vee \tau$.
\end{center}\item The quasi-liminf convergence in $(X, \tau)$ is topological and $\dua_c F\in \tau$ for all $F\in X^{(<\omega)}$.

\end{enumerate}
\end{theorem}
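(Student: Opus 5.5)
I will prove the cycle (1) $\Rightarrow$ (2) $\Rightarrow$ (3) $\Rightarrow$ (1), relying almost entirely on results already established.

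\textbf{(1) $\Rightarrow$ (2).} Let $(X,\tau)$ be a $C$-space. By Theorem \ref{theor-characterization-C-space-1}, implication (1) $\Rightarrow$ (4), the space is locally hypercompact and $\dua_c F\in\tau$ for every $F\in X^{(<\omega)}$. By Corollary \ref{cor-LHC-quasi-liminf-Kou}, or equivalently Lemma \ref{lem-Lawson-tau-O(QL)}(2), quasi-liminf convergence in a locally hypercompact space is topological and $\mathcal O(\mathcal{QL})=\omega(X)\vee\tau$. Hence $\mathcal N\stackrel{QL}\longrightarrow x$ holds iff $\mathcal N$ converges to $x$ in $\mathcal O(\mathcal{QL})=\omega(X)\vee\tau$, which is (2).

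\textbf{(2) $\Rightarrow$ (3).} Assume (2). Convergence in a topology satisfies Kelley's axioms of Theorem \ref{theor-convergence-topological}, so the class $\mathcal{QL}$ coincides with $\mathcal C(\omega(X)\vee\tau)$. It follows that $\mathcal O(\mathcal{QL})=\mathcal O(\mathcal C(\omega(X)\vee\tau))=\omega(X)\vee\tau$. Therefore $\mathcal{QL}$-convergence agrees with convergence in $\mathcal O(\mathcal{QL})$, i.e., it is topological. The condition $\dua_cF\in\tau$ is carried over unchanged.

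\textbf{(3) $\Rightarrow$ (1).} By Theorem \ref{theor-characterization-WLH-space-1}, topological $\mathcal{QL}$-convergence makes $(X,\tau)$ a WLH-space. Together with $\dua_cF\in\tau$ for all finite $F$, this is exactly condition (5) of Theorem \ref{theor-characterization-C-space-1}, so $(X,\tau)$ is a $C$-space.

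The only point needing any care is the identification $\mathcal O(\mathcal{QL})=\omega(X)\vee\tau$ in (2) $\Rightarrow$ (3). It is just the standard fact $\tau'=\mathcal O(\mathcal C(\tau'))$ for any topology $\tau'$, noted after the definition of $\mathcal O(\mathcal L)$. The substantive work has already been done in Theorems \ref{theor-characterization-WLH-space-1}, \ref{theor-LH-is-WLH} and \ref{theor-characterization-C-space-1}, so I expect no genuine obstacle.
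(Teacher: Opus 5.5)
Your proof is correct and takes essentially the same route as the paper. It uses Theorem \ref{theor-characterization-C-space-1} with Lemma \ref{lem-Lawson-tau-O(QL)}(2) for (1) $\Rightarrow$ (2), the identity $\tau'=\mathcal O(\mathcal C(\tau'))$ for the step the paper calls trivial, and Theorem \ref{theor-characterization-WLH-space-1} with condition (5) of Theorem \ref{theor-characterization-C-space-1} for (3) $\Rightarrow$ (1); the only slip is that the appeal to Kelley's axioms in (2) $\Rightarrow$ (3) is superfluous, since $\mathcal{QL}=\mathcal C(\omega(X)\vee\tau)$ is literally hypothesis (2).
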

\begin{proof} (1) $\Leftrightarrow$ (2): By Lemma \ref{lem-Lawson-tau-O(QL)}(2), Theorem \ref{theor-characterization-WLH-space-1} and Theorem \ref{theor-characterization-C-space-1}.

(2) $\Rightarrow$ (3): Trivial.

(3) $\Rightarrow$ (1): By Theorem \ref{theor-characterization-WLH-space-1} and Theorem \ref{theor-characterization-C-space-1}.


\end{proof}

By Proposition \ref{prop-continuous-c-space}, Proposition \ref{prop-quasicontinuous-local-hypercompact}, Theorem \ref{theor-characterization-C-space-1}  and Theorem \ref{theor-characterization-C-space-2}, we get the following.

\begin{corollary}\label{cor-characterization-continuous-poset-1} For a poset $P$, the following conditions are equivalent:
\begin{enumerate}[\rm (1)]
\item $P$ is continuous.
\item For any $U\in \sigma(P)$ and $x\in U$, there exists $F \in P^{(<\omega)}$ such that $F\subseteq {\da} x$ and $x\in \ii_{\sigma(P)} {\ua} F\subseteq  {\ua} F\subseteq U$.
\item $P$ is quasicontinuous and for any $(x, F)\in P\times P^{(<\omega)}$, $x\in  \ii_{\sigma(P)} {\ua} F$ implies  $x\in  \ii_{\sigma(P)} {\ua} (F\cap {\da} x)$.
\item $P$ is quasicontinuous and $\dua_c F\in \sigma(P)$ for all $F\in P^{(<\omega)}$.
\item $\Sigma P$ is a WLH-space and $\dua_c F\in \sigma(P)$ for all $F\in P^{(<\omega)}$.
\item The quasi-liminf convergence in $\Sigma P$ is topological and $\dua_c F\in \sigma(P)$ for all $F\in P^{(<\omega)}$.
\item The quasi-liminf convergence coincides with convergence in the Lawson topology $\lambda(P)$, and $\dua_c F\in \sigma(P)$ for all $F\in P^{(<\omega)}$.
\end{enumerate}
\end{corollary}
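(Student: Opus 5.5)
The plan is to specialise Theorem \ref{theor-characterization-C-space-1} and Theorem \ref{theor-characterization-C-space-2} to the Scott space $\Sigma P=(P,\sigma(P))$, and then translate each topological condition into order-theoretic language by means of Proposition \ref{prop-continuous-c-space} and Proposition \ref{prop-quasicontinuous-local-hypercompact}. By Proposition \ref{prop-continuous-c-space}, (1) holds iff $\Sigma P$ is a $C$-space. Under this translation:
\begin{enumerate}[\rm (a)]
\item conditions (2)--(5) of the corollary are literally conditions (2)--(5) of Theorem \ref{theor-characterization-C-space-1} for $\tau=\sigma(P)$;
\item condition (6) is condition (3) of Theorem \ref{theor-characterization-C-space-2} for $\Sigma P$.
\end{enumerate}

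The one point needing care is the meaning of ``quasicontinuous'' in (3) and (4). By Definition \ref{def-continuous-poset}, a poset $P$ is quasicontinuous precisely when $\Sigma P$ is locally hypercompact, so for a general poset no further argument is needed. For dcpos this is consistent with the classical notion by Proposition \ref{prop-quasicontinuous-local-hypercompact}. Hence (1)$\Leftrightarrow$(2)$\Leftrightarrow$(3)$\Leftrightarrow$(4)$\Leftrightarrow$(5)$\Leftrightarrow$(6) follows directly.

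For (7), I would use Theorem \ref{theor-characterization-C-space-2}(2) with $\tau=\sigma(P)$. This reads: $\dua_c F\in\sigma(P)$ for all $F\in P^{(<\omega)}$, and $\mathcal{QL}$-convergence coincides with convergence in $\omega(P)\vee\sigma(P)$. The only task is to identify $\omega(P)\vee\sigma(P)$ with the Lawson topology $\lambda(P)$. This holds by the very definition of $\lambda(P)$ as the topology generated by $\omega(P)\cup\sigma(P)$, since the specialization order of $\Sigma P$ is the original order of $P$. Thus (7) is equivalent to (1) via Theorem \ref{theor-characterization-C-space-2}.

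I expect no real obstacle; the argument is a direct specialisation. The only spot to double-check is the step (7)$\Rightarrow$(1), which is covered by the (2)$\Rightarrow$(1) direction of Theorem \ref{theor-characterization-C-space-2}. That direction does not presuppose local hypercompactness: coincidence with a topological convergence makes $\mathcal{QL}$ topological, so $\Sigma P$ is WLH by Theorem \ref{theor-characterization-WLH-space-1}, and Theorem \ref{theor-characterization-C-space-1}(5) then applies.
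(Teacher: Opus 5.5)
Your proposal is correct and takes the same route as the paper, which obtains the corollary by specialising Theorems \ref{theor-characterization-C-space-1} and \ref{theor-characterization-C-space-2} to $\Sigma P$. Like the paper, you use Proposition \ref{prop-continuous-c-space} for (1), Definition \ref{def-continuous-poset} and Proposition \ref{prop-quasicontinuous-local-hypercompact} for the meaning of ``quasicontinuous'' in (3)--(4), and the identity $\omega(P)\vee\sigma(P)=\lambda(P)$ (the specialization order of $\Sigma P$ being the original order) for (7).
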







\end{document}